\begin{document}
	
	
\frontmatter
$\,$ 

\vspace{3cm}
\begin{center}
\huge\textbf{Chow-Witt Rings of Classifying Spaces of Products of Multiplicative and Cyclic groups} \normalsize

\vspace{2em}
\Large Andrea Lachmann \normalsize

\vspace{2em}
30 Apr 2025

\end{center}
\vspace{2cm}
\textbf{Abstract.}
We compute the total Chow-Witt rings of the classifying space $B\mu_n$ of the roots of unity, as well as the products $B\Gm \times B\mu_n$ and $B\mu_m \times B\mu_n$ for all $m,n \geq 1$ based on the strategy by di Lorenzo and Mantovani (2023) for $B\mu_n$ with $n$ even.
Moreover we compute the total $I^j$-cohomology and Chow-Witt rings of $\projective^q \times \projective^r$ for all $q,r \geq 1$ and of $B\Gm \times B\Gm$.
\pagebreak

\renewcommand{\theeverything}{\Alph{everything}}

\chapter{Introduction}

Chow-Witt groups have gained increasing attention in intersection theory and enumerative geometry over the past 20 years.
Classically, a central concept of intersection theory are Chow groups of a scheme containing information about the subschemes in a given dimension.
Another invariant of schemes is the Witt ring of quadratic forms over a scheme introduced by \cite[Chapter I §5]{knebusch}.
Chow-Witt groups were then conceived by \cite{barge-morel} via a fiber product of Chow and Witt theory, and implemented in technical detail by \cite{fasel-thesis}.

One object of interest in this area are characteristic classes of vector bundles like Chern classes, Euler classes and Stiefel-Whitney classes living in the different theories mentioned above.
In topology the Euler class assigns to a real vector bundle an element in the singular cohomology of the base space with a local coefficient system (integral coefficients if the vector bundle is orientable).
One of its properties is being an obstruction for the vector bundle to split off a trivial summand, meaning if such a splitting exists then the Euler class vanishes.
Now for vector bundles over a scheme, one can of course consider characteristic classes with values in Chow groups, but it turns out that these satisfy the classical obstruction property only for $\Complex$-schemes.
For schemes over general fields and in particular over $\Real$ a characteristic class with this obstruction property lives in the Chow-Witt groups (compare \cref{thm:euler-class-splitting}).

Motivic classifying spaces of algebraic groups were introduced by \cite{morel-voevodsky}, imitating the long-familiar concept of classifying spaces in topology.
The \'{e}tale and geometric classifying space classify $G$-bundles in the sense that according to \cite[§4 Proposition 1.15]{morel-voevodsky} in the Nisnevich local motivic homotopy category of schemes, the set of maps from some $X$ into the classifying space of $G$ is in bijection to the set of isomorphism classes of $G$-torsors over $X$.
Around the same time Totaro independently defined the Chow ring of a classifying space by means of scheme approximations \cite{totaro98}, without defining the classifying space itself which usually does not exist in the category of schemes. 
This turned out to agree with the definition of Morel-Voevodsky and provides an important tool for computing Chow rings which was later extended to Chow-Witt groups by \cite[Theorem 3.3]{asok-fasel16}.

Some examples for computations of Chow-Witt rings include projective space $\projective^n$ by \cite{fasel-projbundle}, which also provides a powerful tool for $I^j$-cohomology of projective bundles.
The ring structure for $\projective^n$ and $B\Gm$ was computed by \cite{wendt} as a special case of a Grassmannian.
Later \cite{diLorenzo-Mantovani} computed the Chow-Witt ring of $B\mu_n$ for even $n$.
We will heavily build on these two articles in this work.
Further examples can be found in \cite{hornbostel-wendt} and \cite{HXZ}.

In this work we aim to compute several new examples for Chow-Witt rings of classifying spaces of groups and products thereof.
We will extend the argument of \cite{diLorenzo-Mantovani}, considering $B\mu_n$ as a $\Gm$-bundle over $B\Gm$ and then applying a localization sequence to deduce the Chow-Witt groups of the former from those of the latter, to $n$ odd (\cref{chowwitt-ring-of-Bmun-odd}):
\begin{theorem}
	Let $n$ be an odd natural number and $k$ a perfect field of characteristic coprime to $2$ and $n$.
	Denote by $\chowwitt^\tot(X)$ the total Chow-Witt ring of a smooth scheme $X$.
	Then there is an isomorphism of graded $\GW(k)$-algebras
	\[ \chowwitt^\tot (B\mu_n) \cong \GW(k)[\eulercwz]/(I(k) \cdot \eulercwz, n \cdot \eulercwz) \]
	where $\eulercwz \in \chowwitt^1(B\mu_n, \trivialbundle(1))$ denotes the Euler class of $\trivialbundle_{B\mu_n}(-1)$.
\end{theorem}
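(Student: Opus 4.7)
The plan is to adapt the strategy of di Lorenzo and Mantovani from even $n$ to the case of $n$ odd. Realize $B\mu_n$ as the complement of the zero section in the total space of the line bundle $\trivialbundle_{B\Gm}(-n)$ on $B\Gm$; equivalently, $\pi \colon B\mu_n \to B\Gm$ is the $\Gm$-torsor associated to the $n$-th power of the tautological character. This yields a Gysin/localization long exact sequence
\[ \cdots \to \chowwitt^{p-1}(B\Gm, \linebundle) \xto{\cup e(\trivialbundle_{B\Gm}(-n))} \chowwitt^{p+1}(B\Gm, \linebundle') \to \chowwitt^{p+1}(B\mu_n, \pi^*\linebundle') \to \chowwitt^{p}(B\Gm, \linebundle) \to \cdots \]
(with an appropriate twist $\linebundle'$ of $\linebundle$ by a power of $\trivialbundle(1)$) whose connecting homomorphism is multiplication by the Euler class of $\trivialbundle_{B\Gm}(-n)$. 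Combined with Wendt's presentation of $\chowwitt^\tot(B\Gm)$, this will feed into the desired computation of $\chowwitt^\tot(B\mu_n)$.

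The second step is to evaluate this Euler class via the quadratic product formula $e(\linebundle^{\tensor n}) = n_\epsilon \cdot e(\linebundle)$, where $n_\epsilon = \sum_{i=0}^{n-1}\langle(-1)^i\rangle \in \GW(k)$. For odd $n$ this simplifies to $n_\epsilon = \langle 1 \rangle + \tfrac{n-1}{2}\hyperbolicz$. Two properties of $n_\epsilon$ drive the argument: its rank is $n$, so on the Chow-theoretic part of the Gysin sequence it acts as multiplication by $n$; and its image in $\witt(k) = \GW(k)/(\hyperbolicz)$ is $\langle 1 \rangle$, so on the Witt-theoretic part it acts as the identity.

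Reading off kernel and cokernel of multiplication by $e(\trivialbundle_{B\Gm}(-n))$ in the Gysin sequence then recovers, on the Chow side, Totaro's $\chow^\tot(B\mu_n) = \Integer[\eulercwz]/(n \cdot \eulercwz)$, while on the Witt and $I^*$ side the cohomology of $B\mu_n$ is concentrated in degree zero. Assembling via the Chow-Witt fiber square $\chowwitt^p(X, \linebundle) \cong \chow^p(X) \times_{\chow^p(X)/2} I^p(X, \linebundle)$ produces the group description $\GW(k) \oplus \bigoplus_{j \geq 1} (\Integer/n) \cdot \eulercwz^j$. The two relations of the claimed presentation are then visible: $n \cdot \eulercwz = 0$ from the Chow side, and $I(k) \cdot \eulercwz = 0$ because for $j \geq 1$ the class $\eulercwz^j$ vanishes in $I^*(B\mu_n)$, hence is annihilated by every even-rank form through the $\GW(k)$-module structure.

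The main obstacle will be upgrading this module-level answer to an isomorphism of graded $\GW(k)$-algebras: one must identify $\eulercwz \in \chowwitt^1(B\mu_n, \trivialbundle(1))$ as the Euler class of $\trivialbundle_{B\mu_n}(-1)$ (equivalently, as the $\pi^*$-pullback of the Chow-Witt Euler class of $\trivialbundle_{B\Gm}(-1)$), and verify that the two listed relations generate all relations among the powers $\eulercwz^j$. Both should follow from the explicit structure of the Gysin sequence together with Wendt's ring presentation of $\chowwitt^\tot(B\Gm)$, but careful bookkeeping of the line-bundle twists throughout the sequence will be essential.
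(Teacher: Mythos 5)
Your plan mirrors the paper's in its essential steps: realize $B\mu_n$ as the complement of the zero section of $\trivialbundle(n)$ over (approximations of) $B\Gm$, write down the resulting localization sequence, and identify the connecting map as multiplication by $\eulercw(\trivialbundle(n))$, computed to be a unit multiple of $n\eulercwz$ for $n$ odd. Your formula $\eulercw(\linebundle^{\otimes n}) = n_\epsilon \cdot \eulercw(\linebundle)$ with $n_\epsilon = \sum_{i=0}^{n-1}\langle(-1)^i\rangle = \langle 1\rangle + \tfrac{n-1}{2}\hyperbolic$ is an equivalent repackaging of \cref{prop:euler-class-product}, and your two observations --- that $n_\epsilon$ has rank $n$ and maps to $\langle 1\rangle$ in $\witt(k)$ --- are exactly what make the Chow side multiply by $n$ and the $I^j$-side by a unit. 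Where you genuinely deviate is the assembly: you propose computing the Chow and $I^j$-cohomology of $B\mu_n$ separately and gluing them via the Hornbostel--Wendt fiber product (\cref{prop:chowwitt-is-chow-x-I}), whereas the paper reads the Chow-Witt groups off directly from the Milnor-Witt localization sequence using Fasel's description $H^i(B\Gm,\KMW_j,\linebundle) \cong \KM_{j-i}(k)$ or $2\KM_{j-i}(k)$ for $i \geq 1$ (\cref{prop:milnorwitt-of-BGm}), which makes the boundary terms vanish for free since $\KM_{<0}=0$. Your route is viable, but you must independently establish the vanishing of $H^i(B\mu_n, I^i, \linebundle)$ for $i>0$ rather than inherit it: in the paper this vanishing is a \emph{consequence} of the Chow-Witt computation (\cref{prop:Ij-of-Bmun}), so using it as an input would be circular --- run the $I^j$-analogue of the localization sequence instead, where the boundary target $H^i(B\Gm, I^{i-1},\linebundle)$ vanishes in positive degree. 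Finally, you correctly flag the ring structure as the hardest step; the paper resolves it by showing the square periodicity isomorphism $\psi_1\colon \chowwitt^i(B\mu_n, \trivialbundle(1)) \xto{\cong} \chowwitt^i(B\mu_n,\trivialbundle)$ sends $\pi^*(\hyperbolicz)$ to the hyperbolic form $\hyperbolic\in\GW(k)$, so that Wendt's twisted generator $\hyperbolicz$ of $\chowwitt^\tot(B\Gm)$ becomes redundant and its relations collapse into the $\GW(k)$-module structure --- this is precisely what reduces the presentation to the two claimed relations rather than the longer list inherited from $B\Gm$.
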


Then we compute the $I^j$-cohomology of $\projective^m \times \projective^n$ (\cref{thm:Ij-of-PxP}) using the projective bundle formula of \cite{fasel-projbundle}, 
then combine this with the Chow groups of the same space (\cite[Theorem 2.10, 2.12]{totaro-bluebook}) via a fiber product statement of \cite[Lemma 2.11]{hornbostel-wendt}, see \cref{prop:chowwitt-of-PxP} and \cref{prop:chowwitt-of-BGmxBGm}.
\begin{theorem}
	For $k$ a perfect field of characteristic coprime to $2$, there is an isomorphism of graded $\GW(k)$-algebras
	\begin{multline*} 
		\chowwitt^\tot(B\Gm \times B\Gm) \cong 
		\GW(k) [\eulercwa, \eulercwb, \eulercwc, \hyperbolica, \hyperbolicb, \hyperbolicc]/(I(k) \cdot (\hyperbolica, \hyperbolicb, \hyperbolicc, \eulercwa, \eulercwb, \eulercwc), \\
		\eulercwa^2 + \eulercwb^2 + \hyperbolicc \eulercwa \eulercwb - \eulercwc^2, \mathcal{J})  
	\end{multline*}
	where $\eulercwa$, $\eulercwb$ and $\eulercwc$ correspond to the Euler classes of $\trivialbundle(1,0)$, $\trivialbundle(0,1)$, $\trivialbundle(1,1)$, respectively, the $\hyperbolica$, $\hyperbolicb$, $\hyperbolicc$ are a kind of analogue of the hyperbolic form $\hyperbolic$ living in the twisted degree $0$ groups, and $\mathcal{J}$ is a relation ideal. These are described in more detail in \cref{prop:chowwitt-of-PxP}.
\end{theorem}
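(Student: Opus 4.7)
The plan is to compute $\chowwitt^\tot(B\Gm \times B\Gm)$ by passing to the limit over the finite-dimensional approximations $\projective^q \times \projective^r$, using the machinery of \cite[Theorem 3.3]{asok-fasel16}, and to exploit the fiber product decomposition of Chow-Witt groups in terms of Chow groups and $I^j$-cohomology provided by \cite[Lemma 2.11]{hornbostel-wendt}. Concretely, I would first establish the result for $\chowwitt^\tot(\projective^q \times \projective^r)$ as stated in \cref{prop:chowwitt-of-PxP} and then take the inverse limit in $q$ and $r$, recording that the generators $\eulercwa, \eulercwb, \eulercwc$ and the degree-zero classes $\hyperbolica, \hyperbolicb, \hyperbolicc$ (which stabilise in the finite-dimensional case) are compatible with the restriction maps.

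The main technical input is the $I^j$-cohomology of $\projective^q \times \projective^r$ announced in \cref{thm:Ij-of-PxP}. For this I would apply Fasel's projective bundle formula from \cite{fasel-projbundle} iteratively: view $\projective^q \times \projective^r \to \projective^r$ as a trivial projective bundle over $\projective^r$ and plug in the known $I^j$-cohomology of $\projective^r$ computed in the same reference. The twisting by $\trivialbundle(1,0)$, $\trivialbundle(0,1)$ and $\trivialbundle(1,1)$ forces the appearance of the three Euler classes $\eulercwa, \eulercwb, \eulercwc$, while the image of the hyperbolic forms under the map from $\witt$-cohomology to $I^j$-cohomology contributes the classes $\hyperbolica, \hyperbolicb, \hyperbolicc$. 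In parallel, the Chow ring $\chow^\tot(\projective^q \times \projective^r) \cong \Integer[\cherna, \chernb]/(\cherna^{q+1}, \chernb^{r+1})$ is standard from \cite{totaro-bluebook}.

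Having both sides, I would then feed them into the fiber product with $\chowmodtwo$-cohomology and solve for $\chowwitt$-classes compatible on both factors. The Euler classes $\eulercwa, \eulercwb, \eulercwc$ are characterised by lifting the mod-$2$ reductions of their Chow-theoretic Chern classes, and the hyperbolic classes $\hyperbolica, \hyperbolicb, \hyperbolicc$ live entirely in the $\witt$-cohomology factor, which accounts for the relation $I(k) \cdot (\hyperbolica, \hyperbolicb, \hyperbolicc, \eulercwa, \eulercwb, \eulercwc) = 0$. The key nontrivial relation $\eulercwa^2 + \eulercwb^2 + \hyperbolicc \eulercwa \eulercwb - \eulercwc^2 = 0$ should be derived from the Chow-Witt analogue of the tensor product formula for the Euler class of $\trivialbundle(1,1) = \trivialbundle(1,0) \tensor \trivialbundle(0,1)$, where the correction term involving $\hyperbolicc$ arises from the twisted nature of the multiplication in Milnor-Witt $K$-theory.

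The hardest part, I expect, is not the projective bundle computation itself but pinning down the relation ideal $\mathcal{J}$ and the precise form of the Euler-class-of-tensor-product relation, together with verifying that the passage to the limit $q, r \to \infty$ does not introduce or kill any generators or relations beyond what the explicit description predicts. In particular, controlling the interaction between the Witt-theoretic hyperbolic classes (which are $I(k)$-torsion) and the Euler classes through the $\chowmodtwo$ glueing is delicate; this is where the explicit shape of the Hornbostel-Wendt fiber product and the compatibility of mod-$2$ reductions $\reduction(\eulercwc)$ with products $\reduction(\eulercwa)\reduction(\eulercwb)$ must be exploited to verify that the displayed relation is in fact the only exotic relation beyond those visible on the Chow and $I^j$ sides.
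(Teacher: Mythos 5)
Your overall plan — approximate $B\Gm \times B\Gm$ by $\projective^q \times \projective^r$, compute the $I^j$-cohomology via Fasel's projective bundle formula, compute the Chow ring from Totaro, and glue via the Hornbostel--Wendt fiber product over $\chowmodtwo$ — is exactly the strategy the paper takes, first proving \cref{prop:chowwitt-of-PxP} and then stabilising (via \cref{lem:admissible-gadgets-products}, which is a stable-range identification rather than a literal inverse limit) to obtain \cref{prop:chowwitt-of-BGmxBGm}. However, two of your concrete claims would derail the argument if pursued as written.

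First, you say the hyperbolic classes $\hyperbolica,\hyperbolicb,\hyperbolicc$ \enquote{live entirely in the $\witt$-cohomology factor.} This is backwards: in the fiber product $H^0(X,I^0,\linebundle)\times_{\chowmodtwo^0(X)}\chow^0(X)$ the class $\hyperbolicz_i$ is the pair $(0,2)$, i.e.\ it is zero in the $I^0$-factor and equals $2$ in the Chow factor. The relation $I(k)\cdot\hyperbolicz_i=0$ follows because $I(k)\cong\{(w,0):w\in I(k)\}$ has zero rank coordinate, so $(w,0)\cdot(0,2)=(0,0)$; it is not an $I(k)$-torsion phenomenon in Witt cohomology. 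Second, you propose to derive $\eulercwa^2+\eulercwb^2+\hyperbolicc\eulercwa\eulercwb-\eulercwc^2=0$ \enquote{from the Chow-Witt analogue of the tensor product formula} for $\eulercw(\trivialbundle(1,1))$. No such formula exists: the paper's remark after \cref{prop:euler-class-product} (following Levine) explains that $\eulercw(\trivialbundle(-1,-1))$ cannot be written as a polynomial in $\eulercwa,\eulercwb,\hyperbolica,\hyperbolicb$, and indeed cites the present computation as the proof of that failure. The relation you want arises instead by gluing: in $I^j$-cohomology $\euleric^2=\euleria^2+\eulerib^2$ (from $\swc=\swa+\swb$ squared mod $2$), while in the Chow ring $C_3^2=C_1^2+2C_1C_2+C_2^2$, and the mismatch $(0,-2C_1C_2)=-\hyperbolicc\eulercwa\eulercwb$ is precisely the correction term. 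Filling in these two points would turn your sketch into the paper's proof.
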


Later we will use a very similar strategy like for a single copy of $B\mu_n$ to deduce the Chow-Witt rings of $B\Gm \times B\mu_n$ (\cref{thm:chowwitt-of-BGmxBmun}) and $B\mu_m \times B\mu_n$ (\cref{thm:chowwitt-of-BmumxBmun}) from that of $B\Gm \times B\Gm$.
For some values of $m$ and $n$ this argument is inconclusive for degree $0$ which is why in these cases we employ a comparison with Witt cohomology, which in turn can be computed via a Künneth formula developed by \cite[Theorem 4.7]{HMW}.
Most of the ring structure can be derived from that of $B\Gm \times B\Gm$ by precisely understanding the isomorphisms identifying those Chow-Witt groups whose twists become isomorphic over $B\Gm \times B\mu_n$ respectively $B\mu_m \times B\mu_n$.
These rings depend on the parity of $m$ and $n$.
Their algebra presentations are so lengthy in total that we decided against including them in the introduction and the reader should refer to the respective theorems for details.

It will turn out that some but not all of the products considered - namely $\projective^m \times \projective^n$, $B\Gm \times B\Gm$, $B\Gm \times B\mu_n$, $B\mu_m \times B\mu_n$ - satisfy a Künneth isomorphism for Chow-Witt rings, see \cref{remark:kuenneth-formula}.
This provides examples to the already known fact that such an isomorphism cannot be true for Chow-Witt rings of general products.
It remains an open question to establish a nice set of conditions under which this holds.
Such conditions are known for both Chow rings \cite[Theorem 2.12]{totaro-bluebook} and Witt cohomology \cite[Proposition 4.7]{HMW}, but not for $I^j$-cohomology or Chow-Witt rings.

\section*{Notation and Conventions}
Throughout all of this paper, let $k$ be a \textbf{perfect field of characteristic not $2$}.
Perfectness is relevant for some motivic homotopy arguments, for example avoiding smoothness issues since every regular finite type scheme over a perfect field is already smooth.
There are methods to circumvent this condition in many applications developed by \cite{elmanto-khan}, and it might be possible that it can be entirely removed, see also \cite[Section 2.3]{HMW}.
Characteristic different from $2$ is a common assumption in the theory of quadratic forms, because quadratic forms agreeing with symmetric bilinear forms simplifies matters.
Recent works, e.g. \cite{feld}, accomplish at least the construction of Chow-Witt groups as well as functoriality and long exact localization sequences in arbitrary characteristic. 
However there remain arguments, such as the decomposition of Chow-Witt groups into Chow groups and $I^j$-cohomology (\cref{sec:fiber-products}) and the projective bundle theorem for $I^j$-cohomology (\cref{thm:proj-bundle-Ij-thm}), that are only known to be true in characteristic unequal $2$.

All \textbf{rings} are \textbf{associative and unital}.
All \textbf{schemes} are considered to be \textbf{separated of finite type over $k$}.
This implies that the image of a closed subscheme under a proper morphism of schemes is again closed, which is necessary to construct a pushforward map on Chow-Witt groups. 
When working with cohomological Chow-Witt groups or any other kind of cohomology, that is, from \cref{sec:properties} onward, all schemes are required to be smooth.
For a scheme $X$ and an integer $i$ denote by $X_{(i)}$ the set of points of dimension $i$ in $X$, i.e.\ points whose closure has dimension $i$.
If $X$ is smooth and thus has a fixed dimension, denote by $X^{(i)}$ the set of points of codimension $i$.

For a commutative ring $A$ and symbols $x_1, x_2, \ldots$, we denote by $A \langle x_1, x_2, \ldots \rangle$ the free $A$-module generated by $x_1, x_2, \ldots$, not to be confused with the polynomial ring $A[x_1, x_2, \ldots]$.

\noindent We use the following standard notation. \vspace{1em}

\begin{tabular}{r l}
$\GW(k)$ & Grothendieck-Witt ring of quadratic forms over $k$ \\
$\witt(k)$ & Witt ring of quadratic forms modulo hyperbolic spaces \\
$I(k)$ & the fundamental ideal of $\GW(k)$ or $\witt(k)$ \\
$\chow_i(X)$, $\chow^i(X)$ & $i$-th (co)homological Chow group of a scheme $X$ \\
$\chowmodtwo_i(X)$, $\chowmodtwo^i(X)$ & $i$-th (co)homological Chow group modulo $2$ \\
$\chowwitt_i(X, \linebundle)$, $\chowwitt^i(X, \linebundle)$ & $i$-th (co)homological Chow-Witt group twisted by a line \\
& bundle $\linebundle$ over $X$ \\
$\KMW_*(k)$ & the graded Milnor-Witt K-theory ring \\
$\KM_*(k)$ & the graded Milnor K-theory ring \\
$I^*(k) = \KW_*(k)$ & the graded ring consisting of powers of $I(k)$, where \\
& $I^{\leq 0} (k) \coloneqq \witt(k)$
\end{tabular} \vspace{1em}\\
All of these will be introduced in detail in \cref{sec:Chow-Witt-rings}.

\section*{Acknowledgments}
During the writing of this thesis, the author was supported by the GRK 2240 \enquote{Algebro-geometric methods in algebra, arithmetic and topology} of the DFG.
I would like to thank my advisors Jens Hornbostel and Marcus Zibrowius for all the support and patience they offered me.
Also I am grateful to Jule Hänel, Burt Totaro, Matthias Wendt and Alex Ziegler for many helpful discussions, and to Leon Hendrian, Yordan Toshev and Alex Ziegler for their comments on earlier versions of this thesis.

\renewcommand{\theeverything}{\thechapter.\arabic{everything}}

\newgeometry{top=2.5cm}
\tableofcontents
\restoregeometry

\mainmatter
\chapter{Chow-Witt Rings} \label{sec:Chow-Witt-rings}

This chapter introduces Chow-Witt groups and rings and some of their fundamental properties.
In some places we will only sketch ideas but do not include proofs, and the reader should be aware that some results cited from the literature are highly non-trivial.

\section{Motivation and Relation to Chow Groups}

The Chow-Witt ring is a quadratic refinement of the Chow ring, so to provide some context we will start with a quick recollection on the latter.
A more detailed account can be found e.g.\ in \cite[Chapter 1]{fulton}. 

Let $X$ be a scheme over $k$.
The group of algebraic $i$-cycles of $X$ is the free abelian group on closed subschemes of $X$ of dimension $i$, and the $i$-th Chow group $\chow_i(X)$ is then obtained by dividing out rational equivalence.
This can be expressed by the exact sequence
\[ \bigoplus_{W \in X_{(i+1)}} \kappa(W)^{\times} \xrightarrow{\div} \bigoplus_{V \in X_{(i)}} \Integer\langle V \rangle \to \chow_i(X) \to 0 \, . \]

The idea of Chow-Witt groups $\chowwitt_i(X)$ is to consider algebraic cycles $\sum n_V [V]$ not with integral coefficients $n_V$, but instead coefficients in the Grothendieck-Witt ring $\GW(k)$, the group completion of quadratic forms over $k$ up to isometry, equipped with direct sum and tensor product.
This is motivated by the fact that the Grothendieck-Witt ring captures the arithmetic of the base field in more detail. 
Through this one hopes to generalize certain results that hold only over the base field $\Complex$ in classical intersection theory (using Chow groups), considering that $\GW(\Complex) \cong \Integer$.
For example, the zeroth motivic stable stem is isomorphic to $\GW(k)$ by \cite[6.4.1]{morel-introduction} whereas its topological analogue is $\pi_0^s(\mathbb{S}) \cong \Integer$.
A further example is the following splitting theorem of Morel, matching the obstruction property of topological Euler classes.

\begin{theorem}[{\cite[Thm 8.14]{morel-lecturenotes}}]\label{thm:euler-class-splitting}
	Assume $r \geq 4$. 
	Let $X$ be a smooth affine k-scheme of dimension $\leq r$, and let $\xi$ be an oriented algebraic vector bundle of rank $r$. 
	Then:
	\[ \text{$\xi$ splits off a trivial line bundle} \Leftrightarrow \eulercw(\xi) = 0 \in \chowwitt^r(X) \]
\end{theorem}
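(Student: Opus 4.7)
The plan is to attack the two directions of the equivalence separately, with the nontrivial content lying in the direction $\eulercw(\xi) = 0 \Rightarrow \xi \cong \xi' \oplus \trivialbundle$.

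For the easy direction, I would use multiplicativity of the Euler class under direct sums together with the fact that the Euler class of a trivial line bundle vanishes. If $\xi \cong \xi' \oplus \trivialbundle$, then $\eulercw(\xi) = \eulercw(\xi') \cdot \eulercw(\trivialbundle) = 0$ in $\chowwitt^r(X)$. This uses only formal properties of the Euler class that would be developed earlier in the Chow-Witt chapter.

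For the hard direction, I would use obstruction theory in unstable $\affine^1$-homotopy theory. Since $\xi$ is oriented of rank $r$, it is classified by a map $X \to B\SL_r$ in the $\affine^1$-homotopy category, and splitting off a trivial line bundle is equivalent to lifting this map along $B\SL_{r-1} \to B\SL_r$. The $\affine^1$-homotopy fiber of that map is $\SL_r/\SL_{r-1} \simeq \affine^r \setminus \{0\}$, whose $\affine^1$-connectivity and lowest nonvanishing homotopy sheaf are controlled by Morel's computation $\pi_{r-1}^{\affine^1}(\affine^r \setminus \{0\}) \cong \KMW_r$. The primary obstruction to the lift therefore lives in $H^r_{\mathrm{Nis}}(X;\KMW_r) \cong \chowwitt^r(X)$.

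The two key facts I would then invoke to finish are, first, the $\affine^1$-representability results of Morel (specifically the theorem that for smooth affine $X$ of dimension $\leq r$ all higher obstructions vanish for dimension reasons, since both $X$ and the tower of fibers have vanishing Nisnevich cohomology above degree $r$), and second, the identification of this single primary obstruction with the Euler class $\eulercw(\xi)$. This identification proceeds by computing the obstruction class directly on the universal example over $B\SL_r$ and showing it coincides with the universal Euler class in $\chowwitt^r(B\SL_r)$; the hypothesis $r \geq 4$ enters precisely to ensure that $\affine^r\setminus\{0\}$ is sufficiently connected that the obstruction theory reduces to a single stage and that the relevant cohomology class is uniquely determined. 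The hard part will clearly be this last identification, which is the technical core of Morel's argument; I would treat it as a black box citation to \cite{morel-lecturenotes} rather than reproduce the computation.
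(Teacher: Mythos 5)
The paper does not give a proof of this statement; it is cited directly from Morel's lecture notes \cite[Thm 8.14]{morel-lecturenotes} and used as a black box. So there is no in-paper argument to compare against. That said, your sketch is a faithful outline of the strategy in Morel's book: the easy direction is formal multiplicativity of the Euler class, and the hard direction is unstable $\affine^1$-obstruction theory for lifting the classifying map $X \to B\SL_r$ along $B\SL_{r-1} \to B\SL_r$, with $\affine^1$-homotopy fiber $\affine^r\setminus\{0\}$ and primary obstruction valued in $H^r_{\mathrm{Nis}}(X;\KMW_r)\cong\chowwitt^r(X)$ by Morel's computation of $\pi_{r-1}^{\affine^1}(\affine^r\setminus\{0\})$. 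You are right that the technical core is identifying the obstruction class with the Euler class, and treating that identification as a citation is exactly what the paper does.

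One small caveat on your explanation of the hypothesis $r\geq 4$: the bound is not solely about the connectivity of the fiber $\affine^r\setminus\{0\}$ (which is $(r-2)$-$\affine^1$-connected for all $r\geq 2$). Rather, the low-rank cases $r=2,3$ are genuinely subtler because the relevant strata of the Moore--Postnikov tower do not collapse to a single stage, and the identification of the primary obstruction with the Euler class requires separate arguments there; Morel treats $r\geq 4$ uniformly and excludes the small ranks. Your phrasing ``the obstruction theory reduces to a single stage'' captures the spirit, but the reason for the bound is more about the structure of the tower over $B\SL_r$ than about the connectivity of the fiber alone.
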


The analogous statement for Chern classes under the assumption that $k$ is algebraically closed was proved by \cite[Theorem 3.8]{murthy}.
This obstruction property for vector bundles to split off a trivial summand is also true for Euler classes of topological spaces which take values in singular cohomology with integral coefficients.
In fact, there is also a direct connection between $I^j$-cohomology, which is a quotient of Chow-Witt groups, and singular cohomology:
For a smooth scheme $X$ over a field $k \supseteq \Real$ and a line bundle $\linebundle$ over $X$, Jacobson \cite{jacobson} defines a real cycle class map 
\[ H^i(X, I^j, \linebundle) \to H^i_{\sing}(X(\Real); \Integer(\linebundle)) \]
where $X(\Real)$ denotes the set of real points of $X$ with the analytic topology.
This is an isomorphism if $j > \dim X$ \cite[Corollary 8.3]{jacobson}, or if $j \geq i$ and $X$ is cellular \cite[Theorem 5.7]{HWXZ}.

In order to do construct such a quadratic refinement it will be helpful to rephrase the above definition of Chow groups in terms of Milnor K-theory.
\begin{definition}
	Let $F$ be a finitely generated field extension over $k$.
	Its Milnor K-theory $\KM_*(F)$ is the graded-commutative ring generated by the symbols $[a]$ for all $a \in F^\times$ in degree $1$, modulo the relations $[a][1-a]$ for all $a \neq 0, \, 1$ and $[a] + [b] = [ab]$ for all $a, b \in F^\times$.
	The $i$-th degree of this ring is the $i$-th Milnor K-theory group of $F$, denoted $\KM_i(F)$.
\end{definition}
Observing that we have $F^\times = \KM_1(F)$, $\Integer = \KM_0(F)$ and $0 = \KM_{-1}(F)$, the exact sequence defining the $i$-th Chow group above can be expressed using the chain complex
\[ \ldots \to \bigoplus_{x \in X_{(i+1)}} \KM_1(\kappa(x)) \xrightarrow{\div} \bigoplus_{y\in X_{(i)}} \KM_0(\kappa(y)) \xrightarrow{div} \bigoplus_{z\in X_{(i-1)}} \KM_{-1}(\kappa(z)) \to \ldots \]
known as the Gersten complex $C_*(X, \KM_*, i)$ and the $i$-th Chow group is the homology at the middle term.
Naively we want to replace $\KM$ in the above chain complex by its \enquote{quadratic refinement}, Milnor-Witt K-theory $\KMW$:
\begin{definition}\label[definition]{def:KMW}
	Let $F$ be a finitely generated field extension over $k$.
	Its Milnor-Witt K-theory $\KMW_*(F)$ is the graded (non-commutative) ring generated by the symbols
	\begin{enumerate}[itemindent=2em]
		\item $[a]$ for all $a \in F^\times$ in degree $1$,
		\item $\eta$ in degree $-1$
	\end{enumerate}
	modulo the relations
	\begin{enumerate}[itemindent=2em]
		\item $[a] \cdot [1-a]$ for $a \neq 0, \, 1$,
		\item $[a] + [b] + \eta \cdot [a] \cdot [b] - [ab]$ for $a, \, b \in F^\times$,
		\item $\eta \cdot [a] - [a] \cdot \eta$ for $a \in F^\times$,
		\item $\eta^2 \cdot [-1] + 2 \eta$.
	\end{enumerate}
	The $i$-th degree of this ring is the $i$-th Milnor-Witt K-theory group of $F$, which is denoted by $\KM_i(F)$.
\end{definition}
In this setup, however, it is more difficult to define the map $\div$.
This will be the task of the next section.

This construction of Milnow-Witt K-theory as a quadratic refinement of Milnor K-theory is partly justified by the following result of Morel.
\begin{proposition}[{\cite[3.10]{morel-lecturenotes}}]
	Let $F$ be a finitely generated field extension over $k$.
	For $a \in F^\times$, denote by $\langle a \rangle$ the $1$-dimensional quadratic form over $k$ defined by $(x,y) \mapsto a \cdot x \cdot y$.
	Then the assignment
	\begin{align*}
		\GW(k) & \to \KMW_0(F) \\
		\langle a \rangle &\mapsto 1 + \eta \cdot [a]
	\end{align*}
	extends to a well defined ring isomorphism.
\end{proposition}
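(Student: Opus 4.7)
The plan is to construct mutually inverse ring maps between $\KMW_0(F)$ and $\GW(F)$; the proposition as stated then follows by precomposing with the restriction $\GW(k) \to \GW(F)$ induced by the inclusion $k \hookrightarrow F$. I would work from Witt's classical presentation of $\GW(F)$ by generators $\langle a \rangle$ for $a \in F^\times$, subject to $\langle 1 \rangle = 1$, $\langle a \rangle \langle b \rangle = \langle ab \rangle$, $\langle ab^2 \rangle = \langle a \rangle$, and the orthogonal-sum relation $\langle a \rangle + \langle b \rangle = \langle a+b \rangle + \langle ab(a+b) \rangle$ whenever $a + b \neq 0$. The first step is to verify that the elements $1 + \eta \cdot [a] \in \KMW_0(F)$ satisfy each of these Witt relations, thereby yielding a ring homomorphism $s \colon \GW(F) \to \KMW_0(F)$.

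Multiplicativity is the cleanest case: expanding $(1 + \eta[a])(1 + \eta[b])$ and invoking the $\KMW$-relation $[a] + [b] + \eta[a][b] = [ab]$ (multiplied by $\eta$) gives $1 + \eta[ab]$. Square invariance reduces to showing $\eta[b^2] = 0$, which follows from the identity $[b^2] = 2[b] + \eta[b]^2$ together with the hyperbolic-torsion relation $\eta(2 + \eta[-1]) = 0$ and the centrality of $\eta$. The orthogonal-sum relation is the main input on this side: after moving everything to one side and applying multiplicativity, it collapses to $\eta^2 \cdot [a/(a+b)] \cdot [b/(a+b)] = 0$, which is precisely the Steinberg relation $[x][1-x] = 0$ applied to $x = a/(a+b)$.

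For the inverse, note that $\KMW_0(F)$ is additively generated by $1$ together with the products $\eta^n \cdot [a_1] \cdots [a_n]$ for $n \geq 1$. I would define $t \colon \KMW_0(F) \to \GW(F)$ by $t(1) = 1$ and $t(\eta^n \cdot [a_1] \cdots [a_n]) = \prod_{i=1}^n (\langle a_i \rangle - 1)$, and check well-definedness by lifting each defining $\KMW$-relation to an identity in $\GW(F)$. The crucial case is again Steinberg: it becomes $(\langle a \rangle - 1)(\langle 1 - a \rangle - 1) = \langle a(1-a)\rangle - \langle a \rangle - \langle 1-a\rangle + 1$, which vanishes by Witt's chain relation applied to the pair $(a, 1-a)$ with sum $1$. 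Direct computation on generators then gives $s \circ t = \id$ and $t \circ s = \id$.

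The principal obstacle in both directions is the Steinberg/chain correspondence: once the Witt chain equivalence has been traded for the Steinberg relation (and vice versa), the remaining identities are routine bookkeeping using centrality of $\eta$ and the hyperbolic-torsion identity $\eta(2 + \eta[-1]) = 0$.
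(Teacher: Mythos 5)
Your argument reconstructs, in essence, the proof of Morel's Lemma~3.10, which is exactly the result the paper cites without reproducing it: construct $s\colon \GW(F) \to \KMW_0(F)$ on the Witt generators via $\langle a \rangle \mapsto 1 + \eta[a]$ and check the Witt relations (with the Steinberg relation in $\KMW$ doing the work for the chain relation); construct the inverse $t$ on monomials $\eta^n[a_1]\cdots[a_n] \mapsto \prod_i(\langle a_i\rangle - 1)$ and check the $\KMW$ relations (with Witt's chain relation doing the work for Steinberg, and the hyperbolic relation $\langle a\rangle + \langle -a\rangle = \langle 1\rangle + \langle -1\rangle$ absorbing the $\eta$-torsion relation $\eta(2+\eta[-1])=0$); verify the two composites on generators. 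This is the standard argument and the mathematical content checks out.

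One correction to your opening sentence. The paper's statement writes $\GW(k)$ where it must read $\GW(F)$ — the generators $\langle a\rangle$ range over $a \in F^\times$, and the correct isomorphism is $\GW(F) \cong \KMW_0(F)$, not $\GW(k) \cong \KMW_0(F)$; the two differ whenever $F$ is a nontrivial extension of $k$. Your plan to recover ``the proposition as stated'' by precomposing your isomorphism with the restriction $\GW(k) \to \GW(F)$ does not succeed: that composite is a ring homomorphism, but it is neither injective nor surjective in general, so it cannot be the claimed isomorphism. The issue lies with a typo in the paper's statement, not with your proof; what you actually establish — and what Morel establishes — is $\GW(F) \cong \KMW_0(F)$, which is the intended content.
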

\begin{example}
\begin{enumerate}
	\item The Grothendieck-Witt ring of any quadratically closed field is isomorphic to $\Integer$ \cite[II.3.1]{lam}.
	\item $\GW(\Real) \cong \Integer[\Integer/2]$ \cite[II.3.2]{lam}.
\end{enumerate}
\end{example}
In view of this isomorphism we will denote the elements $1 + \eta[a]$ in $\KMW_0(F)$ by $\langle a \rangle$.
In particular, the hyperbolic form $\hyperbolic = \langle 1 \rangle + \langle -1 \rangle = 1 + \langle -1 \rangle \in \GW(F)$ maps to $2 + \eta [-1]$ and we will denote this element by $\hyperbolic$ as well.
Note that under this identification the last relation from \cref{def:KMW} becomes $\eta \hyperbolic$, and the second relation guarantees $\langle ab \rangle = \langle a \rangle \langle b \rangle$.

There are two obvious maps comparing Milnor and Milnor-Witt K-theory:

\begin{definition}\label{def:hyperbolic-map}
	Multiplication with the hyperbolic form $\hyperbolic$ defines a map of graded $\KM_*(F)$-modules
	\[ \hyperbolic \colon \KM_*(F) \to \KMW_*(F) \]
	called the hyperbolic map.
	Dividing out the ideal generated by the element $\eta$ defines a graded ring epimorphism
	\[ \forgetful \colon \KMW_*(F) \to \KM_*(F) \]
	which we will call the reduction map.
\end{definition}
The composition $\forgetful \circ \hyperbolic$ equals multiplication with $\forgetful(\hyperbolic) = \forgetful(2 + \eta[-1]) = 2$.
Both $\hyperbolic$ and $\forgetful$ will extend to maps comparing Chow and Chow-Witt rings.
\begin{example}
\begin{enumerate}
	\item For a quadratically closed field, both $\KM_0$ and $\KMW_0$ are isomorphic to $\Integer$.
	The hyperbolic map is given by multiplication with $2$, and the reduction map is the identity.
	\item The Grothendieck-Witt group of $\Real$ is isomorphic to the group ring $\Integer[\Integer/2]$ with the elements of $\Integer/2$ corresponding to the quadratic forms $\langle 1 \rangle$ and $\langle -1 \rangle$.
	The hyperbolic map is given by multiplication with the hyperbolic form $\langle 1 \rangle + \langle -1 \rangle$, 
	and the reduction map is the $\Integer$-linear extension of the map sending both $\langle 1 \rangle$ and $\langle -1 \rangle$ to $1 \in \KM_0(\Real) \cong \Integer$
\end{enumerate}
\end{example}

\section{The Gersten-Witt Complex}

In this section we follow the exposition of \cite{fasel-lectures} (the original construction is due to \cite{fasel-groupes}) to define a family of chain complexes where the $i$-th degree of the $j$-th complex is
\[ C_*(X, \KMW_*, j) \coloneqq \bigoplus_{x \in X_{(i)}} \KMW_{j+i} (\kappa(x)) \]
and the differentials mimic the divisor map $\div$.
Due to its technical complexity the explicit description of this chain complex is rarely used in computations, as opposed to the tools introduced in \cref{sec:fiber-products,sec:properties}.

First let $F$ be a finitely generated field extension over the base field $k$ and $\nu \colon F \to \Integer$ a discrete valuation.
Denote by $\trivialbundle_\nu$ the valuation ring, $\mathfrak{m}_\nu$ its maximal ideal and $\kappa(\nu) = \trivialbundle_\nu/\mathfrak{m}_\nu$ its residue field, and choose a uniformizing parameter, i.e.\ a generator $\pi$ of $\mathfrak{m}_\nu$.
For $u \in \trivialbundle_\nu$, denote the image of $u$ in $\kappa(\nu)$ by $\overline{u}$.
\begin{theorem}[Morel] \label{prop:gersten-witt-differential}
	Under the above conditions, there exists a unique morphism of graded abelian groups
	\[ \partial^\pi_\nu \colon \KMW_*(F) \to \KMW_{*-1}(\kappa(\nu)) \]
	satisfying 
	\begin{enumerate}
		\item $\partial^\pi_\nu([\pi]\cdot [u_2] \cdot  \ldots \cdot [u_n]) = [\overline{u_2}] \cdot \ldots \cdot [\overline{u_n}]$,
		\item $\partial^\pi_\nu([u_1] \cdot \ldots \cdot [u_n]) = 0$,
		\item $\partial^\pi_\nu(\eta \cdot \alpha) = \eta \partial^\pi_\nu(\alpha)$
	\end{enumerate}
	for any $u_1, \ldots u_n \in \mathcal{O}_\nu^\times$ and $\alpha \in \KMW_*(F)$.
\end{theorem}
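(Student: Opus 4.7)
The plan is to prove uniqueness by reducing every monomial in $\KMW_*(F)$ to a form on which the prescribed conditions (1)--(3) directly apply, and then to prove existence by constructing $\partial^\pi_\nu$ on the free graded algebra on the generators of $\KMW_*(F)$ and checking that it descends through each defining relation.

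For uniqueness, $\KMW_*(F)$ is generated as an abelian group by monomials $\eta^k [a_1] \cdots [a_m]$ with $a_i \in F^\times$. Writing each $a_i = u_i \pi^{n_i}$ with $u_i \in \trivialbundle_\nu^\times$ and iteratively applying the additivity relation $[ab] = [a] + [b] + \eta [a][b]$ (using also $[\pi^{-1}] = -\langle -1\rangle [\pi]$, which itself follows from $[1] = 0$ applied to $\pi \cdot \pi^{-1}$) rewrites each $[a_i]$ as a polynomial expression in $\eta$, $[u_i]$, $[\pi]$ and $[-1]$. Using graded commutativity, which is a formal consequence of the Milnor--Witt relations, one can then gather the $[\pi]$-factors at the front. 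Each resulting summand either contains no $[\pi]$ (and is determined by (2)) or begins with a single $[\pi]$ (and is determined by (1)), with any overall $\eta^k$ handled by (3). Hence $\partial^\pi_\nu$ is uniquely determined on a generating set.

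For existence, I would define $\partial^\pi_\nu$ on the free graded algebra on $\eta$ and the symbols $[a]$, $a \in F^\times$, by the recipe suggested by (1)--(3) and verify that it factors through the Milnor--Witt relations. The compatibilities with the additivity relation, with $\eta \cdot [a] = [a]\cdot \eta$, and with $\eta^2 [-1] + 2\eta = 0$ all reduce to the corresponding identities holding in $\KMW_{*-1}(\kappa(\nu))$ by construction, and amount to straightforward bookkeeping.

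The main obstacle is compatibility with the Steinberg relation $\partial^\pi_\nu([a][1-a]) = 0$, which I would verify by case analysis on $\nu(a)$. The subcase $\nu(a) = \nu(1-a) = 0$ follows immediately from (2). The subcase $\nu(a) > 0$ forces $\nu(1-a) = 0$ and $\overline{1-a} = 1$, so the only potentially surviving term in the residue carries $[\overline{1-a}] = [1] = 0$. The subcase $\nu(a) < 0$ is the technical crux: writing $1 - a = -a(1 - a^{-1})$ with $\nu(1 - a^{-1}) = 0$ and $\overline{1 - a^{-1}} = 1$, expanding $[1-a]$ via additivity, and collecting residues yields a sum whose vanishing rests on the Steinberg relation in $\kappa(\nu)$ combined with the identity $[\overline{u}][-\overline{u}] = 0$ (itself a consequence of Steinberg applied to $u$ and $1-u$). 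Navigating this cancellation carefully, together with the $\eta$-twisted additivity corrections, is where the Milnor--Witt relations genuinely enter and where the bulk of the work lies.
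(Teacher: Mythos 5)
Your uniqueness argument follows the same route as the paper (which cites the fact that $\KMW_n(F)$ is generated by $\eta^m[u_1]\cdots[u_n]$ and $\eta^m[\pi][u_2]\cdots[u_n]$), just unfolded; the only loose end is that after expansion you will produce monomials with $[\pi]^k$ for $k\ge 2$, and you need the further reduction $[\pi][\pi]=-\langle -1\rangle[\pi][-1]$ (from $[\pi][-\pi]=0$) to drive these back into the covered forms. That is a minor omission.

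The existence argument, however, has a genuine gap, and it is exactly the gap that the paper's (Morel's) construction is designed to close. You propose to define $\partial^\pi_\nu$ on the free graded algebra generated by $\eta$ and $[a]$ ``by the recipe suggested by (1)--(3)'' and then check it kills the relations. But (1)--(3) do not determine a map on the free algebra: they only prescribe values on the special monomial shapes $\eta^m[u_1]\cdots[u_n]$ and $\eta^m[\pi][u_2]\cdots[u_n]$, and say nothing about $[u][\pi]$, $[\pi][\pi]$, $[\pi][u][\pi]$, etc.\ \emph{before} you have imposed the relations. You cannot reach these by (1)--(3) without already using the Milnor--Witt relations inside the target, which makes the whole scheme circular. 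More fundamentally, $\partial^\pi_\nu$ is a morphism of graded abelian groups, not a ring homomorphism; a group map on the free algebra factors through the quotient only if it annihilates the entire two-sided ideal generated by the relations, not just the listed relations themselves, and that is vastly more than ``straightforward bookkeeping.''

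This is precisely why the paper does not try to construct $\partial^\pi_\nu$ directly. It first builds the \emph{ring} homomorphism
\[
\Theta^\pi_\nu \colon \KMW_*(F) \to \KMW_*(\kappa(\nu))[x]/(x^2 - x[-1]), \qquad \Theta^\pi_\nu(\alpha) = s^\pi_\nu(\alpha) + \partial^\pi_\nu(\alpha)\,x,
\]
specified on the generators $[\pi^n u] \mapsto [\overline{u}] + \bigl(\sum_{i=0}^{n-1}\langle(-1)^i\rangle\langle\overline{u}\rangle\bigr)x$ and $\eta\mapsto\eta$. Because $\Theta^\pi_\nu$ is a ring map, it is determined on generators, extends multiplicatively for free, and is well-defined on $\KMW_*(F)$ the moment the four defining relations are verified on generators. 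Only then is $\partial^\pi_\nu$ extracted as the $x$-coefficient. Bundling the specialization $s^\pi_\nu$ together with $\partial^\pi_\nu$ is not a cosmetic device: it is what supplies the ``twisted Leibniz rule'' for $\partial^\pi_\nu$ on products and what turns the well-definedness check into a finite verification. If you want to make your existence sketch work without this auxiliary map, you would need to prove and then propagate such a Leibniz rule by hand, which reintroduces $s^\pi_\nu$ through the back door.
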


\begin{proof}
	See \cite[3.15]{morel-lecturenotes}.
	The idea is to define an auxilary map into the polynomial ring
	\begin{align*}
		\Theta^\pi_\nu \colon \Integer \times \mathcal{O}_\nu^\times = F^\times & \to \KMW_*(\kappa(\nu))[x]/(x^2)/(x^2 - x \cdot [-1]) \\
		(\pi^n\cdot u) & \mapsto [\overline{u}] + \left( \sum_{i=0}^{n-1} \langle (-1)^i \rangle \langle \overline{u} \rangle\right) x \\
		\eta &\mapsto \eta
	\end{align*}
	where the symbol $x$ lives in degree $1$.
	Then check that this is compatible with the relations of Milnor-Witt K-theory and thus can be extended to a graded ring homomorphism on $\KMW_*(F)$.
	Now set
	\[ s^\pi_\nu(\alpha) + \partial^\pi_\nu(\alpha)x \coloneqq \Theta^\pi_\nu(\alpha) \]
	and $\partial^\pi_\nu$ is the desired map.
	
	Uniqueness follows because the group $\KMW_n(F)$ is generated by all symbols of the form $\eta^m\cdot [u_1] \cdot  \ldots \cdot [u_n]$ and $\eta^m[\pi]\cdot [u_2] \cdot  \ldots \cdot [u_n]$.
\end{proof}


For Milnor-Witt K-theory, unlike Milnor K-theory, this residue homomorphism depends not only on $\nu$ but also on the choice of $\pi$.
To remove this dependency we will introduce twisted Milnor-Witt K-theory.

\begin{definition}
	Let $F$ be a finitely generated field extension over $k$ and $\linebundle$ a line bundle over $F$.
	Denote by $\linebundle^0$ the complement of the zero section of $\linebundle$.
	Define 
	\[ \KMW_*(F, \linebundle) \coloneqq \KMW_*(F) \otimes_{\Integer[F^\times]} \Integer[\linebundle^0] \, . \]
	The analogous definition can be made for Milnor K-theory and this allows to define twisted versions of the hyperbolic and reduction map:
	\[ \hyperbolic_\linebundle \colon \KM_*(F, \linebundle) \to \KMW_*(F, \linebundle), \quad \forgetful_\linebundle \colon \KMW_*(F, \linebundle) \to \KM_*(F, \linebundle) \]
\end{definition}
Recall that a line bundle over $F$ is just a $1$-dimensional vector space, and its zero section is the zero element.
For any $l \in \linebundle^0$ there is an isomorphism $\linebundle \cong F$ descending to $\linebundle^0 \cong F^\times$ and ultimately
\begin{align*}
	\KMW_*(F) &\xrightarrow{\cong} \KMW_*(F, \linebundle) \\
	a & \mapsto a \otimes l
\end{align*}
but there is no canonical choice for this.
Twisted Milnor-Witt K-theory is a priori not a ring, but only a graded $\KMW_*(F)$-module.

Now for a valuation $\nu \colon F \to \Integer$ and a uniformizing parameter $\pi$ define a twisted residue homomorphism:
\begin{align*}
	\partial^\pi_\nu \colon \KMW_*(F,\linebundle) & \to \KMW_{*-1}(\kappa(\nu), (\mathfrak{m}_\nu/\mathfrak{m}_\nu^2)^\vee \otimes \linebundle_{\kappa(\nu)}) \\
	a \otimes l & \mapsto \partial^\pi_\nu(a) \otimes \overline{\pi}^\vee \otimes l
\end{align*}
Here $\linebundle_{\kappa(\nu)}$ denotes the restriction of $\linebundle$ to $\kappa(\nu)$, $\overline{\pi}$ denotes the class of $\pi$ in $\mathfrak{m}_\nu/\mathfrak{m}_\nu^2$, and $\overline{\pi}^\vee$ its $\kappa(\nu)$-dual.
The appearance of $(\mathfrak{m}_\nu/\mathfrak{m}_\nu)^\vee$ will be explained in a moment.
This homomorphism is independent of the choice of $\pi$: 
For any $\pi^\prime \coloneqq u \cdot \pi$, we have $(\overline{\pi}^\prime )^\vee = \langle u ^{-1} \rangle \overline{\pi}^\vee$ and further $\partial_\nu^{\pi^\prime} (a) = \langle u ^{-1} \rangle \partial_\nu^\pi (a)$ by \cite[Remark 1.9]{fasel-lectures}, therefore we compute
\[ \partial^{\pi^\prime}_\nu (a) \otimes (\overline{\pi}^\prime) ^\vee = \langle u^{-1} \rangle \partial^\pi_\nu (a) \otimes \langle u \rangle \overline{\pi}^\vee = \partial^\pi_\nu(a) \otimes \overline{\pi}^\vee \, . \]
Thus one can drop $\pi$ from the notation.
Note that this is still dependent on the choice of $\nu$.

For the construction of Chow-Witt groups we will twist by a line bundle of the following form.
\begin{definition}
	Let $f \colon X \to Y$ be a morphism of schemes, with $f^\sharp \colon f^{-1}\trivialbundle_Y \to \trivialbundle_X$ the associated map of sheaves.
	\begin{enumerate}
		\item Let $\vectorbundlesheaf$ be an $\trivialbundle_X$-module.
		A $Y$-derivation of $\trivialbundle_X$ into $\vectorbundlesheaf$ is a map $D \colon \trivialbundle_X \to \vectorbundlesheaf$ of $\trivialbundle_X$-modules such that $D \circ f^\sharp = 0$ and $D$ satisfies the Leibniz rule
		\[ D(ab) = aD(b) + D(a)b \, . \]
		\item The sheaf (or module) of relative differentials $\Omega_{X/Y}$ is the $\trivialbundle_X$-module representing the functor $\operatorname{Der}_{f^\sharp}(\trivialbundle_X, -)$,
		equipped with the universal derivation $d \colon \trivialbundle_X \to \Omega_{X/Y}$.
		\item The determinant $\det \vectorbundlesheaf$ of a rank $r$ $\trivialbundle_X$-module is defined as the highest exterior power $\Lambda^r \vectorbundlesheaf$.
		The determinant of $\Omega_{X/Y}$ is sometimes denoted $\omega_{X/Y}$.
	\end{enumerate}
\end{definition}
\noindent The existence of $\Omega_{X/Y}$ is shown for example in \cite[II.8]{hartshorne}.
If $f$ is smooth, $\Omega_{X/Y}$ is locally free and is also referred to as the cotangent bundle of $f$.
\begin{example}
	\begin{enumerate}
		\item The sheaf of differentials $\Omega_{X/X}$ along the identity on $X$ is the zero sheaf.
		\item For a projective space $\projective_k^r$, the sheaf $\omega_{\projective^r/k} = \det \Omega_{\projective^r/k}$ is isomorphic to the twisting sheaf $\trivialbundle_{\projective^r}(-r-1)$ \cite[II.8.20.1]{hartshorne}.
	\end{enumerate}
\end{example}
We will consider the Milnor-Witt K-theory of $\kappa(W)$ twisted by the line bundle $\det(\Omega_{\kappa(W)/k})$.
If $i \colon X \to Y$ is a regular embedding of smooth schemes, there is an isomorphism
\begin{align*}
	\det(\Omega_{X/k}) &\cong i^* \det(\Omega_{Y/k}) \otimes \det(N_XY) 
\end{align*}
of line bundles over $X$, where $N_X Y$ is the normal bundle of $X$ in $Y$ (see after \cite[Eq. 1.4]{fasel-lectures}).
For a discrete valuation $\nu \colon F \to \Integer$ with ring of integers $\trivialbundle_\nu$, maximal ideal $\mathfrak{m}_\nu$ and residue field $\kappa(\nu)$, the quotient map $\trivialbundle_\nu \to \kappa(\nu)$ induces a map $F = \operatorname{Frac}(\trivialbundle_\nu) \to \kappa(\nu)$ with kernel $\mathfrak{m}_\nu$, exhibiting $(\mathfrak{m}_\nu/\mathfrak{m}_\nu^2)^\vee$ as the normal bundle of the morphism of schemes $\Spec(\kappa(\nu)) \to \Spec(F)$. 
Therefore the above isomorphism reads
\begin{align}\label{eq:cotangent-bundle-iso}
	\det(\Omega_{\kappa(\nu)/k}) &\cong F \otimes \det(\Omega_{F/k}) \otimes (\mathfrak{m}_\nu/\mathfrak{m}_\nu^2)^\vee
\end{align}
in this case.
This also explains the appearance of $(\mathfrak{m}_\nu/\mathfrak{m}_\nu^2)^\vee$ in the construction of the twisted residue homomorphism - this homomorphism can also be interpreted as
\[ \partial^\pi_\nu \colon \KMW_*(F, \det \Omega_{F/k} \otimes \linebundle) \to \KMW_{*-1}(\kappa(\nu), \det \Omega_{\kappa(\nu)/k} \otimes \linebundle_{\kappa(\nu)}) \, .\]

Next we will construct the so-called transfer morphism which will form the foundation for both the Gersten-Witt differential and pushforwards.
Consider the polynomial ring $F[t]$ and a monic irreducible polynomial $p \in F[t]$ and denote $F(p) = F[t]/(p)$.
Then the $p$-adic valuation $\nu_p \colon F(t) \to \Integer$ determines a residue homomorphism
\begin{align*} 
\partial_p \colon \KMW_*(F(t), \det(\Omega_{F(t)/k})) \to &\KMW_{*-1}(F(p), (\mathfrak{m}_p/\mathfrak{m}_p^2)^\vee \otimes_{F[t]} \det(\Omega_{F[t]/k})) \\
\cong &\KMW_{*-1}(F(p), \det(\Omega_{F(p)/k})) \, . 
\end{align*}
Fasel \cite[1.20]{fasel-lectures} proves the following, based on \cite[3.24]{morel-lecturenotes}.
\begin{proposition}
	The sequence
	\begin{multline*} 0 \to \KMW_i(F, \det(\Omega_{F/k})) \to \KMW_i(F(t), \det(\Omega_{F(t)/k})) \\ 
	\xrightarrow{\sum_p \partial_p} \bigoplus_p \KMW_{i-1}(F(p), \det(\Omega_{F(p)/k})) \to 0  
	\end{multline*}
	where the sum in the third term goes over all monic irreducible polynomials in $F[t]$,
	is split exact.
\end{proposition}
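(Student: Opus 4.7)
The plan is to adapt Milnor's proof of the corresponding split exact sequence for Milnor $K$-theory to the Milnor-Witt setting, following Morel's approach in \cite{morel-lecturenotes}. The key tool will be a filtration of $\KMW_i(F(t), \det(\Omega_{F(t)/k}))$ by the maximum polynomial degree appearing in a symbol: for each $n \geq 0$, let $L_n$ denote the subgroup generated by the image of $\KMW_i(F, \det(\Omega_{F/k}))$ together with tensors of the form $\eta^a \cdot [f_1] \cdot \ldots \cdot [f_m] \otimes \omega$, where $\omega$ generates $\det(\Omega_{F(t)/k})$ and every $f_j \in F[t]$ is a nonzero polynomial of degree at most $n$.

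First I would check that $\bigcup_n L_n$ exhausts the whole group and that $L_0$ agrees with the image of $\KMW_i(F, \det(\Omega_{F/k}))$. The first follows because $F(t)^\times$ is generated by $F^\times$ together with the monic polynomials, and because the Milnor-Witt relation $[ab] = [a] + [b] + \eta \cdot [a] \cdot [b]$ lets one rewrite symbols involving quotients of polynomials in terms of symbols of polynomials only; the second is immediate. Next, for each monic irreducible $p$ of degree $n$, every polynomial of degree $< n$ is a unit at the $p$-adic valuation $\nu_p$, so parts (2) and (3) of \cref{prop:gersten-witt-differential} give $\partial_p|_{L_{n-1}} = 0$. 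Hence $\sum_{\deg p = n} \partial_p$ descends to a well-defined map from $L_n/L_{n-1}$.

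The heart of the proof is to construct an inverse on the graded pieces. For each monic irreducible $p$ of degree $n$, I would define a section $s_p$ sending a symbol $[\overline{u}_1] \cdot \ldots \cdot [\overline{u}_{i-1}] \otimes \overline{p}^\vee \otimes \omega$ to the class of $[p] \cdot [\widetilde{u}_1] \cdot \ldots \cdot [\widetilde{u}_{i-1}] \otimes \omega$ in $L_n/L_{n-1}$, where $\widetilde{u}_j \in F[t]$ is a polynomial lift of $\overline{u}_j$ of degree $< n$. Well-definedness modulo $L_{n-1}$, together with the identities $\partial_p \circ s_p = \id$ and $\partial_q \circ s_p \equiv 0$ for $q \neq p$ of degree $n$, would be verified by direct manipulation using \cref{def:KMW} and \cref{prop:gersten-witt-differential}. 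Granting these, an induction on $n$ yields compatible splittings assembling into a section of $\sum_p \partial_p$; exactness on the left is then the identification $L_0 \cong \KMW_i(F, \det(\Omega_{F/k}))$, witnessed for instance by the specialization retraction along $t \mapsto 0$.

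The main obstacle is the well-definedness of $s_p$: the presence of $\eta$ and the non-commutativity of $\KMW$ mean that changing a lift or permuting factors inside a symbol produces correction terms of the shape $\eta \cdot (\text{something involving lower-degree polynomials})$, which one must show actually lie in $L_{n-1}$. This is precisely the step where Morel's argument substantially departs from Milnor's original proof for $\KM$ and requires careful bookkeeping with the full list of relations in \cref{def:KMW}.
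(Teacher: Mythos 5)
Your outline is correct and is essentially the argument in Morel's lecture notes (around Theorem~3.24), which is what the paper cites via Fasel \cite[1.20]{fasel-lectures} rather than proving itself: filter by the degree of the polynomials occurring in symbols, show residues vanish on the lower filtration step by \cref{prop:gersten-witt-differential}(2)--(3), construct explicit splittings on the graded pieces by lifting representatives, and recover injectivity on the left from a specialization retraction. You have also correctly isolated where the Milnor--Witt case genuinely departs from Milnor's original argument for $\KM_*$, namely verifying that the correction terms produced by $\eta$ and the failure of commutativity when changing lifts or reordering factors actually land in $L_{n-1}$.
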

Denote by $\partial_\infty$ the residue homomorphism determined by the discrete valuation
\begin{align*}
\nu_\infty \colon F(t) &\to \Integer\\
f/g &\mapsto \deg(g) - \deg(f) \, .
\end{align*}

\begin{definition}
\begin{enumerate}
	\item For a finite field extension $F/k$ and a monic irreducible polynomial $p \in F[t]$ define the transfer morphism as the composition
	\begin{multline*} 
		\transfer_{F(p), F} \colon \KMW_{i-1}(F(p), \det(\Omega_{F(p)/k})) \subseteq \bigoplus_p \KMW_{i-1}(F(p), \det(\Omega_{F(p)/k})) \\
		\xrightarrow{s} \KMW_i(F(t), \det(\Omega_{F(t)/k})) \xrightarrow{\partial_\infty} \KMW_{i-1}(F, \det(\Omega_{F/k})) 
	\end{multline*}
	where $s$ is a section of $\sum_p \partial_p$ as described in the previous proposition.
	\item If $L/F$ is another finite field extension, choose a filtration
	\[ F = F_0 \subseteq \ldots \subseteq F_n = L \]
	such that for all $i$ the field extension $F_i$ is of the form $F_{i-1}(p_i)$ for some monic irreducible polynomial $p_i \in F_{i-1}[t]$, and set
	\[ \transfer_{L,F} = \transfer_{F_1, F_0} \circ \ldots \circ \transfer_{F_n, F_{n-1}} \colon \KMW_i(L, \det(\Omega_{L/k})) \to \KMW_i(F, \det(\Omega_{F/k})) \, . \]
\end{enumerate}
\end{definition}
The first part of the definition is independent of the choice of section $s$ because the composite
\[ \KMW_i(F, \det \Omega_{F/k}) \to \KMW_i(F(p), \det \Omega_{F(p)/k}) \xrightarrow{\partial_\infty} \KMW_{i-1}(F, \det \Omega_{F/k}) \]
is trivial as explained in \cite[Remark 1.14]{fasel-lectures}.
The second part is proved by \cite[4.27]{morel-lecturenotes} to be independent of the chosen filtration.

Now let $X$ be a (finite type, separated) scheme over $k$.
We want to construct a residue homomorphism $\KMW_{i+1}(\kappa(x), \det(\Omega_{\kappa(x)/k})) \to \KMW_i(\kappa(y), \det(\Omega_{\kappa(y)/k}))$ for fixed $x \in X_{(i)}$, $y \in X_{(i+1)}$.
If $x \notin \overline{\{y\}}$ set the residue homomorphism to be zero.
Otherwise denote by $\widetilde{Z}$ the normalization of $\overline{\{y\}}$ with all points of codimension $\geq 2$ removed.
Normal implies regular and therefore, since $k$ is perfect, $\widetilde{Z}$ is smooth.
The morphism $i \colon \widetilde{Z} \to X$ is finite.
Under these conditions, for any point $z \in \widetilde{Z}$ of codimension $1$ lying over $y$ the composition
\[ \nu_{\div} \colon \kappa(x)^\times \xrightarrow{\div} \Integer\langle \overline{\{z^\prime\}} \mid z^\prime \in \widetilde{Z}^{(1)}, \, i(z^\prime) = y  \rangle \to \Integer \langle z \rangle  \] 
is a discrete valuation with residue field $\kappa(z)$ (see \cite[before Lemma 6.1]{hartshorne}) and we consider the associated boundary morphism 
\[ \partial_{\div} \colon \KMW_*(\kappa(x), \linebundle) \to \KMW_{*-1}(\kappa(z), (\mathfrak{m}_{\div}/\mathfrak{m}_{\div}^2)^\vee \otimes \linebundle_{\widetilde{Z}}) \, . \]
Set $\linebundle = \det \Omega_{\kappa(x)/k}$ and insert the isomorphism \cref{eq:cotangent-bundle-iso} to obtain
\[ \partial_{\div} \colon \KMW_*(\kappa(x), \det \Omega_{\kappa(x)/k}) \to \KMW_{*-1}(\kappa(z), \det \Omega_{\kappa(z)/k}) \, . \]

Further if $i(z)  =y$ the field extension $\kappa(y)/\kappa(z)$ is finite and thus there is a canonical transfer
\[ \transfer_{\kappa(z), \kappa(y)}  \colon \KMW_i(\kappa(z), \det(\Omega_{\kappa(z)/k})) \to \KMW_i(\kappa(y), \det(\Omega_{\kappa(y)/k})) \, . \]
Summing over the composition of these two maps for all $z \in \widetilde{Z}^{(1)}$ with $i(z) = y$ finally yields a boundary morphism
\[ \partial_{x,y} \colon \KMW_*(\kappa(x), \det(\Omega_{\kappa(x)/k})) \to \KMW_{*-1}(\kappa(y), \det(\Omega_{\kappa(y)/k})) \]
for the Gersten-Witt complex.
Morel \cite[5.31]{morel-lecturenotes} proves that this is in fact a chain differential under the assumption that the characteristic of $k$ is coprime to $2$,
and \cite{feld} removes this assumption.
With this we can finally make the following definition.

\begin{definition}
	Let $X$ be a scheme over $k$ and $\linebundle$ a line bundle over $X$.
	For every integer $j$ define the homological Gersten-Witt complex by
	\begin{align*}
		C_i(X, \KMW_*, j,\linebundle) &= \bigoplus_{x \in X_{(i)}} \KMW_{j+i} (\kappa(x), \linebundle \otimes \det(\Omega_{\kappa(x)/k})) \\
		\intertext{with differential $\partial \colon C_i(X, \KMW_*,j,\linebundle) \to C_{i-1}(X, \KMW_*,j,\linebundle)$ given by summing over the residue homomorphisms defined above.
		If further $X$ is smooth of dimension $d$, define the cohomological Gersten-Witt complex}
		C^i(X, \KMW_*,j,\linebundle) &= C_{d-i}(X, \KMW_*, j-d, \linebundle \otimes \det \Omega_{X/k}^\vee) = \\
		&\bigoplus_{x \in X^{(i)}} \KMW_{j-i} (\kappa(x), \linebundle \otimes \det(\Omega_{\kappa(x)/k}) \otimes \det \Omega_{X/k}^\vee) 
	\end{align*}
	with differential $\partial \colon C^i(X, \KMW_*, j,\linebundle) \to C^{i+1}(C, \KMW_*, j,\linebundle)$.
\end{definition}
The additional twist $\det \Omega_{X/k}^\vee$ will later ensure that the pullback along a flat morphism of schemes induces no change of twist on cohomological Chow-Witt groups.
This makes cohomological Chow-Witt groups with fixed twist a contravariant functor and will further lead to the ring structure being $\Pic(X)/2)$-graded.
In contrast, for homological Chow-Witt groups the definition is chosen so that pushforward induces no change of twist.
\begin{definition}
	Let $X$ be a scheme over $k$ and $\linebundle$ a line bundle over $X$.
	The homological respectively (if $X$ is smooth) cohomological Chow-Witt groups twisted by $\linebundle$ are defined by
	\begin{align*}
		\chowwitt_i(X,\linebundle) &= H_i(C_*(X,\KMW_*, -i,\linebundle)_*) \\
		\chowwitt^i(X,\linebundle) &= H^i(C^*(X, \KMW_*, i, \linebundle)) \, . \\
		\shortintertext{The Milnor-Witt cohomology groups are defined by}
		H^i(X, \KMW_j, \linebundle) &= H^i(C^*(X, \KMW_*, j, \linebundle)) \, .
	\end{align*}
\end{definition} 
Spelled out, for homological Chow-Witt groups we take homology at the left-hand term of
\[ \ldots 
\to \bigoplus_{x \in X_{(i)}} \KMW_0(\kappa(x), \det(\Omega_{\kappa(x)/k})) \to \bigoplus_{y \in X_{(i-1)}} \KMW_{-1}(\kappa(y), \det(\Omega_{\kappa(y)/k})) \to \ldots \]
and for cohomological Chow-Witt groups we take cohomology at the left-hand term of
\[ \ldots 
\to \bigoplus_{x \in X^{(i)}} \KMW_0(\kappa(x), \det(\mathfrak{m}_x/\mathfrak{m}_x^2)^\vee) \to \bigoplus_{y \in X^{(i+1)}} \KMW_{-1}(\kappa(y), \det(\mathfrak{m}_y/\mathfrak{m}_y^2)^\vee) \ldots \]
i.e.\ we always take (co)homology at the term containing subschemes $V$ of (co)dimension $i$ and $\KMW_0$, 
and in both complexes the differentials decrease dimension (=increase codimension) of subschemes and decrease the degree of $\KMW_*$.
Note that Chow-Witt groups are precisely Milnor-Witt cohomology groups in diagonal bidegrees, i.e.\ $i=j$.

\begin{remark}
	This is not the original definition;
	\cite{fasel-thesis} and \cite{fasel-groupes} originally introduced the Gersten-Witt complex as a fiber product of complexes which we will present in the next section.
	The construction of the complex as detailed above is due to \cite{morel-lecturenotes} and shown by \cite[2.8]{asok-fasel16} to be equivalent to the classical definition.
\end{remark}

\section{Fiber Product Decomposition}\label{sec:fiber-products}

In this section we will introduce some more cohomology theories and their relation to Chow-Witt groups respectively Milnor-Witt cohomology.
This will not only provide context for Chow-Witt groups but also some useful tools for later computations. 

\begin{definition}
	Let $F$ be a finitely generated field extension over $k$.
	The Witt ring $\witt(F)$ consists of isometry classes of quadratic forms over $F$ modulo metabolic forms (direct sum of several copies of the hyperbolic form).
	It becomes a commutative ring with direct sum and tensor product.
\end{definition}
\noindent It is immediate from this construction that the Witt ring is isomorphic to $\GW(F)/\hyperbolic$.
\begin{example}
\begin{enumerate}
	\item The Witt ring of a quadratically closed field is isomorphic to $\Integer/2$ \cite[3.1]{lam}.
	\item The Witt ring of $\Real$ is isomorphic to $\Integer$ \cite[3.2]{lam}.
\end{enumerate}
\end{example}
\begin{definition}
	Let $F$ be a finitely generated field extension over $k$.
	Consider the rank map $\rank \colon \GW(F) \to \Integer$.
	It descends to a map $\witt(F) \to \Integer/\rank(\hyperbolic) = \Integer/2$ which is also called rank.
	\begin{enumerate}
		\item The kernel of the latter is called the fundamental ideal $I(F)$ of the Witt ring.
		\item Denote by $I^i(F)$ the powers of this ideal and by $\ibar^i(F)$ the quotient $I^i(F)/I^{i+1}(F)$.
		\item The Witt K-theory of $F$ denoted $I^*(F)$ (or sometimes $\KW_*(F)$) is the graded ring given by $\KW_i(F) = I^i(F)$, where for $i \leq 0$ we set $I^i(F) = \witt(F)$.
		\item The quotient groups $\ibar^i(F)$ also assemble into a graded ring $\ibar^*(F)$, sometimes called reduced Witt K-theory.
	\end{enumerate}
	All of these have twisted variants $\witt(F, \linebundle)$, $I(F, \linebundle)$, $I^*(F, \linebundle)$, $\ibar^*(F, \linebundle)$ for a line bundle $\linebundle$ over $F$.
\end{definition}
It follows from this definition that
\[ \witt(F)/I(F) \cong (\GW(F) / \hyperbolic) / I(F) \cong (\GW(F) / I(F)) / \hyperbolic \cong \Integer / \rank(\hyperbolic) \cong \Integer/2 \, . \]
Thus one can form the following commutative square:
\[ \begin{tikzcd}
	\GW(F) \ar[r, "\rank"] \ar[d] & \Integer \ar[d]  \ar[r, phantom, "\cong"] & \KM_0(k) \ar[d]\\
	\witt(F) \ar[r, "\rank"] & \Integer/2 \ar[r, phantom, "\cong"] & \KM_0(k)/2
\end{tikzcd} \]
Thus the kernels of the two rank maps are isomorphic,
which justifies calling the kernel of $\rank \colon \GW(F) \to \Integer$ the fundamental ideal of the Grothendieck-Witt ring and denoting it by $I(F)$ as well. 
\begin{example}
	\begin{enumerate}
		\item If $F$ is a quadratically closed field (for example $\Complex$), then $\GW(F) \cong \Integer$ and $\witt(F) \cong \Integer/2$.
		Both rank maps are the identity and the fundamental ideal is zero.
		\item If $F = \Real$, then $\GW(F) \cong \Integer[\Integer/2]$ and $\witt(F) \cong \Integer$.
		The top rank map sends both elements $\langle 1 \rangle$, $\langle -1 \rangle$ of $\Integer/2$ to $1$ and the fundamental ideal is thus isomorphic to $\Integer$ and generated by the element $\langle 1 \rangle - \langle -1 \rangle$.
		The left vertical map sends $\langle 1 \rangle$ to $1$ and $\langle -1 \rangle$ to $-1 \in \witt(F) \cong \Integer$, and the bottom rank map is just the quotient map.
	\end{enumerate}
\end{example}
The Milnor conjecture on quadratic forms proved by \cite[4.1]{orlov-vishik-voevodsky} asserts that $\ibar^*(F)$ is isomorphic to $\KM_*(F)/2$ as graded ring.
Hence one can even form a commutative square of graded rings
\[ \begin{tikzcd}
	\KMW_*(F, \linebundle) \ar[r, "\forgetful"] \ar[d] & \KM_*(F) \ar[d]  \\
	I^*(F, \linebundle) \ar[r, "\reduction"] & \KM_*(F)/2 
\end{tikzcd} \]
which by \cite[5.3]{morel-puissances} is a pullback square.
This is a fact that, to the author's best knowledge, actually relies on the characteristic of the base field $k$ being different from $2$, compare also \cite[Rem. 1.5]{fasel-lectures}.

The inclusions $I^i(F, \linebundle) \subseteq I^{i-1}(F, \linebundle)$ are given by multiplication with $\eta \in \KMW_{-1}(F)$,
which is compatible with the differential of the Gersten-Witt complex by construction as in \cref{prop:gersten-witt-differential}. 
Therefore one can define a subcomplex
\begin{align*} 
	C^i(X, I^*, j,\linebundle) &= \bigoplus_{x \in X^{(i)}} I^{j+i} (\kappa(x), \linebundle \otimes \det\Omega_{\kappa(x)}\otimes \det \Omega_{X/k}^\vee) 
	\shortintertext{and a quotient complex}
	C^i(X, \ibar^*, j) &= \bigoplus_{x \in X^{(i)}} \ibar^{j+i} (\kappa(x)) \, .
	\intertext{Similarly, dividing out $\eta$ is compatible with the Gersten-Witt differential and thus one an define a quotient complex}
	C^i(X, \KM_*, j) & = \bigoplus_{x \in X^{(i)}} \KM_{i+j}(\kappa(x)) \, .
\end{align*}
Observe that the Gersten complex and cohomology with coefficients in $\KM_*(F)$ is independent of twist, thus those for $\KM_*(F)/2$ and $\ibar^*$ are as well and therefore the line bundle $\linebundle$ is omitted from notation in these cases.
\begin{definition} \label{def:I-cohomology}
	Let $X$ be a smooth scheme.
	Define its $i$-th $I^j$-cohomology 
	\begin{align*} H^i(X, I^j, \linebundle) &= H^i(C^*(X, I^*, j, \linebundle) \\
	\shortintertext{and reduced $I^j$-cohomology}
	H^i(X, \ibar^j) &= H^i(C^*(X, \ibar^*, j))
	\intertext{The groups $H^i(X, I^0, \linebundle)$ are sometimes also called Witt cohomology and denoted $H^i(X, \witt, \linebundle)$, in view of $I^0(F) = \witt(F)$.
		Further define the Milnor cohomology and (cohomological) Chow groups of $X$}
	H^i(X, \KM_j) &= H^i(C^*(X, \KM_*, j)) \\
	\chow^i(X) &= H^i(X, \KM_i) \, .
	\end{align*}
	
\end{definition}
\noindent According to \cite[2.8]{asok-fasel16} 
there is in fact a pullback square of chain complexes:
\[ \begin{tikzcd} \label{eq:chain-complex-fiber-product}
	C^*(X, \KMW_*, j, \linebundle) \ar[r] \ar[d] \ar[dr, phantom, pullback] & C^*(X, \KM_*, j) \ar[d] \\
	C^*(X, I^*, j, \linebundle) \ar[r] & C^*(X, \ibar^*, j)
\end{tikzcd} \]
Further one can deduce the following short exact sequences of graded rings from the definitions of $\KMW_*$, $\KM_*$, $I^*$ and $\ibar^*$, and analogous sequences of graded modules for their twisted versions.
\begin{gather*} \label{eq:KMW-decomposition-ses}
	0 \to \KM_*(F) \xrightarrow{\hyperbolic} \KMW_*(F) \to I^*(F) \to 0 \\
	0 \to \KM_*(F) \xrightarrow{\cdot 2} \KM_*(F) \to \KM_*(F)/2 \cong \ibar^*(F) \to 0 \\
	0 \to I^*(F) \to \KMW_*(F) \xrightarrow{\forgetful} \KM_*(F) \to 0 \\
	0 \to I^{*+1}(F) \to I^{*}(F) \xrightarrow{\reduction} \ibar^{*+1}(F) \to 0
\end{gather*}
All of these induce short exact sequences on chain complexes and thus long exact cohomology sequences which assemble into the \enquote{key diagram} of \cite[Section 2.4]{hornbostel-wendt}.
\begin{equation} \label{eq:key-diagram} \begin{tikzcd}
	& \chow^i(X) \ar[r, "\id"] \ar[d, "\hyperbolic_\linebundle"] & \chow^i(X) \ar[d, "\cdot 2"] & \\
	H^i(X, I^{i+1}, \linebundle) \ar[r] \ar[d, "\id"] & \chowwitt^i(X, \linebundle) \ar[r, "\forgetful"] \ar[d, "\operatorname{mod} \hyperbolic_\linebundle"] & \chow^i(X) \ar[r, "\partial_\linebundle"] \ar[d, "\operatorname{mod} 2"] & H^{i+1}(X, I^{i+1}, \linebundle) \ar[d, "\id"] \\
	H^i(X, I^{i+1}, \linebundle) \ar[r, "\eta"] & H^i(X, I^i, \linebundle) \ar[r, "\reduction"] \ar[d] & \chowmodtwo^i(X) \ar[r, "\bockstein"] \ar[d] & H^{i+1}(X, I^{i+1}, \linebundle) \\
	& 0 & 0 & 
\end{tikzcd} \end{equation}
Here $\chowmodtwo$ denotes mod $2$-Chow groups.
The maps $\forgetful$ and $\reduction$ are both called reduction map, and $\hyperbolic_\linebundle$ is called the hyperbolic map. 
The differential $\bockstein$ is referred to as the Bockstein.
The lower horizontal sequence consisting of the maps $\bockstein$, $\eta$, $\reduction$ sometimes gets the nickname Bär sequence.
The fact that the leftmost and rightmost vertical as well as the topmost horizontal arrows are identity maps is due to the pullback square of chain complexes above.
The four maps in the central square are ring homomorphisms.

Hornbostel-Wendt prove the following useful statement about this diagram.
This already mentions the ring structure on Chow-Witt and $I^j$-cohomology, which we will discuss in more detail in \cref{sec:ring-structure}.
\begin{proposition}[{\cite[Prop.\ 2.11]{hornbostel-wendt}}] %
	\label{prop:chowwitt-is-chow-x-I}
	Let $X$ be a smooth scheme over $k$. 
	Consider the canonical ring homomorphism
	\[ c \colon \chowwitt^\tot(X) \coloneqq \bigoplus_{i \in \Integer} \bigoplus_{\linebundle \in \Pic(X)/2} \chowwitt^i(X, \linebundle) \to H^{*}(X, I^{*}, -) \times_{\bigoplus_\linebundle \chowmodtwo^{*}(X)} \bigoplus_{\linebundle \in \Pic(X)/2} \ker \partial_\linebundle \]
	with structure maps for the fiber product as in the central square of the \enquote{key diagram} above.
	This morphism is always surjective. 
	It is injective in a given degree $(i, \linebundle)$ if one of the following conditions holds:
	\begin{enumerate}
		\item $\chow^i(X)$ has no non-trivial $2$-torsion.
		\item the map $\eta \colon H^i(X, I^{i+1}, \linebundle) \to H^i(X, I^i, \linebundle)$ is injective.
	\end{enumerate}
\end{proposition}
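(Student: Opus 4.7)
The plan is a diagram chase in the key diagram~(\ref{eq:key-diagram}), using the exactness of its central row (from the short exact sequence $0 \to I^{*+1} \to \KMW_* \to \KM_* \to 0$), its central column (from $0 \to \KM_* \xrightarrow{\hyperbolic} \KMW_* \to I^* \to 0$), and its bottom Bär row. Two preliminary identifications I would pin down first: commutativity of the leftmost square of (\ref{eq:key-diagram}) identifies $(\operatorname{mod}\hyperbolic_\linebundle) \circ \iota$ with multiplication by $\eta$, where $\iota \colon H^i(X, I^{i+1}, \linebundle) \to \chowwitt^i(X, \linebundle)$ denotes the first arrow of the central row; and $\forgetful \circ \hyperbolic_\linebundle$ equals multiplication by $2$, as noted after Definition~\ref{def:hyperbolic-map}.

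For surjectivity, I would start with $(\alpha, \beta)$ in the fiber product, so $\partial_\linebundle(\beta) = 0$ and $\reduction(\alpha)$ agrees with $\beta \bmod 2$ in $\chowmodtwo^i(X)$. Exactness of the central row at $\chow^i(X)$ provides $\gamma_0 \in \chowwitt^i(X, \linebundle)$ with $\forgetful(\gamma_0) = \beta$. Commutativity of the central square together with the fiber-product condition shows that $\alpha - (\operatorname{mod}\hyperbolic_\linebundle)(\gamma_0)$ lies in $\ker \reduction$, so by the Bär row it equals $\eta \delta$ for some $\delta \in H^i(X, I^{i+1}, \linebundle)$. Setting $\gamma \coloneqq \gamma_0 + \iota(\delta)$ then yields $c(\gamma) = (\alpha, \beta)$: the first component is adjusted by $\eta\delta$ via the preliminary identification, while the second is unchanged because $\forgetful \circ \iota = 0$.

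For injectivity, suppose $c(\gamma) = 0$, so $\forgetful(\gamma) = 0$ and $(\operatorname{mod}\hyperbolic_\linebundle)(\gamma) = 0$. Under condition~(2), exactness of the central row at $\chowwitt^i(X, \linebundle)$ gives $\gamma = \iota(\delta)$ for some $\delta$, hence $0 = (\operatorname{mod}\hyperbolic_\linebundle)(\gamma) = \eta \delta$, and injectivity of $\eta$ forces $\delta = 0$ and thus $\gamma = 0$. Under condition~(1), exactness of the central column at $\chowwitt^i(X, \linebundle)$ instead gives $\gamma = \hyperbolic_\linebundle(\beta')$ for some $\beta' \in \chow^i(X)$, and then $0 = \forgetful(\gamma) = 2\beta'$, so the absence of $2$-torsion in $\chow^i(X)$ forces $\beta' = 0$ and thus $\gamma = 0$.

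The argument is essentially routine; what one must keep straight is that the two injectivity hypotheses genuinely use different parts of the key diagram — a row for~(2) versus a column for~(1) — and that the two vertical maps $\operatorname{mod}\hyperbolic_\linebundle$ and $\operatorname{mod} 2$ in the central square, though both reduction-type maps, should not be confused when verifying the compatibility used in the surjectivity step.
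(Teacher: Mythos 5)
Your proof is correct and takes essentially the same approach as the paper's own argument, which appears verbatim as the proof of the non-diagonal analogue \cref{hornbostel-wendt-fiber-product}. The two injectivity chases are identical, and your surjectivity chase simply mirrors the paper's: you lift $\beta$ along $\forgetful$ and adjust by an element of $\iota\bigl(H^i(X,I^{i+1},\linebundle)\bigr)$, whereas the paper lifts $\alpha$ along $\modulo\hyperbolic_\linebundle$ and adjusts by a hyperbolic image $\hyperbolic_\linebundle(\widetilde{\beta})$ — a symmetric and equally routine choice.
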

The map $c$ is in fact a morphism of $\GW(k)$-algebras, where on the right-hand side $\GW(k)$ acts on the first factor via the action of $\witt(k) \cong \GW(k)/\hyperbolic$ and on the second via $\KM_0(k) \cong \GW(k)/I(k)$.

\section{Functoriality: Flat Pullback and Proper Pushforward} \label{sec:functoriality}

The first goal of this section is to define for every proper morphism $f \colon X \to Y$ of schemes and a line bundle $\linebundle$ on $Y$ a map $f_* \colon \chowwitt_i(X, f^*\linebundle) \to \chowwitt_i(Y, \linebundle)$ on Chow-Witt groups, following the exposition of \cite[Section 2.3]{fasel-lectures} (originally due to \cite{fasel-groupes}).

Let $X$, $Y$ be schemes over $k$, $\linebundle$ a line bundle over $Y$.
Let $x \in X_{(i)}$ and $y =f(x) \in Y_{(j)}$.
We define a map 
\[ (f_*)_{x,y} \colon \KMW_{*}(\kappa(x), \det(\Omega_{\kappa(x)/k})\otimes \linebundle) \to \KMW_{*+j-i}(\kappa(y), \det(\Omega_{\kappa(y)/k}) \otimes \linebundle) \]
by setting $(f_*)_{x,y}=0$ if the field extension $\kappa(x) \subseteq \kappa(y)$ is infinite, and $(f_*)_{x,y} = \transfer_{\kappa(x), \kappa(y)}$ if that field extension is finite and thus $i=j$.

Recall that a morphism $f \colon X \to Y$ is called proper if it is separated, of finite type, and universally closed.
\begin{theorem}[Proper Pushforward (Fasel)] \label{prop:proper-pushforward}
Let $f \colon X \to Y$ be a proper morphism and denote by $c$ the codimension of its image in $Y$.
Then the morphisms 
\begin{align*}
	f_* \colon C_*(X, \KMW_*, j, f^*\linebundle) & \to C_*(Y, \KMW_*, j, \linebundle) \\
	\intertext{and, if $X$ and $Y$ are smooth,}
	f_* \colon C^*(X, \KMW_*, j, f^*\linebundle \otimes \det \Omega_{X/k}) & \to C^{*+c}(Y, \KMW_*, j+c, \linebundle \otimes \det \Omega_{Y/k})
\end{align*} 
obtained by summing over the $(f_*)_{x,y}$ defined above are both morphisms of complexes.
The resulting maps on Chow-Witt groups and Milnor-Witt cohomology are also denoted $f_*$.
\end{theorem}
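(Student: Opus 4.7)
The plan is to verify that the point-wise transfer maps $(f_*)_{x,y}$ commute with the Gersten-Witt differentials; after a geometric reduction this rests on a local reciprocity formula for twisted Milnor-Witt K-theory.

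First, I would unpack the assertion: for each $x \in X_{(i)}$ and $z \in Y_{(i-1)}$, one needs the identity
\[ \sum_{y \in Y_{(i)}} \partial_{y,z} \circ (f_*)_{x,y} \;=\; \sum_{x' \in X_{(i-1)}} (f_*)_{x',z} \circ \partial_{x,x'}. \]
Both sides vanish unless $z$ lies in the closure of $f(x)$, and $(f_*)_{x,y}$ contributes only when $\kappa(x)/\kappa(y)$ is finite, so only finitely many terms are ever nonzero, and one may localize to a neighbourhood of $z$ inside the closure of $f(\overline{\{x\}})$.

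Second, factor $f$ as $X \to \overline{f(X)} \hookrightarrow Y$. The closed immersion contributes only tautological identifications on both residues and transfers, so one may assume $f$ dominant. Applying properness together with the smooth normalizations of $\overline{\{x\}}$ and $\overline{\{f(x)\}}$ reduces the claim to the following essentially one-dimensional statement: given a discrete valuation ring $A$ with fraction field $F$ and residue field $\kappa$, a finite extension $L/F$, and the integral closure $B$ of $A$ inside $L$, one must prove
\[ \partial_\nu \circ \transfer_{L,F} \;=\; \sum_{\mathfrak m \mid \nu} \transfer_{B/\mathfrak m,\,\kappa} \circ \partial_{\mathfrak m} \]
on twisted Milnor-Witt K-theory, where the sum runs over the maximal ideals $\mathfrak m$ of $B$ above the maximal ideal of $A$.

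Third, this reciprocity is the technical heart of the argument and is provided by Morel's construction, in which one carries out d\'evissage through towers of monogenic extensions $F \subset F(p) \subset \cdots \subset L$, invokes the splitting of the sequence of \cite[Proposition 1.20]{fasel-lectures}, and uses the explicit formula for $\partial_\infty$, ultimately reducing to reciprocity on $\projective^1_F$ as in \cite[Chapter 4]{morel-lecturenotes}. The subtlest point, and the main obstacle, is the careful bookkeeping of line-bundle twists: each residue homomorphism introduces a factor $(\mathfrak m/\mathfrak m^2)^\vee$ which, via the isomorphism \cref{eq:cotangent-bundle-iso}, must combine with the ambient twist by $\det\Omega_{-/k}$ to produce matching line bundles on both sides of the equation, and one must check compatibility with the transfer for arbitrary choices of uniformizer.

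Fourth, the cohomological version follows from the homological one through the reindexing built into the definition: since $C^i(X, \KMW_*, j, \linebundle) = C_{\dim X - i}(X, \KMW_*, j - \dim X, \linebundle \otimes \det\Omega_{X/k}^\vee)$, and similarly for $Y$, absorbing the twists $\det\Omega_{X/k}$ and $\det\Omega_{Y/k}$ into the homological version shows that the codimension shift $c$ of the image exactly accounts for the dimension discrepancy, while contributions of points where $\kappa(x)/\kappa(f(x))$ is infinite are zero by construction.
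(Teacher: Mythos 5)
Your outline is correct, but it takes a genuinely different route from the one the paper points to. The paper's entire proof is a citation to \cite[Corollaire 10.4.5]{fasel-groupes}, where the pushforward is built out of the \emph{fiber product} presentation $C^*(X,\KMW_*) \cong C^*(X,I^*) \times_{C^*(X,\ibar^*)} C^*(X,\KM_*)$: one constructs proper pushforwards separately for the Rost--Milnor complex and for the Gersten complex of the sheaves $I^*$, checks compatibility with the structure maps of the fiber product, and then assembles the $\KMW$-pushforward by the universal property. This is why the paper's remark immediately after the theorem can say the pushforward is ``evidently'' compatible with $\forgetful$, $\operatorname{mod}\hyperbolic$, etc.\ --- that compatibility is baked into the very construction.

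What you sketch instead is the direct approach via the Milnor--Witt Gersten complex in the style of Morel's lecture notes: reduce to the DVR reciprocity
\[ \partial_\nu \circ \transfer_{L,F} \;=\; \sum_{\mathfrak m \mid \nu} \transfer_{B/\mathfrak m,\,\kappa} \circ \partial_{\mathfrak m} \]
and prove it by d\'evissage through monogenic towers and reciprocity on $\projective^1_F$. This works and is conceptually cleaner --- there is one reciprocity statement to prove rather than two separate ones to assemble --- but it loses for free the compatibility statements the paper wants later, and it front-loads the delicate twist bookkeeping (the interplay between $(\mathfrak m/\mathfrak m^2)^\vee$, $\det\Omega_{-/k}$, and the choice of uniformizer), which you correctly flag as the hard part. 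One small warning: when you ``reduce to $f$ dominant,'' be explicit that properness is what makes $\overline{f(X)}$ closed and guarantees that the transfer sum over $y$ with $[\kappa(x):\kappa(y)]<\infty$ is finite; this is not purely local and is where properness, rather than just finiteness of field extensions, actually enters. Otherwise your reindexing argument for the cohomological version, including the appearance of the codimension shift $c$, is right.
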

\begin{proof}
	See \cite[Corollaire 10.4.5]{fasel-groupes}.
\end{proof}

As the pushforward morphism is constructed in \cite[Corollaire 10.4.5]{fasel-groupes} from the fiber product of chain complexes of \cite[2.8]{asok-fasel16} (see before \cref{prop:chowwitt-is-chow-x-I}), it is evidently compatible with the structure maps $\forgetful$, $\modulo \hyperbolic$, $\reduction$ and $\modulo 2$ of this fiber product as well as the hyperbolic map $\hyperbolic_\linebundle$ which on the fiber product can be represented as $\cdot (0, 2)$.

There is an isomorphism of line bundles $\det f^* \Omega_{Y/k}^\vee \otimes \det \Omega_{X/k} \cong \det \Omega_{X/Y}$ by \cite[Prop. II.8.11]{hartshorne}.
Hence substituting $\linebundle$ for $\det \Omega_{Y/k}^\vee \otimes \linebundle$ allows to write the pushforward map as
\begin{equation} \label{eq:pushworward-line-bundle-iso}
	\chowwitt^{i-c}(X, \det \Omega_{X/Y} \otimes f^*\linebundle) \to \chowwitt^i(Y, \linebundle) \, . 
\end{equation}

\noindent For pullback morphisms there are the following two statements.
\begin{theorem}[Flat Pullback (Fasel)] \label{prop:flat-pullback}
	Let $f \colon X \to Y$ be a flat morphism and $\linebundle$ a line bundle over $X$.
	Then there are morphisms of complexes
	\begin{align*}
		f^* \colon C_*(Y, \KMW_*, j, f^*\linebundle) & \to C_{*-\codim_Y X}(X, \KMW_*, j - \codim_Y X, \det \Omega_{X/Y} \otimes \linebundle) \\
		\intertext{and, if $X$ and $Y$ are smooth,}
		f^* \colon C^*(Y, \KMW_*, j, f^*\linebundle) & \to C^{*}(X, \KMW_*, j, \linebundle) \, .
	\end{align*} 
	The resulting maps on Chow-Witt groups and Milnor-Witt cohomology are denoted $f^*$.
\end{theorem}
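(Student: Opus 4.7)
The plan is to define the pullback componentwise on the Gersten-Witt complex and then verify compatibility with the differential, mimicking Fulton's construction of flat pullback on Chow groups and carefully tracking the Milnor-Witt coefficients and twists.

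For each $y \in Y_{(i)}$ and each generic point $x$ of the scheme-theoretic fiber $X_y = X \times_Y \Spec(\kappa(y))$, flatness of $f$ guarantees $x \in X_{(i+d)}$ where $d$ is the relative dimension at $x$, and the residue field extension $\kappa(y) \hookrightarrow \kappa(x)$ is finitely generated with artinian local ring $\mathcal{O}_{X_y, x}$ of some finite length $\ell_x$. I would set the component map on a class $\alpha \in \KMW_*(\kappa(y), \ldots)$ to be $\sum_x \ell_x \cdot \operatorname{res}(\alpha)$, where $\operatorname{res} \colon \KMW_*(\kappa(y)) \to \KMW_*(\kappa(x))$ is the natural restriction on generators induced by the field inclusion. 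The twist is handled using the short exact cotangent sequence $f^*\Omega_{Y/k} \to \Omega_{X/k} \to \Omega_{X/Y} \to 0$, whose determinant supplies the additional factor $\det \Omega_{X/Y}$ appearing in the target of the homological version. When $X$ and $Y$ are both smooth, substituting $\det \Omega_{X/k}^\vee$ into the cohomological twist formula from the definition of $C^*$ absorbs all relative differentials, yielding the cleaner cohomological statement.

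The bulk of the work is verifying that this componentwise definition commutes with the differential $\partial$ of \cref{prop:gersten-witt-differential}. I would reduce to two basic cases. First, smooth morphisms: all lengths $\ell_x$ equal one, and compatibility follows from the observation that a uniformizer of a codimension-one point on $Y$ pulls back (up to a unit) to a uniformizer of each preimage on $X$, together with the symbol-level formula characterizing $\partial^\pi_\nu$. Second, finite flat morphisms at codimension one, which isolate the ramification behavior. A general flat morphism factors Zariski-locally as a composition of these two types, so functoriality under composition reduces the general statement to these.

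The main obstacle is the finite flat case, which is the technical heart of the argument: one must show that for a discrete valuation $\nu$ on $\kappa(y)$ and the valuations $\mu$ on $\kappa(x)$ extending it with ramification indices $e_\mu$, a projection-type identity
\[ \partial_\nu(\operatorname{res}(\alpha)) = \sum_{\mu \mid \nu} e_\mu \cdot \operatorname{res}(\partial_\mu(\alpha)) \]
holds in $\KMW_{*-1}$, up to the twist identifications. For $\KM$ this identity is classical; for $\KMW$ it requires additional bookkeeping of the $\eta$-contributions as well as a case analysis on whether the units and ramification behave tamely. The detailed verification is carried out in \cite[Corollaire 10.4.5]{fasel-groupes} and ultimately traces back to the explicit formula for $\partial^\pi_\nu$ given in the proof of \cref{prop:gersten-witt-differential}.
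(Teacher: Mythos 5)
Your construction proceeds cycle by cycle on the Gersten–Witt complex, essentially adapting Rost's flat pullback for cycle modules (and Fulton's for Chow groups) to $\KMW$. That is a genuinely different route from the one this paper takes. The paper's proof sketch follows Fasel \cite[Corollaire 10.4.2]{fasel-groupes}: the pullback is first obtained at the level of derived categories of sheaves and Witt groups, checked to respect the fundamental-ideal filtration so that it descends to a chain map on the $I^*$-Gersten complexes, and then extended to $\KMW_*$ by forming the fiber product with the classical $\KM$-Gersten complex, where flat pullback is already known. Your approach dispenses with the fiber product and works directly with $\KMW$-coefficients; it buys a more elementary, hands-on description at the cost of having to verify all coherence with the residue maps by hand. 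The paper's approach has the opposite trade-off: the existence of the pullback is almost formal once derived Witt groups are available, but the construction is less explicit.

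That said, there are concrete gaps in the proposal as written. First, the projection-type identity is garbled: the indices $\nu$ and $\mu$ are swapped and the sum sits on the wrong side. The statement you need is roughly $\partial_\mu(\operatorname{res}_F^L(\alpha)) = e_\mu \cdot \operatorname{res}_{\kappa(\nu)}^{\kappa(\mu)}(\partial_\nu(\alpha))$ for each place $\mu$ over $\nu$, not an equality with $\partial_\nu$ applied to $\operatorname{res}(\alpha)$. Moreover, for $\KMW$ this identity cannot hold with a bare integer $e_\mu$ in general: the twisted residue $\partial_\nu$ depends on a choice of uniformizer up to multiplication by a unit $\langle u\rangle\in\GW$, so a change of uniformizer under the extension contributes a rank-one form, not merely a sign; the bookkeeping of these $\eta$-terms is precisely where the quadratic refinement differs from Rost's axiom (R3a), and you have not addressed it. Second, the asserted Zariski-local factorization of an arbitrary flat finite-type morphism as (smooth) $\circ$ (finite flat) is not standard and would need a justification; the usual dévissage for cycle modules does not proceed this way. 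Finally, the reference at the end points to \cite[Corollaire 10.4.5]{fasel-groupes}, which is Fasel's proper pushforward; the flat pullback is \cite[Corollaire 10.4.2]{fasel-groupes}.
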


\begin{proof}[Proof sketch]
	This is a very vague sketch. 
	See \cite[Corollaire 10.4.2]{fasel-groupes} for details.
	
	A morphism of schemes $f \colon X \to Y$ induces a functor $f^* \colon D^b (\presheaves(Y)) \to D^b(\presheaves(X))$ between the derived categories of bounded presheaves over $Y$ and $X$
	and that induces a morphism on Witt groups (which we will not introduce in detail in this work).
	Checking that this morphism is compatible with the inclusions of the fundamental ideals as well as the differentials of the Gersten-Witt complex produces two morphisms of complexes
	\begin{align*}
		f^* \colon C_*(Y, I^*, j, f^*\linebundle) & \to C_{*-\codim_Y X}(X, I^*, j - \codim_Y X, \linebundle \otimes \Omega_{X/Y}) \\
		\intertext{and, if $X$ and $Y$ are smooth,}
		f^* \colon C^*(Y, I^*, j, f^*\linebundle) & \to C^{*}(X, I^*, j, \linebundle) \, .
	\end{align*} 
	Forming the fiber product with the Gersten complex for Milnor K-theory, after checking compatibility of $f^*$ with the structure maps of this fiber product, yields the statement.
\end{proof}

Again, the pullback morphism is compatible with the structure maps $\forgetful$ and $\modulo \hyperbolic$ of this fiber product as well as the hyperbolic map $\hyperbolic_\linebundle$.

\begin{proposition}[General Pullback (Fasel)] \label{prop:general-pullback}
	Let $X$ and $Y$ be smooth schemes over $k$, $\linebundle$ a line bundle over $Y$ and $f \colon X \to Y$ any morphism of schemes.
	Then there is a pullback map 
	\[ f^* \colon \chowwitt^i(Y, \linebundle) \to \chowwitt^i(X, f^*\linebundle) \]
\end{proposition}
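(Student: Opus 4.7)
The plan is to reduce to the already established flat pullback via the graph factorization. For a morphism $f \colon X \to Y$ of smooth $k$-schemes, factor $f$ as the composition
\[ X \xrightarrow{\Gamma_f} X \times Y \xrightarrow{p_Y} Y \, , \]
where $\Gamma_f$ is the graph and $p_Y$ is the second projection. Since $X$ is smooth of finite type over $k$, the projection $p_Y$ is smooth, hence flat, so \cref{prop:flat-pullback} provides a flat pullback
\[ p_Y^* \colon \chowwitt^i(Y, \linebundle) \to \chowwitt^i(X \times Y, p_Y^* \linebundle) \, . \]
It therefore remains to construct a Gysin-type pullback along the closed immersion $\Gamma_f$. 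Since $Y$ is smooth over $k$, the diagonal $\Delta_Y \colon Y \to Y \times Y$ is a regular closed immersion, and $\Gamma_f$ is obtained from $\Delta_Y$ by base change along $f \times \id_Y$, so $\Gamma_f$ is itself a regular closed immersion of codimension $\dim Y$.

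The key step is then to define a Gysin map
\[ \Gamma_f^! \colon \chowwitt^i(X \times Y, p_Y^*\linebundle) \to \chowwitt^i(X, f^*\linebundle) \]
for the regular closed immersion $\Gamma_f$. The standard tool here is deformation to the normal bundle: form the deformation space $M$ whose generic fiber over $\affine^1$ is $X \times Y$ and whose special fiber is the total space of the normal bundle $N \defined N_{\Gamma_f}(X \times Y) \cong f^* \Omega_{Y/k}^\vee$. Using the specialization map along the two inclusions of $X \times Y$ and $N$ into $M$, together with homotopy invariance for the projection $N \to X$ (which identifies $\chowwitt^*(N, -)$ with $\chowwitt^*(X, -)$ after the appropriate twist by $\det N \cong f^*\det \Omega_{Y/k}^\vee$), one obtains the map $\Gamma_f^!$. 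The twist bookkeeping works out because the normal bundle twist matches the twist correction in the pushforward/pullback formulas (cf.\ \eqref{eq:pushworward-line-bundle-iso}), so the target is indeed $\chowwitt^i(X, f^*\linebundle)$.

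Finally, set $f^* \defined \Gamma_f^! \circ p_Y^*$. The main obstacle is checking that this definition is independent of the chosen factorization (and that $\Gamma_f^!$ itself is independent of auxiliary choices in the deformation-to-the-normal-bundle construction) and that this produces the flat pullback of \cref{prop:flat-pullback} whenever $f$ happens to be flat. Both points follow from general principles once one establishes the analogous properties of $\Gamma_f^!$: compatibility with composition of regular closed immersions, compatibility with flat base change (proved by comparing the two deformation spaces), and the projection formula. These compatibilities are precisely the content of the corresponding results of Fasel \cite{fasel-groupes}, and once they are in place the desired $f^*$ is well defined and functorial in $f$.
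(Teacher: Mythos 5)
Your construction is correct and coincides with the one in the reference the paper cites (Fasel's lectures, \S 3.3): graph factorization $f = p_Y \circ \Gamma_f$, flat pullback along the smooth projection $p_Y$, and a Gysin map for the regular closed immersion $\Gamma_f$ (regular because $Y$ is smooth) built via deformation to the normal bundle. The paper's proof simply defers to that reference, so you have reproduced, rather than deviated from, the intended argument.
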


\begin{proof}
	See \cite[3.3]{fasel-lectures}.
\end{proof}

\section{Properties} \label{sec:properties}

The following will make repeated appearance in many computations throughout this work.
\begin{proposition}[Localization Sequence]%
	\label{chowwitt-localization}
	Let $X$ be a smooth scheme over $k$, $\linebundle$ a line bundle over $X$, $\iota \colon Z \hookrightarrow X$ a closed smooth subscheme of codimension $c$ and $j \colon U \hookrightarrow X$ its complement.
	Then there is for each $i \in \Integer$ a long exact sequence called the localization sequence:
	\begin{multline*} 
		\ldots \to \chowwitt^{i-c}(Z, \iota^* \linebundle \otimes \det \Omega_{Z/X}) \xrightarrow{\iota_*} \chowwitt^i(X, \linebundle) \xrightarrow{j^*} \chowwitt^i(U, j^*\linebundle) \\
		\to H^{i-c+1}(Z, \KMW_{i-\codim_X(Z)}, \iota^*\linebundle \otimes \det\Omega_{Z/X}) \to \ldots 
	\end{multline*}
\end{proposition}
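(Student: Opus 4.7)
The plan is to obtain the localization sequence as the long exact cohomology sequence of a short exact sequence of Gersten--Witt complexes associated with the decomposition $X = Z \sqcup U$. Since $\iota \colon Z \hookrightarrow X$ is closed of codimension $c$ with open complement $U$, the codimension-$i$ points of $X$ partition as $X^{(i)} = U^{(i)} \sqcup Z^{(i-c)}$. Using the conormal exact sequence of the regular embedding $\iota$, which yields a canonical isomorphism $\iota^* \det \Omega_{X/k}^\vee \cong \det \Omega_{Z/k}^\vee \otimes \det \Omega_{Z/X}$ of twist line bundles, each term of the Gersten--Witt complex decomposes as
\[ C^i(X, \KMW_*, j, \linebundle) \cong C^{i-c}(Z, \KMW_*, j-c, \iota^*\linebundle \otimes \det \Omega_{Z/X}) \oplus C^i(U, \KMW_*, j, j^*\linebundle). \]

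Next I would show that this termwise splitting upgrades to a short exact sequence of complexes
\[ 0 \to C^{*-c}(Z, \KMW_*, j-c, \iota^*\linebundle \otimes \det \Omega_{Z/X}) \xrightarrow{\iota_*} C^*(X, \KMW_*, j, \linebundle) \xrightarrow{j^*} C^*(U, \KMW_*, j, j^*\linebundle) \to 0. \]
The $Z$-summand is a subcomplex because for $z \in Z^{(i-c)}$ the Gersten--Witt differential at $z$ is supported on points of $\overline{\{z\}} \subseteq Z$; and inspection of the construction of $\iota_*$ in the Proper Pushforward theorem identifies this inclusion with the pushforward. The projection onto the $U$-summand is a chain map coinciding with $j^*$ by the Flat Pullback theorem, since the restriction of the Gersten--Witt complex of $X$ to points of $U$ is manifestly the Gersten--Witt complex of $U$ (no twist change occurs for an open immersion, as $\det \Omega_{U/X}$ is trivial). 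Passing to the long exact sequence in cohomology and specialising the internal degree to $j = i$ produces the claimed localization sequence; the tail term $H^{i-c+1}(Z, \KMW_{i-c}, \iota^*\linebundle \otimes \det\Omega_{Z/X})$ arising from the connecting morphism is a genuine Milnor--Witt cohomology group rather than a Chow--Witt group because the cohomological and internal degrees differ by one.

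The main obstacle is the twist bookkeeping: one must verify that the isomorphism of line bundles supplied by the conormal sequence is compatible with the residue homomorphisms defining the Gersten--Witt differential, so that the inclusion of the $Z$-summand is truly a map of complexes with matching twists. This compatibility is exactly the content of Fasel's construction of the pushforward recorded in the Proper Pushforward theorem, and it is the source of the $\det \Omega_{Z/X}$ twist appearing on the left-hand side of the localization sequence. Once this is in place, the rest of the argument is formal homological algebra.
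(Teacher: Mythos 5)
Your proposal is correct and reconstructs the standard Gersten--Witt argument that the paper compresses into a citation: the paper's own proof simply invokes Fasel's lecture notes for the existence of the long exact sequence and then applies the twist substitution from \cref{eq:pushworward-line-bundle-iso}, whereas you spell out the underlying construction (the partition $X^{(i)} = Z^{(i-c)} \sqcup U^{(i)}$, the conormal sequence giving $\det \Omega_{Z/X} \cong \det N_Z X$, and the short exact sequence of complexes whose long exact cohomology sequence is the localization sequence). Same approach as the cited reference, just with the details filled in.
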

This sequence continues indefinitely to the left and right meaning that Milnor-Witt cohomology groups of all possible bidegrees occur.

Localization sequences are compatible with pullbacks of scheme morphisms in the following sense.
\begin{lemma}\label{lem:locseq-pullback}
	Let $f \colon X \to Y$ be a morphism of smooth schemes and $\linebundle$ a line bundle on $Y$.
	Let $\iota \colon Z \subseteq Y$ a closed subscheme of codimension $c$ and $j \colon U = Y \setminus Z \subseteq Y$ its complement, and denote $X_Z = f^{-1}(Z)$ and $X_U = f^{-1}(U)$.
	Assume that $X_Z$ is smooth and further that there is an isomorphism of bundles $f^* N_Z Y \cong N_{X_Z} X$.
	Then there is a commutative ladder of localization sequences:
	\[ \begin{tikzcd}[column sep=small]
		\ldots \ar[r] & \chowwitt^{i-c}(Z, \iota^* \linebundle \otimes \det \Omega_{Z/Y}) \ar[r, "\iota_*"] \ar[d, "f^*"] & \chowwitt^i(Y, \linebundle) \ar[r, "j^*"] \ar[d, "f^*"] & \chowwitt^i(U, j^*\linebundle) \ar[r, "\partial"] \ar[d, "f^*"] & \ldots \\
		\ldots \ar[r] & \chowwitt^{i-c}(X_Z, f^*\iota^* \linebundle \otimes \det \Omega_{X_Z/X}) \ar[r]& \chowwitt^i(X, f^* \linebundle) \ar[r] & \chowwitt^i(X_U, f^*j^*\linebundle) \ar[r, "\partial"] & \ldots
	\end{tikzcd} \]
\end{lemma}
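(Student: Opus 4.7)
The plan is to realize both localization sequences as the long exact cohomology sequences attached to short exact sequences of Gersten-Witt chain complexes, and then to show that the pullback $f^*$ induces a morphism between these short exact sequences; the commutative ladder will then follow by naturality of the connecting homomorphism.

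Concretely, the localization sequence of \cref{chowwitt-localization} for $Y$ arises from the short exact sequence
\[ 0 \to C^*_Z(Y, \KMW_*, j, \linebundle) \to C^*(Y, \KMW_*, j, \linebundle) \to C^*(U, \KMW_*, j, j^*\linebundle) \to 0 \]
of Gersten-Witt complexes, where $C^*_Z$ denotes the subcomplex supported on points of $Y$ lying in $Z$. The transfer map $\iota_*$ identifies this subcomplex with a shift by $c$ of the Gersten-Witt complex of $Z$ with an extra twist by $\det \Omega_{Z/Y}$ coming from the conormal sequence. The analogous short exact sequence is available for $X$ with $X_Z$ and $X_U$ in place of $Z$ and $U$: the smoothness of $X_Z$ ensures that the Gersten-Witt complex of $X_Z$ is well defined, and the normal bundle hypothesis $f^* N_Z Y \cong N_{X_Z} X$ ensures that the twist appearing on the $X_Z$-side is precisely $f^*$ of the twist on the $Z$-side.

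First I would verify that the pullback $f^* \colon C^*(Y, \KMW_*, j, \linebundle) \to C^*(X, \KMW_*, j, f^*\linebundle)$ supplied by \cref{prop:general-pullback} sends the subcomplex $C^*_Z$ into $C^*_{X_Z}$, and hence descends to a compatible map of quotient complexes supported on $U$ and $X_U$. The compatibility with the quotient amounts to the identity $\tilde{j}^* \circ f^* = (f|_{X_U})^* \circ j^*$, where $\tilde{j} \colon X_U \hookrightarrow X$, which follows immediately from contravariance of pullback applied to the commuting square $\tilde{j} \circ f|_{X_U} = f \circ j$. The compatibility with the subcomplex, after identification via the closed pushforwards $\iota_*$ and $\iota'_* \colon X_Z \hookrightarrow X$, reduces to the base change identity $f^* \circ \iota_* = \iota'_* \circ (f|_{X_Z})^*$ on Chow-Witt groups. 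Once a morphism of short exact sequences of complexes is in place, the required commutative ladder on cohomology is immediate from the naturality of the snake lemma.

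The main obstacle is the base change identity $f^* \iota_* = \iota'_* (f|_{X_Z})^*$ and in particular matching up the twists. The hypothesis $f^* N_Z Y \cong N_{X_Z} X$ is essential here: it guarantees that the square of inclusions is transverse in the sense that the cycle-level preimage of $Z$ has the expected codimension $c$, and it induces a canonical isomorphism $(f|_{X_Z})^* \det \Omega_{Z/Y} \cong \det \Omega_{X_Z/X}$ making the twist identifications on both rows of the ladder compatible. For $f$ flat this is already contained in the constructions of \cref{prop:proper-pushforward} and \cref{prop:flat-pullback}; for general $f$ it is a consequence of Fasel's construction of general pullback via deformation to the normal cone, and can be extracted from the corresponding base change results in \cite{fasel-groupes}.
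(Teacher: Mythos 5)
The paper's proof of this lemma is a single sentence deferring to \cite[Lemma 3.5]{HMW}, so the comparison is necessarily asymmetric: you are attempting to supply the details that the paper outsources.

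Your outline is the right one for flat $f$, and it is plausibly how the cited HMW argument goes (and indeed the only places this lemma is actually invoked in the paper, projections of product schemes, are flat). However, there is a genuine gap in your argument as a proof of the lemma as stated, which allows an arbitrary morphism $f$ of smooth schemes. Your entire mechanism is to exhibit $f^*$ as a morphism of short exact sequences of Gersten--Witt chain complexes and then invoke naturality of the connecting homomorphism. But for non-flat $f$, the pullback is not a chain map: \cref{prop:flat-pullback} gives a map of complexes only for flat $f$, while \cref{prop:general-pullback} produces the pullback only on cohomology (via factoring $f$ through a closed embedding followed by a smooth projection, with the closed-embedding step going through deformation to the normal cone). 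So the sentence that invokes ``the pullback $f^* \colon C^*(Y, \KMW_*, j, \linebundle) \to C^*(X, \KMW_*, j, f^*\linebundle)$ supplied by \cref{prop:general-pullback}'' is asserting something \cref{prop:general-pullback} does not give you. Your final paragraph acknowledges this and gestures at ``base change results in Fasel,'' but this is exactly the nontrivial content for general $f$, and it is not a corollary of the chain-level argument you set up; it has to be proved separately, morphism by morphism of the ladder, at the level of cohomology. In particular you would need to separately establish (a) that $f^*$ commutes with restriction $j^*$ (this one is easy, it is functoriality of the general pullback), (b) that $f^*$ commutes with pushforward along the closed embedding, i.e.\ a proper base change statement for Chow--Witt groups under the transversality hypothesis $f^*N_ZY \cong N_{X_Z}X$, and (c) that $f^*$ commutes with the boundary map $\partial$. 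None of these is automatic once you abandon the chain-level picture, and (c) in particular cannot be reduced to naturality of the snake lemma.

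One smaller point: you describe the $X_Z$ versus $Z$ twist identification as coming from ``$(f|_{X_Z})^* \det \Omega_{Z/Y} \cong \det \Omega_{X_Z/X}$,'' which you derive from the normal bundle hypothesis. That is correct, but it is worth stating explicitly that the hypothesis also forces $\codim_X X_Z = c$, which is what makes the shifts in the two rows line up; without that, the top-left and bottom-left entries would not sit in the same cohomological degree, and the ladder would not even typecheck.

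If you restrict the statement to flat $f$ (which is all the paper uses), your argument is correct and complete modulo writing out the base change identity $f^*\iota_* = \iota'_*(f|_{X_Z})^*$ at the chain level, which does follow from the explicit transfer construction in \cref{prop:proper-pushforward} together with the Gersten-complex description of flat pullback. For general $f$ you have the right idea but have not closed the argument.
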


\begin{proof}
	The argument for Witt cohomology from \cite[Lemma 3.5]{HMW} translates directly to this setting.
\end{proof}

\begin{proof}
Fasel \cite[Section 2.2]{fasel-lectures} proves that under these conditions, there is a long exact sequence
\[ \ldots \to \chowwitt^{i-\codim_X(Z)}(Z, i^* \linebundle \otimes \det i^* \Omega_{X/k}^\vee \otimes \det \Omega_{Z/k}) \xrightarrow{i_*} \chowwitt^i(X, \linebundle) \xrightarrow{j^*} \chowwitt^i(U, j^*\linebundle) \ldots \]
The pushforward $(s_0)_*$ exists because $s_0$ is a closed immersion and thus proper.
Substituting line bundles as in \cref{eq:pushworward-line-bundle-iso} yields the statement.
\end{proof}

\begin{proposition}[Square Periodicity]\label{chowwitt-quadratic-periodicity}
	Let $X$ be a smooth scheme over $k$ and $\linebundle$ and $\linebundleb$ line bundles over $X$.
	For all $i$ there is a canonical isomorphism
	\[ \sqperiod_\linebundleb \colon \chowwitt^i(X, \linebundle) \to \chowwitt^i(X, \linebundle \otimes \linebundleb^{\otimes 2}) \, . \]
	When the line bundle $\linebundleb$ is clear from context, we will sometimes drop it from the notation and simply write $\sqperiod$.
\end{proposition}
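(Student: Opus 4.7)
The plan is to construct $\sqperiod_\linebundleb$ at the level of the Gersten-Witt complex and pass to cohomology, exploiting the identity $\langle u^2\rangle = \langle 1\rangle = 1$ in $\KMW_0$ to show that twisted Milnor-Witt K-theory is insensitive to tensoring the twist by a square.

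First I would establish the fieldwise version. For a finitely generated field extension $F/k$, a line bundle $\linebundle$ over $F$, and a choice of nonzero $m \in \linebundleb^0$, define
\[ \sqperiod^m \colon \KMW_*(F, \linebundle) \to \KMW_*(F, \linebundle \otimes \linebundleb^{\otimes 2}), \qquad a \otimes l \mapsto a \otimes (l \otimes m \otimes m). \]
If $m' = u \cdot m$ for some $u \in F^\times$, then $m' \otimes m' = u^2 (m \otimes m)$, and the $\Integer[F^\times]$-balance in the definition $\KMW_*(F, -) = \KMW_*(F) \otimes_{\Integer[F^\times]} \Integer[(-)^0]$ gives
\[ a \otimes (l \otimes m' \otimes m') = \langle u^2 \rangle \cdot a \otimes (l \otimes m \otimes m) = a \otimes (l \otimes m \otimes m), \]
using that $\langle u^2 \rangle$ is the quadratic form $x \mapsto u^2 x^2$, which is isometric to $\langle 1 \rangle$ via $x \mapsto u^{-1} x$. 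Hence $\sqperiod^m$ is independent of $m$, and is an isomorphism with inverse induced by any generator of $\linebundleb^\vee$.

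Next I would assemble these fieldwise maps into a morphism of complexes $C^*(X, \KMW_*, i, \linebundle) \to C^*(X, \KMW_*, i, \linebundle \otimes \linebundleb^{\otimes 2})$ by applying the construction summand-by-summand at each residue field $\kappa(x)$ for $x \in X^{(i)}$, restricting $\linebundleb$ along $\Spec \kappa(x) \to X$. To verify compatibility with the differential, fix $x \in X^{(i)}$ and $y \in X^{(i+1)}$ with $y \in \overline{\{x\}}$, and choose a local trivialization $m$ of $\linebundleb$ on an open neighborhood in the normalization $\widetilde{Z}$ of $\overline{\{x\}}$ containing both a codimension-one point lying over $y$ and the generic point. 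This trivialization descends to compatible choices at $\kappa(x)$ and $\kappa(z)$ (and via transfer to $\kappa(y)$), and since both the residue homomorphism $\partial_\nu^\pi$ and the transfer $\transfer_{\kappa(z)/\kappa(y)}$ are defined via the $\KMW_*$-module structure without touching the $\linebundleb^{\otimes 2}$ factor, they commute with $\sqperiod^m$. Independence of trivialization, already established fieldwise, guarantees the square commutes unambiguously.

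Passing to cohomology then yields the canonical isomorphism $\sqperiod_\linebundleb$. The main obstacle is the bookkeeping involved in the compatibility with the differential: because $\partial_{x,y}$ is built from a normalization, a valuation-theoretic residue map, and a transfer, one must check that a single trivialization of $\linebundleb$ near the relevant codimension-one locus threads compatibly through all three ingredients. The fieldwise choice-independence reduces this to verifying that each of these ingredients is $\KMW_*$-linear on the $\linebundleb^{\otimes 2}$-part, which follows directly from their construction in \cref{prop:gersten-witt-differential} and the definition of the transfer morphism.
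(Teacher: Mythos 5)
Your approach is correct but genuinely different from the paper's. The paper reduces the whole problem to the fiber-product decomposition $C^*(X,\KMW_*,j,\linebundle) \cong C^*(X,I^*,j,\linebundle) \times_{C^*(X,\ibar^*,j)} C^*(X,\KM_*,j)$: the $\KM_*$- and $\ibar^*$-legs are twist-independent by construction, and on the $I^*$-leg the periodicity is multiplication by the Witt class $[\linebundleb^{\otimes 2} \to \trivialbundle_X \otimes \linebundleb^{\otimes 2}] \in W^0(X, \linebundleb^{\otimes 2})$, cited as an isomorphism from Balmer--Calmès. You instead build the map directly on the $\KMW_*$-Gersten complex, summand by summand, using $\langle u^2 \rangle = 1$ in $\KMW_0$ to get a canonical fieldwise isomorphism $\KMW_*(\kappa(x),\linebundle) \cong \KMW_*(\kappa(x),\linebundle\otimes\linebundleb^{\otimes 2})$. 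Your construction is more self-contained — it avoids invoking the fiber-product theorem (which relies on $\operatorname{char}k\neq 2$) and the Witt-theoretic input, and it produces the isomorphism at the chain level by pure manipulation of twists, so the isomorphism on cohomology is immediate once compatibility with the differential is established. What it costs is exactly that compatibility check, which is the delicate part: $\partial_{x,y}$ factors through a normalization, a valuation residue $\partial^\pi_\nu$ that alters the twist by $(\mathfrak{m}_\nu/\mathfrak{m}_\nu^2)^\vee$, and a transfer built from a section of $\sum_p\partial_p$ and $\partial_\infty$, and one must verify that the $\linebundleb^{\otimes 2}$-factor is a genuine spectator through each stage. Your argument that a local trivialization of (the pullback of) $\linebundleb$ on an open of $\widetilde Z$ meeting both the generic point and a codimension-one point over $y$ does the job, together with the fieldwise independence of trivialization, is the right idea but is stated rather informally; a careful writeup would need to spell out, for each summand $z$ of $\partial_{x,y}$ and for the transfer filtration $F_0\subseteq\cdots\subseteq F_n$, that the chosen trivialization over $\kappa(x)$ is compatible with some trivialization over $\kappa(y)$ through the intermediate fields. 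The paper sidesteps this bookkeeping entirely by outsourcing it to the $I^*$-side where it is already done. Both approaches are valid; yours is arguably more instructive and potentially more robust in characteristic $2$, while the paper's is shorter.
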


This will be crucial for defining a total Chow-Witt ring in the next section.
The statement seems to be well known, but nevertheless the author was not able to find a written account for Chow-Witt groups, so we will give a short argument here.

\begin{proof}
	Consider again the pullback square of \cref{eq:chain-complex-fiber-product}.
	The complexes with coefficients in $\KM_*(k)$ and $\ibar^*(k) \cong \KM_*(k)/2$ are independent of twist by construction.
	For the complex with coefficients in $I^*(k)$, the square periodicity map comes from multiplication with the quadratic form $[\linebundleb^{\otimes 2} \xrightarrow{\id} \trivialbundle_X \otimes \linebundleb^{\otimes 2}] \in H^0(X, I^0, \linebundleb^{\otimes 2}) = W^0(X, \linebundleb^{\otimes 2})$.
	A proof that this is indeed an isomorphism can be found for example in \cite{balmer-calmes}.
\end{proof}

From this constructions the following is immediate.
\begin{lemma}
The following diagram commutes.
\[ \begin{tikzcd}
	\chowwitt^i(X, \linebundle) \ar[r, "\sqperiod"] \ar[dr, "\forgetful"] & \chowwitt^i(X, \linebundle \otimes \linebundleb^{\otimes 2}) \ar[d, "\forgetful"] \\
	& \chow^i(X)
\end{tikzcd} \]
\end{lemma}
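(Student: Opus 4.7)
The plan is to read off commutativity directly from the fiber product description of Chow-Witt groups recalled in the proof of \cref{chowwitt-quadratic-periodicity}. Specifically, the square periodicity isomorphism was defined componentwise with respect to the pullback square of chain complexes
\[ \begin{tikzcd}
	C^*(X, \KMW_*, j, \linebundle) \ar[r] \ar[d] & C^*(X, \KM_*, j) \ar[d] \\
	C^*(X, I^*, j, \linebundle) \ar[r] & C^*(X, \ibar^*, j)
\end{tikzcd} \]
from \cite[2.8]{asok-fasel16}: on the two right-hand complexes (coefficients $\KM_*$ and $\ibar^*$) it acts as the identity because those complexes do not depend on the twist, while on the $I^*$-complex it is multiplication by the class of the quadratic form $[\linebundleb^{\otimes 2} \xrightarrow{\id} \trivialbundle_X \otimes \linebundleb^{\otimes 2}]$.

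The reduction map $\forgetful \colon \chowwitt^i(X, \linebundle) \to \chow^i(X)$ is, through the same fiber product description, induced by the top horizontal projection $C^*(X, \KMW_*, j, \linebundle) \to C^*(X, \KM_*, j)$ and is therefore exactly the projection onto the $\KM_*$-component. Since $\sqperiod_\linebundleb$ restricts to the identity on that component, the diagram
\[ \begin{tikzcd}
	\chowwitt^i(X, \linebundle) \ar[r, "\sqperiod_\linebundleb"] \ar[dr, "\forgetful"'] & \chowwitt^i(X, \linebundle \otimes \linebundleb^{\otimes 2}) \ar[d, "\forgetful"] \\
	& \chow^i(X)
\end{tikzcd} \]
commutes termwise at the level of the underlying Gersten-type complexes, and hence on cohomology.

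The only step to double-check is that the fiber product description of $\forgetful$ really coincides with the forgetful map appearing in the key diagram \cref{eq:key-diagram}; this is part of the construction of that diagram in \cite[Section 2.4]{hornbostel-wendt}, where the map $\forgetful$ is obtained precisely as the projection to the $\KM_*$-factor of the pullback square. Thus there is no non-trivial calculation to perform, and no serious obstacle arises.
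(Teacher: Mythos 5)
Your proof is correct and takes essentially the same approach as the paper, which simply remarks that the statement is ``immediate'' from the construction of $\sqperiod$ given in the proof of \cref{chowwitt-quadratic-periodicity}. You spell out precisely what the paper leaves implicit: that $\sqperiod$ is the identity on the $\KM_*$-component of the fiber product of chain complexes and that $\forgetful$ is the projection onto that component, so the triangle commutes at the level of complexes.
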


\begin{proposition}[Homotopy Invariance] \label{prop:chowwitt-homotopy-invariance}
	Let $X$ be a smooth scheme and $\pi \colon E \to X$ a vector bundle.
	Then the pullback
	\[ \pi^* \colon \chowwitt^\tot (X) \to \chowwitt^\tot(E) \]
	is a ring isomorphism.
\end{proposition}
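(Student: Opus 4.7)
The plan is to combine the fiber-product decomposition of chain complexes (\cref{eq:chain-complex-fiber-product}) with the classical $\affine^1$-invariance of Chow cohomology and the corresponding result for $I^j$-cohomology, then extend from the trivial-bundle case to arbitrary vector bundles via Mayer--Vietoris.

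First, I would reduce to the case $E = X \times \affine^n$. Since a vector bundle is Zariski locally trivial, pick an open cover $X = \bigcup_\alpha U_\alpha$ trivializing $E$. The localization sequence (\cref{chowwitt-localization}) applied to complementary open/closed pairs, together with its compatibility with pullbacks (\cref{lem:locseq-pullback}), yields a Mayer--Vietoris long exact sequence for open pairs, and a standard induction on the cardinality of the cover using the five-lemma reduces the assertion to the trivial-bundle case. Next, factoring $X \times \affine^n \to X$ through $X \times \affine^{n-1}$ and iterating reduces further to the rank-one case $\pi \colon X \times \affine^1 \to X$.

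For this remaining case, I would exploit the pullback square of chain complexes (\cref{eq:chain-complex-fiber-product}). This pullback square consists of surjections of chain complexes, giving rise to a short exact sequence
\[ 0 \to C^*(-, \KMW_*, j, \linebundle) \to C^*(-, I^*, j, \linebundle) \oplus C^*(-, \KM_*, j) \to C^*(-, \ibar^*, j) \to 0 \]
and hence to a long exact Mayer--Vietoris-type cohomology sequence. Applying the five-lemma to the map of these long exact sequences induced by $\pi^*$ reduces the claim to $\affine^1$-invariance of each of $\chow^*$, $H^*(-, \KM_*)$, and $H^*(-, I^*, \linebundle)$. The first two are classical (the Chow case is in \cite[Theorem 3.3]{fulton}, and the same Gersten argument applies to Milnor cohomology), and the third is Fasel's homotopy invariance for $I^j$-cohomology. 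Multiplicativity of $\pi^*$ is automatic by construction of the pullback on Gersten--Witt complexes, so bijectivity upgrades to a ring isomorphism.

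The hard part will be the careful bookkeeping of twists through Mayer--Vietoris and its interplay with the total Chow--Witt ring structure. The twist $\pi^*\linebundle$ on $E$ restricts compatibly to each trivializing open, and the square-periodicity isomorphism (\cref{chowwitt-quadratic-periodicity}) identifies twists modulo squares in $\Pic$, matching the definition of $\chowwitt^\tot$. For the trivial-bundle reduction itself this is harmless since $\pi^* \colon \Pic(X) \to \Pic(X \times \affine^n)$ is already an isomorphism.
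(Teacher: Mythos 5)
The paper's own ``proof'' of this proposition is a one-line citation to \cite[Corollaire 11.3.2]{fasel-groupes}, so your self-contained argument is a genuinely different route. The overall strategy --- reduce via the fiber-product square \cref{eq:chain-complex-fiber-product} to $\affine^1$-invariance for $\chow^*$, $H^*(-,\KM_*)$, $H^*(-,I^*,\linebundle)$ and $H^*(-,\ibar^*)$, each of which is available separately in the literature --- is a reasonable way to assemble the result from known pieces, and the observation that $\pi^*\colon \Pic(X) \to \Pic(E)$ is an isomorphism correctly disposes of the twist bookkeeping for the total ring.

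That said, there are two gaps you would have to close. First, the Mayer--Vietoris reduction to the trivial-bundle case cannot be run with the localization sequence as stated in \cref{chowwitt-localization} or its ladder form \cref{lem:locseq-pullback}: both require the closed complement $Z$ to be smooth, and the complement $X \setminus U_\alpha$ of a trivializing open is in general not smooth. What you actually need is the ``cohomology with supports'' version of the sequence, $H^i_Z(X,\KMW_*,\linebundle) \to H^i(X,\KMW_*,\linebundle) \to H^i(U,\KMW_*,\linebundle) \to \cdots$, which is defined on the level of Gersten complexes for an arbitrary closed $Z$ and does exist in the sources (Rost, Fasel), but is not among the tools the paper provides; you would have to introduce it or cite it explicitly. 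Second, the five-lemma applied to the long exact sequence arising from your short exact sequence of complexes needs $\pi^*$ to be an isomorphism on $H^*(-,\ibar^*)$ as well, not only on the three theories you list; this does follow from the $\KM_*$ case via the mod-$2$ long exact sequence, but it should be stated. With those two points repaired --- and noting in passing that you are essentially re-deriving the content of Fasel's Corollaire 11.3.2 --- your argument would go through; as written it relies on lemmas of the paper in settings where they do not apply.
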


\begin{proof}
	See \cite[Corollaire 11.3.2]{fasel-groupes}.
\end{proof}

\section{Ring Structure} \label{sec:ring-structure}

The first step towards defining the multiplicative structure for the Chow-Witt ring is the exterior product
\[ \mu \colon H^i(X, \KMW_j, \linebundle) \otimes H^{i^\prime}(Y, \KMW_{j^\prime}, \linebundle^\prime) \to H^{i + i^\prime}(X \times Y, \KMW_{j + j^\prime}, \pr_1^*\linebundle \otimes \pr_2^*\linebundle^\prime) \]
for $X$ and $Y$ smooth schemes and $\linebundle$, $\linebundle^\prime$ line bundles over $X$ and $Y$, respectively.
For this consider the fiber product of chain complexes from \cref{eq:chain-complex-fiber-product}.
The external products on $C^*(X, \KM_*, j)$ as defined by \cite{rost96} and $C^*(X, I^*, j, -)$ as defined by \cite{gille-nenashev} induce a product on their fiber product $C^*(X, \KMW_*, j, -)$.
Fasel \cite[4.12]{fasel-chowwitt} proves that this product is well-defined and descends to a product on cohomology.

\begin{definition}
	Let $X$ be a smooth scheme over $k$ and $\linebundle$, $\linebundleb$ line bundles over $X$.
	Define the product on Milnor-Witt cohomology
	\[ H^i(X, \KMW_j, \linebundle) \otimes H^{i^\prime}(X, \KMW_{j^\prime}, \linebundleb) \to H^{i + i^\prime}(X, \KMW_{j + j^\prime}, \linebundle \otimes \linebundleb) \]
	as the composition of the exterior product $\mu$ with the pullback along the diagonal map $\Delta \colon X \to X \times X$.
\end{definition}
This product is associative by \cite[Prop.\ 6.6]{fasel-chowwitt}.
It is neither commutative nor anti-commutative, but satisfies the following property due to \cite[Prop.\ 2.5]{hornbostel-wendt}.
\begin{lemma} \label{lem:chowwitt-graded-commutative}
	Let $X$ be a smooth scheme over $k$ and $\linebundle$ and $\linebundleb$ line bundles over $X$.
	Let $\alpha \in \chowwitt^i(X, \linebundle)$ and $\beta \in \chowwitt^j(X, \linebundleb)$.
	Then
	\[ \beta \alpha = \langle -1 \rangle ^{ij} \alpha \beta \, . \]
\end{lemma}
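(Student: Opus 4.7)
The plan is to reduce the internal statement to an external one. Since the cup product on Chow-Witt cohomology is defined by $\alpha \cdot \beta = \Delta^* \mu(\alpha, \beta)$ where $\Delta \colon X \to X \times X$ is the diagonal, and $\Delta = \tau \circ \Delta$ for $\tau \colon X \times X \to X \times X$ the coordinate swap, it suffices to establish the external identity
\[ \tau^* \mu(\beta, \alpha) = \langle -1 \rangle^{ij} \, \mu(\alpha, \beta) \]
in $\chowwitt^{i+j}(X \times X, \pr_1^* \linebundle \otimes \pr_2^* \linebundleb)$; applying $\Delta^*$ then gives the lemma.

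Next I would descend to the Gersten-Witt complex. The external product $\mu$ acts on a pair of local representatives $\alpha_x \in \KMW_0(\kappa(x), \det(\mathfrak{m}_x/\mathfrak{m}_x^2)^\vee)$ at $x \in X^{(i)}$ and $\beta_y \in \KMW_0(\kappa(y), \det(\mathfrak{m}_y/\mathfrak{m}_y^2)^\vee)$ at $y \in Y^{(j)}$, producing an element of $\KMW_0(\kappa(x,y), \det(\mathfrak{m}_{(x,y)}/\mathfrak{m}_{(x,y)}^2)^\vee)$ at each codimension-$(i+j)$ point $(x,y)$ lying over the pair. Two ingredients enter: the $\KMW$-multiplication of the coefficients, which is strictly commutative here because both factors sit in K-theory degree zero; and the canonical splitting isomorphism
\[ \det(\mathfrak{m}_x/\mathfrak{m}_x^2)^\vee \otimes \det(\mathfrak{m}_y/\mathfrak{m}_y^2)^\vee \xrightarrow{\cong} \det(\mathfrak{m}_{(x,y)}/\mathfrak{m}_{(x,y)}^2)^\vee \]
of the top exterior powers of the conormal bundles.

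The sole sign contribution therefore comes from reordering the tensor factors in the determinant isomorphism. For vector spaces of dimensions $i$ and $j$ the canonical braiding $\det V \otimes \det W \to \det W \otimes \det V$ is multiplication by $(-1)^{ij}$. One then translates this classical sign into $\langle -1 \rangle^{ij}$ using that twisted Milnor-Witt K-theory is built as $\KMW_0(F, \linebundle) = \KMW_0(F) \otimes_{\Integer[F^\times]} \Integer[\linebundle^0]$: negating a generator of $\linebundle$ is the same as acting by the image of $-1 \in F^\times$ in $\KMW_0(F)$, which is $\langle -1 \rangle$. Accumulating $ij$ such sign changes on the twist yields the asserted factor $\langle -1 \rangle^{ij}$.

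The main obstacle will be tracking these signs rigorously through both the determinant-of-conormal-bundle identifications and the conventions for the $\KMW$-product, especially since the product is assembled from the separately-defined products on the $\KM$ and $I^*$ components of the pullback square of chain complexes. A possibly cleaner alternative avoids direct sign-tracking on the Gersten complex: one can verify graded commutativity separately on each face of the square in \cref{prop:chowwitt-is-chow-x-I} — strictly for $\chow^*(X)$ because its local coefficients are in $\KM_0$, and with sign $\langle -1 \rangle^{ij}$ for $I^j$-cohomology, which is standard in Witt theory and arises from precisely the determinant braiding above — and then transfer the identity to $\chowwitt^\tot(X)$ using the surjectivity of the canonical map $c$ together with a short diagram chase.
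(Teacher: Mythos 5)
The paper does not give a proof of this lemma at all: it simply cites \cite[Prop.\ 2.5]{hornbostel-wendt}. So your proposal is more ambitious than what the paper contains, and deserves to be judged on its own terms.

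Your first route (sign-tracking on the Gersten--Witt complex) identifies the two correct sources of the factor $\langle -1 \rangle^{ij}$: commutativity of the $\KMW_0 \cong \GW$-coefficients, and the permutation sign arising when the determinant of the conormal bundle of a codimension-$(i+j)$ point in $X \times X$ is split as a tensor product of the two determinants of dimensions $i$ and $j$, with the factor $(-1)^{ij}$ absorbed into $\langle -1 \rangle^{ij}$ through the $\Integer[F^\times]$-module structure on the twist. This is the right picture; one small imprecision is that the braiding $\det V \otimes \det W \to \det W \otimes \det V$ of one-dimensional spaces is sign-free by itself --- the sign $(-1)^{ij}$ enters only after comparing the two splittings of $\det(V \oplus W)$. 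You acknowledge that actually tracking this through the definitions of Rost's and Gille--Nenashev's products and their fiber product is the hard part, and you do not carry it out, so this route is a sketch rather than a proof.

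Your ``cleaner alternative'' has a genuine gap. You propose to verify graded commutativity on the $\KM$ and $I^*$ components and ``transfer the identity to $\chowwitt^\tot(X)$ using the surjectivity of the canonical map $c$.'' Surjectivity of $c$ is of no help in showing that $\beta\alpha - \langle -1 \rangle^{ij}\alpha\beta$ is zero: for that you would need $c$ to be \emph{injective} in the relevant degree, and by \cref{prop:chowwitt-is-chow-x-I} injectivity is only guaranteed when $\chow^{i+j}(X)$ has no $2$-torsion or $\eta$ is injective on $H^{i+j}(X, I^{i+j+1}, \linebundle\otimes\linebundleb)$ --- conditions that fail in general. The correct way to make this strategy rigorous is to establish the (anti)commutativity relation at the level of cochains, where the Gersten--Witt complex $C^*(X, \KMW_*)$ is an honest pullback $C^*(X, I^*) \times_{C^*(X, \ibar^*)} C^*(X, \KM_*)$ of chain complexes: identities holding on $C^*(I)$ and $C^*(\KM)$ compatibly over $C^*(\ibar)$ automatically hold on the pullback complex, and pass to cohomology without any assumption on $c$. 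If the commutativity on the $I^*$ and $\KM_*$ complexes holds only up to chain homotopy, one additionally has to check that these homotopies are themselves compatible over $C^*(\ibar)$ so that they glue. As written, the proposal replaces this chain-level argument with an argument on cohomology that does not close.
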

It immediately follows that all classes in degree $0$ are central in the Chow-Witt ring.

In \cref{chowwitt-quadratic-periodicity} we found that Chow-Witt groups are invariant under squares of line bundles.
So when adding up all Chow-Witt groups to form a ring we don't want to include all of these isomorphic groups, but only sum over $\Pic(X)/2$.
For this we need to ensure the following compatibility.

\begin{lemma}%
	\label{square-periodicity-compatibility}
	The square periodicity isomorphism is compatible with multiplication in the sense that the following diagram commutes for all line bundles $\linebundle$, $\linebundle^\prime$, $\linebundleb$, $\linebundleb^\prime$ on $X$.
	\[ \begin{tikzcd}
		\chowwitt^i(X, \linebundle) \times \chowwitt^j (X, \linebundle^\prime) \ar[r, "\mathrm{mult}"] \ar[d, "\sqperiod_\linebundleb \times \sqperiod_{\linebundleb^\prime}"] & \chowwitt^{i+j} (X, \linebundle \otimes \linebundle^\prime) \ar[d, "\sqperiod_{\linebundleb\otimes \linebundleb^\prime}"] \\
		\chowwitt^i(X, \linebundle \otimes \linebundleb^{\otimes 2}) \times \chowwitt^j(X, \linebundle^\prime \otimes (\linebundleb^\prime) ^{\otimes 2}) \ar[r, "\mathrm{mult}"] & \chowwitt^{i+j} (X, \linebundle \otimes \linebundle^\prime \otimes (\linebundleb \otimes \linebundleb^\prime)^{\otimes 2})
	\end{tikzcd} \]	
\end{lemma}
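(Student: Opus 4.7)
The plan is to exploit the explicit description of $\sqperiod_\linebundleb$ given in the proof of \cref{chowwitt-quadratic-periodicity}: it is multiplication by the unit class $u_\linebundleb \defined [\linebundleb^{\otimes 2} \xrightarrow{\id} \trivialbundle_X \otimes \linebundleb^{\otimes 2}] \in H^0(X, I^0, \linebundleb^{\otimes 2})$, and under the fiber product decomposition of \cref{prop:chowwitt-is-chow-x-I} this class lives in the $I^*$-component (with trivial $\chow^*$-component via the structure maps). So the compatibility should reduce entirely to a statement about multiplying by this unit class.

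First I would reformulate the diagram. Let $\alpha \in \chowwitt^i(X, \linebundle)$ and $\beta \in \chowwitt^j(X, \linebundle^\prime)$. The commutativity amounts to the identity
\[ u_{\linebundleb \otimes \linebundleb^\prime} \cdot (\alpha \cdot \beta) \; = \; (u_\linebundleb \cdot \alpha) \cdot (u_{\linebundleb^\prime} \cdot \beta) \]
in $\chowwitt^{i+j}(X, \linebundle \otimes \linebundle^\prime \otimes (\linebundleb \otimes \linebundleb^\prime)^{\otimes 2})$. Using associativity of the Chow-Witt product (\cite[Prop.\ 6.6]{fasel-chowwitt}) and the fact that $u_{\linebundleb^\prime}$ sits in degree $0$ and thus is central by \cref{lem:chowwitt-graded-commutative} (here $\langle -1 \rangle^{i \cdot 0} = 1$), the right-hand side equals $(u_\linebundleb \cdot u_{\linebundleb^\prime}) \cdot \alpha \cdot \beta$. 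It therefore suffices to prove the identity
\[ u_\linebundleb \cdot u_{\linebundleb^\prime} \; = \; u_{\linebundleb \otimes \linebundleb^\prime} \]
in $H^0(X, I^0, (\linebundleb \otimes \linebundleb^\prime)^{\otimes 2})$.

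The key step is this last identity. I would argue it at the level of symmetric bilinear forms representing classes in twisted Witt cohomology. The external product on $I^*$-cohomology is induced by tensor product of forms (as in the definition of Gille-Nenashev used to build the product on the fiber product of chain complexes), so $u_\linebundleb \cdot u_{\linebundleb^\prime}$ is represented by the tensor product of the two identity forms, which, under the canonical isomorphism $\linebundleb^{\otimes 2} \otimes (\linebundleb^\prime)^{\otimes 2} \cong (\linebundleb \otimes \linebundleb^\prime)^{\otimes 2}$ of line bundles, matches $u_{\linebundleb \otimes \linebundleb^\prime}$ on the nose.

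The main obstacle I anticipate is bookkeeping: making sure the canonical rearrangement isomorphism of line bundles $\linebundleb^{\otimes 2} \otimes (\linebundleb^\prime)^{\otimes 2} \cong (\linebundleb \otimes \linebundleb^\prime)^{\otimes 2}$ used on the target of the multiplication and on the source of the quadratic form representing $u_{\linebundleb \otimes \linebundleb^\prime}$ is \emph{the same} one implicit in the statement of the lemma, and that no stray sign $\langle -1 \rangle$ is introduced by a swap of the middle factors. Once this is checked, the lemma follows by combining associativity, centrality of degree-zero classes, and the multiplicativity of the unit classes $u_{(-)}$. An alternative (and perhaps cleaner) route would be to verify the statement directly on the level of the fiber-product complex \cref{eq:chain-complex-fiber-product}, where $u_\linebundleb$ is carried only by the $I^*$-summand and the multiplicativity in that summand is precisely the content of the construction of $\sqperiod$ in \cite{balmer-calmes}.
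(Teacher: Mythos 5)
Your proposal is correct and follows essentially the same route as the paper's own proof: describe $\sqperiod_{\linebundleb}$ as multiplication by the canonical degree-zero class in $H^0(X, I^0, \linebundleb^{\otimes 2})$, invoke centrality of degree-zero elements via \cref{lem:chowwitt-graded-commutative} to rearrange the product, and conclude from multiplicativity of these unit classes under tensor product of line bundles (the paper attributes this last step to the tensor-product structure of the Witt ring, matching your argument about tensor products of forms). Your extra remark about matching rearrangement isomorphisms is a reasonable caution, but it does not change the argument in substance.
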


\begin{proof}
	As explained in the proof of \cref{chowwitt-quadratic-periodicity}, considering $\KMW_*$ as the fiber product $I^* \times _{\KM_*/2} \KM_*$ exhibits the square periodicity isomorphism as the fiber product of the identity on Chow groups and the square periodicity isomorphism for $I^j$-cohomology.
	The latter is given by multiplication with the quadratic form $[\linebundleb \to \linebundleb^\vee \otimes \linebundleb^{\otimes 2}] \in H^0(X, I^0, \linebundleb^{\otimes 2})$ by \cite{balmer-calmes}.
	To prove the \namecref{square-periodicity-compatibility} we have to show that for $\alpha \in \chowwitt^i(X, \linebundle)$ and $\beta \in \chowwitt^j(X, \linebundle^\prime)$:
	\begin{align*} 
		\sqperiod_\linebundleb(\alpha) \cdot \sqperiod_{\linebundleb^\prime}(\beta) &= \alpha \cdot [\linebundleb \to \linebundleb^\vee \otimes \linebundleb^{\otimes 2}] \cdot \beta \cdot [\linebundleb^\prime \to (\linebundleb^\prime)^\vee \otimes (\linebundleb^\prime)^{\otimes 2}] \\ 
		&= \alpha \beta \cdot [\linebundleb \to \linebundleb^\vee \otimes \linebundleb^{\otimes 2}] \cdot [\linebundleb^\prime \to (\linebundleb^\prime)^\vee \otimes (\linebundleb^\prime)^{\otimes 2}] \\
		&= \alpha\beta \cdot [\linebundleb \otimes \linebundleb^\prime \to (\linebundleb \otimes \linebundleb^\prime)^\vee \otimes (\linebundleb \otimes \linebundleb^\prime)^{\otimes 2}] \\ 
		&= \sqperiod_{\linebundleb\otimes \linebundleb^\prime} (\alpha \beta)
	\end{align*}
	The second equality holds because the class $[\linebundleb \to \linebundleb^\vee \otimes \linebundleb^{\otimes 2}]$ lives in degree $0$ and is thus in the center of the Witt cohomology ring as per \cref{lem:chowwitt-graded-commutative}.
	The third equality follows because the product in the Witt ring $\witt(k)$ is given by tensor product of quadratic forms.
\end{proof}

A very detailed account of all technicalities concerning the choices of line bundles representing classes in $\Pic(X)/2$ and the necessary compatibilities can be found in \cite{balmer-calmes}.

\begin{definition} \label{total-chow-witt-ring}
	We define the total Chow-Witt ring of a smooth scheme $X$ as the graded ring
	\[ \chowwitt^\tot(X) \coloneqq \bigoplus_{i \in \Integer} \bigoplus_{\linebundle \in \Pic(X)/2} \chowwitt^i(X, \linebundle) \]
	with the multiplication detailed above.
\end{definition}
We will use the notation $\chowwitt^\tot$ for the $\Integer \times (\Pic(X)/2)$-graded ring, as opposed to $\chowwitt^*$ which in the literature is commonly used to denote the $\Integer$-graded ring of Chow-Witt groups with trivial twist.
The computations in this work are all concerned with the total Chow-Witt ring $\chowwitt^\tot$.

\begin{proposition}
	Let $X$, $Y$ be smooth schemes over $k$ and $f \colon X \to Y$ a morphism.
	Then the pullback map
	\[ f^* \colon \chowwitt^\tot(Y) \to \chowwitt^\tot(X) \]
	is a ring homomorphism.
\end{proposition}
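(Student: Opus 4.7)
The plan is to reduce the statement to compatibility of pullback with the exterior product $\mu$ and then invoke functoriality of pullback combined with naturality of the diagonal. Recall that the product of classes $\alpha \in \chowwitt^i(Y,\linebundle)$ and $\beta \in \chowwitt^j(Y,\linebundleb)$ is defined as $\alpha\cdot\beta = \Delta_Y^*\mu(\alpha,\beta)$, so what needs to be established is $f^*(\alpha\cdot\beta) = f^*\alpha \cdot f^*\beta$ together with $f^*(1) = 1$ and compatibility with the square periodicity isomorphism.

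The key intermediate step is naturality of $\mu$ under pullback: for morphisms $f \colon X \to Y$, $g \colon X' \to Y'$ of smooth schemes and classes $\alpha$ on $Y$, $\beta$ on $Y'$, one has $(f\times g)^* \mu(\alpha,\beta) = \mu(f^*\alpha, g^*\beta)$. The exterior product is constructed as a map of Gersten-Witt complexes assembled from products on twisted Milnor-Witt K-theory of residue fields, and it is compatible with the fiber product decomposition of \cref{eq:chain-complex-fiber-product}. Naturality of external products on the Chow complex and on the $I^*$-complex is built into their constructions, and together with compatibility of $f^*$ with the structure maps of that fiber product this assembles into the required statement.

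Combined with the obvious equality $(f\times f)\circ\Delta_X = \Delta_Y\circ f$ and functoriality of pullback on Milnor-Witt cohomology this yields
\begin{align*}
f^*(\alpha\cdot\beta) &= f^*\Delta_Y^*\mu(\alpha,\beta) = \Delta_X^*(f\times f)^*\mu(\alpha,\beta) \\
&= \Delta_X^*\mu(f^*\alpha,f^*\beta) = f^*\alpha\cdot f^*\beta.
\end{align*}
The unit $1 \in \chowwitt^0(Y,\trivialbundle_Y)$ is pulled back from $\Spec(k)$ via the structure morphism of $Y$, which factors through $f$, so $f^*(1)=1$. To see that $f^*$ descends to a well-defined morphism on the quotients indexed by $\Pic/2$, observe that $\sqperiod_\linebundleb$ is multiplication by the class $[\linebundleb \xto{\id} \linebundleb^\vee\otimes \linebundleb^{\otimes 2}]$ in degree zero, and that $f^*$ sends this class to the analogous one for $f^*\linebundleb$; combined with multiplicativity this gives $f^*\circ \sqperiod_\linebundleb = \sqperiod_{f^*\linebundleb}\circ f^*$.

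The main obstacle is the intermediate step for the general pullback of \cref{prop:general-pullback}, where no explicit chain-level description is directly available. The standard workaround is to factor $f$ as $\pi\circ\Gamma_f$ with $\pi \colon X\times Y\to Y$ the flat projection and $\Gamma_f \colon X\to X\times Y$ the graph, which is a regular embedding because $X$ is smooth over $k$. Flat pullback commutes with the chain-level product directly via \cref{prop:flat-pullback}. For the embedding $\Gamma_f$ one either invokes the deformation-to-the-normal-cone description of the pullback, under which multiplicativity is built into the construction, or appeals separately to the corresponding statements on Chow groups and $I^*$-cohomology and pastes them together through the fiber product of \cref{prop:chowwitt-is-chow-x-I}.
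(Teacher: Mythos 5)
Your proof is correct in outline but takes a more self-contained route than the paper. The paper's own argument simply cites \cite[7.2]{fasel-chowwitt} for multiplicativity of $f^*$ on the $\Pic(Y)$-indexed total Chow-Witt ring, then spends the rest of the proof on bookkeeping: pushing the grading from $\Pic(Y)$ to $\Pic(X)$ via $f^*\colon\Pic(Y)\to\Pic(X)$, and finally descending to $\Pic/2$-graded rings via the universal property of quotients. You instead re-derive the Fasel step from the structure of the product as $\Delta^*\circ\mu$, external naturality of $\mu$, and the naturality square $(f\times f)\circ\Delta_X = \Delta_Y\circ f$, which is exactly what Fasel's proof does under the hood. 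What your version buys is transparency about \emph{why} $f^*$ is multiplicative; what it costs is that the hardest step — external naturality of $\mu$ under the general (non-flat) pullback of \cref{prop:general-pullback} — is only sketched. Your workaround of factoring $f = \pr_2\circ\Gamma_f$ and treating the flat piece and the regular embedding separately is standard and does work, but it leaves the Gysin-pullback multiplicativity (deformation to the normal cone, or chain-level pasting via the fiber product square) as the real content, and that is precisely what the reference absorbs.

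Two small corrections. The graph $\Gamma_f\colon X\to X\times Y$ is a regular embedding because $Y$ is smooth over $k$, not because $X$ is: $\Gamma_f$ is a section of the smooth projection $\pr_1\colon X\times Y\to X$, whose smoothness needs $Y/k$ smooth (equivalently, $\Gamma_f$ is the base change of $\Delta_Y$ along $f\times\id_Y$). Here both are smooth so nothing breaks, but the attribution should be fixed. Second, if you go the paste-through-the-fiber-product route for the Gysin step, you should do it at the chain level (where \cite[2.8]{asok-fasel16} gives an honest pullback square of complexes) rather than at the cohomology level, since the cohomological fiber-product statement of \cref{prop:chowwitt-is-chow-x-I} is only injective under hypotheses you would not want to impose here.
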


\begin{proof}
	Previously in \cref{prop:general-pullback} we have constructed a pullback homomorphism of groups
	\[ f^* \colon \chowwitt^i(Y, \linebundle) \to \chowwitt^i(X, f^* \linebundle) \]
	for each $i \in \Integer$ and $\linebundle$ a line bundle over $Y$.
	These assemble into a map
	\[ \bigoplus_i \bigoplus_{\linebundle \in \Pic(Y)} \chowwitt^i(Y, \linebundle) \to \bigoplus_i \bigoplus_{\linebundle \in \Pic(Y)} \chowwitt^i(X, f^* \linebundle)  \]
	which is shown in \cite[7.2]{fasel-chowwitt} to be a ring homomorphism.
	Further $f$ induces a pullback map $f^* \colon \Pic(Y) \to \Pic(X)$ \textemdash which is in general neither injective nor surjective \textemdash and this in turn induces 
	\[ \bigoplus_i \bigoplus_{\linebundle \in \Pic(Y)} \chowwitt^i(X, f^* \linebundle) \to \bigoplus_i \bigoplus_{\linebundleb \in \Pic(X)} \chowwitt^i(X, \linebundleb) \, . \]
	Here the multiplicative structure on the left hand side is inherited from that of the right hand side and hence it is easy to check that this is a ring homomorphism as well.
	Now to get to the total Chow-Witt rings indexed over the $\modulo 2$-Picard group, consider the diagram
	\[ \begin{tikzcd}
		\bigoplus_{i} \bigoplus_{\linebundle \in \Pic(Y)} \chowwitt^i(Y, \linebundle) \ar[r, "f^*"] \ar[d] & \bigoplus_{i} \bigoplus_{\linebundleb \in \Pic(X)} \chowwitt^i(X, \linebundleb) \ar[d] \\
		\bigoplus_{i} \bigoplus_{\linebundle \in \Pic(Y)/2} \chowwitt^i(Y, \linebundle) \ar[r, dashed, "f^*"] & \bigoplus_{i} \bigoplus_{\linebundleb \in \Pic(X)/2} \chowwitt^i(X, \linebundleb) 
	\end{tikzcd} \]
	where the vertical arrows are the quotient maps dividing out all square periodicity isomorphisms.
	The bottom horizontal map $f^*$ then emerges from the universal property of quotients and is automatically a ring homomorphism.
\end{proof}

There are also ring structures on Chow groups, $I^j$-cohomology and $\ibar^j$-cohomology defined in an analogous way as composition of an exterior product and the pullback along the diagonal map.
The reduction maps 
\begin{align*} 
	\modulo \hyperbolic_\linebundle &\colon \chowwitt^i(X, \linebundle) \to H^i(X, I^i, \linebundle) \\
	\forgetful &\colon \chowwitt^i(X, \linebundle) \to \chow^i(X) \\
	\reduction &\colon H^i(X, I^i, \linebundle) \to H^i(X, \ibar^i) 
\end{align*}
are all $\Integer$-graded ring homomorphisms with respect to these products,
essentially because they are induced by graded ring homomorphisms
\begin{align*}
	\KMW_*(k) &\to \KMW_*(k)/\eta = \KM_*(k) \\
	\KMW_*(k) &\to \KMW_*(k)/\hyperbolic = I^*(k) \\ 
	I^*(k) &\to I^*(k)/\eta = I^*(k)/I^{*+1}(k) = \ibar^*(k) \, .
\end{align*}
In the case of $\forgetful$ and $\reduction$, these ring homomorphisms forget the $\Pic(X)/2$-grading on the domain and regard it only as a $\Integer$-graded ring, with $i$-th degree
\[ \bigoplus_{\linebundle \in \Pic(X)/2} \chowwitt^i(X, \linebundle) \to \chow^i(X) \quad \text{respectively} \: \bigoplus_{\linebundle \in \Pic(X)/2} H^i(X, I^i, \linebundle) \to H^i(X, \ibar^i) \, . \] 

\begin{definition}
	Let $X$ be a smooth scheme.
	We denote the Chow ring of $X$ by
	\begin{align*}
		\chow^*(X) &= \bigoplus_{i \in \Integer} \chow^i(X) \\
		\shortintertext{and the total $I^j$-cohomology ring by}
		H^\tot(X, I^*) &= \bigoplus_{i \in \Integer} \bigoplus_{\linebundle \in \Pic(X)/2} H^i(X, I^i, \linebundle)
	\end{align*}
	with the multiplicative structures detailed above.
\end{definition}

\section{Equivariant Chow-Witt Rings} \label{sec:equivariant-chowwitt}

In this section we define the Chow-Witt ring $\chowwitt^\tot(BG)$ of the classifying space $BG$ of an algebraic group $G$ following Totaro \cite{totaro98}.
Such a classifying space always exists in Morel-Voevodsky's $\affine^1$-homotopy category, but not in the category of schemes.
It is however possible to approximate $BG$ by certain schemes and these suffice (and are in fact quite useful) to compute the Chow-Witt ring.

\begin{proposition}[Totaro, Asok-Fasel] \label{lem:admissible-gadgets}
	Let $G$ be a linear algebraic group over $k$ and $s$ a natural number. 
	Let $V$ be a finite-dimensional faithful $G$-representation over $k$ such that $G$ acts freely outside a $G$-invariant closed subset $S \subseteq V$ of codimension $\geq s$, and the quotient $(V \setminus S)/G$ exists as a scheme over $k$. 
	Then $\Pic((V\setminus S)/G)$ is independent of the choices of $V$ and $S$ if $s \geq 3$,
	the Chow group $\chow^i((V \setminus S)/G)$ is independent of $V$ and $S$,
	and for a line bundle $\linebundle \in \Pic((V\setminus S)/G)$ and $i < s-1$, the Chow-Witt group $\chowwitt^i((V\setminus S)/G, \linebundle)$ is independent of $V$ and $S$.
	We say that $(V \setminus S)/G$ is an approximation for $BG$ in codimension $< s-1$.
\end{proposition}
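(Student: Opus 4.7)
The strategy is the standard bundle-plus-localization argument of Totaro, extended to Chow-Witt groups by Asok-Fasel. Given two admissible pairs $(V_1, S_1)$ and $(V_2, S_2)$ with each $S_i$ of codimension at least $s$, write $U_i = V_i \setminus S_i$ and form the auxiliary pair on $V_1 \oplus V_2$ with $G$ acting diagonally, removing $(S_1 \times V_2) \cup (V_1 \times S_2)$. The complement is precisely $U_1 \times U_2$, on which $G$ still acts freely, so $(U_1 \times U_2)/G$ exists as a smooth scheme and projects onto both $U_1/G$ and $U_2/G$.

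I would then combine two comparisons. First, the projection $(U_1 \times V_2)/G \to U_1/G$ is the rank $\dim V_2$ vector bundle associated to the principal $G$-bundle $U_1 \to U_1/G$ via the representation $V_2$; by homotopy invariance (\cref{prop:chowwitt-homotopy-invariance}) its pullback induces an isomorphism on total Chow-Witt groups, and likewise on Chow groups and, using $\Pic \cong \chow^1$ for smooth schemes, on Picard groups. Second, the open inclusion $(U_1 \times U_2)/G \hookrightarrow (U_1 \times V_2)/G$ has closed complement $(U_1 \times S_2)/G$ of codimension $c \geq s$; the localization sequence of \cref{chowwitt-localization}, together with the vanishing of the cohomological Gersten complex in negative cohomological degree, forces both the preceding term $\chowwitt^{i-c}(Z, -)$ and the following term $H^{i-c+1}(Z, \KMW_{i-c}, -)$ to vanish once $i < c - 1$, hence the restriction yields $\chowwitt^i((U_1 \times V_2)/G, \linebundle) \cong \chowwitt^i((U_1 \times U_2)/G, \linebundle)$ for $i < s - 1$. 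The analogous sequence for Chow groups has no right-hand term and needs only $i - c < 0$, so $\chow^i$ is independent of approximations of sufficiently large $s$ for every $i$. The stronger bound $s \geq 3$ for $\Pic$ reflects the fact that $\Pic$ indexes the Chow-Witt twists and must be identified in the same range $i < s - 1$ used for $\chowwitt$ rather than merely the weaker Chow range.

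Chaining the two isomorphisms in the first direction with their analogues obtained by swapping $V_1$ and $V_2$ gives the required canonical comparisons between $\chow^i(U_i/G)$, $\Pic(U_i/G)$, and $\chowwitt^i(U_i/G, \linebundle)$. The main technical point, rather than any genuinely hard input, is the bookkeeping of line bundle twists: since $\chowwitt^\tot$ is graded over $\Pic/2$, one must first produce canonical identifications $\Pic(U_1/G) \cong \Pic((U_1 \times U_2)/G) \cong \Pic(U_2/G)$ before the twisted Chow-Witt comparison is even formulated, which is precisely why the Picard statement is proved first and why its bound $s \geq 3$ is tuned to match the Chow-Witt range.
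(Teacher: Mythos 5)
Your argument is correct and follows essentially the same route as the paper's proof: compare both approximations through the product representation $V_1 \oplus V_2$, using homotopy invariance (\cref{prop:chowwitt-homotopy-invariance}) for the vector-bundle projections and the localization sequence (\cref{chowwitt-localization}) to kill the Gersten-complex boundary terms in the stated codimension range, with $\Pic \cong \chow^1$ handling the Picard statement. The paper organizes this as two separate steps — first $S$-independence for fixed $V$ via excision, then $V$-independence via the product — while you chain both comparisons at once through $(U_1 \times U_2)/G$, but the underlying mechanics and the required vanishing bounds are identical.
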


In the $\affine^1$-homotopy category, the colimit over a family $\{ (V_i \setminus S_i)/G \}_{i \in \Natural}$ as in the proposition with $\codim S_i \geq i$ is in fact homotopy equivalent to the classifying space $B_{\mathrm{\acute{e}t}}G$ \cite[Prop.\ 4.2.6]{morel-voevodsky}.
The computations in this work, however, will all take place in the category of schemes.
The above result is due to \cite[1.1]{totaro98} for Chow groups and was observed by \cite[3.3]{asok-fasel16} and \cite[3.1]{hornbostel-wendt} to transfer to Chow-Witt groups.

\begin{proof}
	For the statement about the Picard group, note that according to \cite[1.30]{eisenbud-harris} it is isomorphic to the first Chow group $\chow^1((V\setminus S)/G)$
	which is independent of $V$ and $S$ by \cite[2.5]{totaro-bluebook}.
	
	Fix a (finite-dimensional faithful) representation $V$ and let $S \subseteq S^\prime \subseteq V$ be two subsets satisfying the conditions.
	Consider the decomposition
	\[ (S^\prime \setminus S)/G \xhookrightarrow{i} (V \setminus S/G) \xhookleftarrow{j} (V \setminus  S^\prime)/G \, .  \]
	The subset $S/G \subseteq S^\prime/G$ being closed, it follows from excision \cite[Lemma 2.13]{asok-fasel16} that $j^*$ is an isomorphism on Chow-Witt groups in codimension $< s-1$.
	In case $S$ and $S^\prime$ are smooth this also follows from the localization sequence
	\begin{multline*} 
		\ldots \to \chowwitt^{i-r}((S^\prime \setminus S)/G, i^* \linebundle \otimes \det \Omega_{\left( (S^\prime \setminus S)/G \right) / \left( (V \setminus S)/G \right) }) \\ 
		\xrightarrow{i_*} \chowwitt^i((V \setminus S)/G, \linebundle) \xrightarrow{j^*} \chowwitt^i((V \setminus S^\prime)/G, j^* \linebundle) \to \ldots  
	\end{multline*}
	where $r$ is the codimension of $(S^\prime \setminus S)/G$ in $(V \setminus S)/G$, since Chow-Witt groups (and, in fact, all Milnor-Witt and similar cohomology groups) vanish in negative codimension.
	For two closed subsets $S \nsubseteq S^\prime$ apply the same argument to $S \subseteq S \cup S^\prime$ and $S^\prime \subseteq S \cup S^\prime$.
	
	Now let $V$ and $W$ be two representations with subsets $S \subseteq V$ and $T \subseteq W$ satisfying the conditions, and let $s$ be a mutual lower bound for the codimensions of these subsets.
	Consider the vector bundles $(V \times W) \setminus (S \times W)/G \to (V \setminus S)/G$ and $(V \times W) \setminus (V \times T)/G \to (W \setminus T)/G$.
	Independence of $S$ as proven above shows that the two total spaces have isomorphic Chow-Witt groups in degrees $< s-1$,
	and homotopy invariance shows that the same is true for the base spaces.
\end{proof}

\begin{example}\label{ex:admissible-gadgets}
	Consider the group $\Gm$, acting by multiplication on $V = \affine^{r+1}$ for some $r \geq 1$.
	This action is free outside of $S = \{ 0 \}$ which has codimension $r+1$.
	The quotient $(\affine^{r-1} \setminus \{ 0\})/\Gm \cong \projective^r$ exists as a scheme.
	Thus $\projective^r$ is an approximation for $B\Gm$ in codimension $< r$.
	
	Fasel \cite[11.8]{fasel-projbundle} computes
	\begin{align*}
		\chowwitt^i(\projective^r, \trivialbundle) &\cong \begin{cases*}
			\GW(k) & $i=0$, or $i=r$ and $r$ odd \\
			\Integer & $0 < i \leq r$ and $i$ even \\
			2 \Integer & $0 < i < r$ and $i$ odd 
		\end{cases*}\\
		\chowwitt^i(\projective^r, \trivialbundle_{\projective^r}(1)) &\cong \begin{cases*}
			2 \Integer & $0 \leq i < r$ and $i$ even \\
			\Integer & $0 \leq i \leq r$ and $i$ odd \\
			\GW(k) & $i=r$ and $r$ even.
		\end{cases*}\\
		\shortintertext{Thus we find}
		\chowwitt^i(B\Gm, \trivialbundle_{B\Gm}) &\cong \begin{cases*}
			\GW(k) & $i=0$ \\
			\Integer & $i > 0$ and $i$ even \\
			2 \Integer & $i > 0$ and $i$ odd \\
		\end{cases*}\\
		\chowwitt^i(B\Gm, \trivialbundle_{B\Gm}(1)) &\cong \begin{cases*}
			2\Integer & $i \geq 0$ and $i$ even \\
			\Integer & $i \geq 0$ and $i$ odd
		\end{cases*}
	\end{align*}
	The factor $2$ indicated that the image of the respective group under reduction $\forgetful$ to the Chow group is generated by $2$.
\end{example}
\begin{example} \label{ex:chow-ring-of-Bmun}
	Denote by $V_a$ the line bundle over $\Spec(k)$ associated to the $1$-dimension representation of $\Gm$ given by $\lambda.v = \lambda^n \cdot v$.
	Then $(V_n \times (V_1^{r+1} \setminus \{0\}))/\Gm$ is a line bundle over $(V_1^{r+1} \setminus \{0\})/\Gm \cong \projective^r$ and the complement of its zero section is an approximation for $B\mu_n$ in codimension $<r$, as will be explained in \cref{sec:model-for-Bmun}.
	This approximation was already used by \cite{totaro-bluebook} and \cite{brosnan-steenrod} to compute the Chow groups of $B\mu_n$ and then adapted by \cite{diLorenzo-Mantovani} to compute the Chow-Witt groups of $B\mu_n$ if $n$ is even.
	For both computations, one can consider the localization sequence associated to the decomposition 
	\[ \projective^r \xrightarrow{s_0} (V_n \times (V_1^{r+1} \setminus \{0\}))/\Gm \hookleftarrow \left( \left(V_n \times (V_1^{r+1} \setminus \{0\})\right)/\Gm \right) \setminus s_0(\projective^r) \, . \]
	For Chow groups it then easily follows that
	\[ \chow^i(B\mu_n) \cong \chow^i(B\Gm)/n \]
	and one obtains 
	\[ \chow^*(B\mu_n) \cong \Integer[\chern]/n \cdot \chern \]
	where $\chern$ corresponds to the first Chern class of the tautological bundle $\trivialbundle_{B\mu_n}(-1)$.
	For Chow-Witt groups the sequence will turn out to be more complicated.
	This will be the subject of \cref{sec:chowwitt-groups-Bmun}.
\end{example}

The existence of such $V$ and $S$ for a given linear algebraic group $G$ and codimension $s$ is guaranteed by \cite[Remark 1.4]{totaro98}.
This allows for the following definition.

\begin{definition}\label{def:chowwitt-of-classifying-space}
	Let $G$ be a linear algebraic group over $k$.
	Set
	\begin{align*}
		\Pic(BG) & \cong \Pic((V\setminus S)/G) \\
		\intertext{for $V$ and $S$ as in \cref{lem:admissible-gadgets} with codimension of $S$ greater than $2$, and for $\linebundle \in \Pic(BG)$ set}
		\chowwitt^i(BG, \linebundle) & \cong \chowwitt^i((V \setminus S)/G, \linebundle)
	\end{align*}	
	for $V$ and $S$ as in \cref{lem:admissible-gadgets} with codimension of $S$ greater than $i+1$.
	The product of two classes $\alpha \in \chowwitt^i(B\Gm, \linebundle)$, $\beta \in \chowwitt^j(B\Gm, \linebundleb)$
	is defined as their product in an approximation of $BG$ in codimensions $\geq i+j$.
	This is well-defined because the map $j^*$ in the proof of \cref{lem:admissible-gadgets} is a ring homomorphism.
	The ring obtained in this way is the total Chow-Witt ring $\chowwitt^\tot(BG)$.
	The Chow ring $\chow^*(BG)$ is defined analogously.
\end{definition}
The product obtained this way is well-defined because the map $j^*$ in the localization sequence in the proof of \cref{lem:admissible-gadgets} is a ring homomorphism.

The same argument as for \cref{lem:admissible-gadgets} proves the following two statements.
\begin{corollary}\label{lem:admissible-gadgets-nondiag}
	Let $G$, $V$ and $S$ be as in \cref{lem:admissible-gadgets}.
	Then for $i < s-1$ and all $j$, the Milnor-Witt cohomology $H^i ((V\setminus S)/G, \KMW_j, \linebundle)$ is independent of the choice of $V$ and $S$.
\end{corollary}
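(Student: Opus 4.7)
The plan is to recycle the proof of Proposition \ref{lem:admissible-gadgets} essentially verbatim, replacing $\chowwitt^i(-,\linebundle)$ by $H^i(-,\KMW_j,\linebundle)$ throughout, after checking that each of the two geometric inputs used there—excision/localization and homotopy invariance—remains available for Milnor-Witt cohomology in arbitrary bidegree. Both tools are formulated at the level of the Gersten-Witt complex, so nothing pins them to the diagonal bidegree.

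First I would fix a faithful representation $V$ and compare two admissible $G$-invariant closed subsets $S \subseteq S' \subseteq V$, each of codimension $\geq s$. Writing $j \colon (V\setminus S')/G \hookrightarrow (V\setminus S)/G$ with closed complement $(S' \setminus S)/G$ of codimension $c \geq s$, the localization sequence (the bigraded form of Proposition \ref{chowwitt-localization}, which the same construction provides for all $j$) yields
\[
H^{i-c}((S'\setminus S)/G,\KMW_{j-c},-) \to H^i((V\setminus S)/G,\KMW_j,\linebundle) \xrightarrow{j^*} H^i((V\setminus S')/G,\KMW_j,j^*\linebundle) \to H^{i-c+1}((S'\setminus S)/G,\KMW_{j-c+1},-).
\]
Both outer terms vanish whenever $i < s-1 \leq c-1$, since the Gersten-Witt complex for any bidegree is supported in non-negative codimensions. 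So $j^*$ is an isomorphism in all such bidegrees. For two incomparable admissible subsets, one reduces to this case by comparing each with their union.

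Next I would compare two different admissible pairs $(V,S)$ and $(W,T)$. Form the representation $V \oplus W$, which has the admissible closed subsets $S \times W$ and $V \times T$ (each of codimension $\geq s$). Then $((V\oplus W)\setminus (S\times W))/G$ is the total space of a vector bundle over $(V\setminus S)/G$, and similarly with the roles of $V$ and $W$ exchanged. Homotopy invariance identifies the Milnor-Witt cohomology of each total space with that of its base, and Step 1 (applied inside $V \oplus W$) identifies the two total spaces up to the relevant codimension. Composing these isomorphisms delivers the claim.

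The only non-mechanical point is that the version of homotopy invariance I need is for $H^i(-,\KMW_j,-)$ in arbitrary $j$, not only the Chow-Witt case stated in Proposition \ref{prop:chowwitt-homotopy-invariance}. This will be the main thing to pin down, but it is contained in the same argument of Fasel (cf.\ \cite[Corollaire 11.3.2]{fasel-groupes}), which works directly on the Gersten-Witt complex and therefore respects the $\KMW$-degree. Once this reference is noted, the proof reduces to an index bookkeeping that is identical to the Chow-Witt case.
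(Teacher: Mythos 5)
Your proposal is correct and reproduces essentially the argument the paper intends: the paper's proof of this corollary is the single sentence "The same argument as for \cref{lem:admissible-gadgets} proves the following," and you have spelled that argument out, correctly identifying that the only point deserving attention is the extension of homotopy invariance to non-diagonal bidegrees (which, as you note, comes for free from the Gersten-Witt complex formulation in Fasel). One minor remark: the paper's primary tool in the first step is excision (\cite[Lemma 2.13]{asok-fasel16}), with the localization sequence offered as an alternative valid when $S$ and $S'$ are smooth; you use the localization sequence directly, which is what the paper actually falls back on anyway and does not change the substance of the argument.
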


\begin{corollary}\label{lem:admissible-gadgets-Ij}
	Let $G$, $V$ and $S$ be as in \cref{lem:admissible-gadgets}.
	Then for $i < s-1$ and all $j$, the $I^j$-cohomology $H^i ((V\setminus S)/G, I^j, \linebundle)$ is independent of the choice of $V$ and $S$.
\end{corollary}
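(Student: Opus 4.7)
The plan is to mimic the proof of \cref{lem:admissible-gadgets} almost verbatim, just replacing Chow-Witt groups by $I^j$-cohomology throughout. The three ingredients needed are the analogues for $I^j$-cohomology of: (a) a localization sequence, (b) homotopy invariance, and (c) vanishing in negative codimension. All three are standard and follow from the construction of $I^j$-cohomology as the cohomology of the subcomplex $C^*(X, I^*, j, \linebundle) \subseteq C^*(X, \KMW_*, j, \linebundle)$; in particular, any closed-open decomposition of a smooth scheme gives a short exact sequence of Gersten complexes (the $I^*$-analogue of the sequence underlying \cref{chowwitt-localization}), and the pullback along a vector bundle projection is an isomorphism on $I^j$-cohomology. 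Vanishing in negative codimension is immediate from the definition, since the Gersten complex is supported in non-negative codimensions.

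First I would fix a representation $V$ and compare two admissible closed subsets $S \subseteq S' \subseteq V$. Writing $i \colon (S' \setminus S)/G \hookrightarrow (V \setminus S)/G$ and $j \colon (V \setminus S')/G \hookrightarrow (V \setminus S)/G$ and applying the $I^j$-cohomology localization sequence (assuming for now that $S$ and $S'$ are smooth; otherwise one invokes excision directly), I would obtain
\[
	\ldots \to H^{i-r}\bigl((S' \setminus S)/G, I^{j-r}, \ph\bigr) \xrightarrow{i_*} H^i\bigl((V\setminus S)/G, I^j, \linebundle\bigr) \xrightarrow{j^*} H^i\bigl((V\setminus S')/G, I^j, j^*\linebundle\bigr) \to \ldots
\]
where $r \geq s$ is the codimension of $(S'\setminus S)/G$ in $(V\setminus S)/G$. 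Since $I^j$-cohomology vanishes in negative codimension, both outer terms are zero for $i < s-1$, so $j^*$ is an isomorphism. For two subsets with neither contained in the other, apply the argument to $S \subseteq S \cup S'$ and $S' \subseteq S \cup S'$.

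Second, I would vary the representation. Given two admissible pairs $(V, S)$ and $(W, T)$ with $\codim S, \codim T \geq s$, consider the two vector bundle projections
\[
	\bigl((V\times W) \setminus (S \times W)\bigr)/G \to (V\setminus S)/G \quad \text{and} \quad \bigl((V \times W) \setminus (V \times T)\bigr)/G \to (W\setminus T)/G.
\]
By homotopy invariance of $I^j$-cohomology, each projection induces an isomorphism on $H^i(-, I^j, -)$ in all degrees. Moreover, the two total spaces are both of the form $((V\times W) \setminus S'')/G$ for admissible closed subsets of $V \times W$; by the first part of the argument, applied inside the single representation $V \times W$, they have isomorphic $I^j$-cohomology in degrees $i < s-1$. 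Composing the three isomorphisms gives the desired comparison.

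The only minor obstacle is ensuring that the $I^j$-analogue of the localization sequence and of homotopy invariance are available in the twisted setting with the same conventions used for Chow-Witt groups; both follow from the fact that $C^*(-, I^*, j, \linebundle)$ is a subcomplex of the Gersten-Witt complex preserved under all the relevant maps (flat pullback, proper pushforward, boundary), so the proofs quoted from \cite{fasel-groupes} and \cite{fasel-lectures} for $\chowwitt^*$ restrict directly to $H^*(-, I^*, -)$. Beyond that the argument is formally identical to the Chow-Witt case.
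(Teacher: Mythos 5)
Your proposal is correct and takes essentially the same route as the paper, which simply states that the corollary follows by "the same argument as for \cref{lem:admissible-gadgets}." You correctly identify that the three ingredients (localization sequence, homotopy invariance, vanishing in negative codimension) all restrict to the subcomplex $C^*(X, I^*, j, \linebundle)$ of the Gersten–Witt complex, and you reproduce the paper's two-step comparison (first varying $S$ inside a fixed $V$ via localization/excision, then varying $V$ via vector bundle projections), including the caveat that excision handles the case where $S$, $S'$ are not smooth.
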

This allows for a definition of non-diagonal Milnor-Witt and $I^j$-cohomology groups of a classifying space completely analogous to \cref{def:chowwitt-of-classifying-space}.

\begin{corollary}\label{lem:admissible-gadgets-products}
	Let $G$ and $H$ be linear algebraic groups over $k$.
	Let $V$ be a finite-dimensional faithful $G$-representation with subset $S \subseteq V$ of codimension $\geq s$ and $W$ a finite-dimensional faithful $H$-representation with subset $T \subseteq W$ of codimension $\geq t$ as in \cref{lem:admissible-gadgets}.
	Then $\chowwitt^i((V \setminus S)/G \times (W\setminus T)/H, \linebundle)$ for $i < \min(s,t)-1$ is independent of $V$, $S$, $W$ and $T$.
\end{corollary}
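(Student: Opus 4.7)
The plan is to imitate closely the proof of \cref{lem:admissible-gadgets}, using that both of its main ingredients---excision/localization and homotopy invariance---interact well with products. By symmetry it suffices to fix $W$ and $T$ and show independence of the choice of $V$ and $S$, and then separately fix $V$ and $S$ and vary the other side.

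First I would address the case of two nested subsets $S \subseteq S' \subseteq V$ (both of codimension $\geq s$ in $V$), with $V$ fixed. Taking the product with $(W \setminus T)/H$ preserves codimensions, so the closed immersion
\[ \bigl((S' \setminus S)/G\bigr) \times (W \setminus T)/H \xhookrightarrow{i} (V \setminus S)/G \times (W \setminus T)/H \xhookleftarrow{j} (V \setminus S')/G \times (W\setminus T)/H \]
still has closed part of codimension $\geq s$. Excision (\cite[Lemma 2.13]{asok-fasel16}) then shows that $j^*$ induces an isomorphism on $\chowwitt^i(-, \linebundle)$ for $i < s-1$; alternatively, in the smooth case one invokes the localization sequence of \cref{chowwitt-localization} and uses that Chow-Witt groups vanish in negative codimension. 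For two arbitrary $S$, $S'$ one compares both to $S \cup S'$ as in the original proof.

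Second, I would compare two different representations $V$, $V'$ with subsets $S \subseteq V$, $S' \subseteq V'$ of codimensions $\geq s$ and $\geq s'$ respectively. Consider the diagonal $G$-action on $V \times V'$, and form the two vector bundles
\begin{align*}
\bigl((V \times V') \setminus (S \times V')\bigr)/G \times (W \setminus T)/H &\longrightarrow (V \setminus S)/G \times (W \setminus T)/H, \\
\bigl((V \times V') \setminus (V \times S')\bigr)/G \times (W \setminus T)/H &\longrightarrow (V' \setminus S')/G \times (W \setminus T)/H.
\end{align*}
By the previous step the two total spaces have isomorphic Chow-Witt groups in codimension $< \min(s,s')-1$ (since the two closed subsets defining them both have codimension $\geq \min(s,s')$ in $V \times V'$), and by the homotopy invariance of \cref{prop:chowwitt-homotopy-invariance} each vector bundle projection induces an isomorphism on $\chowwitt^\tot$. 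Concatenating these isomorphisms gives the desired independence of $(V,S)$.

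Applying the same argument on the $H$-factor (with $V$, $S$ fixed) completes the proof, yielding independence in the combined range $i < \min(s,t) - 1$. The mild technical point to watch is that the twisting line bundle $\linebundle$ must be transported along the comparison maps; but the pullbacks along $j^*$ and along vector bundle projections are ring homomorphisms that induce isomorphisms on $\Pic/2$ in the relevant range (identified with $\chow^1$), so $\linebundle$ matches up canonically on both sides. The main obstacle, as in the original lemma, is simply bookkeeping: ensuring that the codimension bound $s-1$ (respectively $t-1$) is preserved under taking products and that the hypotheses of excision and homotopy invariance actually apply to the product schemes in question.
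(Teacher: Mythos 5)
Your argument is correct, but it takes a noticeably more laborious route than the paper's. The paper observes that
\[
(V \setminus S)/G \times (W \setminus T)/H \;\cong\; \bigl((V \times W) \setminus \bigl((S \times W) \cup (V \times T)\bigr)\bigr)/(G \times H),
\]
that $G \times H$ acts freely outside the closed invariant set $(S \times W) \cup (V \times T)$, and that this set has codimension $\geq \min(s,t)$ in $V \times W$. This exhibits the product scheme as an approximation of $B(G\times H)$, and \cref{lem:admissible-gadgets} then gives the statement directly---a two-line proof. You instead re-run the internal argument of \cref{lem:admissible-gadgets} (excision for nested closed subsets, then homotopy invariance for two vector bundles over the two candidate approximations) with the extra factor $(W \setminus T)/H$ carried along, first varying $(V,S)$ and then by symmetry $(W,T)$. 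Every step you write is sound: codimensions are preserved under taking the product with a fixed smooth scheme, excision and the localization sequence apply, and homotopy invariance applies to the product vector bundles. The only thing your approach misses is the economy of recognizing the product as a single admissible gadget for the product group; yours essentially re-proves the special case of \cref{lem:admissible-gadgets} for $G\times H$ without saying so. Both proofs establish the result in the claimed range $i < \min(s,t)-1$.
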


\begin{proof}
	Consider $V \times W$ as the obvious representation of the group $G \times H$.
	Then $G \times H$ acts freely outside of $(S \times W) \cup (V \times T)$ and the latter has codimension $\geq \min(s,t)$ in $V \times W$.
	Thus one can apply \cref{lem:admissible-gadgets}.
\end{proof}

\section{Euler Classes}

\begin{definition} \label{def:euler-class}
Let $X$ be a smooth scheme, $\pi \colon \vectorbundle \to X$ a vector bundle of rank $r$ with zero section $s_0$, $\linebundle$ a line bundle over $X$ and $i$ an integer.
The Euler map of $\pi$ is defined as the composition
\[ \chowwitt^{i-r}(X, \linebundle \otimes \omega_{\vectorbundle/X})) \xrightarrow{(s_0)_*} \chowwitt^i(\vectorbundle, \pi^* \linebundle) \xrightarrow{(\pi^*)^{-1}} \chowwitt^i(X, \linebundle) \]
and the Euler class $\eulercw(\vectorbundle)$ or $\eulercw(\pi) \in \chowwitt^r(X, \omega_{\vectorbundle/X}^\vee)$ is the image of $1 \in \chowwitt^0(X, \trivialbundle)$ under this map.
The same definition can be made for $I^j$-cohomology (also called Euler class $\euleri$), Chow groups (called top Chern class $\chern_r$) and mod $2$-Chow groups (called top Stiefel-Whitney class $\sw_r$).
\end{definition}
Since in this work we will only ever deal with top Chern and Stiefel-Whitney classes, we will omit the index $r$ from the notation.
As explained in \cite[Section 13.2]{fasel-groupes} there is an isomorphism of vector bundles $\det( \Omega_{E/X}) \cong \pi^* \det( E^\vee)$.
Note that the line bundles $\det E$ and $\det E^\vee$ differ by a square and thus the Chow-Witt groups twisted by these are isomorphic.
This justifies considering $\eulercw(E)$ to live in $\chowwitt^r(X, \det E^\vee)$ even though technically it lives in $\chowwitt^r(X, \det E)$, which in some cases makes the notation slightly more readable (e.g.\ writing $\trivialbundle(1)$ instead of $\trivialbundle(-1)$).
As explained after \cref{prop:proper-pushforward,prop:flat-pullback}, the Euler map is compatible with the maps $\forgetful$, $\reduction$, $\modulo \hyperbolic_\linebundle$ and $\modulo 2$ introduced in the diagram \cref{eq:key-diagram}, that is, $\forgetful (\eulercw(\vectorbundle)) = \chern(\vectorbundle)$, $\reduction(\euleri(\vectorbundle)) = \sw(\vectorbundle)$ and so on. 
Further the hyperbolic map $\hyperbolic_{\det \Omega_{\vectorbundle/X}}$ sends $ \chern(\vectorbundle)$ to $\hyperbolic \eulercw(\vectorbundle)$: 
The hyperbolic map commutes with the Euler map, and since the latter is $\GW(k)$-linear it sends $\hyperbolic_\trivialbundle(1) = \hyperbolic = \hyperbolic \cdot 1$ to $\hyperbolic \cdot \eulercw(\vectorbundle)$.

We will prove a formula for the Euler class of the tensor product of certain line bundles, following \cite[Theorem 10.1(2)]{levine-enumerative}.

\begin{proposition} \label{prop:euler-class-product}
	Let $X$ be a smooth scheme over $k$ and $\linebundle, \mathcal{M}$ line bundles over $X$.
	Then
	\[ \eulercw(\linebundle \otimes \linebundleb^{\otimes 2}) = \eulercw(\linebundle) + \hyperbolic_{\linebundle}(\chern(\linebundleb)) \]
	where $\hyperbolic_{\linebundle^\vee} \colon \chow^1(X) \to \chowwitt^1(X, \linebundle)$ is the hyperbolic map.
	In particular:
	\[ \eulercw(\linebundle^{\otimes n}) = \begin{cases*}
		\frac{n}{2} \cdot \hyperbolic_{\trivialbundle}(\chern(\linebundle)) & $n$ even \\
		\frac{n}{2} \hyperbolic \cdot \eulercw(\linebundle) & $n$ odd 
	\end{cases*} \]
\end{proposition}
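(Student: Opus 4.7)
The strategy is to exploit the fiber product decomposition of Chow-Witt groups from \cref{prop:chowwitt-is-chow-x-I}: two classes in $\chowwitt^1(X, \linebundle)$ agree provided their images under $\forgetful$ in $\chow^1(X)$ and under $\modulo \hyperbolic$ in $H^1(X, I^1, \linebundle)$ coincide, once the injectivity hypothesis of that proposition is met. Since injectivity can fail on a general $X$, I would first classify $\linebundle$ and $\linebundleb$ by a morphism $f \colon X \to B\Gm \times B\Gm$ and reduce the identity to the universal case on an approximation $\projective^r \times \projective^r$ via \cref{lem:admissible-gadgets-products}; there $\chow^*$ is torsion free, so the canonical map into the fiber product is injective in every relevant bidegree and it suffices to match the two reductions.

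Checking the formula after $\forgetful$ is routine: additivity of the first Chern class under tensor products of line bundles yields
\[ \forgetful(\eulercw(\linebundle \otimes \linebundleb^{\otimes 2})) = \chern(\linebundle) + 2 \chern(\linebundleb), \]
and the relation $\forgetful \circ \hyperbolic_\linebundle = \cdot 2$ applied to the right hand side of the claimed identity produces the same expression.

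The essential content lies on the $I^j$-side, which I expect to be the main obstacle. After $\modulo \hyperbolic$ the hyperbolic summand on the right hand side vanishes, so it remains to verify that $\modulo \hyperbolic (\eulercw(\linebundle \otimes \linebundleb^{\otimes 2}))$ and $\modulo \hyperbolic (\eulercw(\linebundle))$ are identified in $H^1(X, I^1, \linebundle)$ by the square periodicity isomorphism of \cref{chowwitt-quadratic-periodicity}. My approach would be to unfold the Euler map as pushforward along the zero section composed with the inverse of flat pullback, and to observe that the zero section of $\pi \colon \linebundle \otimes \linebundleb^{\otimes 2} \to X$ differs from that of $\linebundle \to X$ only by twisting the normal bundle by $\linebundleb^{\otimes 2}$; this extra twist is absorbed by the unit quadratic form on $\linebundleb$ implementing square periodicity, so the two pushforwards are intertwined by this isomorphism.

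Finally, the "In particular" formula follows by iteration. For $n = 2m$ even, writing $\linebundle^{\otimes n} = \trivialbundle \otimes (\linebundle^{\otimes m})^{\otimes 2}$ and applying the main identity with $\eulercw(\trivialbundle) = 0$ gives $\eulercw(\linebundle^{\otimes n}) = \frac{n}{2}\, \hyperbolic_\trivialbundle(\chern(\linebundle))$. For $n = 2m+1$ odd, the decomposition $\linebundle^{\otimes n} = \linebundle \otimes (\linebundle^{\otimes m})^{\otimes 2}$ combined with the relation $\hyperbolic_\linebundle(\chern(\linebundle)) = \hyperbolic \cdot \eulercw(\linebundle)$ noted just after \cref{def:euler-class} produces the stated expression.
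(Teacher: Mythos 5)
Your overall strategy coincides with the paper's: classify $\linebundle,\linebundleb$ by a map to $B\Gm \times B\Gm$, verify the universal identity on an approximation $\projective^r \times \projective^r$, and pull back. The point of difference is \emph{which} injectivity is used to certify the universal identity. The paper observes that the reduction $\forgetful\colon \chowwitt^1(B\Gm\times B\Gm,\trivialbundle(-1,-2))\to\chow^1(B\Gm\times B\Gm)$ is already injective (one can see this directly from \cref{prop:chowwitt-of-BGmxBGm}, where the degree-$(1,\trivialbundle(1,0))$ component maps isomorphically onto $\Integer\langle\cherna\rangle\oplus 2\Integer\langle\chernb\rangle\subseteq\chow^1$), so that only the Chow-side comparison is needed. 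You instead invoke the full Hornbostel–Wendt fiber product \cref{prop:chowwitt-is-chow-x-I}, using the absence of $2$-torsion in $\chow^*(\projective^r\times\projective^r)$, and therefore must also match the two sides after $\modulo\hyperbolic$. That extra check is genuinely needed in your version, and this is the one place where your proposal is too vague: "the zero section differs only by twisting the normal bundle" does not, by itself, explain why the pushforwards along the two \emph{different} zero sections $X\hookrightarrow\linebundle$ and $X\hookrightarrow\linebundle\otimes\linebundleb^{\otimes 2}$ produce classes identified by $\sqperiod_\linebundleb$. The clean way to close this gap is the Bockstein description \cref{lem:bockstein-of-sw}: one has $\euleri(\linebundle)=\bockstein_{\linebundle^\vee}(1)$, and the Bockstein depends on its twist only through $\Pic(X)/2$, so $\euleri(\linebundle\otimes\linebundleb^{\otimes 2})=\bockstein_{(\linebundle\otimes\linebundleb^{\otimes 2})^\vee}(1)=\sqperiod_\linebundleb(\euleri(\linebundle))$. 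With that substituted, your argument is complete, and arguably more robust than the paper's phrasing of the injectivity step, which appeals to the vanishing of an $I^j$-cohomology group in a bidegree and twist where the group is actually nonzero (the correct group controlling $\ker\forgetful$ is $H^1(X,I^2,\linebundle)$, and its \emph{image} in $\chowwitt^1$ vanishes, but the group itself does not). Your iteration of the "in particular" formula is correct and matches the paper.
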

\begin{proof}
	Let us first recall where all the occurring classes live.
	The Euler class $\eulercw(\linebundle)$ is an element of $\chowwitt^1(X, \linebundle)$, and likewise $\eulercw(\linebundle \otimes \mathcal{M}^{\otimes 2}) \in \chowwitt^1(X, \linebundle \otimes \mathcal{M}^{\otimes 2})$ which by quadratic periodicity (\cref{chowwitt-quadratic-periodicity}) is isomorphic to $\chowwitt^1(X, \linebundle)$. 
	The Chern class $\chern(\mathcal{M})$ lives in $\chow^1(X)$.
	
	We start by proving the statement for the classifying space for line bundles $B\Gm = \projective^\infty$ together with the tautological bundle $\linebundle = \mathcal{M} = \trivialbundle(-1)$.
	$B\Gm$ is not a scheme but a motivic space, but according to \cref{lem:admissible-gadgets} one can understand its Chow-Witt groups and Euler classes through $\projective^r$ for $r$ sufficiently large.
	Denote by $\trivialbundle(n,m)$ the line bundle over $B\Gm \times B\Gm$ that is the tensor product of the pullback of $\trivialbundle_{B\Gm}(n)$ on the first factor and the pullback of $\trivialbundle_{B\Gm}(m)$ on the second factor.
	We want to determine the Euler class of the bundle $\linebundle \otimes \mathcal{M}^2 = \trivialbundle(-1,-2)$ over $B\Gm \times B\Gm$.
	Consider the reduction map $\forgetful$ to $\chow^1(B\Gm \times B\Gm)$ which maps the Euler class of a line bundle to its first Chern class.
	The first Chern class is additive, thus $\forgetful \eulercw (\linebundle \otimes \linebundleb^{\otimes 2}) =\chern(\linebundle \otimes \mathcal{M}^{\otimes 2}) = \chern(\linebundle) + 2\chern(\mathcal{M})$.
	
	Under the same map $\eulercw(\trivialbundle(-1, 0)) \in \chowwitt^1(B\Gm \times B\Gm, \trivialbundle(-1,0))$ is sent to $\chern(\trivialbundle(0,-1))$, and $\hyperbolic_{\trivialbundle(-1,0)}(\chern(\trivialbundle(0,-1)))$ is sent to $2\chern(\trivialbundle(0,-1))$.
	The reduction map is injective in this case, since its kernel is the image of $H^1(B\Gm \times B\Gm, I^1, \trivialbundle(-1,-2))$ in the long exact sequence coming from the sequence of sheaves $I^{j+1} \to \KMW_{j} \to \KM_j$ as stated in \cref{eq:KMW-decomposition-ses}, and that group will be shown to vanish in \cref{thm:Ij-of-PxP} (which does not rely on this \namecref{prop:euler-class-product}).
	From this it follows that $\eulercw(\trivialbundle(-1,-2)) = \eulercw(\trivialbundle(-1,0)) + \hyperbolic_{\trivialbundle(-1,0)}(\chern(\trivialbundle(0,-1)))$.
	
	Now for the general statement.
	Let $\linebundle$ and $\mathcal{M}$ be two line bundles over $X$.
	According to \cite[Prop.\ 4.3.8]{morel-voevodsky} there exist classifying maps $f,g \colon X \to B\Gm$ so that $\linebundle \cong f^*\trivialbundle(-1)$ and $\linebundleb \cong g^* \trivialbundle(-1)$ and therefore $\pr_1^* \linebundle \otimes \pr_2^*\mathcal{M}^{\otimes 2} \cong (f,g)^* \trivialbundle(-1,-2)$ over $B\Gm \times B\Gm$.
	Thus we compute:
	\begin{align*}
		\eulercw(\linebundle \otimes \mathcal{M}^{\otimes 2}) &= \eulercw((f,g)^* \trivialbundle(-1,-2)) \\
		&= (f,g)^* \eulercw(\trivialbundle(-1,-2)) \\
		&= (f,g)^* \eulercw(\trivialbundle(-1,0)) + (f,g)^* \hyperbolic_{\trivialbundle(-1,0)}(\chern(\trivialbundle(0,-1))) \\
		&= (f,g)^* \eulercw(\trivialbundle{-1,0}) + \hyperbolic_{(f,g)^* \trivialbundle(-1,0)}((f,g)^*\chern(\trivialbundle(0,-1))) \\
		&= \eulercw(\linebundle) + \hyperbolic_{\linebundle}(\chern(\mathcal{M})) 
	\end{align*}
	The statement about $\linebundle^{\otimes n}$ can be deduced inductively, inserting $\hyperbolic_{\linebundle}(\chern \linebundle) = \hyperbolic \eulercw(\linebundle)$ in the odd case.
\end{proof}

\begin{remark}
	For arbitrary line bundles $\linebundle$, $\linebundleb$ over a scheme $X$ there is no formula of this kind as explained in the third paragraph of \cite[Section 10]{levine-enumerative}:
	Consider the universal case $X = B\Gm \times B\Gm$, $\linebundle = \pr_1^*\trivialbundle_{B\Gm}(-1)$, $\linebundleb = \pr_2^* \trivialbundle_{B\Gm}(-1)$.
	Then
	\begin{align*}
		\eulercw(\trivialbundle(-1,0)) &\in \chowwitt^1(B\Gm \times B\Gm, \trivialbundle(1,0)) \\
		\eulercw(\trivialbundle(0,-1)) & \in \chowwitt^1(B\Gm \times B\Gm, \trivialbundle(0,1)) \\
		\eulercw(\trivialbundle(-1,0)\otimes \trivialbundle(0,-1)) = \eulercw(\trivialbundle(-1,-1)) & \in \chowwitt^1(B\Gm \times B\Gm, \trivialbundle(1,1)) \, .
	\end{align*}
	Thus to express $\eulercw(\trivialbundle(-1,-1))$ in terms of $\eulercw(\trivialbundle(-1,0))$ and $\eulercw(\trivialbundle(0,-1))$, one would additionally need classes in degrees $(0, \trivialbundle(1,0))$ and $(0, \trivialbundle(0,1))$.
	Levine now claims that the groups in those degrees vanish, which is not quite correct since \cite[Theorem 1.1]{wendt} does not imply $\chowwitt^0(B\Gm, \trivialbundle(1)) = 0$ as stated in \cite[Section 10, p.2220]{levine-enumerative}.
	The rest of Levine's argument, however, still stands: in \cref{prop:chowwitt-of-BGmxBGm} we prove that the subgroup
	\begin{multline*} 
	\left( \chowwitt^0(B\Gm, \trivialbundle(1,0)) \cdot \chowwitt^1(B\Gm, \trivialbundle(0,1)) \right) \\
	+ \left( \chowwitt^0(B\Gm, \trivialbundle(0,1)) \cdot \chowwitt^1(B\Gm , \trivialbundle(1,0)) \right) \subseteq \chowwitt^1(B\Gm, \trivialbundle(1,1)) 
	\end{multline*}
	does not contain $\eulercw(\linebundle \otimes \linebundleb)$ but only $2 \cdot \eulercw(\linebundle \otimes \linebundleb)$ (using the symbols of \cref{prop:chowwitt-of-BGmxBGm}: the subgroup on the left is generated by $\hyperbolica \eulercwb$ and $\hyperbolicb \eulercwa$, and the only relation in degree $(1, \trivialbundle(1,1))$ is $\hyperbolica \eulercwb + \hyperbolicb \eulercwa - \hyperbolic \eulercwc$ from \cref{eq:relations-BGmxBGm-3}, so $\eulercwc$ cannot be expressed as a linear combination of $\hyperbolica \eulercwb$ and $\hyperbolicb \eulercwa$).
\end{remark}

\chapter{The Chow-Witt Ring of \texorpdfstring{$B\mu_n$}{Bµn}} %
\label{sec:Chow-Witt-of-Bmun}

Throughout this chapter, let $n$ be a positive natural number and the base field $k$ be a perfect field with characteristic coprime to $2$ and $n$.
Denote by $\mu_n$ the group of roots of unity, a linear algebraic group given as a scheme by $\Spec(k[x]/(x^n-1))$.
The main result in this chapter is to compute the total Chow-Witt ring
\[ \chowwitt^\tot(B\mu_n) = \bigoplus_{i \in \Integer} \bigoplus_{\linebundle \in \Pic(B\mu_n)/2} \chowwitt^i(B\mu_n, \linebundle) \]
for odd $n$.
The strategy closely follows that for even $n$ due to \cite{diLorenzo-Mantovani}.
The model for $B\mu_n$ described here was originally constructed in \cite[Lemma 6.3]{voevodsky-power-operations} and used in \cite[Theorem 7.1]{brosnan-steenrod} and \cite[Theorem 2.10]{totaro-bluebook} to compute its Chow groups.

\section{A Model for \texorpdfstring{$B\mu_n$}{Bµn}}\label{sec:model-for-Bmun}

We construct an approximation of $B\mu_n$ in the sense of \cref{lem:admissible-gadgets}.

For $a \in \Integer$ denote by $V_a$ the vector bundle over $\Spec(k)$ associated to the $1$-dimensional representation of $\Gm$ with action given by $\lambda.v = \lambda ^a \cdot v$.
Products of such representations are always considered with diagonal action.
As explained in \cref{ex:admissible-gadgets} $(V_1^r \setminus \{ 0\})/\Gm \simeq \projective^{r-1}$ is an approximation of $B\Gm$ in codimension $< r-1$.
Over this space, consider the bundle 
\[ \Bmun{r}{n} \coloneqq \left( V_n \times (V_1^{r+1} \setminus \{ 0\})\right)/\Gm \to (V_1^{r+1} \setminus \{ 0\})/\Gm \]
given by projection on the second factor.
\Cref{lem:Ern-equals-O(n)} will show that this is a line bundle.
Note that the zero coordinate in the factor $V_n$ is the image of the zero section $s_0$ of the line bundle $(V_n \times (V_1^{r+1} \setminus \{0\}))/\Gm$.

\begin{lemma}\label{lem:Ern-equals-Vr-mod-mun}
	There is a map of $\Gm$-bundles over $\projective^r$
	\[ \left( V_1^{r+1} \setminus \{0\} \right) / \mu_n \to \left( (V_n \setminus \{0\}) \times (V_1^{r+1} \setminus \{0\}) \right) / \Gm \]
	inducing isomorphisms on Chow-Witt groups in codimension $<r-1$.
\end{lemma}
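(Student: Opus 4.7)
The plan is to construct the map explicitly and recognize it as a $\Gm$-equivariant morphism of $\Gm$-torsors over $\projective^r$, whence an isomorphism of schemes. First, consider the closed immersion
\[ \iota \colon V_1^{r+1} \setminus \{0\} \hookrightarrow (V_n \setminus \{0\}) \times (V_1^{r+1} \setminus \{0\}), \qquad v \mapsto (1, v). \]
For $\zeta \in \mu_n$, restricting the target's $\Gm$-action to $\mu_n \subseteq \Gm$ gives $\zeta \cdot (1, v) = (\zeta^n, \zeta v) = (1, \zeta v) = \iota(\zeta v)$, so $\iota$ is $\mu_n$-equivariant. Post-composing with the $\Gm$-quotient on the target produces a $\mu_n$-invariant morphism that descends to the desired
\[ \phi \colon (V_1^{r+1} \setminus \{0\})/\mu_n \to ((V_n \setminus \{0\}) \times (V_1^{r+1} \setminus \{0\}))/\Gm. \]

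Next I would check that $\phi$ respects the $\Gm$-bundle structures over $\projective^r$. The source is a $\Gm$-bundle via the residual action of $\Gm/\mu_n$, identified with $\Gm$ through the $n$-th power isomorphism; the target is $\Bmun{r}{n} \setminus s_0(\projective^r)$, a $\Gm$-bundle via scaling of the $V_n$-coordinate. For any lift $\tilde\lambda \in \Gm$ of an element of $\Gm/\mu_n$,
\[ \phi([\tilde\lambda v]_{\mu_n}) = [(1, \tilde\lambda v)]_\Gm = [(\tilde\lambda^{-n}, v)]_\Gm = \tilde\lambda^{-n} \cdot \phi([v]_{\mu_n}), \]
so $\phi$ intertwines the two actions along the group isomorphism $\Gm/\mu_n \xrightarrow{\cong} \Gm$, $\tilde\lambda \mu_n \mapsto \tilde\lambda^{-n}$.

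The key point, which is the only mildly subtle step, is that any morphism of $\Gm$-torsors over a common base which is equivariant along an isomorphism of structure groups is automatically an isomorphism of schemes, since fibrewise it becomes an equivariant map between two $\Gm$-torsors. As a sanity check one can trivialise both bundles over the standard affine cover $\{U_i = \{x_i \neq 0\}\}$ of $\projective^r$: on each $U_i$ both sides trivialise to $U_i \times \Gm$ and $\phi$ becomes the above group isomorphism on each fibre. Consequently $\phi$ is an isomorphism of schemes and induces isomorphisms on Chow-Witt groups in all codimensions; the bound $<r-1$ in the statement simply matches the range in which \cref{lem:admissible-gadgets} guarantees that $(V_1^{r+1} \setminus \{0\})/\mu_n$ represents the Chow-Witt groups of $B\mu_n$.
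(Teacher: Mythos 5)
Your proof is correct, and it fills in the argument that the paper simply cites from \cite{brosnan-steenrod}. The construction of $\phi$ via the $\mu_n$-equivariant closed immersion $v \mapsto (1,v)$, the verification that $\phi$ intertwines the two $\Gm$-actions along the group isomorphism $\Gm/\mu_n \xrightarrow{\cong} \Gm$, $\tilde\lambda\mu_n \mapsto \tilde\lambda^{-n}$ (the $n$-th power isomorphism followed by inversion), and the conclusion via the fact that an equivariant map of torsors along a structure-group isomorphism is an isomorphism of schemes, is exactly the torsor argument underlying the cited reference. One point worth making explicit, since it is the sort of thing that can cause worry: the map $[v] \mapsto [(1,v)]$ is not manifestly surjective on naive $k$-points when $k$ lacks $n$-th roots, since lifting $[(a,v)]$ appears to require an $n$-th root of $a$. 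The torsor formalism handles this automatically — the two sides are genuinely isomorphic as schemes, and the apparent failure of surjectivity is an artifact of the fact that $(Y/\mu_n)(k)$ is not $Y(k)/\mu_n(k)$ in general, the discrepancy being controlled by $H^1(k,\mu_n) \cong k^\times/(k^\times)^n$. Your final remark is also accurate: since $\phi$ is an isomorphism of schemes it induces isomorphisms on Chow-Witt groups in \emph{all} codimensions, and the bound $< r-1$ in the statement only reflects the range in which the left-hand side approximates $B\mu_n$ per \cref{lem:admissible-gadgets}.
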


\begin{proof}
	See the proof of \cite[Theorem 7.1(i)]{brosnan-steenrod}.
\end{proof}

Since $(V_1^{r+1} \setminus \{0\}) / \mu_n$ is an approximation for $B\mu_n$ in codimension $< r-1$, this shows that $\Bmun{r}{n}\setminus s_0(\projective^r)$ is also an approximation for computing these Chow-Witt groups.
The advantage of using $\Bmun{r}{n}\setminus s_0(\projective^r)$ over $(V_1^{r+1} \setminus \{0\}) / \mu_n$ is that the former can be embedded into a line bundle over $\projective^r$, namely the twisting bundle $\trivialbundle(n)$, as the complement of the zero section as shown in the following lemma.

\begin{lemma} [{\cite[Section 4.1.4]{diLorenzo-Mantovani}}]\label{lem:Ern-equals-O(n)}
	For any $n \in \Integer$, the line bundle $\Bmun{r}{n}$ is isomorphic to $\trivialbundle(n)$ over $\projective^r$.
\end{lemma}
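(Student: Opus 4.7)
The plan is to identify $\Bmun{r}{n}$ with the line bundle on $\projective^r$ associated to the principal $\Gm$-torsor $p \colon V_1^{r+1} \setminus \{0\} \to \projective^r$ via the character $\chi_n \colon \Gm \to \Gm$, $\lambda \mapsto \lambda^n$, and then to recognise this associated bundle as $\trivialbundle(n)$ by matching their sheaves of sections.

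First I would unpack the quotient: for an open $U \subseteq \projective^r$ with preimage $\widetilde{U} \defined p^{-1}(U)$, a section of $\Bmun{r}{n}$ over $U$ is the same datum as a $\Gm$-equivariant lift $\widetilde{U} \to V_n \times \widetilde{U}$, i.e.\ a regular function $\phi \colon \widetilde{U} \to V_n = \affine^1$ satisfying the homogeneity condition $\phi(\lambda v) = \lambda^n \phi(v)$ forced by the diagonal action $\lambda.(t, v) = (\lambda^n t, \lambda v)$.

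Second, I would observe that such $\phi$ are precisely the degree-$n$ homogeneous regular functions on $\widetilde{U}$, which is exactly the sheaf of sections of $\trivialbundle(n)$ over $U$. The resulting assignment is tautologically $\trivialbundle_{\projective^r}$-linear and bijective, yielding the desired isomorphism of line bundles. If a more concrete check is preferred, one may instead work on the standard affine cover $\{ U_i = \{x_i \neq 0\} \}$ using the normalised local lifts $s_i \colon U_i \to \widetilde{U}_i$ that pin the $i$-th coordinate to $1$; the relation $s_j = (x_i/x_j)\, s_i$ combined with the weight-$n$ action on the $V_n$-factor produces the cocycle $(x_j/x_i)^n$ for $\Bmun{r}{n}$, matching the standard cocycle of $\trivialbundle(n)$.

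The only real subtlety is bookkeeping of conventions, namely ensuring that the weight-$n$ representation $V_n$ yields $\trivialbundle(n)$ and not $\trivialbundle(-n)$. This is pinned down by the identification of the tautological line bundle with $\trivialbundle(-1)$ used in the excerpt in \cref{ex:chow-ring-of-Bmun}; once this sign is fixed, the rest is immediate from functoriality of the associated-bundle construction in the character $\chi_n$.
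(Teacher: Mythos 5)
Your proposal is correct but takes a genuinely different route from the paper. The paper's proof treats the case $n=-1$ by writing out the tautological bundle as an explicit subscheme $L$ of $(V_1^{r+1}\setminus\{0\}) \times V_0^{r+1}$ and constructing a concrete $\Gm$-equivariant isomorphism $L \cong (V_1^{r+1}\setminus\{0\}) \times V_{-1}$; it then obtains all other $n$ from the compatibility $\Bmun{r}{m} \otimes \Bmun{r}{n} \cong \Bmun{r}{m+n}$ and $\trivialbundle(m) \otimes \trivialbundle(n) \cong \trivialbundle(m+n)$, using that tensor product commutes with the quotient. You instead identify $\Bmun{r}{n}$ as the associated bundle of the torsor $p \colon V_1^{r+1}\setminus\{0\} \to \projective^r$ along the character $\chi_n$, match its sheaf of sections with degree-$n$ homogeneous functions on $p^{-1}(U)$, and note this is precisely $\trivialbundle(n)$; this handles all $n$ in a single stroke, with the cocycle computation as a concrete fallback. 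Your route is shorter and more conceptual; the paper's is more hands-on and incidentally records the multiplicativity $\Bmun{r}{m}\otimes\Bmun{r}{n}\cong\Bmun{r}{m+n}$, which is a useful byproduct. One small caveat: your appeal to \cref{ex:chow-ring-of-Bmun} to ``pin down the sign'' is slightly circular, since that example tacitly relies on this very lemma to know $\Bmun{r}{n}$ is a line bundle at all; fortunately the sign is already forced by either of your own arguments (equivariance $\phi(\lambda v) = \lambda^n\phi(v)$ gives degree-$n$ homogeneity, hence $\trivialbundle(n)$ and not $\trivialbundle(-n)$; equivalently the transition functions come out as $n$-th powers of those of $\trivialbundle(1)$), so nothing is actually at stake.
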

\begin{proof}
	We start by proving the statement for $n= -1$.
	The tautological line bundle $\trivialbundle(-1)$ is the subscheme of $\projective^r \times \affine^{r+1}$ containing those points $([X_0 : \ldots : X_r], (Y_0, \ldots, Y_r))$ for which there is a scalar $t$ such that $(Y_1, \ldots, Y_r) = t(X_0, \ldots, X_r)$.
	Denote by $L$ the pullback of $\trivialbundle(-1)$ along $V_1^{r+1}\setminus\{0\} \to V_1^{r+1}/\Gm \cong \projective^r$.
	This is the subscheme of $(V_1^{r+1} \setminus \{0\}) \times V_0^{r+1}$ containing those points $((X_0, \ldots, X_r), (Y_0, \ldots, Y_r))$ for which there is a scalar $t$ such that $(Y_1, \ldots, Y_r) = t(X_0, \ldots, X_r)$.
	This is an $\Gm$-invariant subscheme and thus inherits the $\Gm$-action from $(V_1^{r+1} \setminus \{0\}) \times V_0^{r+1}$.
	
	Now consider the map
	\begin{align*}
		\varphi \colon L & \to (V_1^{r+1} \setminus \{0\}) \times V_{-1} \\
		((X_0, \ldots, X_r),(Y_0,\ldots,Y_r)) & \mapsto (X_0, \ldots, X_r, t) \\
		= ((X_0, \ldots, X_r), (tX_0, \ldots, tX_r)) & 
	\end{align*}
	which is readily checked to be an isomorphism.
	This map is equivariant with respect to the $\Gm$-action previously defined on each side:
	\[ \varphi(\lambda.(X,Y)) = \varphi(\lambda.(X, tX)) = \varphi((\lambda X, tX)) = (X, t \cdot \lambda^{-1}) = \lambda.\varphi((X, tX)) \]
	Therefore $(V_1^{r+1} \setminus \{0\}) \times V_{-1} / \Gm \cong L/\Gm \cong \trivialbundle(-1)$.
	
	All other line bundles $\trivialbundle(n)$ can be constructed as tensor powers of $\trivialbundle(-1)$, adhering to the rule $\trivialbundle(m) \otimes \trivialbundle(n) \cong \trivialbundle(m+n)$.
	Using that tensor products commute with quotients we have
	\begin{align*} 
		&\left(\left(V_m \times (V_1^{r+1} \setminus \{0\})\right) / \Gm\right) \otimes \left(\left(V_n \times (V_1^{r+1} \setminus \{0\})\right) / \Gm\right) \\
		\cong &\left((V_m \otimes V_n) \times (V_1^{r+1} \setminus \{0\})\right) / \Gm \\
		\cong &\left(V_{m+n} \times (V_1^{r+1} \setminus \{0\})\right) / \Gm
	\end{align*}
	as line bundles over $\projective^r \cong (V_1^{r+1} \setminus \{0\}) / \Gm$ for all $m, n \in \Integer$.
	Thus the statement holds for all $n \in \Integer$.
\end{proof}

The scheme $\Bmun{r}{n}\setminus s_0(\projective^r)$ will serve as an approximation of $B\mu_n$ in the sense of \cref{lem:admissible-gadgets} throughout the next sections.
The bundle map $\pi \colon \Bmun{r}{n}\setminus s_0(\projective^r) \to \projective^r$ induces a ring homomorphism
\[ pi^* \colon \chowwitt^\tot(\projective^r) \to \chowwitt^\tot(\Bmun{r}{n}\setminus s_0(\projective^r)) \]
for all $r$ and thus also 
\[ \pi^* \colon \chowwitt^\tot(B\Gm) \to \chowwitt^\tot(B\mu_n) \, . \] 

\section{Computing the Chow-Witt Groups}\label{sec:chowwitt-groups-Bmun}

The following known result will be used in the computation.

\begin{theorem}[{\cite[Theorem 11.7]{fasel-projbundle}}] %
	\label{prop:milnorwitt-of-BGm}
	Let $k$ be a perfect field of characteristic coprime to $2$.
	Then we have the following isomorphisms of $\GW(k)$-modules.
	\begin{align*} 
		H^i(B\Gm, \KMW_j, \trivialbundle) &\cong \begin{cases*} 
			\KMW_j(k) & i=0 \\ 
			\KM_{j-i}(k) & $i \geq 2$ even \\ 
			2 \KM_{j-i}(k) & $i \geq 1$ odd \\
		\end{cases*} \\
		H^i(B\Gm, \KMW_j, \trivialbundle(1)) 	& \cong \begin{cases*}
			2\KM_{j-i}(k) & $i \geq 0$ even \\
			\KM_{j-i}(k) & $i \geq 1$ odd. \\
		\end{cases*} \\
	\end{align*}
	The factor $2$ indicates that the image of the respective group under reduction $\forgetful$ to the Chow group is generated by $2$.
\end{theorem}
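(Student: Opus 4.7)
The plan is to use $\projective^r$ as an approximation of $B\Gm$ (via \cref{lem:admissible-gadgets-nondiag}) and compute $H^i(\projective^r, \KMW_j, \linebundle)$ by induction on $r$, passing to the limit $r \to \infty$. The inductive step is carried out by applying the localization sequence (\cref{chowwitt-localization}, in its non-diagonal Milnor-Witt form) to the standard hyperplane decomposition $\projective^{r-1} \xhookrightarrow{\iota} \projective^r \xhookleftarrow{j} \affine^r$. Since the normal bundle of $\projective^{r-1}$ in $\projective^r$ is $\twist{1}|_{\projective^{r-1}}$, the twist on the closed-stratum side is shifted by $\det\Omega_{\projective^{r-1}/\projective^r} \cong \twist{-1}|_{\projective^{r-1}}$. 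The base case is $\projective^0 = \Spec k$, whose Milnor-Witt cohomology is $\KMW_j(k)$ concentrated in degree zero.

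By homotopy invariance (\cref{prop:chowwitt-homotopy-invariance}) applied to $\affine^r \to \Spec k$, the group $H^i(\affine^r, \KMW_j, -)$ equals $\KMW_j(k)$ for $i = 0$ and vanishes otherwise. For $i \geq 2$ the localization sequence therefore collapses to an isomorphism
\[
\iota_* \colon H^{i-1}(\projective^{r-1}, \KMW_{j-1}, \iota^*\linebundle \otimes \twist{-1}) \xrightarrow{\cong} H^i(\projective^r, \KMW_j, \linebundle),
\]
and for $i = 0, 1$ one gets a four-term exact sequence also involving $\KMW_j(k)$. Quadratic periodicity (\cref{chowwitt-quadratic-periodicity}) identifies $\twist{-1}$ with $\twist{1}$ modulo squares, so iterating the pushforward toggles between the twists $\trivialbundle$ and $\twist{1}$: starting from $\linebundle = \trivialbundle$ on $\projective^r$ the twist becomes $\twist{1}$ on $\projective^{r-1}$, and vice versa. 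This reproduces the bifurcation in the statement, with the trivial-twist column containing the single $\KMW_j(k)$ summand in degree $0$ inherited from $\Spec k$ and all other entries being of the $\KM_{j-i}(k)$-shape coming from the $\KMW_{j-1}$-piece of $\projective^{r-1}$ after several iterations.

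Finally, to distinguish between $\KM_{j-i}(k)$ and its subgroup $2\KM_{j-i}(k)$ as the image under the reduction $\forgetful$, one interprets the generators obtained by the induction as products of $\eulercw(\twist{1}) \in \chowwitt^1(B\Gm, \twist{1})$ with itself under the ring structure. By \cref{prop:euler-class-product}, passing between $\eulercw(\twist{1})$ and $\eulercw(\twist{2})$ (which are matched, modulo square periodicity, by the ping-pong above) introduces a factor of $\hyperbolic$, and $\forgetful(\hyperbolic) = 2$; this is precisely what records the $2$ in the odd-degree rows of the trivial-twist column and in the even-degree rows of the $\twist{1}$-twist column. The main obstacle is the careful low-degree analysis: identifying the boundary $\partial \colon \KMW_j(k) \to H^0(\projective^{r-1}, \KMW_{j-1}, \twist{-1})$ via the Gersten-Witt residue of \cref{prop:gersten-witt-differential} so that the inductive hypothesis propagates cleanly, and cross-checking the image under $\forgetful$ against the key diagram \cref{eq:key-diagram} coming from the fiber-product description \cref{eq:chain-complex-fiber-product}.
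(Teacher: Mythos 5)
The paper does not actually prove this statement; it is imported verbatim from \cite[Theorem 11.7]{fasel-projbundle}, where it is established inside the full projective-bundle machinery. Your proposal is therefore a from-scratch argument, and its skeleton is sound: approximating $B\Gm$ by $\projective^r$ via \cref{lem:admissible-gadgets-nondiag} and running the localization sequence of \cref{chowwitt-localization} on the hyperplane pair $\projective^{r-1} \subset \projective^r \supset \affine^r$, with the open stratum killed by homotopy invariance (\cref{prop:chowwitt-homotopy-invariance}), does collapse for $i \geq 2$ to the pushforward isomorphism you write, and the twist alternation between $\trivialbundle$ and $\twist{1}$ is governed by $N_{\projective^{r-1}/\projective^r} \cong \twist{1}$ modulo square periodicity. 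This correctly reduces everything to degrees $0$ and $1$.

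But the low-degree analysis you defer is not a finishing touch; it is the entire content of the theorem, and you leave it open. The iterated isomorphism lands you at $\ker\partial$ (degree $0$) and $\coker\partial$ (degree $1$) of the boundary $\partial \colon \KMW_t(k) \cong H^0(\affine^s, \KMW_t) \to H^0(\projective^{s-1}, \KMW_{t-1}, \iota^*\linebundle \otimes \det N^\vee)$. For $\linebundle = \trivialbundle$ the structure map $\projective^s \to \Spec k$ gives a retraction splitting restriction to $\affine^s$, so $\partial = 0$ and this case is cheap. For $\linebundle = \twist{1}$ no such retraction exists, and the theorem is exactly the claim that the resulting $\partial \colon \KMW_t(k) \to \KMW_{t-1}(k)$ is, up to a unit, multiplication by $\eta$, so that $\ker\partial \cong 2\KM_t(k)$ and $\coker\partial \cong \KM_{t-1}(k)$ via the fiber-product description of $\KMW_*$. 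You gesture at identifying $\partial$ ``via the Gersten-Witt residue of \cref{prop:gersten-witt-differential},'' but this residue computation --- choosing a rational trivialization of $\twist{1}$ over $\projective^s$, tracking its order of pole along the hyperplane at infinity, and feeding the result into the nested induction --- is where all of the work lives, and you do not carry it out. The appeal to \cref{prop:euler-class-product} is also not a substitute: the $2$ does not arise from comparing $\eulercw(\twist{1})$ with $\eulercw(\twist{2})$ but from the structural fact that $\ker(\eta \colon \KMW_t(k) \to \KMW_{t-1}(k))$ is $\hyperbolic \cdot \KM_t(k)$; moreover the paper proves \cref{prop:euler-class-product} using \cref{thm:Ij-of-PxP}, which rests on Fasel's projective bundle theorem --- the very circle of ideas you are trying to re-derive --- so invoking it here would at minimum require a circularity audit. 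Until $\partial$ is pinned down, the proposal is an outline with the crux left as a promissory note.
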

The total Chow-Witt ring of $B\Gm$ has been computed in \cite[Theorem 1.1]{wendt}: 
\begin{proposition}[Wendt]%
	\label{chowwitt-ring-of-BGm}
	There is an isomorphism of $\GW(k)$-algebras
	\[ \chowwitt^\tot(B\Gm) \cong \GW(k)[\eulercwz,\hyperbolicz]/(I(k) \cdot \eulercwz, I(k) \cdot \hyperbolicz, \hyperbolicz^2 - 2h) \]
	where $\eulercwz \in \chowwitt^1(B\Gm, \trivialbundle(1))$ corresponds to the Euler class of $\trivialbundle_{B\Gm}(-1)$ and $\hyperbolicz$ corresponds to the element represented by $(0,2)$ in
	\[ \chowwitt^0(B\Gm, \trivialbundle(1)) \subseteq H^0(B\Gm, I^0, \trivialbundle(1)) \times \chow^0(B\Gm) \, .  \]
\end{proposition}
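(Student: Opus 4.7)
The plan is to combine the fiber product decomposition of \cref{prop:chowwitt-is-chow-x-I} with the Milnor-Witt cohomology computation of \cref{prop:milnorwitt-of-BGm} to first pin down the additive $\GW(k)$-module structure of $\chowwitt^\tot(B\Gm)$, then identify generators and verify the claimed relations, and finally rule out further relations by a bidegreewise size comparison.

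First I would establish the additive structure. Since $\chow^i(B\Gm) \cong \Integer$ is torsion-free in every degree, condition (1) of \cref{prop:chowwitt-is-chow-x-I} holds in each bidegree, so the canonical homomorphism
\[ c \colon \chowwitt^\tot(B\Gm) \xrightarrow{\cong} H^\tot(B\Gm, I^*) \times_{\bigoplus \chowmodtwo^*(B\Gm)} \bigoplus_{\linebundle \in \Pic(B\Gm)/2} \ker \partial_\linebundle \]
is a $\GW(k)$-algebra isomorphism. Setting $i = j$ in \cref{prop:milnorwitt-of-BGm} and using $\Pic(B\Gm)/2 = \{\trivialbundle, \trivialbundle(1)\}$ then determines each group $\chowwitt^i(B\Gm, \linebundle)$ explicitly: a single copy of $\GW(k)$ in bidegree $(0, \trivialbundle)$, and a copy of $\Integer$ or $2\Integer \subseteq \Integer$ in every other bidegree, depending on the parity of $i$ and the twist.

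Next I would identify generators. Set $\eulercwz \coloneqq \eulercw(\trivialbundle_{B\Gm}(-1)) \in \chowwitt^1(B\Gm, \trivialbundle(1))$ and let $\hyperbolicz \in \chowwitt^0(B\Gm, \trivialbundle(1))$ be the unique preimage of $(0, 2) \in H^0(B\Gm, I^0, \trivialbundle(1)) \times \chow^0(B\Gm)$ under $c$. Componentwise multiplication in the fiber product yields
\[ c(\eulercwz^a) = (\euleri(\trivialbundle(-1))^a,\, c^a), \qquad c(\eulercwz^a \hyperbolicz) = (0,\, 2c^a), \]
and these are generators of the respective cyclic Chow-Witt groups in bidegrees $(a, \trivialbundle(a \bmod 2))$ and $(a, \trivialbundle((a+1) \bmod 2))$ for $a \geq 1$; together with the factor $\GW(k)$ in bidegree $(0, \trivialbundle)$ they exhaust $\chowwitt^\tot(B\Gm)$ as a $\GW(k)$-module.

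Then I would verify the three relations. Each group in which $\eulercwz$ or $\hyperbolicz$ or any of their monomials lives has a $\GW(k)$-action factoring through the rank map (on the Chow factor) and the zero map (on the $I^*$-factor, since that factor is either trivial or image of $\hyperbolic_\linebundle$ and hence annihilated by $I(k)$); this gives $I(k) \cdot \eulercwz = 0 = I(k) \cdot \hyperbolicz$. The relation $\hyperbolicz^2 = 2 \hyperbolic$ is read off from $c(\hyperbolicz^2) = (0,2) \cdot (0,2) = (0,4) = c(2 \hyperbolic)$, using that $\hyperbolic \in \GW(k)$ has rank $2$ and trivial image in $\witt(k)$. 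To conclude that these are all relations, the free $\GW(k)$-algebra modulo the stated ideal admits a basis $\{1\} \cup \{\eulercwz^a, \eulercwz^a \hyperbolicz \mid a \geq 1\}$ over $\GW(k)/I(k) \cong \Integer$ (outside bidegree $(0,\trivialbundle)$), and this matches the list of generators above bidegree by bidegree; thus the canonical surjection onto $\chowwitt^\tot(B\Gm)$ is also injective.

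The main obstacle will be the careful bookkeeping of twists modulo squares together with the precise identification of generators via the fiber product description, in particular tracking when a $\chowwitt$-group is all of $\Integer$ versus the subgroup $2\Integer$ and making sure that the generator $\eulercwz^a \hyperbolicz$ indeed hits the latter whenever required.
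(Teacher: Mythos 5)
Your strategy — fiber product decomposition (\cref{prop:chowwitt-is-chow-x-I}), the known $I^j$-cohomology and Chow rings of projective space, generator identification in the fiber product, and a bidegreewise size comparison — is precisely the approach the paper itself takes for the closely analogous computations of $\chowwitt^\tot(\projective^q \times \projective^r)$ and $\chowwitt^\tot(B\Gm \times B\Gm)$ in \cref{prop:chowwitt-of-PxP} and \cref{prop:chowwitt-of-BGmxBGm}; the paper quotes this particular proposition from Wendt, whose proof also runs through the key diagram, so you are on the right track.

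Two small slips are worth fixing. Your generating set $\{1\} \cup \{\eulercwz^a, \eulercwz^a \hyperbolicz \mid a \geq 1\}$ and the corresponding list of bidegrees omit $(0, \trivialbundle(1))$: the second family must start at $a = 0$, since $\chowwitt^0(B\Gm, \trivialbundle(1)) \cong 2\Integer$ is nonzero and generated by $\hyperbolicz$ itself, and the same $a \geq 0$ convention is needed on the abstract side so that the bidegree-by-bidegree match actually covers all degrees. Second, your justification for $I(k)\cdot\eulercwz = 0$ misdescribes the $I^*$-component of $c(\eulercwz^a)$: it is $\euleri^a$, which is neither trivial nor in the image of $\hyperbolic_\linebundle$. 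The correct reason is the relation $I(k)\cdot\euleri = 0$ already present in $H^\tot(\projective^r, I^*) \cong \witt(k)[\euleri,\orientationz]/(I(k)\euleri, \ldots)$ from \cref{lem:Ij-of-Pn}; equivalently, $H^i(B\Gm, I^i, \linebundle)$ is $0$ or $\cong \witt(k)/I(k)$ for $i \geq 1$, so $I(k)$ acts as zero on that factor. With these repairs the argument closes cleanly.
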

The inclusion in the last line is in fact an inclusion by \cite[Prop.\ 2.11]{hornbostel-wendt} if $\chow^0(B\Gm)$ has trivial $2$-torsion which is satisfied since by \cite[Thm 2.10]{totaro-bluebook}, $\chow^0(B\Gm) \cong \Integer$.

By \cite[Prop.\ 1.30]{eisenbud-harris}, the Picard group of $B\mu_n$ is isomorphic to $\chow^1(B\mu_n)$ which is shown in \cite[Theorem 2.10]{totaro-bluebook} to be isomorphic to $\Integer/n$.
Thus for even $n$, we need to consider two equivalence classes of line bundles in $\Pic(B\mu_n)/2$ represented by $\trivialbundle$ and $\trivialbundle(1)$ (defined as the pullbacks of the respective line bundles on $\projective^r$), and for odd $n$ only the trivial line bundle $\trivialbundle$.

For even $n$, the Chow-Witt ring has been computed in \cite[Theorem 5.3.4]{diLorenzo-Mantovani}.

\begin{definition}[di Lorenzo-Mantovani] \label{def:classu} 
	$\,$ \vspace{-3ex} \\
	\begin{enumerate}
		\item Let $X$ be a smooth scheme and $\linebundle$ a line bundle over $X$.
		Let $s$ be a global section of $\linebundle^{\otimes 2}$ with smooth and non-empty vanishing locus $D \subseteq X$,
		and denote $U \coloneqq X \setminus D$. 
		Then there is a non-degenerate quadratic form on $\linebundle \vert_U$ 
		\begin{align*}
			q \colon \linebundle \vert_U \otimes \linebundle \vert_U & \to \trivialbundle_U \\
			a \otimes b & \mapsto( a \otimes b) /s
		\end{align*}
		determining an element $q_{\mathrm{gen}} \in \chowwitt^0(U, \trivialbundle_U)$.
		This element satisfies $\hyperbolic \cdot (q_{\mathrm{gen}} - 1) = 0$ and is mapped to $\eta \otimes \bar{f}^\vee$ by the boundary map
		\[ \partial \colon \chowwitt^0(U) \to H^0(D, \KMW_{-1}, \trivialbundle_D) \]
		in the localization sequence associated to the embedding $D \subseteq X$,
		where $f$ is a local equation with vanishing set $D$ and $f^\vee$ its dual \cite[Lemma 3.3.2]{diLorenzo-Mantovani}.
		\item Let $n$ be even and consider the scheme $\Bmun{r}{n}$ constructed in the previous section with the line bundle $\trivialbundle_{\Bmun{r}{n}}(n/2)$.
		Since $\Bmun{r}{n}$ is isomorphic to $\trivialbundle(n)$ as line bundle over $\projective^r$, the line bundle
		\[ \trivialbundle_{\Bmun{r}{n}}(n/2)^{\otimes 2} \cong \trivialbundle_{\Bmun{r}{n}}(n) \cong \trivialbundle_{\projective^r}(n) \times_{\projective^r} \Bmun{r}{n} \cong \trivialbundle_{\projective^r}(n) \times_{\projective^r} \trivialbundle_{\projective^r}(n) \]
		comes equipped with a section given by the diagonal map whose vanishing locus is precisely the zero section $s_0$ of the bundle $\Bmun{r}{n} \to \projective^r$.
		Applying the construction from the first part of this definition yields an element in the zeroth Chow-Witt group and we set
		\[ \classu \coloneqq q_{\mathrm{gen}} - 1 \in \chowwitt^0(\Bmun{r}{n}\setminus s_0, \trivialbundle) \cong \chowwitt^0(B\mu_n, \trivialbundle) \, . \]
	\end{enumerate}
\end{definition}

\begin{theorem}[di Lorenzo-Mantovani]\label{chowwitt-groups-of-Bmun-even}
	There are isomorphisms of groups
	\begin{align*}
		\chowwitt ^i (B\mu_n, \trivialbundle) &\cong \begin{cases*}
			\GW(k) \oplus \witt(k) & i = 0 \\
			\Integer/2n & i $\geq$ 2 \text{even} \\
			\Integer/(\frac{n}{2}) & i \text{odd} 
		\end{cases*} \\
		\chowwitt ^i (B\mu_n, \trivialbundle(1)) & \cong
		\begin{cases*}
		\Integer & i = 0 \\
		\Integer/(\frac{n}{2}) & i $\geq$ 2 \text{even} \\
		\Integer/2n& i \text{odd.}
		\end{cases*} \\ 
	\end{align*}
	Further there is an isomorphism of $\GW(k)$-algebras
	\[ \chowwitt^\tot(B\mu_n) \cong \GW(k)[\classu, \hyperbolicz, \eulercwz]/(I(k) \hyperbolicz, I(k) \eulercwz, \hyperbolic \classu, \hyperbolicz \classu, n \hyperbolicz \eulercwz, \hyperbolicz^2 - 2 \hyperbolic, \classu^2 + 2 \classu, \classu \eulercwz - 2n\eulercwz) \]
	where $\eulercwz$ and $\hyperbolicz$ correspond to the pullbacks of the respective elements described in \cref{chowwitt-ring-of-BGm} and $\classu \in \chowwitt^0(B\mu_n, \trivialbundle)$ is as constructed in \cref{def:classu}.
\end{theorem}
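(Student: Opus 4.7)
The plan is to compute $\chowwitt^\tot(B\mu_n)$ for even $n$ by exploiting the model $\Bmun{r}{n} \setminus s_0(\projective^r)$ from \cref{sec:model-for-Bmun}, which sits inside the line bundle $\Bmun{r}{n} \cong \trivialbundle(n)$ over $\projective^r$ as the complement of the zero section. Combining the localization sequence (\cref{chowwitt-localization}) for the closed embedding $s_0 \colon \projective^r \hookrightarrow \Bmun{r}{n}$ with homotopy invariance $\pi^* \colon \chowwitt^i(\projective^r, \linebundle) \xrightarrow{\cong} \chowwitt^i(\Bmun{r}{n}, \pi^*\linebundle)$ (\cref{prop:chowwitt-homotopy-invariance}), one obtains, after taking $r$ large and passing to the colimit, a long exact sequence
\[ \ldots \to \chowwitt^{i-1}(B\Gm, \linebundle \otimes \trivialbundle(n)) \xrightarrow{\cdot\, \eulercw(\trivialbundle(n))} \chowwitt^i(B\Gm, \linebundle) \xrightarrow{\pi^*} \chowwitt^i(B\mu_n, \linebundle) \to \ldots \]
with the connecting map given by multiplication with the Euler class of the normal bundle $\trivialbundle(n)$.

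Because $n$ is even, \cref{prop:euler-class-product} identifies $\eulercw(\trivialbundle(n)) = \frac{n}{2}\hyperbolic_{\trivialbundle}(\chern(\trivialbundle(1)))$, so the connecting map factors through the Chow ring $\chow^*(B\Gm) \cong \Integer[\chern(\trivialbundle(1))]$. Using Wendt's presentation of $\chowwitt^\tot(B\Gm)$ from \cref{chowwitt-ring-of-BGm} together with \cref{prop:milnorwitt-of-BGm}, one writes out the source and target in both twists $\trivialbundle$ and $\trivialbundle(1)$, identifies the cokernel of multiplication with $\frac{n}{2}\hyperbolic \cdot \chern(\trivialbundle(1))$, and computes the kernel on the next step of the sequence. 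This yields the stated groups: in nonzero degrees one gets cyclic groups of orders $2n$ and $n/2$ according to parity and twist, while in degree $0$ with trivial twist an extra copy of $\witt(k)$ appears coming from the class $\classu = q_\mathrm{gen} - 1$ of \cref{def:classu}, which the boundary map maps to $\eta$ against the generator of the normal bundle so that $\hyperbolic \cdot \classu = 0$.

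For the ring structure, the generators $\pi^*\hyperbolicz, \pi^*\eulercwz$ and $\classu$ must generate the ring, which follows by degree-wise surjectivity of $\pi^*$ onto the image and the observation that $\classu$ generates the extra summand in degree zero. The pullback relations $I(k) \hyperbolicz = 0$, $I(k) \eulercwz = 0$, and $\hyperbolicz^2 = 2\hyperbolic$ descend directly from $B\Gm$. The new relations are: $\hyperbolic \classu = 0$ (from $\hyperbolic q_\mathrm{gen} = \hyperbolic$ in the Witt ring), $\classu^2 + 2\classu = 0$ (by $(q_\mathrm{gen}-1)^2 = q_\mathrm{gen}^2 - 2q_\mathrm{gen} + 1 = -2(q_\mathrm{gen}-1)$ using that $q_\mathrm{gen}^2$ is a square quadratic form and hence equals $1$ in $\GW$ modulo squares), $\hyperbolicz \classu = 0$ (since $\classu$ is killed by $\hyperbolic$ and $\hyperbolicz$ reduces to $2$), $n\hyperbolicz \eulercwz = 0$ (using $n \chern(\trivialbundle(1)) = 0$ in $\chow^*(B\mu_n)$, lifted through the fiber square of \cref{prop:chowwitt-is-chow-x-I}), and $\classu \eulercwz = -2n\eulercwz$ (computed by combining the action of $\classu$ in the boundary term with the known $n$-torsion of $\eulercwz$).

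The main obstacle is the last step: verifying the exotic-looking relations involving $\classu$, especially $\classu \eulercwz - 2n\eulercwz = 0$ and $\classu^2 + 2\classu = 0$. These require carefully unraveling the definition of $\classu$ as the generic quadratic form $q_\mathrm{gen}$ on the restriction of a line bundle to the complement of a divisor (\cref{def:classu}), together with tracking boundary maps in the localization sequence and using \cref{prop:chowwitt-is-chow-x-I} to reduce to simultaneous computations in $I^j$-cohomology and Chow groups, where the second factor is completely controlled and the first factor can be handled via the relation $\hyperbolic\classu = 0$.
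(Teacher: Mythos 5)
The paper does not prove this theorem: it is stated as a citation of di Lorenzo--Mantovani, so there is no in-paper proof to compare against. Your outline nevertheless reproduces the strategy of their argument (and of this paper's odd-$n$ case): the localization sequence for $s_0 \colon \projective^r \hookrightarrow \Bmun{r}{n}$ combined with homotopy invariance, the identification of the connecting map with multiplication by $\eulercw(\trivialbundle(n))$, the observation that for $n$ even this factors through $\hyperbolic_\trivialbundle$, and the recognition of $\classu$ as the extra degree-zero generator whose image under the boundary map is $\eta \otimes \bar f^\vee$. That much is right.

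The gap is in the ring relations, which you assert rather than prove. First, you write that $\hyperbolic\classu = 0$ because "the boundary map maps [$\classu$] to $\eta$ against the generator of the normal bundle''; but the boundary value $\partial(\classu) = \eta \otimes \bar f^\vee$ and the identity $\hyperbolic\cdot\classu = 0$ are two separate statements in \cref{def:classu}; neither formally implies the other without computing in the Gersten--Witt complex (multiplying by $\hyperbolic$ does not obviously commute with the boundary in a useful way here). Second, and more seriously, the relation $\classu\eulercwz - 2n\eulercwz$ cannot be extracted from the fiber-product formula of \cref{prop:chowwitt-is-chow-x-I} as you suggest: that proposition is only injective when $\chow^i$ has no $2$-torsion or $\eta$ is injective, and here $\chow^1(B\mu_n) \cong \Integer/n$ has $2$-torsion for $n$ even, so the image of $\classu\eulercwz$ in $H^1(I^1) \times \chow^1$ does not determine it. Di Lorenzo--Mantovani pin down this product by an explicit cycle-level computation in the complex $C^*(\Bmun{r}{n}\setminus s_0, \KMW_*)$ --- choosing local equations for two disjoint divisors in $\projective^1$ and showing the difference of the two resulting cycles is a boundary --- and nothing in your sketch substitutes for that. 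Similarly, deducing $n\hyperbolicz\eulercwz = 0$ from $n\chern = 0$ via the fiber square fails for the same injectivity reason; one has to argue directly from the localization sequence in that bidegree. Until these relations are actually established (not merely stated), the algebra presentation is not proved.
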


We will compute the case of odd $n$ .
Following the strategy of \cite[Prop.\ 5.2.3]{diLorenzo-Mantovani} for even $n$
consider the localization sequence associated to the decomposition 
\[ \projective^r \xhookrightarrow{s_0} \Bmun{r}{n} \xhookleftarrow{\iota} \Bmun{r}{n} \setminus s_0(\projective^r) \]
which reads as follows.
\begin{multline*} 
	\ldots \to \chowwitt^{i-1}(\projective^r, s_0^* \linebundle \otimes \det \Omega_{\Bmun{r}{n}/\projective^r}) \xrightarrow{(s_0)_*} \chowwitt^i(\Bmun{r}{n}, \linebundle) \xrightarrow{\iota^*} \chowwitt^i(\Bmun{r}{n} \setminus s_0, \iota^*\linebundle) \\
	\to H^i(\projective^r, \KMW_{i-1}, s_0^* \linebundle \otimes \det \Omega_{\Bmun{r}{n}/\projective^r}) \to \ldots 
\end{multline*}
In the first term \cref{lem:Ern-equals-O(n)} implies $\det \Omega_{\Bmun{r}{n}/\projective^r} \cong \det \Omega_{\trivialbundle_{\projective^r}(n)/\projective^r}$ which in turn is isomorphic to $\trivialbundle(n)$ \cite[Example 8.20.1]{hartshorne} and thus trivial for odd $n$.
By homotopy invariance, the pullback along the bundle map $\pr \colon \Bmun{r}{n} \to \projective^r$ induces an isomorphism on Chow-Witt groups.
The composition 
\[ \chowwitt^{i-1}(\projective^r, s_0^* \linebundle \otimes \trivialbundle(n)) \xrightarrow{(s_0)_*} \chowwitt^i(\Bmun{r}{n}, \linebundle) \xleftarrow[\cong]{\pr^*} \chowwitt^i(\projective^r, \linebundle) \]
is multiplication with the Euler class of the line bundle $\Bmun{r}{n}$ by \cref{def:euler-class}.
Now using that $\projective^r$ and $\Bmun{r}{n}\setminus s_0$ are approximations for $B\Gm$ respectively $B\mu_n$ in the sense of \cref{lem:admissible-gadgets}, for $r>i$ this localization sequence is isomorphic to the following one.
\begin{multline}\label{eq:locseq-Bmun}
	\ldots \to \chowwitt^{i-1}(B\Gm, s_0^*\linebundle) \xrightarrow{\eulercw(\Bmun{r}{n}) \cdot} \chowwitt^i(B\Gm, \linebundle) \xrightarrow{(\iota \circ \pr)^*} \chowwitt^i(B\mu_n, \iota^*\linebundle) \\
	\to H^i(B\Gm, \KMW_{i-1}, s_0^* \linebundle) \to \ldots
\end{multline}
Note that $\iota \circ \pr = \pi$.

Equipped with this we can compute the group structure.

\begin{theorem} \label{chowwitt-groups-of-Bmun-odd}
	Let $n$ be odd.
	Then:
	\begin{align*}
	\chowwitt ^i (B\mu_n, \trivialbundle) &\cong \begin{cases*}
		\GW(k) & i = 0 \\
		\Integer/n & i $\geq$ 1
	\end{cases*} 
	\end{align*}
\end{theorem}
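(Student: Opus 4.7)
My approach is to apply the localization sequence \cref{eq:locseq-Bmun} with $\linebundle = \trivialbundle$. Since $n$ is odd, $\Pic(B\mu_n)/2 \cong (\Integer/n)/2 = 0$, so only the trivial twist on $B\mu_n$ is relevant, while on the $B\Gm$-side the twist appearing in the first term of the sequence is $s_0^*\linebundle \otimes \det\Omega_{\Bmun{r}{n}/\projective^r} \cong \trivialbundle(n)$, which is $\trivialbundle(1)$ modulo squares. Thus the sequence reads
\[ \cdots \to \chowwitt^{i-1}(B\Gm, \trivialbundle(1)) \xrightarrow{\cdot\, \eulercw(\Bmun{r}{n})} \chowwitt^i(B\Gm, \trivialbundle) \xrightarrow{\pi^*} \chowwitt^i(B\mu_n, \trivialbundle) \to H^i(B\Gm, \KMW_{i-1}, \trivialbundle(1)) \to \cdots \]

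The first observation is that every cokernel term $H^i(B\Gm, \KMW_{i-1}, \trivialbundle(1))$ vanishes: by \cref{prop:milnorwitt-of-BGm} such a group sits inside $\KM_{-1}(k) = 0$. Hence $\pi^*$ is surjective, and $\chowwitt^i(B\mu_n, \trivialbundle)$ is identified with the cokernel of multiplication by $\eulercw(\Bmun{r}{n})$. For $i = 0$ this already yields $\chowwitt^0(B\mu_n, \trivialbundle) \cong \chowwitt^0(B\Gm, \trivialbundle) = \GW(k)$, since $\chowwitt^{-1}(B\Gm, \trivialbundle(1)) = 0$.

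The next step is to identify $\eulercw(\Bmun{r}{n})$ explicitly. The group $\chowwitt^1(B\Gm, \trivialbundle(1))$ is infinite cyclic on $\eulercwz$ (see \cref{chowwitt-ring-of-BGm}), and the forgetful map $\forgetful$ is injective on it; since $\forgetful\, \eulercw(\Bmun{r}{n}) = \chern(\trivialbundle(n)) = \pm n \, \forgetful(\eulercwz)$, we conclude $\eulercw(\Bmun{r}{n}) = \pm n \cdot \eulercwz$. Alternatively, this follows directly from \cref{prop:euler-class-product} together with the relation $I(k) \cdot \eulercwz = 0$ in \cref{chowwitt-ring-of-BGm}.

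Finally, for each $i \geq 1$ I would read off the quotient using the ring presentation of \cref{chowwitt-ring-of-BGm}. Both $\chowwitt^{i-1}(B\Gm, \trivialbundle(1))$ and $\chowwitt^i(B\Gm, \trivialbundle)$ are infinite cyclic groups; depending on the parity of $i$ their generators are either $\eulercwz^{i-1}, \eulercwz^i$ or $\hyperbolicz \eulercwz^{i-1}, \hyperbolicz \eulercwz^i$, and in either parity case multiplication by $n \eulercwz$ carries the generator of the source to $n$ times the generator of the target (after the obvious square-periodicity identification), giving cokernel $\Integer/n$. The main obstacle is essentially bookkeeping: one must keep the square-periodicity identifications and sign conventions consistent between the twists $\trivialbundle(n)$, $\trivialbundle(1)$ and $\trivialbundle$, and match the generators coming from the presentation of $\chowwitt^\tot(B\Gm)$ with those of the abstract groups appearing in the localization sequence. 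Once this is in order, the statement follows formally from the vanishing of the cokernel term and the known ring structure of $\chowwitt^\tot(B\Gm)$.
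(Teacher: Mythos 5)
Your proof is correct and matches the paper's argument in all essentials: both use the localization sequence \cref{eq:locseq-Bmun}, both observe that the rightmost term vanishes because $\KM_{-1}(k)=0$ (so $\pi^*$ is surjective and one is computing a cokernel), both identify $\eulercw(\Bmun{r}{n}) = \pm n\eulercwz$ via \cref{prop:euler-class-product}, and both read off $\Integer/n$ from the ring presentation of $\chowwitt^\tot(B\Gm)$ after sorting out the parity-dependent generators $\eulercwz^i$ versus $\hyperbolicz\eulercwz^i$. The alternative route you offer for the Euler class (reducing to the Chow ring via the injective $\forgetful$) is a harmless but unnecessary extra check that the paper skips in favour of the direct formula.
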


\begin{proof}
	For $i > 0$, insert \cref{prop:milnorwitt-of-BGm,chowwitt-ring-of-BGm} to see that the localization sequence \cref{eq:locseq-Bmun} evaluates to
	\begin{center}
	\begin{tikzcd} 
		\phantom{\hyperbolicz}\KM_0(k)\langle \eulercwz^{i-1} \rangle \ar[r, "\cdot \eulercw(\Bmun{r}{n})"] \ar[d, phantom, sloped, "\cong"] & \KM_0(k)\langle \eulercwz^i \rangle \ar[r] \ar[d, phantom, sloped, "\cong"] &  \chowwitt^i(B\mu_n, \trivialbundle) \ar[r] & \KM_{-1}(k) \ar[d, phantom, sloped, "\cong"] \\
		\Integer\langle \eulercwz^{i-1} \rangle \ar[r, "\cdot (-n) \eulercwz"] & \Integer\langle \eulercwz^i \rangle  & & 0 \\
	\end{tikzcd}
	\end{center}
	for $i$ even and
	\begin{center}
	\begin{tikzcd}
		2\KM_0(k)\langle \hyperbolicz\eulercwz^{i-1} \rangle \ar[r, "\cdot \eulercw(\Bmun{r}{n})"] \ar[d, phantom, sloped, "\cong"] & 2\KM_0(k)\langle \hyperbolicz\eulercwz^i \rangle \ar[r] \ar[d, phantom, sloped, "\cong"] &  \chowwitt^i(B\mu_n, \trivialbundle) \ar[r] & \KM_{-1}(k) \ar[d, phantom, sloped, "\cong"] \\
		\Integer\langle \hyperbolicz\eulercwz^{i-1} \rangle \ar[r, "\cdot (-n) \eulercwz"] & \Integer\langle \hyperbolicz\eulercwz^i \rangle  & & 0 \\
	\end{tikzcd}
	\end{center}
	for $i$ odd.
	Since $\Bmun{r}{n} \cong \trivialbundle(n)$ as line bundles over $\projective^{r}$ by \cref{lem:Ern-equals-O(n)}, its Euler class evaluates to $n/2 \hyperbolic \cdot \eulercw(\trivialbundle(1)) = -n/2 \hyperbolic \eulercwz = -n \eulercwz$ using \cref{prop:euler-class-product}.
	From this it follows that $\chowwitt^i(B\mu_n) \cong \Integer/n$ for $0 < i \leq n$.
	Denoting $\pi \colon \Bmun{r}{n} \to \projective^r$ or by abuse of notation also $\pi \colon B\mu_n \to B\Gm$ and using the symbols from \cref{chowwitt-ring-of-BGm}, one can also write
	\begin{equation} \label{eq:chowwitt-groups-of-Bmun} \chowwitt^i(B\mu_n) \cong \begin{cases*}
		\Integer/n \langle \pi^*(\eulercwz^i) \rangle & $0 < i \leq n$, $i$ even \\
		\Integer/n \langle \pi^*(\hyperbolicz \eulercwz^i) \rangle & $0 < i \leq n$, $i$ odd. \\
	\end{cases*} \end{equation}
	
	For $i=0$ we get
	\begin{center}
	\begin{tikzcd}[column sep=small]
		\chowwitt^{-1}(B\Gm, \trivialbundle(n)) \ar[r] \ar[d, phantom, sloped, "\cong"] & \chowwitt^0(B\Gm, \trivialbundle) \ar[r] \ar[d, phantom, sloped, "\cong"] & \chowwitt^0(B\mu_n, \trivialbundle) \ar[r] & H^0(B\Gm, \KMW_{-1}, \trivialbundle(n)) \ar[d, phantom, sloped, "\cong"] \\
		0 & \GW(k)& & \KM_{-1}(k) = 0 
	\end{tikzcd}
	\end{center}
	which immediately implies $\chowwitt^0(B\mu_n, \trivialbundle) \cong \GW(k)$.
\end{proof}
	
\section{Multiplicative Structure}

For odd $n$ we observed in \cref{chowwitt-groups-of-Bmun-odd} that all Chow-Witt groups of $B\mu_n$ are isomorphic to those of $B\Gm$ modulo $n \eulercwz$.
Since the quotient map is the map $\pi^*$ from the localization sequence \ref{eq:locseq-Bmun} and this is a ring homomorphism, the Chow-Witt ring of $B\mu_n$ is also isomorphic to that of $B\Gm$ modulo $n \pi^*(\eulercwz)$ and modulo the quadratic periodicity isomorphism 
\[ \psi_1 \colon \chowwitt^i(B\mu_n, \trivialbundle(1)) \cong \chowwitt^i\left(B\mu_n, \trivialbundle\left((n+1)/2\right)^{\otimes 2}\right) \cong \chowwitt^i(B\mu_n, \trivialbundle) \, . \]
This is not true for even $n$, since in that case there is an element in $\chowwitt^0(B\mu_n, \trivialbundle)$ (denoted $U$ in \cite{diLorenzo-Mantovani}) which is not in the image of $\chowwitt^\tot(B\Gm)$.

Since $\sqperiod$ and thus also $\psi_1$ is compatible with multiplication (\cref{square-periodicity-compatibility}) it suffices to understand its action on the generators $\pi^*(\hyperbolicz)$ and $\pi^*(\eulercwz)$.
Consider
\[ \pi^*(\hyperbolicz) = (0,2) \in \chowwitt^0(B\mu_n, \trivialbundle(1)) \subseteq H^0(B\mu_n, I^0, \trivialbundle(1)) \times_{\chowmodtwo^0(B\mu_n)} \chow^0(B\mu_n)  \]
where the inclusion is justified by \cref{prop:chowwitt-is-chow-x-I} and the fact that $\chow^0(B\mu_n) \cong \Integer$ has trivial $2$-torsion.
On the second factor, $\psi_1$ is the identity.
On the first, $\psi_1$ being a group homomorphism already implies that it maps $0$ to $0$. 
Thus
\begin{equation}\label{eq:square-periodicity-H-h} \psi_1(\pi^*(\hyperbolicz)) = (0,2) \in \chowwitt^0(B\mu_n, \trivialbundle) \subseteq H^0(B\mu_n, \trivialbundle) \times_{\chowmodtwo^0(B\mu_n)} \chow^0(B\mu_n)  \end{equation}
which under the isomorphism $\chowwitt^0(B\mu_n, \trivialbundle) \cong \GW(k)$ corresponds to the hyperbolic form $\hyperbolic$.
Therefore the class $\pi^*(\hyperbolicz)$ is not required as a generator for the Chow-Witt ring of $B\mu_n$.

The relation $I(k) \cdot \hyperbolicz$ from $B\Gm$ now becomes $I(k) \cdot \hyperbolic$ which already holds in $\GW(k)$.
Similarly $\hyperbolicz^2 - 2 \hyperbolic$ becomes $\hyperbolic ^2 - 2 \hyperbolic$ which is also true in $\GW(k)$.
The relation $I(k) \cdot \eulercwz$ just becomes $I(k) \cdot \pi^*(\eulercwz)$.

Because Euler classes are compatible with pullbacks, $\pi^*(\eulercwz)$ is the Euler class of $\trivialbundle_{B\mu_n}(-1)$ and we can rename it into $\eulercwz$.
This yields the following concise description.

\begin{theorem} \label{chowwitt-ring-of-Bmun-odd}
	For $n$ odd, there is an isomorphism of $\GW(k)$-algebras
	\begin{align*} 
		\chowwitt^\tot (B\mu_n) & \cong \GW(k)[\eulercwz]/(I(k) \cdot \eulercwz, n \cdot \eulercwz)
	\end{align*}
	where $\eulercwz \in \chowwitt^1(B\mu_n)$ is the Euler class of the tautological line bundle $\trivialbundle(-1)$ on $B\mu_n$.
\end{theorem}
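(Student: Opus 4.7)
The plan is to exhibit $\chowwitt^\tot(B\mu_n)$ as a quotient of $\chowwitt^\tot(B\Gm)$ via the ring homomorphism $\pi^* \colon \chowwitt^\tot(B\Gm) \to \chowwitt^\tot(B\mu_n)$ coming from the bundle projection, and then translate the presentation of the latter given in \cref{chowwitt-ring-of-BGm} through this quotient. First I would note that since $n$ is odd, $\Pic(B\mu_n)/2 \cong (\Integer/n)/2 = 0$, so the total Chow-Witt ring of $B\mu_n$ is concentrated in the trivial twist. Then I would invoke \cref{chowwitt-groups-of-Bmun-odd} and the explicit generator description in \cref{eq:chowwitt-groups-of-Bmun} to conclude that $\pi^*$ is surjective in every degree.

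Next I would determine the kernel of $\pi^*$ using the localization sequence \cref{eq:locseq-Bmun} together with square periodicity. On the untwisted summand, the sequence already shows $\ker(\pi^*)$ is generated by $n \eulercwz$ in each degree (this is essentially what \cref{chowwitt-groups-of-Bmun-odd} establishes). For the twisted summand, square periodicity gives an isomorphism $\psi_1 \colon \chowwitt^i(B\mu_n, \trivialbundle(1)) \xrightarrow{\cong} \chowwitt^i(B\mu_n, \trivialbundle)$, which by \cref{square-periodicity-compatibility} is compatible with multiplication. Thus the image of the twisted part of $\chowwitt^\tot(B\Gm)$ under $\pi^*$ becomes redundant in the quotient once we understand how $\psi_1 \circ \pi^*$ acts on the generators $\hyperbolicz$ and $\eulercwz$.

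The key computation, which I expect to be the main obstacle, is identifying $\psi_1(\pi^*(\hyperbolicz))$ explicitly. Using the fiber product description \cref{prop:chowwitt-is-chow-x-I} (applicable because $\chow^0(B\mu_n) \cong \Integer$ has no $2$-torsion) and the explicit action of the square periodicity isomorphism on the two factors, one sees as in \cref{eq:square-periodicity-H-h} that $\psi_1(\pi^*(\hyperbolicz))$ is represented by $(0, 2)$ in $H^0(B\mu_n, I^0, \trivialbundle) \times_{\chowmodtwo^0(B\mu_n)} \chow^0(B\mu_n)$, which under $\chowwitt^0(B\mu_n, \trivialbundle) \cong \GW(k)$ is exactly the hyperbolic form $\hyperbolic$. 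Similarly $\psi_1(\pi^*(\eulercwz))$ is compatible with the naming in the theorem because Euler classes commute with pullbacks.

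Finally I would translate the three relations from \cref{chowwitt-ring-of-BGm}: the relation $I(k) \cdot \hyperbolicz$ becomes $I(k) \cdot \hyperbolic$, which already holds in $\GW(k)$ and is therefore absorbed into the base ring; the relation $\hyperbolicz^2 - 2\hyperbolic$ likewise becomes $\hyperbolic^2 - 2\hyperbolic$, which is the defining relation of $\hyperbolic$ in $\GW(k)$; and $I(k) \cdot \eulercwz$ is preserved verbatim. Adding the new relation $n \cdot \eulercwz$ coming from the localization sequence yields the claimed presentation. One final bookkeeping step is to verify that no unexpected relations are introduced in positive degrees by the square periodicity identification — this follows because $\psi_1$ is an isomorphism of $\GW(k)$-modules and the generators $\pi^*(\eulercwz^i)$ and $\pi^*(\hyperbolicz \eulercwz^i)$ of \cref{eq:chowwitt-groups-of-Bmun} map to $\Integer/n$-multiples of $\eulercwz^i$ and $\hyperbolic \eulercwz^i$, respectively, which are compatible with the quotient relations already listed.
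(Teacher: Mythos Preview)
Your proposal is correct and follows essentially the same approach as the paper: both argue that $\pi^*$ is a surjective ring homomorphism (via the localization sequence and \cref{chowwitt-groups-of-Bmun-odd}), identify the additional quotient by the square periodicity isomorphism $\psi_1$, compute $\psi_1(\pi^*(\hyperbolicz)) = \hyperbolic$ via the fiber product description, and then simplify the relations inherited from $\chowwitt^\tot(B\Gm)$ exactly as you outline. The paper additionally computes $\psi_1(\pi^*(\eulercwz))$ explicitly as an illustration, but notes (as you implicitly do) that this is not needed for the final presentation.
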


To obtain this description it is not necessary to compute $\psi_1 \pi^*(\eulercwz)$, but for better understanding we will illustrate the argument anyway.
Consider a second isomorphism
\[ \psi_2 \colon \chowwitt^i(B\mu_n, \trivialbundle(2)) \cong \chowwitt^i(B\mu_n, \trivialbundle(n+1)^{\otimes 2}) \cong \chowwitt^i(B\mu_n, \trivialbundle) \]
coming from the isomorphism of line bundles $\trivialbundle(2) \cong \trivialbundle(n+2)$ and the square periodicity isomorphism $\sqperiod_{\trivialbundle(n+1)}$.
This fits into the following commutative diagram: 
\[ \begin{tikzcd}
	\chowwitt^i(B\mu_n, \trivialbundle(1)) \times \chowwitt^j(B\mu_n, \trivialbundle(1)) \ar[d] \ar[r, "\operatorname{mult}"] \ar[bend right=60, swap, xshift=-55pt]{dd}[yshift=15pt]{\psi_1 \times \psi_1} & \chowwitt^{i+1}(B\mu_n, \trivialbundle(2)) \ar[d] \ar[bend left=50, xshift=30pt]{dd}[yshift=15pt]{\psi_2} \\
	\hspace{30pt} \chowwitt^i(B\mu_n, \trivialbundle(n+1)) \times \chowwitt^j(B\mu_n, \trivialbundle(n+1)) \ar[d, "\sqperiod_{\trivialbundle((n+1)/2)} \times \sqperiod_{\trivialbundle((n+1)/2)}"] \ar[r, "\operatorname{mult}"] & \chowwitt^{i+j}(B\mu_n, \trivialbundle(2n+2)) \ar[d, swap, "\sqperiod_{\trivialbundle(n+1)}"] \\
	\chowwitt^i(B\mu_n, \trivialbundle) \times \chowwitt^j(B\mu_n, \trivialbundle) \ar[r, "\operatorname{mult}"] & \chowwitt^{i+j}(B\mu_n, \trivialbundle)
\end{tikzcd}\]
The bottom square commutes by \cref{square-periodicity-compatibility} and the top one because the ring multiplication is constructed to be compatible with isomorphisms of line bundles.
With this one can compute:
\begin{align*} 
	\psi(\pi^*(\eulercwz)) &= (n+1) \cdot \psi(\pi^*(\eulercwz)) \\
	&= \frac{n+1}{2} h \psi(\pi^*(\eulercwz)) = \frac{n+1}{2} \cdot \psi(\pi^*(\hyperbolicz))\psi(\pi^*(\eulercwz)) \\
	&= \frac{n+1}{2} \cdot \psi^* \pi^*(\hyperbolicz\eulercwz) = \frac{n+1}{2} \cdot \pi^*(\hyperbolicz\eulercwz) \, . 
\end{align*}
The first equality follows from the relation $n\pi^*(\eulercwz)$, the second from the relation $I(k) \cdot \eulercwz$ and $\GW(k)$-linearity of $\pi^*$ and $\psi$, and the third from \cref{eq:square-periodicity-H-h}.
The second to last equality comes from the above diagram
and the last one holds because $\trivialbundle(2n+2)$ is already a square over $B\Gm$ and thus $\psi_2$ is already divided out in $\chowwitt^\tot(B\Gm)$.

\section{Milnor-Witt Cohomology in Non-Diagonal Bidegrees}

The following will become useful in later computations.
\begin{lemma}\label{prop:KMW-of-Bmun}
	Let $i,\, j$ integers with $i \neq 0$.
	If $j < i$ then
	\[ H^i(B\mu_n, \KMW_j, \linebundle) \cong 0 \, . \]
\end{lemma}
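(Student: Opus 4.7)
The plan is to deduce the vanishing from the known computation on $B\Gm$ in \cref{prop:milnorwitt-of-BGm} via the localization sequence for the embedding of the zero section $s_0 \colon \projective^r \hookrightarrow \Bmun{r}{n}$, following the same strategy as in the proof of \cref{chowwitt-groups-of-Bmun-odd}.

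Concretely, I will apply the Milnor-Witt cohomology analogue of the localization sequence of \cref{chowwitt-localization} to the codimension-one decomposition
\[ \projective^r \xhookrightarrow{s_0} \Bmun{r}{n} \xhookleftarrow{\iota} \Bmun{r}{n} \setminus s_0(\projective^r). \]
Composing with the homotopy invariance isomorphism on the middle term (\cref{prop:chowwitt-homotopy-invariance}) and invoking \cref{lem:admissible-gadgets-nondiag}, so that for $r$ sufficiently large all three terms compute the Milnor-Witt cohomologies of $B\Gm$, $B\Gm$ and $B\mu_n$ respectively, one obtains a long exact sequence
\[ \ldots \to H^i(B\Gm, \KMW_j, \linebundle_1) \to H^i(B\mu_n, \KMW_j, \linebundle) \xrightarrow{\partial} H^i(B\Gm, \KMW_{j-1}, \linebundle_2) \to \ldots \]
for certain line bundles $\linebundle_1$, $\linebundle_2$ on $B\Gm$ determined by $\linebundle$ (the latter additionally twisted by the dual of the normal bundle of $s_0$). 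By exactness it suffices to show that both flanking terms vanish.

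Here \cref{prop:milnorwitt-of-BGm} delivers the required vanishing immediately: for $i \neq 0$, both $H^i(B\Gm, \KMW_m, \trivialbundle)$ and $H^i(B\Gm, \KMW_m, \trivialbundle(1))$ are identified with subgroups of $\KM_{m-i}(k)$, and Milnor $K$-theory vanishes in negative degree. Since the hypothesis $j < i$ forces both $j - i < 0$ and $(j-1) - i < 0$, the two flanking terms vanish irrespective of the classes of $\linebundle_1$ and $\linebundle_2$ in $\Pic(B\Gm)/2 \cong \Integer/2$, and we conclude $H^i(B\mu_n, \KMW_j, \linebundle) = 0$.

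The only mild bookkeeping issue is tracking the twists through the localization sequence, in particular the extra $\det\Omega_{\projective^r/\Bmun{r}{n}}$ factor appearing on the $\projective^r$-terms; however, since \cref{prop:milnorwitt-of-BGm} furnishes the required vanishing uniformly over both classes in $\Pic(B\Gm)/2$, the precise identification of these twists is inessential.
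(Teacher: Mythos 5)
Your proof is correct and takes essentially the same route as the paper: both plug Fasel's computation $H^i(B\Gm, \KMW_m, -) \cong \KM_{m-i}(k)$ or $2\KM_{m-i}(k)$ (for $i \neq 0$) into the localization sequence associated to the zero section $s_0 \colon \projective^r \hookrightarrow \Bmun{r}{n}$ and use that Milnor $K$-theory vanishes in negative degrees. The only cosmetic difference is that you spell out explicitly why it suffices to check both twists and to sandwich $H^i(B\mu_n, \KMW_j, \linebundle)$ between the two zero groups, whereas the paper records the terms of the sequence directly.
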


\begin{proof}
	Consider the localization sequence from the proof of \cref{chowwitt-groups-of-Bmun-odd}.
	Inserting \cref{prop:milnorwitt-of-BGm} shows that unless $i=0$ and $\linebundle$ is trivial, this is isomorphic to
	\[ \ldots \to \KM_{j-i}(k) \to \KM_{j-i}(k) \to H^i(B\mu_n, \KMW_j, \linebundle) \to \KM_{j-i-1}(k) \to \ldots \]
	Since $\KM_{<0}(k) = 0$ this implies $H^i(B\mu_n, \KMW_j, \linebundle) = 0$ for $j < i$.
\end{proof}

\section{\texorpdfstring{$I^j$}{Ij}-Cohomology of \texorpdfstring{$B\mu_n$}{Bmu\_n}}

\begin{corollary}\label{prop:Ij-of-Bmun}
	\begin{align*}
		\shortintertext{If $n$ is odd:}
		H^\tot(B\mu_n, I^*) &\cong \witt(k) \\
		\shortintertext{If $n$ is even:}
		H^\tot(B\mu_n, I^*) &\cong \witt(k)[\classu, \euleriz]/(I(k) \euleriz, \classu^2 + 2 \classu, \classu\euleriz) 
	\end{align*}
\end{corollary}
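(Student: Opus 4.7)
The plan is to apply the ring homomorphism $\modulo \hyperbolic \colon \chowwitt^\tot(B\mu_n) \to H^\tot(B\mu_n, I^*)$ of \cref{sec:ring-structure} to the Chow--Witt presentations of \cref{chowwitt-ring-of-Bmun-odd,chowwitt-groups-of-Bmun-even}. This map is surjective: by \cref{prop:chowwitt-is-chow-x-I} the canonical map $c$ from $\chowwitt^\tot$ into the fiber product is surjective, and the projection of that fiber product onto its $H^\tot(-, I^*)$-factor is also surjective, since $\chow^i \to \chowmodtwo^i$ always is. The images of the Chow--Witt generators are: $\hyperbolic$ vanishes in $\witt(k) = \GW(k)/\hyperbolic$; the class $\hyperbolicz$ is represented by $(0,2)$ in the fiber product of \cref{chowwitt-ring-of-BGm} and therefore maps to $0 \in H^0(B\mu_n, I^0, \trivialbundle(1))$; and $\classu, \eulercwz$ map to their $I^*$-analogues $\classu, \euleriz$.

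For odd $n$, applying $\modulo \hyperbolic$ to $\GW(k)[\eulercwz]/(I(k)\eulercwz, n\eulercwz)$ yields $\witt(k)[\euleriz]/(I(k)\euleriz, n\euleriz)$. Since $\hyperbolic = 0$ in $\witt(k)$, the element $2$ equals $\langle 1 \rangle - \langle -1 \rangle$ there, which has rank $0$ and therefore lies in $I(k)$. Hence $I(k)\euleriz = 0$ forces $2\euleriz = 0$; combined with $n\euleriz = 0$ and $\gcd(2, n) = 1$, this gives $\euleriz = 0$, and surjectivity of $\modulo \hyperbolic$ then delivers $H^\tot(B\mu_n, I^*) \cong \witt(k)$.

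For even $n$, applying $\modulo \hyperbolic$ to the di Lorenzo--Mantovani presentation collapses every relation involving $\hyperbolic$ or $\hyperbolicz$ to a tautology, retains $I(k)\euleriz$ and $\classu^2 + 2\classu$, and simplifies $\classu\eulercwz - 2n\eulercwz$ to $\classu\euleriz$ (using $2n\euleriz = 0$ by the same argument as above). This produces exactly $\witt(k)[\classu, \euleriz]/(I(k)\euleriz, \classu^2 + 2\classu, \classu\euleriz)$ as a candidate image.

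\textbf{The main obstacle} is to verify that this is the entire story, i.e.\ that no further relations hold and the induced map $\chowwitt^\tot(B\mu_n)/\ker(\modulo \hyperbolic) \to H^\tot(B\mu_n, I^*)$ is an isomorphism. For odd $n$ this is painless: $\chow^i(B\mu_n) \cong \Integer/n$ has no $2$-torsion, so part (1) of \cref{prop:chowwitt-is-chow-x-I} upgrades $c$ to an isomorphism and the kernel of $\modulo \hyperbolic$ is read off the fiber product. For even $n$, where $\Integer/n$ has $2$-torsion, the cleanest backup is to rerun the localization sequence of \cref{sec:chowwitt-groups-Bmun} with $I^*$-coefficients: \cref{prop:euler-class-product} combined with $\hyperbolic = 0$ in $I^*$-cohomology yields $\euleri(\trivialbundle(n)) = 0$ for even $n$, so the sequence splits into short exact pieces whose abutments one then matches bidegree by bidegree against the candidate presentation.
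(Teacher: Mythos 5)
Your proposal is correct and uses essentially the same mechanism as the paper: both treat $H^\tot(B\mu_n, I^*)$ as the quotient of $\chowwitt^\tot(B\mu_n)$ by the image of the hyperbolic map, i.e.\ by the kernel of $\modulo\hyperbolic$. A few minor differences in execution. Your surjectivity argument goes through the fiber product of \cref{prop:chowwitt-is-chow-x-I} combined with surjectivity of $\chow^i \to \chowmodtwo^i$ (and, implicitly, the Bär sequence to lift back into $\ker\partial$); the paper instead cites exactness of $\chow^i \to \chowwitt^i \to H^i(I^i) \to 0$ directly, which holds because $H^{i+1}(X,\KM_i)$ involves only $\KM_{<0}$ and so vanishes — a more economical route to the same fact. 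Your observation that $2 = \langle 1 \rangle - \langle -1 \rangle \in I(k) \subseteq \witt(k)$, so $I(k)\euleriz = 0$ already forces $2\euleriz = 0$, is a nice repackaging of the paper's computation that $\hyperbolic_\trivialbundle(\chern^i) = 2\eulercwz^i$ generates $\Integer/n\langle\eulercwz^i\rangle$ when $\gcd(2,n)=1$. For even $n$, the paper simply repeats the explicit hyperbolic-map computation (for both twists, tracking parity of $i$), which is uniform with the odd case; your suggested backup of rerunning the localization sequence with $I^*$-coefficients — noting $\euleri(\trivialbundle(n)) = 0$ for $n$ even so the sequence splits — is a legitimate alternative, but it requires separately determining the off-diagonal groups $H^i(B\Gm, I^{i-1}, \linebundle)$, whereas the paper's route reuses the already-established Chow--Witt presentation directly. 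Since you left the even case as a sketch, you would still need to carry this out bidegree by bidegree to have a complete argument, but the plan is sound.
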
	

\begin{proof}
	Combine \cref{chowwitt-groups-of-Bmun-even,chowwitt-ring-of-Bmun-odd} and the fact that the sequence of abelian groups
	\[ \chow^i(B\mu_n) \xrightarrow{\hyperbolic_\trivialbundle} \chowwitt^i(B\mu_n, \trivialbundle)\to H^i(B\mu_n, I^i, \trivialbundle) \to 0 \]
	is exact.
	
	First consider odd $n$.
	The composition $\forgetful \circ \hyperbolic_\trivialbundle$ equals multiplication with $2$ and further it follows from the previous computations of the groups $\chowwitt^i(B\mu_n)$ and $\chow^i(B\mu_n)$ that the reduction map $\forgetful$ is injective in positive degrees.
	For degree zero consider the fiber product formula from \cref{prop:chowwitt-is-chow-x-I} using that $\chow^0(B\mu_n) \cong \Integer$ has no $2$-torsion.
	Note that the structure map $\chowwitt^0(B\mu_n, \trivialbundle) \to H^0(B\mu_n, I^0, \trivialbundle)$ is given by dividing out the image of $\hyperbolic_\trivialbundle$ and clearly $\modulo \hyperbolic_\trivialbundle \circ \hyperbolic_\trivialbundle = 0$,
	and recall that the pair $(0,2)$ in this fiber product corresponds to the hyperbolic form $\hyperbolic \in \chowwitt^0(B\mu_n, \trivialbundle)$.
	Thus the hyperbolic map $\hyperbolic_\trivialbundle$ acts by
	\begin{align*} 
		\hyperbolic_\trivialbundle \colon \chow^*(B\mu_n) \cong \Integer[\chern]/(n \chern) & \to \GW(k)[\eulercwz]/(I(k) \cdot \eulercwz, n \eulercwz) \cong \chowwitt^\tot(B\mu_n) \\  
		1 &\mapsto \hyperbolic \\
		\chern^i &\mapsto \hyperbolic \eulercwz^i = 2 \eulercwz^i 
	\end{align*}
	Since $2 \eulercwz^i$ generates $\chowwitt^i(B\mu_n) \cong \Integer/n \langle \eulercwz^i \rangle$ as a $\GW(k)$-module for $i \geq 1$, this means that all higher $I^j$-cohomology groups vanish.
	In degree $0$ 
	\[ H^0(B\mu_n, I^0, \trivialbundle) \cong \chowwitt^0(B\mu_n, \trivialbundle) / \hyperbolic \cong \GW(k) / \hyperbolic \cong \witt(k) \, . \]
	
	If $n$ is even an analogous argument shows that the hyperbolic maps are given by
	\begin{align*}
		\hyperbolic_\trivialbundle \colon \chow^*(B\mu_n) \cong \Integer[\chern]/(n \chern) &\to \GW(k)[\classu, \hyperbolicz, \eulercwz]/(\ldots) \cong \chowwitt^\tot(B\mu_n) \\
		1 & \mapsto \hyperbolic \\
		\chern^i &\mapsto \begin{cases*}
			\hyperbolic \eulercwz^i = 2 \eulercwz^i & $i$ even \\
			\hyperbolicz \eulercwz^i & $i$ odd 
		\end{cases*} \\
		\hyperbolic_{\trivialbundle(1)} \colon \chow^*(B\mu_n) \cong \Integer[\chern]/(n \chern) &\to \GW(k)[\classu, \hyperbolicz, \eulercwz]/(\ldots) \cong \chowwitt^\tot(B\mu_n) \\
		1 &\mapsto \hyperbolicz \\
		\chern^i &\mapsto \begin{cases*}
			\hyperbolicz \eulercwz^i & $i$ even \\
			\hyperbolic \eulercwz^i = 2 \eulercwz^i & $i$ odd 
		\end{cases*}
	\end{align*}
	This shows that the groups $\chowwitt^i(B\mu_n, \trivialbundle(i+1))$ which are generated by $\hyperbolicz \eulercwz^i$ become trivial in $I^j$-cohomology.
	For $i \geq 1$, this further implies
	\[ H^i(B\mu_n, I^i, \trivialbundle(i)) \cong \Integer/2n \langle \eulercwz^i \rangle / 2 \eulercwz^i \cong \Integer/2 \langle \euleriz^i \rangle \cong \witt(k)/I(k) \langle \euleriz^i \rangle \, . \]
	The relation $n \hyperbolicz \eulercwz$ is a multiple of $\hyperbolic \eulercwz = 2 \eulercwz$ and thus no longer appears.
	In degree $0$ and trivial twist compute 
	\[ H^0(B\mu_n, I^0, \trivialbundle) \cong \chowwitt^0(B\mu_n, \trivialbundle) / \hyperbolic \cong (\GW(k) \oplus \witt(k) \langle \classu \rangle) / \hyperbolic \cong \witt(k)\langle 1, \classu \rangle  \, . \]
	
	The statement follows by adding up these groups and inheriting the multiplication and relations from the Chow-Witt ring.
\end{proof}

\begin{remark}\label{remark:real-cycles-Bmun}
	Let the base field $k$ contain the real numbers $\Real$.
	Our scheme approximations of $B\mu_n$ are not cellular, thus the condition under which  \cite[Theorem 5.7]{HWXZ} proves that the real cycle class map
	\[ H^i(B\mu_n, I^j, \linebundle) \to H_{\sing}^i (B\mu_n(\Real), \Integer(\linebundle)) \]
	is an isomorphism for $j=i$ is not met.

	If $n$ is even, this is in fact not an isomorphism:
	The real realization, i.e.\ taking the set of real points of a scheme equipped with the analytic topology, commutes with products and quotients.
	This means that the real realization of $\Bmun{r}{n} \cong \trivialbundle_{\projective^r}(n)$ which is a line bundle over $\projective^r$, is again a real line bundle over $\projective^r(\Real) = \Real\projective^r$, and in case $n$ is even this is orientable.
	The zero section is also preserved under real realization, and removing the zero section of an oriented line bundle over $\Real\projective^r$ divides the total space into two connected components which are both homotopy equivalent to $\Real\projective^r$.
	Finally, taking the colimit over our approximations of increasing dimensions also commutes with real realization, thus $B_{\mathrm{gm}}\mu_n(\Real)$ is homotopy equivalent to $\Real\projective^\infty \amalg \Real\projective^\infty$.
	Its (untwisted) singular cohomology ring is 
	\[ H_{\sing}^*(B\mu_n(\Real); \Integer) \cong H_{\sing}^*(\Real\projective^\infty; \Integer) \oplus H^*(\Real\projective^\infty; \Integer) \cong \Integer[x]/2x \oplus \Integer[y]/2y \]
	where $x$ and $y$ live in degree $2$ and this is not isomorphic to the untwisted part of the $I^j$-cohomology we have computed above.
	The real cycle class map is, however, an isomorphism for  $j \geq i + 3$:
	Jacobson \cite[Corollary 8.3]{jacobson} proves this for $j \geq \dim X$.
	By \cref{lem:admissible-gadgets-Ij} we can use $\Bmun{r}{n}$ to compute $H^i(B\mu_n, I^j, \linebundle)$ if $r \geq i+2$ and this scheme has dimension $r+1 \geq i+3$.
	What happens in these bidegrees is roughly that the last term of the localization sequence \ref{eq:locseq-Bmun} does not vanish, contributing an additional generator $\classu \eulercwz^i$ to the Milnor-Witt and thus also the $I^j$-cohomology group.
	 
	If $n$ is odd, the space of real points $B\mu_n(\Real)$ is contractible with cohomology ring $\Integer \cong \witt(\Real)$ concentrated in degree $0$, thus coinciding with our results for $I^j$-cohomology.
\end{remark}

\chapter{The \texorpdfstring{$I^j$}{Ij}-Cohomology of \texorpdfstring{$\projective^q \times \projective^r$}{P x P}} \label{sec:I-cohomology-of-PxP}

\section{Fasel's Projective Bundle Formula}

Let $X$ be a smooth scheme, $\vectorbundle$ a vector bundle of rank $r$ on $X$ and $p \colon \projective(\vectorbundle) \to X$ the associated projective bundle.
Denoting by $\vectorbundlesheaf$ the locally free $\trivialbundle_X$-module associated to $\vectorbundle$, there is a $\trivialbundle_{\projective(\vectorbundle)}$-module $\vectorbundlesheafG$ defined by the short exact sequence 
\[ 0 \to \vectorbundlesheafG \to p^* \vectorbundlesheaf \to \trivialbundle_{\projective(\vectorbundle)}(1) \to 0 \, . \]
We denote the total space of the vector bundle associated to $\vectorbundlesheafG$ by $\vectorbundleG$.
Further let $\linebundle$ be a line bundle on $X$.
For $a \in \Integer$ denote by $\linebundle(a)$ the line bundle $p^* \linebundle \otimes \trivialbundle_{\projective(\vectorbundle)}(a)$ over $\projective(\vectorbundle)$.
Recall that by quadratic periodicity (\cref{chowwitt-quadratic-periodicity}), $I^j$-cohomology twisted by $\linebundle(a)$ is isomorphic to that twisted in $\linebundle$ if $a$ is even and $\linebundle(-1)$ if $a$ is odd.

In \cite{fasel-projbundle}, Fasel constructs maps
\begin{multline*} 
	\mu_a^\linebundle \colon H^i(X, \ibar^j) \xrightarrow{p^*} H^i(\projective(\vectorbundle), \ibar^j) \xrightarrow{\bockstein_{\linebundle(-1)}} H^{i+1}(\projective(\vectorbundle), I^{j+1}, \linebundle(-1)) \\
	\xrightarrow{\cdot \euleri(\trivialbundle_{\projective(\vectorbundle)}(1))^{a-1}} H^{i+a}(\projective(\vectorbundle), I^{j+a}, \linebundle(-a)) 
	\end{multline*}
for $a \geq 1$ and $i \in \Integer$, where $\bockstein_{\linebundle(-1)}$ is the Bockstein introduced in \cref{eq:key-diagram}, and 
\begin{align*}
	\Theta_{\mathrm{even}}^\linebundle \coloneqq \sum_{\substack{1 \leq a \leq r-1 \\ a \; \mathrm{even}}} \mu_a^\linebundle & \colon \bigoplus_a H^{i-a}(X, \ibar^{j-a}) \to H^i(\projective(\vectorbundle), I^j, p^*\linebundle) \\
	\Theta_{\mathrm{odd}}^\linebundle \coloneqq \sum_{\substack{1 \leq a \leq r-1 \\ a \; \mathrm{odd}}} \mu_a^\linebundle & \colon \bigoplus_a H^{i-a}(X, \ibar^{j-a}) \to H^i(\projective(\vectorbundle), I^j, p^*\linebundle(-1))
\end{align*}
for $i \in \Integer$.
These two maps are split injective by \cite[Cor.\ 5.8]{fasel-projbundle}.

\begin{definition}[Fasel] \label{def:reduced-cohomology}
	For a vector bundle $E \to X$ and its associated projective bundle $\projective(E)$, define the groups
	\begin{align*}
		\widetilde{H}^i(\projective(E), I^j, p^*\linebundle) &\coloneqq \coker \Theta_{\mathrm{even}}^\linebundle \\
		\widetilde{H}^i(\projective(E), I^j, p^*\linebundle(-1)) & \coloneqq \coker \Theta_{\mathrm{odd}}^\linebundle \, .
	\end{align*}
\end{definition}
Fasel calls this group the reduced $I^j$-cohomology, but we will not adopt this because we are using the term to describe $\ibar^j$-cohomology.
Note also that the tilde in this definition has nothing to do with the notation for Chow-Witt groups.

Denote by $\xi_\vectorbundle \in H^{r-1}(\projective(E), I^{r-1}, \trivialbundle_{\projective(\vectorbundle)}(-r))$ the orientation class of the projective bundle $\projective(\vectorbundle)$ associated to a trivial vector bundle $\vectorbundle$ of rank $r$ over $X$, as defined in \cite[Definition 6.1]{fasel-projbundle}.
By \cite[Prop.\ 6.4]{fasel-projbundle} the triviality condition on the vector bundle can be omitted if $r$ is odd, as in this case $\xi$ agrees with the Euler class of $\vectorbundleG^\vee$.
Fasel's main theorems \cite[Theorems 9.1, 9.2, 9.4]{fasel-projbundle} summarize to the following.
\begin{proposition}[Fasel] \label{thm:proj-bundle-Ij-thm}
	Let $X$ be a scheme, $\linebundle$ a line bundle over $X$, $\vectorbundle$ a vector bundle of rank $r$ over $X$ and $p \colon \projective(\vectorbundle) \to X$ the associated projective bundle, and $i \in \Integer$.
	Then if $r$ is odd, the compositions
	\begin{enumerate}[ref=\cref{thm:proj-bundle-Ij}(\arabic*)]
		\item $H^i(X, I^j, \linebundle) \xrightarrow{p^*} H^i(\projective(\vectorbundle), I^j, p^*\linebundle) \twoheadrightarrow \widetilde{H}^i(\projective(\vectorbundle), I^j, p^*\linebundle)$ and
		\item $H^{i-r+1}(X, I^{j-r+1}, \linebundle) \xrightarrow{\cdot \xi \circ p^*} H^i(\projective(E), I^j, p^*\linebundle \otimes \omega_{\projective(\vectorbundle)/X}) \twoheadrightarrow \widetilde{H}^i(\projective(\vectorbundle), I^j, p^*\linebundle \otimes \omega_{\projective(\vectorbundle)/X})$
	\end{enumerate}
	are isomorphisms of $\witt(k)$-algebras.
	If $r$ is even, \vspace{1ex}
	\begin{enumerate}[resume, ref=\cref{thm:proj-bundle-Ij}(\arabic*)]
		\item $\widetilde{H}^i(\projective(\vectorbundle), I^j, p^*\linebundle(-1)) = 0$,
		\item and if $\vectorbundlesheaf$ is free then \vspace{-1em}
		\begin{multline*} 
			H^i(X, I^j, \linebundle) \oplus H^{i-r+1}(X, I^{j-r+1}, \linebundle) \xrightarrow{p^* \oplus (\cdot \xi \circ p^*)} H^i(\projective(\vectorbundle), I^j, p^*\linebundle) \\ 
			\twoheadrightarrow \widetilde{H}^i(\projective(E), I^j, p^*\linebundle)
		\end{multline*} 
		is an isomorphism. 
	\end{enumerate}
\end{proposition}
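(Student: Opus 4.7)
The plan is to proceed by induction on the rank $r$ of $\vectorbundle$. The base case $r = 1$ makes $p \colon \projective(\vectorbundle) \to X$ an isomorphism and all four statements hold trivially. For the inductive step I would use a codimension-one subbundle $i \colon \projective(\vectorbundle') \hookrightarrow \projective(\vectorbundle)$ coming from a short exact sequence $0 \to \linebundle_0 \to \vectorbundle \to \vectorbundle' \to 0$, available Zariski-locally on $X$ and globally under the freeness hypothesis of part (4). Its open complement is an affine bundle of rank $r-1$ over $X$, so homotopy invariance (\cref{prop:chowwitt-homotopy-invariance}, in its $I^j$-cohomology form) identifies the cohomology of the complement with that of $X$.

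Next I would feed this decomposition into the localization sequence of \cref{chowwitt-localization} applied to $I^j$-cohomology, obtaining a long exact sequence relating $H^i(\projective(\vectorbundle), I^j, p^*\linebundle(\ast))$, the inductively-known cohomology of $\projective(\vectorbundle')$, and $H^i(X, I^j, \linebundle)$. The connecting morphisms are, via the key diagram \cref{eq:key-diagram}, identified with the Bockstein $\bockstein$ followed by multiplication by powers of the Euler class of $\trivialbundle_{\projective(\vectorbundle)}(1)$; this is exactly the definition of the $\mu_a^\linebundle$ entering $\Theta_{\mathrm{even}}^\linebundle$ and $\Theta_{\mathrm{odd}}^\linebundle$. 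Since these $\Theta$-maps are split injective by hypothesis, passing to the cokernel $\widetilde{H}^i$ collapses the long exact sequence to the claim that the remaining generators are precisely the images of $p^*$ and, when an extra one survives, of $\cdot\xi \circ p^*$.

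The parity-dependence in the statement is driven entirely by $\xi$. For odd $r$, $\xi = \euleri(\vectorbundleG^\vee)$ by \cite[Prop.~6.4]{fasel-projbundle}, so multiplication by $\xi$ is a genuine Gysin map and parts (1) and (2) become $\witt(k)$-algebra isomorphisms; multiplicativity is automatic from that of $p^*$ combined with graded commutativity (\cref{lem:chowwitt-graded-commutative}) passing to $I^j$-cohomology. For even $r$, the orientation class requires a trivialization to be defined, explaining the freeness hypothesis in (4); in the twist considered in (3), the $\mu_a^\linebundle$ with $a$ odd already exhaust the cohomology, giving the vanishing of the cokernel.

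The hard part will be tracking twist parities through the induction. The normal bundle of $i$ contributes an $\trivialbundle_{\projective(\vectorbundle')}(1)$-factor (modulo squares of line bundles pulled back from $X$), shifting the twist by one unit via quadratic periodicity (\cref{chowwitt-quadratic-periodicity}) at each inductive step. Aligning this shift with the parity of the summation index $a$ in $\Theta_{\mathrm{even}}^\linebundle$ and $\Theta_{\mathrm{odd}}^\linebundle$, and with the parity of $r$ determining whether $\xi$ is an Euler class, is what carves the single geometric argument into the four cases of the proposition.
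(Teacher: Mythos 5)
The paper does not prove this proposition; it is cited verbatim as a summary of \cite[Theorems 9.1, 9.2, 9.4]{fasel-projbundle}. So there is no ``paper's own proof'' to compare against, and your sketch is an attempt to reconstruct Fasel's argument.

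Your approach — induct on rank, cut out a codimension-one subbundle $\projective(\vectorbundle')$, note the complement is an affine bundle, feed the decomposition into a localization sequence — is the standard geometric template for projective bundle formulas, but it is \emph{not} what Fasel does. Fasel begins with the projective bundle formula that is already known for $\ibar^j$-cohomology (i.e.\ for mod-2 Chow groups, by a Leray–Hirsch argument), and then lifts along the B\"ar sequence $0 \to I^{j+1} \to I^j \to \ibar^j \to 0$. The maps $\mu_a^\linebundle$, the Bockstein $\bockstein$, the split injectivity of $\Theta_{\mathrm{even}}^\linebundle$ and $\Theta_{\mathrm{odd}}^\linebundle$, and the reduced groups $\widetilde{H}^i$ are all manifestations of that lifting process, not of a Gysin/localization sequence. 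So your route and Fasel's are genuinely different.

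The concrete gap in your sketch is the sentence identifying the connecting morphisms of the localization sequence with ``the Bockstein $\bockstein$ followed by multiplication by powers of the Euler class.'' These are two different kinds of maps. The localization boundary is a geometric residue, induced by the short exact sequence of Gersten complexes for the triple $\projective(\vectorbundle') \hookrightarrow \projective(\vectorbundle) \hookleftarrow U$, and it shifts codimension and $\KMW$-degree while moving between different spaces. The Bockstein $\bockstein$ is the connecting map for the coefficient sequence $I^{j+1}\to I^j\to\ibar^j$ and is intrinsic to a single scheme. The key diagram \cref{eq:key-diagram} relates these two kinds of exact sequences, but it does not identify their boundary maps, and no such identification is supplied. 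Without it, the subsequent step — ``passing to the cokernel $\widetilde{H}^i$ collapses the long exact sequence'' — does not follow: taking cokernels of the split $\Theta$-maps term by term does not obviously produce another long exact sequence, nor does it obviously isolate the images of $p^*$ and $\cdot\xi\circ p^*$. A secondary issue is the claim that multiplicativity of part (2) is ``automatic'' from graded commutativity: the map $\cdot\xi\circ p^*$ is not unital, so calling it a $\witt(k)$-algebra isomorphism already requires interpretation (it should be read as a module map over the algebra in part (1)), and your argument does not address this.
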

Together with the fact that the maps $\Theta_{\mathrm{even}}^\linebundle$ and $\Theta_{\mathrm{odd}}^\linebundle$ are split injective, this theorem yields a direct formula to compute the $I^j$-cohomology groups of a projective bundle.

Caution: $\projective^r = \projective(k^{r+1})$ is the projective bundle associated to the trivial rank $r+1$ vector bundle over $k$ and thus a rank $r+1$ projective bundle.

Before computing the $I^j$-cohomology of the product $\projective^q \times \projective^r$ we need to know the $I^j$-cohomology ring of a single copy of projective space $\projective^r$ over a base field $k$.
The additive structure is an immediate consequence of the previous theorem and the multiplicative structure is due to \cite[Prop.\ 4.1]{wendt}.
The reduced $I^j$-cohomology ring follows for example from \cite[Theorem 4.1]{fasel-projbundle}.

\begin{corollary}[Fasel, Wendt] %
		\label{lem:Ij-of-Pn}
	The total $I^j$-cohomology ring of projective space $\projective^r$ over a base field $k$ is 
	\begin{align*} 
		H^\tot(\projective^r, I^*) &\cong \witt(k)[\euleri, \orientationz]/(I(k) \cdot \euleri, \euleri^{r+1}, \euleri \cdot \orientationz, \orientationz^2) \\
	\intertext{as $\witt(k)$-algebras,
	where $\euleri$ corresponds to the class $\euleri(\trivialbundle_{\projective^r}(-1)) \in H^1(\projective^r, I^1, \trivialbundle_{\projective^r}(1))$ and $\orientationz$ corresponds to $\xi \in H^r(\projective^r, I^r, \trivialbundle_{\projective^r}(r-1))$.
	The reduced cohomology ring is}
	\bigoplus_i H^i(\projective^r, \ibar^i) & \cong \Integer/2\Integer [\sw] / \sw^{r+1} 
	\end{align*}
	where $\sw$ corresponds to the class $\sw(\trivialbundle_{\projective^r}(-1)) \in H^1(\projective^r, \ibar^1)$.
	The reduction morphism $\reduction \colon H^{\ast}(\projective^r, I^{\ast}, -) \to H^{\ast}(\projective^r, \ibar^{\ast})$ maps $\euleri$ to $\sw$ and $\orientationz$ to $\sw^r$.
\end{corollary}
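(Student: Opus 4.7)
The plan is to apply Fasel's projective bundle theorem \cref{thm:proj-bundle-Ij-thm} with $X = \Spec k$ and $\vectorbundle = k^{r+1}$, giving $\projective(\vectorbundle) = \projective^r$. Since $H^i(\Spec k, I^j, -) = I^j(k)$ is concentrated in cohomological degree zero (and analogously for $\ibar^*$), each nonzero summand in the projective bundle decomposition contributes a single copy of $I^j(k)$ or $\ibar^j(k)$.

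First I would unwind what Fasel's formula gives. Each summand $\mu_a^{\linebundle}$ (for $1 \leq a \leq r-1$) applied to the only nontrivial source $H^0(\Spec k, \ibar^{j-a}) = \ibar^{j-a}(k)$ lands in cohomological degree $a$ and, by the construction $\mu_a^{\linebundle}(\alpha) = \bockstein(p^*\alpha) \cdot \euleri(\trivialbundle(1))^{a-1}$, is identified with $\ibar^{j-a}(k) \cdot \euleri^a$. For diagonal bidegrees $(a,a)$ this means each $\witt(k)/I(k) \cdot \euleri^a$, while the reduced cohomology piece $\widetilde{H}^i$ (according to the appropriate part of \cref{thm:proj-bundle-Ij-thm} depending on the parity of $r+1$) contributes a $\witt(k)$ in cohomological degree $0$ and a second $\witt(k)$ in cohomological degree $r$ spanned by the orientation class $\orientationz = \xi$. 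This gives the additive decomposition matching the right-hand side of the stated isomorphism.

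For the multiplicative structure, I would verify each of the four relations. The relation $I(k) \cdot \euleri = 0$ follows from the preceding additive computation, since $\euleri^i$ generates a $\witt(k)/I(k)$-summand for every $i \geq 1$. The three remaining relations $\euleri^{r+1}$, $\euleri \cdot \orientationz$, and $\orientationz^2$ all live in cohomological degree strictly greater than $r = \dim \projective^r$, and Milnor-Witt (hence $I^j$-) cohomology vanishes above the Krull dimension of a smooth scheme. Comparing the $\witt(k)$-module structure of the free algebra modulo these four relations with the additive computation above shows that no further relations are needed, yielding the algebra isomorphism. This last verification is also covered by \cite[Prop.\ 4.1]{wendt}, which I would cite as an alternative.

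For the reduced cohomology, apply the Milnor conjecture $\ibar^*(k) \cong \KM_*(k)/2$ together with the identification between $\bigoplus_i H^i(\projective^r, \ibar^i)$ and the mod-$2$ Chow ring, reducing the statement to $\chowmodtwo^*(\projective^r) \cong \Integer/2[\sw]/\sw^{r+1}$. That $\reduction(\euleri) = \sw$ is immediate from the compatibility of Euler and Stiefel-Whitney classes under $\reduction$ (see the paragraph after \cref{def:euler-class}), and $\reduction(\orientationz) = \sw^r$ follows from \cite[Prop.\ 6.4]{fasel-projbundle}, which identifies $\xi$ with the relevant top Euler class in the odd-rank case and whose mod-$2$ image is $\sw^r$. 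The main obstacle I expect is the careful twist bookkeeping in the projective bundle formula — in particular, matching the twist $\trivialbundle(-a)$ appearing in $\mu_a^{\linebundle}$ with the expected twist of $\euleri^a$ modulo squares, and confirming that the image of $\mu_a$ really is the $\witt(k)/I(k)$-line spanned by $\euleri^a$ rather than some a priori different class in the same bidegree.
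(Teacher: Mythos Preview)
Your approach matches the paper's, which simply cites Fasel's projective bundle theorem for the additive structure, \cite[Prop.\ 4.1]{wendt} for the multiplicative structure, and \cite[Theorem 4.1]{fasel-projbundle} for the reduced cohomology.

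One indexing slip to fix: since $\projective^r = \projective(k^{r+1})$, the rank appearing in \cref{thm:proj-bundle-Ij-thm} is $r+1$, so the maps $\mu_a^{\linebundle}$ range over $1 \leq a \leq r$, not $1 \leq a \leq r-1$. As stated, your additive decomposition is missing the summand $\witt(k)/I(k)\cdot\euleri^r$ in cohomological degree $r$ (twist $\trivialbundle(r)$), which sits alongside the $\witt(k)\cdot\orientationz$ summand (twist $\trivialbundle(r-1)$). With that corrected, your module count matches the presentation $\witt(k)[\euleri,\orientationz]/(I(k)\cdot\euleri,\euleri^{r+1},\euleri\cdot\orientationz,\orientationz^2)$ on the nose and the rest of your argument goes through.
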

Wendt characterizes the class $\euleri$ as $\bockstein_{\trivialbundle(1)}(1)$.
That this is in fact the Euler class of $\trivialbundle(-1)$ follows from \cref{lem:bockstein-of-sw}.

\section{The \texorpdfstring{$I^j$}{I}-Cohomology Ring of \texorpdfstring{$\projective^q \times \projective^r$}{P x P}}

Consider the product space $\projective^q \times \projective^r$ for $q, \, r \geq 1$.
By \cite[Prop.\ 1.30]{eisenbud-harris}, the Picard group of a smooth scheme is isomorphic to its first Chow group, which in the case of $\projective^q \times \projective^r$ is shown in \cite[Theorem 2.12]{totaro-bluebook} to be isomorphic to $\Pic(\projective^q) \times \Pic(\projective^r) \cong \Integer \times \Integer$.

\begin{notation}
We denote the two projections by 
\[ \pr_1 \colon \projective^q \times \projective^r \to \projective^r \text{  and  } \pr_2 \colon \projective^q \times \projective^r \to \projective^r \, . \]
Line bundles on $\projective^q \times \projective^r$ are of the form $\pr_1^* \trivialbundle_{\projective^q}(s) \otimes \pr_2^*\trivialbundle_{\projective^r}(t)$ and we will use the shorthand notation
\[ \trivialbundle(s,t) \coloneqq \pr_1^* \trivialbundle_{\projective^q}(s) \otimes \pr_2^*\trivialbundle_{\projective^r}(t) \, . \]
\end{notation}

$I^j$-cohomology considers twists in $\Pic(\projective^q \times \projective^r)/2 \cong \Integer/2 \times \Integer/2$,
whose elements can be represented e.g.\ by the line bundles $\trivialbundle(0,0)$, $\trivialbundle(0,1)$, $\trivialbundle(1,0)$ and $\trivialbundle(1,1)$.

\begin{theorem} \label{thm:Ij-of-PxP}
	Let $k$ be a perfect field of characteristic coprime to $2$.
	The total $I^j$-cohomology ring of $\projective^q \times \projective^r$ is isomorphic to the following $(\Integer, \Pic(\projective^q \times \projective^r)/2)$-graded $\witt(k)$-algebra:
	\begin{multline*} 
		\witt(k)[\euleria, \eulerib, \euleric, \orientationa, \orientationb]/(I(k) \cdot \euleria, I(k) \cdot \eulerib, I(k) \cdot \euleric, \euleria^2 + \eulerib^2 - \euleric^2, \\
		\euleria^{q+1}, \eulerib^{r+1}, \euleria \orientationa, \eulerib \orientationb,  \orientationa^2, \orientationb^2, \\
		\eulerib \orientationa - \euleria^q \euleric, \euleria \orientationb - \eulerib^r \euleric, \euleric \orientationa - \euleria^q \eulerib, \euleric \orientationb - \euleria \eulerib^r)
	\end{multline*}
	where
	\begin{align*}
	\euleria &\mapsto \euleri(\trivialbundle(-1,0)) &&\in H^1(\projective^q \times \projective^r, I^1, \trivialbundle(1,0)) \\
	\eulerib &\mapsto \euleri(\trivialbundle(0,-1)) &&\in H^1(\projective^q \times \projective^r, I^1, \trivialbundle(0,1)) \\
	\euleric &\mapsto \euleri(\trivialbundle(-1,-1)) &&\in H^1(\projective^q \times \projective^r, I^1, \trivialbundle(1,1)) \\
	\orientationa &\mapsto \xi_{\trivialbundle_{\projective^r}^{q+1}} &&\in H^q(\projective^q \times \projective^r, I^q, \trivialbundle(q+1,0)) \\
	\orientationb &\mapsto \xi_{\trivialbundle_{\projective^q}^{r+1}} &&\in H^r(\projective^q \times \projective^r, I^r, \trivialbundle(0, r+1)) \, .
	\end{align*}
\end{theorem}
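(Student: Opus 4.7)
The plan is to apply Fasel's projective bundle theorem (\cref{thm:proj-bundle-Ij-thm}) iteratively. First I would view the second projection $\pr_2 \colon \projective^q \times \projective^r \to \projective^r$ as the projective bundle $\projective(\trivialbundle_{\projective^r}^{q+1})$ of rank $q+1$, and symmetrically $\pr_1$ as a rank $r+1$ projective bundle over $\projective^q$. Combining \cref{thm:proj-bundle-Ij-thm} with the base case \cref{lem:Ij-of-Pn} produces the additive structure: depending on the parities of $q+1$ and $r+1$, the group $H^i(\projective^q \times \projective^r, I^j, \linebundle)$ decomposes into shifted copies of $H^*(\projective^r, I^*, -)$ coming from the split injections $\Theta^\linebundle_{\mathrm{even/odd}}$ (built from $\pr_2^*$, Bocksteins and powers of $\euleria$) together with the reduced cohomology, in which the orientation class $\orientationa = \xi_{\trivialbundle_{\projective^r}^{q+1}}$ either coincides with $\euleria^q$ up to a unit or contributes an independent summand.

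Second I would identify the stated generators and derive the easy relations. The classes $\euleria = \pr_1^* \euleri(\trivialbundle_{\projective^q}(-1))$, $\eulerib = \pr_2^* \euleri(\trivialbundle_{\projective^r}(-1))$, $\orientationa$ and $\orientationb$ produce all pullbacks from the factors, while $\euleric = \euleri(\trivialbundle(-1,-1))$ is the extra generator needed to access the twist $\trivialbundle(1,1)$, which is not represented by any external tensor product of line bundles from the factors. The relations $\euleria^{q+1} = 0$, $\eulerib^{r+1} = 0$, $\euleria \orientationa = 0$, $\eulerib \orientationb = 0$, $\orientationa^2 = 0$, $\orientationb^2 = 0$ and $I(k) \cdot \{\euleria, \eulerib, \euleric\} = 0$ follow by pullback from \cref{lem:Ij-of-Pn}, together with the fact that the fundamental ideal annihilates any line bundle Euler class. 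The cross relations $\eulerib \orientationa = \euleria^q \euleric$, $\euleria \orientationb = \eulerib^r \euleric$, $\euleric \orientationa = \euleria^q \eulerib$ and $\euleric \orientationb = \euleria \eulerib^r$ all live in groups which the additive formula shows to be one-dimensional; I would verify each by reducing modulo $I$ to the mod-$2$ Chow ring and invoking the Stiefel-Whitney additivity $\sw(\trivialbundle(-1,-1)) = \sw(\trivialbundle(-1,0)) + \sw(\trivialbundle(0,-1))$ together with $\sw^{q+1} = 0$, then lifting via injectivity of $\reduction$ in the relevant bidegrees.

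The main obstacle is the quadratic relation $\euleria^2 + \eulerib^2 - \euleric^2 = 0$ in $H^2(\projective^q \times \projective^r, I^2, \trivialbundle(0,0))$, which expresses non-additivity of the Euler class on tensor products of line bundles and is not covered by \cref{prop:euler-class-product} (which handles only tensoring by a square). I would first check it modulo $I$: squaring $\swc = \swa + \swb$ in $\chowmodtwo^1$ and using the characteristic-$2$ identity $(\swa + \swb)^2 = \swa^2 + \swb^2$ shows the relation holds in $\ibar^2$. Hence $\euleria^2 + \eulerib^2 - \euleric^2$ lies in $\ker \reduction$, which by the Bär sequence equals the image of $\bockstein \colon H^1(\projective^q \times \projective^r, \ibar^1) \to H^2(\projective^q \times \projective^r, I^2, \trivialbundle)$. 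To conclude, I would either pull back from the universal case $B\Gm \times B\Gm$ (where it suffices to verify the identity in small-rank approximations $\projective^N \times \projective^N$), or compute $H^2(\projective^q \times \projective^r, I^3, \trivialbundle(0,0))$ directly via the iterated projective bundle formula and show that the possible correction term vanishes.

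Finally, once all the stated relations are established, I would check completeness of the presentation by comparing the additive rank of the abstract $\witt(k)$-algebra on the right with the rank of $H^i(\projective^q \times \projective^r, I^j, \linebundle)$ in each bidegree obtained from the first step; an explicit monomial basis in the generators $\euleria, \eulerib, \euleric, \orientationa, \orientationb$ modulo the listed relations should match the additive decomposition, showing that the surjective ring homomorphism from the presentation is an isomorphism.
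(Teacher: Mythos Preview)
Your proposal is correct and follows essentially the same route as the paper: apply Fasel's projective bundle formula to $\pr_2\colon\projective^q\times\projective^r\to\projective^r$, then verify relations by reducing to $\chowmodtwo^*$ and lifting via injectivity of $\reduction$ in the relevant bidegrees.

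Two remarks. First, the paper replaces your final rank-comparison step with something cleaner: it uses \cref{lem:bockstein-of-sw} to compute the images $\mu_a^{\linebundle}(\sw^i)$ explicitly as monomials in $\euleria,\eulerib,\euleric$ (e.g.\ $\mu_a^{\trivialbundle(i+1)}(\sw^i)=\euleria^{a-1}\eulerib^i\euleric$), which simultaneously exhibits a $\witt(k)$-module basis of each group and proves that the five listed classes generate the ring, so no separate counting of monomials in the abstract presentation is needed. Second, your first option for the quadratic relation (pull back from $B\Gm\times B\Gm$) would be circular here, since the paper derives the $B\Gm\times B\Gm$ result from the $\projective^q\times\projective^r$ computation; your second option is equivalent to what the paper actually does, namely observe from step one that $H^2(\projective^q\times\projective^r,I^2,\trivialbundle)\cong \witt(k)/I(k)\langle\euleria^2\rangle\oplus\witt(k)/I(k)\langle\eulerib^2\rangle$, on which $\reduction$ is visibly injective, so the mod-$2$ identity $(\swa+\swb)^2=\swa^2+\swb^2$ lifts uniquely.
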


To prove this theorem we will apply Fasel's formula \cref{thm:proj-bundle-Ij-thm} to the projective bundle 
\[ p \coloneqq \pr_2 \colon \projective^q \times \projective^r \cong \projective(\trivialbundle_{\projective^r}^{\oplus q + 1}) \to \projective^r \, . \]

The reduced cohomology of $\projective^q \times \projective^r$ can be computed immediately using \cite[Theorem 4.1]{fasel-projbundle}:

\begin{proposition}
	\[ \bigoplus_{i} H^i(\projective^q \times \projective^r, \ibar^i) \cong \Integer/2\Integer[\swa, \swb]/(\swa^{q+1}, \swb^{r+1}) \]
	where 
	\begin{align*}
		\swa &\mapsto \sw(\trivialbundle(-1,0)) &&\in H^1(\projective^q \times \projective^r, \ibar^1) \\
		\swb &\mapsto \sw(\trivialbundle(0,-1)) &&\in H^1(\projective^q \times \projective^r, \ibar^1) 
	\end{align*}
\end{proposition}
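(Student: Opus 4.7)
The plan is to apply \cite[Theorem 4.1]{fasel-projbundle}, which is the projective bundle formula for reduced $\ibar^*$-cohomology, in complete analogy with how Fasel's $I^*$-projective bundle formula is used to prove \cref{thm:Ij-of-PxP}. One views the product as a projective bundle via the second projection
\[ p = \pr_2 \colon \projective^q \times \projective^r \cong \projective\bigl(\trivialbundle_{\projective^r}^{\oplus q+1}\bigr) \to \projective^r. \]

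For a rank $n$ projective bundle $p \colon \projective(\vectorbundle) \to X$, Fasel's theorem exhibits $\bigoplus_i H^i(\projective(\vectorbundle), \ibar^i)$ as a free module over $\bigoplus_i H^i(X, \ibar^i)$ with basis $1, \eta, \ldots, \eta^{n-1}$, where $\eta \coloneqq \sw(\trivialbundle_{\projective(\vectorbundle)}(-1))$, subject to a single Wu-type relation expressing $\eta^n$ in terms of the pulled-back Stiefel-Whitney classes of $\vectorbundle$. Since $\vectorbundle = \trivialbundle_{\projective^r}^{\oplus q+1}$ is trivial, its total Stiefel-Whitney class equals $1$, so this relation degenerates to $\eta^{q+1} = 0$. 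Substituting the known reduced cohomology $\bigoplus_i H^i(\projective^r, \ibar^i) \cong \Integer/2[\swb]/\swb^{r+1}$ from \cref{lem:Ij-of-Pn} then yields the claimed presentation immediately.

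For the identification of the generators, note that under the isomorphism $\projective(\trivialbundle_{\projective^r}^{\oplus q+1}) \cong \projective^q \times \projective^r$ the tautological line bundle $\trivialbundle_{\projective(\vectorbundle)}(-1)$ pulls back to $\trivialbundle(-1,0)$, so $\eta = \sw(\trivialbundle(-1,0))$, while $p^*\swb = p^* \sw(\trivialbundle_{\projective^r}(-1)) = \sw(\trivialbundle(0,-1))$, as required.

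There is no real obstacle here: the computation is a direct application of Fasel's projective bundle theorem, whose setup has already been established for the $I^*$-case. The only piece of bookkeeping is the identification of the fiberwise tautological class with $\sw(\trivialbundle(-1,0))$ under the product decomposition, which follows from compatibility of tautological quotients with pullback.
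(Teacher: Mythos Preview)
Your proposal is correct and follows exactly the paper's approach: the paper states that this result ``can be computed immediately using \cite[Theorem 4.1]{fasel-projbundle}'' with the same projective bundle $p = \pr_2 \colon \projective(\trivialbundle_{\projective^r}^{\oplus q+1}) \to \projective^r$, and you have simply spelled out the details of that immediate application.
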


To shorten notation, in the following we will use the symbols $\euleria$, $\eulerib$, $\euleric$, $\orientationa$, $\orientationb$, $\swa$, $\swb$ to also denote their corresponding classes in the (reduced) $I^j$-cohomology ring.
Observe that
\begin{align*}
\reduction(\euleria) &= \swa \\
\reduction(\eulerib) &= \swb \\
\reduction(\euleric) &= \sw(\trivialbundle(-1,-1)) = \sw(\trivialbundle(-1,0) \otimes \trivialbundle(0,-1)) = \swa + \swb \\
\reduction(\orientationa) &= \swa^q \\
\reduction(\orientationb) &= \swb^r \, .
\end{align*}
where the third row uses the formula for Stiefel-Whitney classes of tensor products of line bundles, and the rest follows by pulling the respective classes back to one factor of the product where the reduction morphism is already known.

The classes $\sw^i$ for $0 \leq i \leq r$ generate $H^\tot(\projective^r, I^*)$ as a bi-graded $\witt(k)$-module
and thus their images under the maps
\[ p^*, \, \mu_a^\linebundle \text{ and } (\cdot \xi_{\trivialbundle_{\projective^r}^{q+1}}) \circ p^* = (\cdot \orientationa) \circ p^* \colon H^\tot(\projective^r, \ibar^*) \to H^\tot(\projective^q \times \projective^r, \ibar^*) \] 
appearing in Fasel's formula produce a generating set of $H^\tot(\projective^q \times \projective^r, I^*)$.
Our strategy will be to express all of these in terms of the classes $\euleria$, $\eulerib$, $\euleric$, $\orientationa$, $\orientationb$ to show that those generate $H^\tot(\projective^q \times \projective^r, I^*, -)$ as a $\witt(k)$-algebra.
For the pullback map $p^*$ we simply have $p^* \sw^i = \swb^i$, and likewise $\orientationa \cdot p^*(\sw^i) = \orientationa \cdot \swb^i$.
The main obstacle is to understand the Bockstein homomorphism $\bockstein_{\trivialbundle(1,t)}$ that occurs as a component of $\mu_a^\linebundle$.
We will use the following lemma of Fasel \cite[Lemma 3.1]{fasel-projbundle}.

\begin{lemma}[Fasel]%
	\label{lem:bockstein-of-sw}
	Let $X$ be a smooth scheme and $\linebundle$, $\linebundleb$ line bundles over $X$.
	Let $\alpha \in H^i(X, I^j, \linebundle)$.
	Then the Bockstein homomorphism
	\[ \bockstein_{\linebundle \otimes \linebundleb^\vee} \colon H^i(X, \ibar^i) \to H^{i+1}(X, I^{i+1}, \linebundle \otimes \linebundleb^\vee) \]
	maps $\reduction(\alpha)$ to $\alpha \cdot e(\linebundleb)$.
\end{lemma}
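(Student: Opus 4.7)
The proof plan is to reduce the statement to two independent ingredients: a universal identification of $\bockstein(1)$ with an Euler class, and a Leibniz-type projection formula for the Bockstein.

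First, I would establish the base case $\alpha = 1 \in H^0(X, I^0, \trivialbundle)$, where the claim reads $\bockstein_{\linebundleb^\vee}(1) = \euleri(\linebundleb)$ in $H^1(X, I^1, \linebundleb^\vee)$. By naturality of both sides under pullback, this reduces to the universal example: choose a classifying map $f \colon X \to B\Gm$ with $\linebundleb \cong f^* \trivialbundle(-1)$ and pull back from an approximation $\projective^N$ of $B\Gm$ for $N$ large. On projective space, $H^1(\projective^N, I^1, \trivialbundle(1))$ is generated as a $\witt(k)$-module by a single class (\cref{lem:Ij-of-Pn}), and the equality $\bockstein_{\trivialbundle(1)}(1) = \euleri(\trivialbundle(-1))$ can be verified directly by matching the Bockstein connecting homomorphism coming from the coefficient short exact sequence $0 \to I^{*+1} \to I^* \to \ibar^* \to 0$ with the boundary in the localization sequence of \cref{chowwitt-localization} applied to the zero section $s_0 \colon \projective^N \hookrightarrow \trivialbundle_{\projective^N}(-1)$, whose pushforward of $1$ is the Euler class by \cref{def:euler-class}.

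Second, I would prove a projection formula for the Bockstein: for $\alpha \in H^i(X, I^i, \linebundle)$ viewed as a cocycle and $\beta \in H^0(X, \ibar^0)$,
\[ \bockstein_{\linebundle \otimes \linebundleb^\vee}(\reduction(\alpha) \cdot \beta) = \alpha \cdot \bockstein_{\linebundleb^\vee}(\beta). \]
This is a direct chain-level computation: represent $\beta$ by a cocycle $b \in C^0(X, \ibar^0)$, choose a lift $b' \in C^0(X, I^0)$ so that $db'$ represents $\bockstein_{\linebundleb^\vee}(\beta)$, and observe that $\alpha \cdot b'$ is a lift of the cocycle $\alpha \cdot b$ (which represents $\reduction(\alpha) \cdot \beta$) whose differential equals $\alpha \cdot db'$ because $\alpha$ is closed. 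Hence $\alpha \cdot db'$ represents both sides simultaneously. Combining this with the base case applied to $\beta = 1$ yields the lemma:
\[ \bockstein_{\linebundle \otimes \linebundleb^\vee}(\reduction(\alpha)) = \alpha \cdot \bockstein_{\linebundleb^\vee}(1) = \alpha \cdot \euleri(\linebundleb). \]

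The main obstacle will be the base case: the careful comparison between the abstract Bockstein coming from the coefficient sheaf sequence and the geometric boundary map from the zero-section localization sequence requires tracking twist line bundles and normal bundle identifications throughout, and exploiting the homotopy-invariance isomorphism from \cref{prop:chowwitt-homotopy-invariance} for the line bundle $\linebundleb$ to identify the two connecting homomorphisms up to sign. By contrast, the projection-formula step is purely formal once one has a chain-level description of the Bockstein.
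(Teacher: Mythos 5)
The paper does not prove this lemma at all: it simply attributes it to Fasel and cites {\cite[Lemma 3.1]{fasel-projbundle}}. So there is no ``paper proof'' to compare against, and what follows is an evaluation of your argument on its own merits.

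Your decomposition into a base case $\alpha = 1$ plus a projection (Leibniz) formula is the natural strategy, and the combination does yield the lemma. But there are two genuine gaps.

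First, the chain-level Leibniz argument does not go through as written. The internal product on $I^j$-cohomology, as recorded in \cref{sec:ring-structure}, is \emph{not} a chain map: it is the external product $\mu$ (which is a chain map by Gille--Nenashev) followed by pullback along the diagonal $\Delta\colon X\to X\times X$, and $\Delta$ is not flat, so $\Delta^*$ is only defined on cohomology via \cref{prop:general-pullback}. Hence ``$\alpha\cdot b'$'' is not a well-defined cochain in $C^i(X,I^i,\linebundle\otimes\linebundleb^\vee)$, and the computation $d(\alpha\cdot b') = \alpha\cdot db'$ has no literal meaning. The fix is standard but must be made explicit: run your Leibniz argument for the \emph{external} product on $X\times X$, where it is a chain-level statement $\bockstein(\reduction(\alpha)\times\beta) = \pm\,\alpha\times\bockstein(\beta)$, then pull back along $\Delta$ at the level of cohomology, using that the Bockstein commutes with arbitrary pullbacks (it is a connecting map in a functorial long exact sequence of complexes). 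Without rerouting through the external product, the step ``observe that $\alpha\cdot b'$ is a lift'' is unjustified.

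Second, the base case is both overcomplicated and slightly confused. You propose to ``match'' the Bockstein connecting map coming from $0\to I^{*+1}\to I^*\to \ibar^*\to 0$ with the boundary of the localization sequence for the zero section, but those are connecting maps of two very different long exact sequences (one from coefficient complexes, one from a geometric decomposition), and the phrase ``whose pushforward of $1$ is the Euler class'' conflates the \emph{pushforward} $(s_0)_*$ appearing in \cref{def:euler-class} with the \emph{boundary} map of the localization sequence, which are not the same arrow. A much cleaner route exists and you half-mention it: on $\projective^N$, both $\bockstein_{\trivialbundle(1)}(1)$ and $\euleri(\trivialbundle(-1))$ are nonzero elements of $H^1(\projective^N, I^1, \trivialbundle(1))$, which is an $I(k)$-torsion cyclic $\witt(k)$-module, hence isomorphic to $\Integer/2$; two nonzero elements of $\Integer/2$ are equal. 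The nonvanishing of $\euleri$ is seen via its image $\sw\ne 0$ in $\chowmodtwo^1$, and the nonvanishing of $\bockstein(1)$ follows from the Bär exact sequence together with $H^0(\projective^N, I^0, \trivialbundle(1)) = 0$. But beware of circularity here: the computations in \cref{lem:Ij-of-Pn} and \cref{thm:proj-bundle-Ij-thm} come from the \emph{same} paper of Fasel as the lemma you are proving, so one must either verify that those inputs do not secretly use Lemma 3.1, or compute $H^1(\projective^N, I^1, \trivialbundle(1))$ directly from the Gersten complex or from the Chow--Witt computation in \cref{ex:admissible-gadgets}.
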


To compute $\bockstein_{\trivialbundle(s,t)}(\swa^k \swb^l)$, insert $\linebundle = \trivialbundle(k,l)$, $\alpha = \euleria^k \eulerib^l \in H^i(\projective^q \times \projective^r, I^i, \trivialbundle(k,l))$ and $\linebundleb = \trivialbundle(k-s, l-t)$ and obtain
\[ \bockstein_{\trivialbundle(s,t)} (\swa^k \swb^l) = \bockstein_{\trivialbundle(s,t)} ( \rho_{\trivialbundle(k,l)} (\euleria^k \eulerib^l)) = \euleria^k \eulerib^l \cdot \euleri(\trivialbundle(k-s,l-t)) \, . \]
Note that Euler classes of squares vanish in $I^j$-cohomology, hence only the parity of $k-s$ and $l-t$ is relevant.

In the projective bundle formula there appear only Bocksteins whose twist is tautological on the first factor of the product, and they are composed with the map $p^* \colon H^i(\projective^r, \ibar^i) \to H^i(\projective^q \times \projective^r, \ibar^i)$ whose image is generated by $p^* c^i = \swb^i$. 
So the relevant values are:
\begin{align*}
	\bockstein_{\trivialbundle(1,i)} (\swb^i) &= \euleria \eulerib^i \\
	\bockstein_{\trivialbundle(1,i+1)} (\swb^i) &= \eulerib^i \euleric \\
\shortintertext{and thus}
	\mu_a^{\trivialbundle_{\projective^r}(i)} (\sw^i) &= \euleria^a \eulerib^i \\
	\mu_a^{\trivialbundle_{\projective^r}(i+1)} (\sw^i) &= \euleria^{a-1} \eulerib^i \euleric \, .
\end{align*}

Having determined the images of the maps $p^*$, $\mu_a^\linebundle$ and $\cdot \orientationa$ appearing in Fasel's formula yields a set of generators of all diagonal $I^j$-cohomology groups of $\projective^q \times \projective^r$ as $\witt(k)$-modules.
By describing all of these as products of $\euleria, \eulerib, \euleric, \orientationa, \orientationb$ it follows that these five classes generate the total $I^j$-cohomology ring as a $\witt(k)$-algebra.

Concerning relations, there are $\euleria^{q+1}$, $\euleria \orientationa$, $\orientationa^2$ as well as $\eulerib^{r+1}$, $\eulerib \orientationb$, $\orientationb^2$ which are pulled back from the individual factors.
Since all other possible products of $\euleria$, $\eulerib$, $\orientationa$ and $\orientationb$ generate their own summands in the $I^j$-cohomology groups of $\projective^q \times \projective^r$ there are no new relations between these four generators.

So it remains to determine the relations involving $\euleric$.
Since $\euleric$ lives in 
\begin{align*} 
	H^1(\projective^q \times \projective^r, I^1, \trivialbundle(1,1)) &= \mu_1^{\trivialbundle_{\projective^r}(-1)} H^0(\projective^r, \ibar^0) \\ 
	&= \mu_1^{\trivialbundle_{\projective^r}(-1)}(1) \cdot \witt(k)/I(k) \\
	&= \euleric \cdot \witt(k)/I(k) 
\end{align*} 
one can deduce the relation $I(k) \cdot \euleric$.
From our computations of the $\mu_a^\linebundle$ deduce further that the $\euleria^i \eulerib^j \euleric$ for $0 \leq i < q$ and $0 \leq j < r$ each generate their own summand and thus are not subject to any relations.
To determine $\euleric^2$ consider its reduction:
\[ \reduction(\euleric^2) = \reduction(\euleric)^2 = (\swa + \swb)^2 = \swa^2 + 2\swa\swb + \swb^2 = \swa^2 + \swb^2 \]
The last equality holds because $2 \in I(k)$ vanishes in $I(k)$-torsion.
Since $\euleric^2$ lives in $H^2(\projective^q \times \projective^r, I^2, \trivialbundle)$ which by the projective bundle formula equals $\witt(k)/I(k) \cdot \euleria^2 \oplus \witt(k)/I(k) \cdot \eulerib^2$, 
the only possible preimage of $\swa^2 + \swb^2$ is $\euleria^2 + \eulerib^2$.
Therefore $\euleric^2 = \euleria^2 + \eulerib^2$.
Similar arguments yield $\euleria^q \euleric = \eulerib \orientationa$, $\eulerib^r \euleric = \euleria \orientationb$, $\euleric \orientationa = \euleria^q \eulerib$ and $\euleric \orientationb = \euleria \eulerib^r$.
Now we have covered all possible monomials involving $\euleric$ and thus determined all relations, finishing the computation.

\begin{corollary}\label{lem:Ij-of-BGmxBGm}
	Let $k$ be a perfect field of characteristic coprime to $2$.
	The total $I^j$-cohomology ring of $B\Gm \times B\Gm$ is isomorphic to the following $(\Integer, \Pic(B\Gm \times B\Gm)/2)$-graded $\witt(k)$-algebra:
	\[ \witt(k)[\euleria, \eulerib, \euleric]/(I(k) \cdot \euleria, I(k) \cdot \eulerib, I(k) \cdot \euleric, \euleria^2 + \eulerib^2 - \euleric^2) \]
	where
	\begin{align*}
		\euleria &\mapsto \euleri(\trivialbundle(-1,0)) &&\in H^1(B\Gm \times B\Gm, I^1, \trivialbundle(1,0)) \\
		\eulerib &\mapsto \euleri(\trivialbundle(0,-1)) &&\in H^1(B\Gm \times B\Gm I^1, \trivialbundle(0,1)) \\
		\euleric &\mapsto \euleri(\trivialbundle(-1,-1)) &&\in H^1(B\Gm \times B\Gm, I^1, \trivialbundle(1,1))  \, . \\
	\end{align*}
\end{corollary}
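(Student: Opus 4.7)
The plan is to deduce this directly from \cref{thm:Ij-of-PxP} by letting $q, r \to \infty$ in the approximations $\projective^q \times \projective^r$ of $B\Gm \times B\Gm$, observing that the two auxiliary generators $\orientationa, \orientationb$ together with all relations involving them drop out of any fixed cohomological bidegree.

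First I would build a graded $\witt(k)$-algebra homomorphism $\Phi$ from the right hand side to $H^\tot(B\Gm \times B\Gm, I^*)$ by sending each generator to the Euler class of the indicated line bundle. Well-definedness of $\Phi$ reduces to checking the four relations. The annihilation relations $I(k) \cdot \euleria$, $I(k) \cdot \eulerib$, $I(k) \cdot \euleric$ follow from the fact that the Euler class of a line bundle in $I^j$-cohomology lies in a $\witt(k)/I(k)$-summand, which is visible on any $\projective^r$ by \cref{lem:Ij-of-Pn} and pulls back along the classifying map. The relation $\euleria^2 + \eulerib^2 = \euleric^2$ can be verified by pulling back to an approximation $\projective^q \times \projective^r$ with $q, r \geq 2$, where it holds by \cref{thm:Ij-of-PxP}.

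Next I would show $\Phi$ is an isomorphism in each fixed bidegree $(i, \linebundle)$ with $\linebundle \in \Pic(B\Gm \times B\Gm)/2$. By the $I^j$-analogue of \cref{lem:admissible-gadgets-products}, there is an isomorphism $H^i(B\Gm \times B\Gm, I^i, \linebundle) \cong H^i(\projective^q \times \projective^r, I^i, \linebundle)$ for $q, r$ sufficiently large compared with $i$. Choose $q, r > i$. By \cref{thm:Ij-of-PxP} the target admits a presentation in the five generators $\euleria, \eulerib, \euleric, \orientationa, \orientationb$. Since $\orientationa$ lives in cohomological degree $q > i$ and $\orientationb$ in degree $r > i$, no monomial involving them contributes in bidegree $i$. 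Analogously, the relations $\euleria^{q+1} = 0$, $\eulerib^{r+1} = 0$, $\euleria \orientationa = 0$, $\eulerib \orientationb = 0$, $\orientationa^2 = 0$, $\orientationb^2 = 0$, and the four relations expressing $\orientationa$- or $\orientationb$-products in terms of $\euleria, \eulerib, \euleric$, all act only in cohomological degrees $\geq \min(q, r)$ and are therefore inert in bidegree $i$. Consequently the presentation of $H^i(\projective^q \times \projective^r, I^i, \linebundle)$ restricts, in bidegree $i$, to precisely the presentation induced by $\Phi$.

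The main technical obstacle will be verifying that the identification of $H^\tot(B\Gm \times B\Gm, I^*)$ with the cohomology of each approximation is compatible with ring structure, so that the generators and relations translate faithfully and do not acquire hidden corrections in higher bidegrees. This follows the pattern used in \cref{def:chowwitt-of-classifying-space}: the pullback along the open immersion between two approximations is a ring homomorphism and an isomorphism in sufficiently low bidegrees, so the multiplicative structure on $H^\tot(B\Gm \times B\Gm, I^*)$ in any fixed bidegree is forced by that of $\projective^q \times \projective^r$ for $q, r \gg 0$, giving the presentation claimed.
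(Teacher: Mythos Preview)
Your proposal is correct and follows essentially the same approach as the paper: both deduce the result from \cref{thm:Ij-of-PxP} via the scheme approximations $\projective^q \times \projective^r$ of $B\Gm \times B\Gm$ (\cref{lem:admissible-gadgets-Ij,lem:admissible-gadgets-products}), noting that the extra generators $\orientationa, \orientationb$ and all relations involving them or $\euleria^q, \eulerib^r$ live in cohomological degrees at or above the approximation range and hence disappear in the limit. Your write-up is more explicit in setting up the comparison homomorphism $\Phi$ and checking ring compatibility, but the content is the same; one small adjustment is that the approximation range is degrees strictly below $\min(q,r)-1$, so you should take $q, r > i+1$ rather than $q, r > i$.
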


\begin{proof}
	This follows from the computation of $H^\tot(\projective^q \times \projective^r, I^*)$ in \cref{thm:Ij-of-PxP} and the approximation of classifying spaces in \cref{lem:admissible-gadgets-Ij,lem:admissible-gadgets-products}.
	$\projective^q \times \projective^r$ is an approximation for $B\Gm \times B\Gm$ in degrees $< \min(q,r)-1$.
	The classes $\orientationa$ and $\orientationb$ as well as all relations involving $\orientationa$, $\orientationb$, $\euleria^q$ or $\eulerib^r$ live in degrees above this boundary and thus don't appear here.
\end{proof}

\begin{remark}
	The considered schemes $\projective^q \times \projective^r$ are all cellular and thus the real cycle class map 
	\[ H^i(X, I^j, \linebundle) \to H^i (X(\Real), \Integer(\linebundle)) \]
	is an isomorphism for all $i$ by \cite[Theorem 5.7]{HWXZ}.
	Compare this for example with the computation of \cite[Example 3.E.5]{hatcher} of the singular cohomology of $\Real \projective^\infty \times \Real \projective^\infty$ in (non-twisted) $\Integer$-coefficients:
	\[ H^*(\Real\projective^\infty \times \Real\projective^\infty; \Integer) \cong \Integer[\lambda, \mu, \nu]/(2\lambda, 2\mu, 2\nu, \nu^2 + \lambda^2 \mu + \lambda \mu^2) \]
	with $\lambda$ and $\mu$ living in degree $2$ and $\nu$ in degree $3$.
	This embeds as the untwisted subring of the total $I^j$-cohomology ring of $B\Gm \times B\Gm$ via
	\begin{align*}
		\lambda &\mapsto \euleria^2 \\ 
		\mu &\mapsto \eulerib^2 \\ 
		\nu &\mapsto \euleria \eulerib \euleric
	\end{align*} 
	and the relation  
	\[ \nu^2 + \lambda^2 \mu + \lambda \mu^2 \mapsto \euleria^2 \eulerib^2 \euleric^2 + \euleria^4 \eulerib^2 + \euleria^2 \eulerib^4 = \euleria^2 \eulerib^2 (\euleria^2 + \eulerib^2 + \euleric^2) \] 
	is respected since $\euleria^2 + \eulerib^2 + \euleric^2 = 0$.
\end{remark}

\section{\texorpdfstring{$I^j$}{Ij}-Cohomology in Non-Diagonal Bidegrees}

The computations in the following chapters will involve $H^i(\projective^q \times \projective^r, I^j, \linebundle)$ for $j = i-1$.
It turns out that this group is straightforward to compute for $j < i$ which we will do in this section.

\begin{proposition} \label{lem:Ij-of-PxP-nondiag}
	Let $j < i$.
	\[ H^i(\projective^q \times \projective^r, I^j, \linebundle) \cong \begin{cases*}
		\witt(k) & $i=0$, $\linebundle = \trivialbundle(0,0)$ \\
		\witt(k) \oplus \witt(k) & $i=q=r$ odd, $\linebundle = \trivialbundle(0,0)$ \\
		\witt(k) & $i=r$, $\linebundle = \trivialbundle(0,r-1)$ and [$r \neq q$ or $r$ even] \\
		\witt(k) & $i=q$, $\linebundle = \trivialbundle(q-1,0)$ and [$q \neq r$ or $q$ even] \\
		\witt(k) & $i=q+r$, $\linebundle = \trivialbundle(q-1,r-1)$ \\
		0 & else \\
	\end{cases*} \]
\end{proposition}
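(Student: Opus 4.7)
The plan is to apply Fasel's projective bundle formula (\cref{thm:proj-bundle-Ij-thm}) to the projection $p = \pr_2 \colon \projective^q \times \projective^r \cong \projective(\trivialbundle_{\projective^r}^{\oplus q+1}) \to \projective^r$. This decomposes $H^i(\projective^q \times \projective^r, I^j, \linebundle)$ into the image of the split injection $\Theta_{\mathrm{even}}^{\linebundle}$ or $\Theta_{\mathrm{odd}}^{\linebundle}$ together with the quotient $\widetilde{H}^i(\projective^q \times \projective^r, I^j, \linebundle)$. First I would observe that the $\Theta$-contribution vanishes whenever $j < i$: its image is a direct sum of groups $H^{i-a}(\projective^r, \ibar^{j-a})$ with $a \geq 1$, and the $\ibar^{\ast}$-cohomology of $\projective^r$ is concentrated along the diagonal (it agrees with the mod $2$ Chow ring via the isomorphism $\ibar^{\ast} \cong \KM_{\ast}/2$, and $\chow^{\ast}(\projective^r)$ is torsion-free and diagonal). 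Since $j < i$ forces $j - a < i - a$, all these terms vanish.

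It then remains to compute $\widetilde{H}^i(\projective^q \times \projective^r, I^j, \linebundle)$ via \cref{thm:proj-bundle-Ij-thm}, splitting on the parity of $q$. The relative canonical bundle is $\omega_{p} \cong \trivialbundle(-q-1, 0)$, equivalent modulo squares to $\trivialbundle(1,0)$ if $q$ is even and to $\trivialbundle$ if $q$ is odd. For $q$ even, parts (1) and (2) identify the $\widetilde{H}$-groups in the two twist-classes $\trivialbundle(0,t)$ and $\trivialbundle(1,t)$ with $H^i(\projective^r, I^j, \trivialbundle(t))$ and $H^{i-q}(\projective^r, I^{j-q}, \trivialbundle(t))$, respectively. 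For $q$ odd, part (3) yields vanishing in twist $\trivialbundle(1,t)$, while part (4) identifies the first-factor untwisted groups with the direct sum of the two terms above.

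To conclude I would apply the same analysis to $\projective^r = \projective(k^{r+1}) \to \Spec k$, which shows that for $j < i$ the group $H^i(\projective^r, I^j, \trivialbundle(t))$ is $\witt(k)$ exactly at the bidegrees $(i,t) = (0,0)$ and $(i,t) \equiv (r, r-1) \pmod{(0,2)}$ (with $j$ in the permissible range), and zero otherwise. A case check on the four parity combinations of $(q, r)$ then matches the listed bidegrees and twists; the notable case is $q = r$ odd, where the two summands from part (4) collide at bidegree $(q, \trivialbundle)$ to produce $\witt(k) \oplus \witt(k)$, which also explains the ``$r \neq q$ or $r$ even'' caveat in the $i = r$ and $i = q$ clauses.

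The main obstacle will be the bookkeeping: carefully tracking line-bundle twists modulo squares through both iterations of the projective bundle formula across the four parity cases, and verifying rigorously the off-diagonal vanishing of $\ibar^{\ast}$-cohomology of $\projective^r$ that powers the $\Theta$-vanishing step.
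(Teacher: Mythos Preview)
Your proposal is correct and follows essentially the same approach as the paper's proof: observe that the $\Theta$-contribution vanishes for $j<i$ because $\ibar^{j-i}=0$ (which is the content of your ``concentrated on the diagonal'' remark), so that the $I^j$-cohomology agrees with $\widetilde{H}^i$, and then apply \cref{thm:proj-bundle-Ij-thm} together with the known non-diagonal $I^j$-cohomology of a single projective space. The paper's argument is the same but much more terse, leaving the parity casework you spell out entirely implicit in the phrase ``follows immediately from the projective bundle theorem.''
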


\begin{proof}
Observe that for $j < i$, $I^{j-i} = \witt(k)$ by definition and $\ibar^{j-i} = 0$.
This implies that in these cases the $I^j$-cohomology groups are isomorphic to the groups $\widetilde{H}^i(\projective^q \times \projective^r, I^j, \linebundle)$ as defined in \cref{def:reduced-cohomology}.

Recall:
\[ H^i(\projective^r, I^j, \linebundle) \cong \begin{cases*}
	\witt(k) & $i=0$, $\linebundle = \trivialbundle$ \\
	\witt(k) & $i=r$, $\linebundle = \trivialbundle(r-1)$ \\
	0 & else
\end{cases*} \]
The statement then follows immediately from the projective bundle theorem (\cref{thm:proj-bundle-Ij-thm}).
\end{proof}

\chapter{The Chow-Witt Ring of \texorpdfstring{$B\Gm \times B\Gm$}{BG\_m x BG\_m}} \label{sec:Chow-Witt-of-PxP}

\section{The Chow-Witt Ring of \texorpdfstring{$\projective^q \times \projective^r$}{P x P} and \texorpdfstring{$B\Gm \times B\Gm$}{BG{\_}m x BG{\_}m}}

According to \cref{prop:chowwitt-is-chow-x-I} there is an isomorphism
\[ c \colon \chowwitt^\tot(\projective^q \times \projective^r) \cong H^{*}(\projective^q \times \projective^r, I^{*}, -) \times_{\bigoplus_\linebundle \chowmodtwo^{*}(\projective^q \times \projective^r)} \bigoplus_{\linebundle \in \Pic(\projective^q \times \projective^r)/2} \ker \partial_\linebundle \]
where
\[ \partial_\linebundle^i \colon \chow^i(\projective^q \times \projective^r) \to H^{i+1}(\projective^q \times \projective^r, I^{i+1}, \linebundle) \]
is the boundary map of the long exact cohomology sequence associated to the sequence $I \to \KMW \to \KM$ of chain complexes.
In order to compute this kernel, first compute the Chow ring in which it lives using a result of Totaro about Chow rings of products:

\begin{proposition}[{\cite[Lemma 2.3, Theorem 2.12]{totaro-bluebook}}]
	\begin{align*}
		\chow^*(\projective^r) &\cong \Integer[\chern]/(\chern^{r+1}) \\
		\chow^*(\projective^q \times \projective^r) &\cong \Integer[\cherna,\chernb]/(\cherna^{q+1}, \chernb^{r+1}) \\
		\chow^*(\projective^q \times \projective^r)/2 & \cong H^*(\projective^q \times \projective^r, \ibar^*) \cong \Integer/2\Integer[\sw_1,\sw_2]/(\sw_1^{q+1}, \sw2^{r+1})
	\end{align*}
\end{proposition}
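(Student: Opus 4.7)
The plan is to treat the three isomorphisms successively, invoking standard tools from intersection theory; none of them require a genuinely new argument.

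For $\chow^*(\projective^r)$, the cleanest approach is the projective bundle formula applied to the trivial rank $r+1$ bundle over $\Spec(k)$; this produces $\chow^*(\Spec k)[\chern]/(\chern^{r+1}) = \Integer[\chern]/(\chern^{r+1})$ with $\chern = \chern_1(\trivialbundle_{\projective^r}(-1))$. Alternatively one can proceed by induction on $r$ via the localization sequence attached to $\projective^{r-1} \hookrightarrow \projective^r \hookleftarrow \affine^r$: homotopy invariance gives $\chow^i(\affine^r) = 0$ for $i > 0$, so the pushforward $\chow^{*-1}(\projective^{r-1}) \to \chow^*(\projective^r)$ is surjective in positive codimension, and one checks inductively that it is injective.

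For $\chow^*(\projective^q \times \projective^r)$, the second projection $\pr_2 \colon \projective^q \times \projective^r \to \projective^r$ realises the product as the trivial $\projective^q$-bundle over $\projective^r$. Applying the projective bundle formula in Chow theory yields
\[ \chow^*(\projective^q \times \projective^r) \cong \chow^*(\projective^r)[\cherna]/(\cherna^{q+1}) \, , \]
and inserting the first isomorphism gives the claim. Equivalently, the Künneth formula for Chow groups applies since both factors have free Chow groups, which is exactly the content of \cite[Lemma 2.3, Theorem 2.12]{totaro-bluebook}.

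For the mod $2$ statement, the Milnor conjecture of Orlov-Vishik-Voevodsky identifies $\ibar^*$ with $\KM_*/2$, so the Gersten complex $C^*(X, \ibar^*)$ is the mod $2$ reduction of the Gersten complex $C^*(X, \KM_*)$ whose cohomology is $\chow^*(X)$. Since the Chow groups of $X = \projective^q \times \projective^r$ just computed are free abelian, the universal coefficient theorem forces the Tor term to vanish and gives $H^i(X, \ibar^i) \cong \chow^i(X)/2$. Compatibility of the exterior products on Chow groups and on $\ibar^*$-cohomology with mod $2$ reduction transports the ring presentation to the right-hand side, yielding $\Integer/2[\sw_1, \sw_2]/(\sw_1^{q+1}, \sw_2^{r+1})$. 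The only step that uses anything beyond elementary intersection theory is the identification $\ibar^* \cong \KM_*/2$; everything else is a routine application of the projective bundle formula and universal coefficients.
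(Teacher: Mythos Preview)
Your proposal is correct. The paper itself does not give a proof of this proposition at all: it simply records the statement with a citation to Totaro's book (and, for the $\ibar^*$-part, implicitly relies on the earlier computation via \cite[Theorem 4.1]{fasel-projbundle}). So there is nothing to compare against; you have supplied a standard argument where the paper is content to quote the literature.

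Your route is the expected one: projective bundle formula for $\chow^*(\projective^r)$ and for the product (equivalently K\"unneth, which is what Totaro's cited Lemma 2.12 gives), and the Milnor conjecture identification $\ibar^* \cong \KM_*/2$ together with freeness of the Chow groups to pass to the mod $2$ statement. The only minor remark is that your phrase ``the Gersten complex $C^*(X,\ibar^*)$ is the mod $2$ reduction of $C^*(X,\KM_*)$'' is better read as ``there is a short exact sequence of complexes $0 \to C^*(X,\KM_*) \xrightarrow{2} C^*(X,\KM_*) \to C^*(X,\ibar^*) \to 0$''; the resulting long exact sequence in cohomology then collapses because $\chow^*(\projective^q \times \projective^r)$ is free, which is exactly your universal-coefficients argument.
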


According to the \enquote{key diagram} explained in \cref{sec:fiber-products},  $\partial_\linebundle$ equals $\bockstein_\linebundle \circ \modulo 2$.
We have already computed this Bockstein $\bockstein_\linebundle$ in the previous section after \cref{lem:bockstein-of-sw}, so now it is straightforward to describe its kernel.
From this explicit description one checks that for $\alpha \in \ker \partial_\linebundle$ and $\beta \in \ker \partial_\linebundleb$, the product $\alpha \beta$ lies in $\ker \partial_{\linebundle \otimes \linebundleb}$ and thus the kernels assemble into the $\Integer \times (\Pic/2)$-graded ring
\begin{align*}
	\Integer [C_1, C_2, C_3, r_1, r_2, h_1, h_2, h_3]/(\mathcal{J}^\prime, \mathcal{K}) &\cong \bigoplus_{\linebundle \in \Pic(\projective^q \times \projective^r)/2} \ker \partial_\linebundle \\
	C_1 &\mapsto \cherna \in \ker \partial_{\trivialbundle(1,0)}^1 \\
	C_2 &\mapsto \chernb \in \ker \partial_{\trivialbundle(0,1)}^1 \\
	C_3 &\mapsto \cherna + \chernb \in \ker \partial_{\trivialbundle(1,1)}^1 \\
	r_1 &\mapsto \cherna^q \in \ker \partial_{\trivialbundle(q+1,0)}^q \\
	r_2 &\mapsto \chernb^r \in \ker \partial_{\trivialbundle(0,r+1)}^r \\
	h_1 &\mapsto 2 \in \ker \partial_{\trivialbundle(1,0)}^0 \\
	h_2 &\mapsto 2 \in \ker \partial_{\trivialbundle(0,1)}^0 \\
	h_3 &\mapsto 2 \in \ker \partial_{\trivialbundle(1,1)}^0
\end{align*}
where $\mathcal{J}^\prime$ is the ideal generated by
\begin{align*}
	& h_1^2 - 4, h_2^2 - 4, h_3^2 - 4,  \\
	& h_1h_2 - 2h_3, h_2h_3 - 2h_1, h_1h_3 - 2h_2,  \\
	& 2 C_1 + h_3 C_2 - h_2 C_3, h_1 C_1 + h_2 C_2 - h_3 C_3, h_2 C_1 + h_1 C_2 - 2 C_3, h_3 C_1 + 2 C_2 - h_1 C_3
\end{align*}
and $\mathcal{K}$ is the ideal generated by
\begin{align*}
	&C_1^2 + C_2^2 + h_3 C_1 C_2 - C_3^2, \\ 
	&C_1^{q+1}, C_2^{r+1}, C_1 r_1, C_2 r_2, r_1^2, r_2^2, \\
	&C_1^q C_3 - C_2 r_1, C_2^r C_3 - C_1 r_2, C_3 r_1 - C_1^q C_2, C_3 r_2 - C_1 C_2^r \, .
\end{align*}
Note that $\ker \partial_{\trivialbundle(0,0)}$ also contains $1 = \cherna^0\chernb^0$ and $(\cherna + \chernb)^2 = \cherna^2 + 2\cherna\chernb + \chernb^2$.

So now we can apply \cref{prop:chowwitt-is-chow-x-I} and \cref{thm:Ij-of-PxP} to obtain the following.

\begin{corollary} \label{prop:chowwitt-of-PxP}
	For $k$ a perfect field of characteristic coprime to $2$, there is an isomorphism of graded $\GW(k)$-algebras
	\begin{multline*} 
		\chowwitt^\tot(\projective^q \times \projective^r) \cong 
		\GW(k) [\hyperbolica, \hyperbolicb, \hyperbolicc, \eulercwa, \eulercwb, \eulercwc, \orientationa, \orientationb]/(I(k) \cdot (\hyperbolica, \hyperbolicb, \hyperbolicc, \eulercwa, \eulercwb, \eulercwc) \\
		\eulercwa^2 + \eulercwb^2 + \hyperbolicc \eulercwa \eulercwb - \eulercwc^2, \mathcal{J}, \eulercwa^{q+1}, \eulercwb^{r+1}, \eulercwa \orientationa,\eulercwb \orientationb, \orientationa^2, \orientationb^2, \\
		\eulercwa^q \eulercwc - \eulercwb \orientationa, \eulercwb^r \eulercwc - \eulercwa \orientationb, \eulercwc \orientationa - \eulercwa^q \eulercwb, \eulercwc \orientationb - \eulercwa \eulercwb^r) 
	\end{multline*}
	where $\eulercwa$, $\eulercwb$, $\eulercwc$ correspond to 
	\[ \eulercw(\linebundle^\vee) \in \chowwitt^1(\projective^q \times \projective^r, \linebundle) \]
	and $\hyperbolica$, $\hyperbolicb$, $\hyperbolicc$ to
	\[ (0,2) \in \chowwitt^0(\projective^q \times \projective^r, \linebundle) \subseteq H^0(\projective^q \times \projective^r, I^0, \linebundle) \times_{\chowmodtwo^0(\projective^q \times \projective^r)} \chow^0(\projective^q \times \projective^r) \]	
	for $\linebundle = \trivialbundle(1,0), \trivialbundle(0,1), \trivialbundle(1,1)$, respectively,
	and $\orientationa$ and $\orientationb$ are the pullbacks of the orientation classes along the projections onto each factor.
	The ideal $\mathcal{J}$ is generated by the relations
	\begin{align}
		& \hyperbolica^2 - 2\hyperbolic, \hyperbolicb^2 - 2\hyperbolic, \hyperbolicc^2 - 2\hyperbolic, \label{eq:relations-BGmxBGm-1} \\
		& \hyperbolica\hyperbolicb - 2\hyperbolicc, \hyperbolicb\hyperbolicc - 2\hyperbolica, \hyperbolica\hyperbolicc - 2\hyperbolicb, \label{eq:relations-BGmxBGm-2} \\
		& 2 \eulercwa + \hyperbolicc \eulercwb - \hyperbolicb \eulercwc, \hyperbolica \eulercwa + \hyperbolicb \eulercwb - \hyperbolicc \eulercwc, \hyperbolicb \eulercwa + \hyperbolica \eulercwb - 2 \eulercwc, \hyperbolicc \eulercwa + 2 \eulercwb - \hyperbolica \eulercwc \label{eq:relations-BGmxBGm-3} \, .
	\end{align}
\end{corollary}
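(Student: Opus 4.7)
The plan is to derive the presentation by combining the fiber-product description of Chow--Witt groups from \cref{prop:chowwitt-is-chow-x-I} with the explicit computations of $H^\tot(\projective^q \times \projective^r, I^*)$ in \cref{thm:Ij-of-PxP} and of the ring $\bigoplus_{\linebundle} \ker \partial_{\linebundle}$ described just above the statement. Since $\chow^*(\projective^q \times \projective^r) \cong \Integer[\cherna,\chernb]/(\cherna^{q+1},\chernb^{r+1})$ is torsion-free in every bidegree, the first criterion of \cref{prop:chowwitt-is-chow-x-I} holds, and the canonical map
\[ c \colon \chowwitt^\tot(\projective^q \times \projective^r) \xrightarrow{\cong} H^\tot(\projective^q \times \projective^r, I^*) \times_{\bigoplus_{\linebundle} \chowmodtwo^*} \bigoplus_{\linebundle} \ker \partial_{\linebundle} \]
is an isomorphism of $\GW(k)$-algebras, so the problem reduces to presenting the right-hand side.

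I would then identify each proposed generator with its image under $c$. The Euler classes $\eulercwa, \eulercwb, \eulercwc$ correspond to the pairs $(\euleria, \cherna)$, $(\eulerib, \chernb)$, $(\euleric, \cherna+\chernb)$, using that $\forgetful$ sends Euler classes to top Chern classes and $\modulo \hyperbolic$ to $I^j$-Euler classes (together with $\sw(\trivialbundle(-1,-1)) = \swa + \swb$ lifted to Chow classes). The orientation classes $\orientationa, \orientationb$ (well-defined because the underlying vector bundles are trivial, cf.\ \cref{thm:proj-bundle-Ij-thm}) map to $(\xi_1, \cherna^q)$ and $(\xi_2, \chernb^r)$, and $\hyperbolica, \hyperbolicb, \hyperbolicc$ are by definition the classes $(0, 2)$ in the appropriate twisted zeroth-degree fiber products. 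Together these match the eight generators $C_1, C_2, C_3, r_1, r_2, h_1, h_2, h_3$ of $\bigoplus_{\linebundle} \ker \partial_{\linebundle}$ under $\forgetful$ and the five $I^j$-generators $\euleria, \eulerib, \euleric, \orientationa, \orientationb$ under $\modulo \hyperbolic$.

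Next I would verify that each listed relation is mapped to zero by $c$. The $\mathcal{J}$-relations \cref{eq:relations-BGmxBGm-1,eq:relations-BGmxBGm-2,eq:relations-BGmxBGm-3} sit in degrees $\leq 1$; on the $I^j$-side they reduce to trivial multiples of the hyperbolic generators (which all vanish in $I^j$-cohomology), and on the Chow side to elementary additivity, for instance $\forgetful(2\eulercwa + \hyperbolicc\eulercwb - \hyperbolicb\eulercwc) = 2\cherna + 2\chernb - 2(\cherna+\chernb) = 0$. The relation $\eulercwa^2 + \eulercwb^2 + \hyperbolicc\eulercwa\eulercwb - \eulercwc^2$ reduces on the $I^j$-side to the identity $\euleria^2 + \eulerib^2 = \euleric^2$ from \cref{thm:Ij-of-PxP} (with the $\hyperbolicc$-term vanishing) and on the Chow side to the binomial identity for $(\cherna+\chernb)^2$. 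The remaining relations involving $\orientationa, \orientationb$ are either pulled back from the individual factors $\projective^q$ or $\projective^r$, or correspond directly to the relations in the $\mathcal{K}$-ideal presented above.

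The final step, and the main obstacle, is completeness. Here I would argue bidegree-by-bidegree that the $\GW(k)$-algebra generated by the listed classes modulo the listed relations maps bijectively onto the fiber product. Using $\GW(k) \cong \witt(k) \times_{\Integer/2} \Integer$, each fiber-product element decomposes into a $\witt(k)$-linear combination of products of the $\euler$- and $\orientation$-classes on the $I^j$-side together with an integer Chow-correction that can be adjusted by multiples of the hyperbolic generators (whose Chow reduction is $2$). Since both factors are already given as explicit quotient rings in precisely these symbols, the only delicate point is verifying that the $\mathcal{J}$-relations in the statement exactly match the $\mathcal{J}'$-relations in the description of $\bigoplus_{\linebundle} \ker \partial_{\linebundle}$; this matching encodes the compatibility $\GW(k) \cong \witt(k) \times_{\Integer/2} \Integer$ at the level of generators and is the crux of the combinatorial bookkeeping.
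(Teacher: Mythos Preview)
Your proposal is correct and follows essentially the same approach as the paper: both apply the fiber-product criterion \cref{prop:chowwitt-is-chow-x-I} (using torsion-freeness of the Chow groups), identify the eight generators with explicit pairs in $H^\tot(I^*) \times_{\chowmodtwo} \oplus_\linebundle \ker\partial_\linebundle$, verify each relation vanishes on both coordinates, and then establish completeness via the decomposition $\GW(k) \cong \witt(k) \times_{\Integer/2} \Integer$. The paper organizes the completeness argument slightly differently---it first enumerates a $\GW(k)$-module generating set of pairs and then reduces to eight algebra generators---but the substance is the same.
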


\noindent Note that 
\[ (0,2) = \hyperbolic_\trivialbundle(1) \in \chowwitt^0(\projective^q \times \projective^r, \trivialbundle) \subseteq H^0(\projective^q \times \projective^r, I^0, \trivialbundle) \times_{\chowmodtwo^0(\projective^q \times \projective^r)} \chow^0(\projective^q \times \projective^r) \]
is the hyperbolic form $\hyperbolic \in \GW(k)$,
and the classes $\hyperbolica$, $\hyperbolicb$, $\hyperbolicc$ are the images of $1$ under the hyperbolic maps $\hyperbolic_\linebundle \colon \chow^0(\projective^q \times \projective^r) \to \chowwitt^0(\projective^q \times \projective^r, \linebundle))$ for $\linebundle = \trivialbundle(1,0), \trivialbundle(0,1), \trivialbundle(1,1)$, respectively.
They can be thought of as twisted versions of the hyperbolic form $\hyperbolic$.

\begin{proof}
We will use the fiber product formula from \cref{prop:chowwitt-is-chow-x-I}.
The fiber product of two $\GW(k)$-algebras contains precisely those pairs in the set-wise product where both entries have the same image in $\chowmodtwo^*(\projective^q \times \projective^r)$.
This entire proof consists of spelling out that set-wise characterization.

Recall that we have a set of generators $C_1, \, C_2,\, C_3,\, h_1,\, h_2,\, h_3,\, r_1,\, r_2$ of $\oplus_\linebundle \ker \partial_\linebundle$ and $\euleria, \, \eulerib, \, \euleric, \, \orientationa, \, \orientationb$ of $H^\tot(\projective^q \times \projective^r, I^*)$ as $\GW(k)$-algebras.
Thus as $\GW(k)$-modules, these two are generated by all monomials of the form $h_1^{i_1} h_2^{i_2} h_3^{i_3} C_1^{i_4} C_2^{i_5} C_3^{i_6} r_1^{i_7} r_2^{i_8}$ respectively $\euleria^{j_4} \eulerib^{j_5} \euleric^{j_6} \orientationa^{j_7} \orientationb^{j_8}$  where the exponents are non-negative integers.
Observe that monomials of the first form are mapped to zero in $\chowmodtwo^*(\projective^q \times \projective^r)$ if and only if the monomial vanishes in $\oplus_\linebundle \ker \partial_\linebundle$ already or if at least one of $i_1, \, i_2, \, i_3$ is not zero.
A monomial of the second form is in the kernel of $\reduction$ if and only if it already vanishes in $H^\tot(\projective^q \times \projective^r, I^*)$.

To determine for a given monomial $\alpha$ of the first form all pairs in the fiber product that have $\alpha$ as second coordinate, compute the $\modulo 2$-reduction $\overline{\alpha}$ and its preimage $\reduction^{-1}(\overline{\alpha})$ in the $I^j$-cohomology ring.
The latter can be read off from our description of $H^\tot(\projective^q \times \projective^r, I^*)$ in \cref{thm:Ij-of-PxP}.

If $\alpha = 0$ or one of the exponents $i_1, \, i_2, \, i_3$ is non-zero, then $\reduction^{-1}(\overline{\alpha}) = \ker (\reduction)$.
Otherwise if $\alpha = C_1^{i_4} C_2^{i_5} C_3^{i_6} r_1^{i_7} r_2^{i_8}$ we find that $\reduction^{-1}(\overline{\alpha}) = \euleria^{i_4} \eulerib^{i_5} \euleric^{i_6} \orientationa^{i_7} \orientationb^{i_8} + \ker(\reduction)$.
Hence the following form a set of generators of the fiber product as a $\GW(k)$-module.
\vspace{0.5em}
\begin{itemize} \setlength\itemsep{0.5em}
	\item $(0, h_1^{i_1} h_2^{i_2} h_3^{i_3} C_1^{i_4} C_2^{i_5} C_3^{i_6} r_1^{i_7} r_2^{i_8})$ if one out of $i_1, \, i_2, \, i_3$ non-zero
	\item $(\euleria^{i_4} \eulerib^{i_5} \euleric^{i_6} \orientationa^{i_7} \orientationb^{i_8}, C_1^{i_4} C_2^{i_5} C_3^{i_6} r_1^{i_7} r_2^{i_8})$ for $i_4, \, i_5, \, i_6, \, i_7, \, i_8$ any non-negative integers such that the second entry of the pair is not zero
	\item $(\beta, 0)$ with $\beta \in \ker(\reduction)$
\end{itemize}
\vspace{0.5em}
The third case, however, is redundant:
Any $\beta \in \ker(\reduction)$ can be expressed as a $\witt(k)$-linear combination of monomials $\euleria^{j_4} \eulerib^{j_5} \euleric^{j_6} \orientationa^{j_7} \orientationb^{j_8}$ with coefficients $\beta_{j_4, \ldots, j_8}$ in $I(k)\subseteq \witt(k)$.
Now if $\beta_{j_4, \ldots, j_8}$ is in $I(k)$, then the pair $(\beta_{j_4, \ldots, j_8},0)$ is in the fiber product $\witt(k) \times_{\Integer/2} \Integer \cong \GW(k)$ and therefore one can write
\[ (\beta,0) = \sum_{j_4, \ldots, j_8} (\beta_{j_4, \ldots, j_8}, 0) \cdot (\euleria^{j_4} \eulerib^{j_5} \euleric^{j_6} \orientationa^{j_7} \orientationb^{j_8}, C_1^{j_4} C_2^{j_5} C_3^{j_6} r_1^{j_7} r_2^{j_8}) \]
thus reducing to the second case.

All pairs in the first two cases can be expressed as monomials in the following eight pairs, which therefore form a set of generators of the fiber product as a $\GW(k)$-\textit{algebra}. 
We assign the symbols on the left-hand side to the corresponding elements in the Chow-Witt ring.
\begin{align*}
	c \colon \chowwitt^\tot(\projective^q \times \projective^r) \xrightarrow{\cong}& H^\tot(\projective^q \times \projective^r, I^{*}) \times_{\bigoplus_\linebundle \chowmodtwo^{*}(\projective^q \times \projective^r)} \bigoplus_{\linebundle \in \Pic(\projective^q \times \projective^r)/2} \ker \partial_\linebundle \\
	\hyperbolica, \hyperbolicb, \hyperbolicc \mapsto &(0, h_1), (0, h_2), (0, h_3) \\
	\eulercwa, \eulercwb, \eulercwc \mapsto &(\euleria, C_1), (\eulerib, C_2), (\euleric, C_3) \\
	\orientationa, \orientationb \mapsto& (\orientationa, r_1), (\orientationb, r_2) \\
	\intertext{Relations in the fiber product on the right hand side consist of pairs such that each coordinate is a relation in the respective factor. %
	The fundamental ideal becomes}
	I(k) \mapsto& (I(k),0) \, 
	\intertext{thus all pairs which contain $I(k)$ in the first coordinate become relations in the fiber product, that is, }
	I(k) \eulercwa, I(k)\eulercwb, I(k) \eulercwc \mapsto& (I(k), 0) \cdot (\euleria, C_1) = (I(k) \euleria, 0) = (0,0), \, \ldots
	\intertext{Further one finds relations}
	I(k) \hyperbolica, I(k) \hyperbolicb, I(k) \hyperbolicc \mapsto& (I(k), 0) \cdot (0, h_1), \, \ldots
	\intertext{The relations in $\mathcal{J}^\prime$ live in degree $0$ and $1$ where the only non-trivial relations in the $I^*$-cohomology ring are $I(k) \euleriz_i$ which are already covered above. %
	Thus each of these has a straightforward analogue in the Chow-Witt ring}
	\hyperbolica^2 - 2h = \hyperbolica^2 - h^2, \, \ldots \mapsto& (0, h_1^2) - (0, 4)	, \, \ldots \\
	\hyperbolic \eulercwa + \hyperbolicc \eulercwb - \hyperbolicb \eulercwc \mapsto& (0, 2) (\euleria, C_1) + (0, h_3) (\eulerib, C_2) - (0, h_2) (\euleric, C_3) \\
	=& (0, 2C_1 + h_3 C_2 - h_2 C_3)
	\intertext{and we will denote the ideal generated by these by $\mathcal{J}$. %
	Something interesting happens with the relation $\euleria^2 + \eulerib^2 - \euleric^2$. %
	There are several pairs in the fiber product containing this in the first coordinate, the most obvious one being}
	\eulercwa^2 + \eulercwb^2  - \eulercwc^2 \mapsto& (\euleria^2 + \eulerib^2 -\euleric^2, C_1^2 + C_2^2 - C_3^2) \\
	 =& (\euleria^2 + \eulerib^2 -\euleric^2, C_1^2 + C_2^2 - (C_1^2 + C_2^2 + h_3 C_1 C_2)) \\
	= &(\euleria^2 + \eulerib^2 -\euleric^2, - h_3 C_1 C_2) \\
	\intertext{whose second coordinate is not a relation in the Chow ring. %
	Only subtracting $(0, - h_3 C_1 C_2)$ yields the following relation.}
	\eulercwa^2 + \eulercwb^2 + \hyperbolicc \eulercwa \eulercwb - \eulercwc^2 \mapsto& (\euleria^2 + \eulerib^2 -\euleric^2, C_1^2 + C_2^2 + h_3 C_1 C_2 - C_3^2) \\
	=& (\euleria^2 + \eulerib^2 - \euleric^2, 0)
\intertext{All other relations from \cref{thm:Ij-of-PxP} appear in a unique pair in the fiber product whose second coordinate is in $\mathcal{K}$, thus each yields one relation:}
	\eulercwa^{q+1}, \, \ldots \mapsto& (\euleria^{q+1}, C_1^{q+1}), \, \ldots \\
	\eulercwa^q \eulercwc - \eulercwb \orientationa, \, \ldots \mapsto& (\euleria^q \euleric - \eulerib \orientationa, C_1^q C_3 - C_2 r_1), \, \ldots \qedhere
\end{align*}
\end{proof}

\begin{corollary}\label{prop:chowwitt-of-BGmxBGm}
	For $k$ a perfect field of characteristic coprime to $2$, there is an isomorphism of graded $\GW(k)$-algebras
	\begin{multline*} 
		\chowwitt^\tot(B\Gm \times B\Gm) \cong 
		\GW(k) [\eulercwa, \eulercwb, \eulercwc, \hyperbolica, \hyperbolicb, \hyperbolicc]/(I(k) \cdot (\hyperbolica, \hyperbolicb, \hyperbolicc, \eulercwa, \eulercwb, \eulercwc) \\
		\eulercwa^2 + \eulercwb^2 + \hyperbolicc \eulercwa \eulercwb - \eulercwc^2, \mathcal{J})  
	\end{multline*}
	with the symbols on the right hand side corresponding to the same classes as in \cref{prop:chowwitt-of-PxP}.
\end{corollary}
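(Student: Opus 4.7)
The plan is to pass to the limit $q,r \to \infty$ in \cref{prop:chowwitt-of-PxP}. By \cref{ex:admissible-gadgets} and \cref{lem:admissible-gadgets-products}, the product $\projective^q \times \projective^r$ is an approximation of $B\Gm \times B\Gm$ in codimension $< \min(q,r) - 1$, so that for each bidegree $(i,\linebundle)$ one has a ring-compatible isomorphism $\chowwitt^i(B\Gm \times B\Gm,\linebundle) \cong \chowwitt^i(\projective^q \times \projective^r,\linebundle)$ as soon as $\min(q,r) > i+1$, and the total ring $\chowwitt^\tot(B\Gm \times B\Gm)$ is assembled from these approximations as in \cref{def:chowwitt-of-classifying-space}.

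First I would sort the generators and relations of \cref{prop:chowwitt-of-PxP} by how they behave in the limit. The six classes $\hyperbolica, \hyperbolicb, \hyperbolicc \in \chowwitt^0$ and $\eulercwa, \eulercwb, \eulercwc \in \chowwitt^1$ live in fixed low codimension; as each is a hyperbolic class or the Euler class of $\trivialbundle(\pm 1, 0)$, $\trivialbundle(0, \pm 1)$, or $\trivialbundle(\pm 1, \pm 1)$, they are compatible with the closed inclusions $\projective^q \times \projective^r \hookrightarrow \projective^{q'} \times \projective^{r'}$ and therefore descend to well-defined classes on $B\Gm \times B\Gm$. The orientation classes $\orientationa \in \chowwitt^q$ and $\orientationb \in \chowwitt^r$, on the other hand, live in codimensions that exceed the approximation bound $\min(q,r)-1$ and do not stabilize. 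Symmetrically, the relations $I(k)\cdot(\hyperbolica, \hyperbolicb, \hyperbolicc, \eulercwa, \eulercwb, \eulercwc)$, $\eulercwa^2 + \eulercwb^2 + \hyperbolicc \eulercwa \eulercwb - \eulercwc^2$, and the ideal $\mathcal{J}$ all live in codimension at most $2$ and persist, while every remaining relation of \cref{prop:chowwitt-of-PxP} (involving $\orientationa$, $\orientationb$, or the powers $\eulercwa^{q+1}, \eulercwb^{r+1}$) lives in codimension $\geq \min(q,r)$ and disappears in the limit.

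Second I would package this into a graded $\GW(k)$-algebra map from the claimed presentation $R$ to $\chowwitt^\tot(B\Gm \times B\Gm)$. It is well-defined since every listed relation already holds in each approximation $\projective^q \times \projective^r$ by \cref{prop:chowwitt-of-PxP}. For bijectivity in a given bidegree $(i,\linebundle)$, I pick $q, r > i+1$: in $\chowwitt^i(\projective^q \times \projective^r, \linebundle)$ the classes $\orientationa$ and $\orientationb$ cannot appear as factors on degree grounds, so the six generators of $R$ already span the group, and the only defining relations of the approximation acting in codimension $\leq i$ are exactly those kept in $R$.

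The main obstacle is the last step, namely verifying that no hidden low-codimension relations among the six generators are forced by the $\orientationa, \orientationb$-relations of \cref{prop:chowwitt-of-PxP}. These ``forbidden'' relations always produce terms containing $\orientationa$ or $\orientationb$ (or the high powers $\eulercwa^{q+1}, \eulercwb^{r+1}$), and any consequence purely in the six generators obtainable from them would have to live in codimension $\geq \min(q,r)$; by making $q$ and $r$ arbitrarily large this remains above any prescribed codimension $i$, so no further relations are imposed. This completes the reduction from \cref{prop:chowwitt-of-PxP} to the stated presentation for $B\Gm \times B\Gm$.
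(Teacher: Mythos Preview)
Your proof is correct and follows essentially the same approach as the paper: use that $\projective^q \times \projective^r$ approximates $B\Gm \times B\Gm$ in codimension $< \min(q,r)-1$, observe that the six generators and the relations $I(k)\cdot(\ldots)$, $\eulercwa^2 + \eulercwb^2 + \hyperbolicc \eulercwa \eulercwb - \eulercwc^2$, and $\mathcal{J}$ are stable while $\orientationa$, $\orientationb$ and all relations involving them or $\eulercwa^{q+1}$, $\eulercwb^{r+1}$ lie above the approximation range. Your explicit check that the high-codimension relations cannot impose hidden low-codimension relations is a nice addition that the paper leaves implicit.
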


\begin{proof}
	We have already observed that $(V_1^{r+1}\setminus \{0\})/\Gm \cong \projective^r$ is an approximation of $B\Gm$ in codimension $<r$ in the sense of \cref{lem:admissible-gadgets}.
	By \cref{lem:admissible-gadgets-products} the product space $\projective^q \times \projective^r$ is an approximation of $B\Gm \times B\Gm$ in codimension $< \min{q,r} -1$.
	
	In the description of \cref{prop:chowwitt-of-PxP} the classes $\eulercwa$, $\eulercwb$, $\eulercwc$, $\hyperbolica$, $\hyperbolicb$, $\hyperbolicc$ remain stable for all $q,r \geq 2$ and thus survive in the Chow-Witt ring of $B\Gm \times B\Gm$.
	The classes $\orientationa$ and $\orientationb$ on the other hand live in codimension $q$ respectively $r$ which is not in the range where $\projective^q \times \projective^r$ is an approximation for $B\Gm \times B\Gm$.
	The same holds for all relations involving $\eulercwa^q$ or $\eulercwb^r$.
\end{proof}

\section{Milnor-Witt \texorpdfstring{$K$}{K}-Theory Groups in Non-Diagonal Bidegrees}

For the computations in \cref{sec:Chow-Witt-of-PmxBmun} we will also need to know the group $H^i(\projective^q \times \projective^r, \KM_{j})$ for $j \in \{i-1, i-2\}$.
To this end observe that the argument of \cite[Prop.\ 2.11]{hornbostel-wendt} extends to a statement about Milnor-Witt cohomology in non-diagonal bidegrees.
In this case the key diagram of \cite{hornbostel-wendt} reads as follows:
\begin{center}
	\begin{tikzcd}
		& H^i(X, \KM_j) \ar[r, "\id"] \ar[d, "\hyperbolic_\linebundle"] & H^i(X, \KM_j) \ar[d, "\cdot 2"] & \\
		H^i(X, I^{j+1}, \linebundle) \ar[r] \ar[d, "\id"] & H^i(X, \KMW_j, \linebundle) \ar[r, "\forgetful"] \ar[d, "\modulo \hyperbolic_\linebundle"] & H^i(X, \KM_j) \ar[r, "\partial"] \ar[d, "\modulo 2"] & H^{i+1}(X, I^{j+1}, \linebundle) \ar[d, "\id"] \\
		H^i(X, I^{j+1}, \linebundle) \ar[r, "\eta"] & H^i(X, I^j, \linebundle) \ar[r, "\reduction"] \ar[d, "\delta"] & H^i(X, \KM_j/2) \ar[r, "\bockstein"] \ar[d] & H^{i+1}(X, I^{j+1}, \linebundle) \\
		& H^{i+1}(X, \KM_j) \ar[r, "\id"] & H^{i+1}(X, \KM_j) & 
	\end{tikzcd}
\end{center}

\begin{proposition}\label{hornbostel-wendt-fiber-product}
	The canonical group homomorphism
	\[ \phi \coloneqq (\modulo \hyperbolic_\linebundle) \times \forgetful \colon H^i(X, \KMW_j, \linebundle) \to \ker\delta \times_{H^i(X, \KM_j/2)} \ker \partial \]
	is always surjective. 
	It is injective if one of the following conditions hold:
	\begin{enumerate}
		\item $H^i(X, \KM_j)$ has no non-trivial $2$-torsion.
		\item The map $\eta \colon H^i(X, I^{j+1}, \linebundle) \to H^i(X, I^j, \linebundle)$ is injective.
	\end{enumerate}
	If further $H^{i+1}(X, \KM_j)$ has no non-trivial $2$-torsion, then $\ker \delta$ equals $H^i(X, I^j, \linebundle)$.
\end{proposition}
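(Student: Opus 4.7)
The plan is to adapt the diagram chase that Hornbostel and Wendt use to prove \cite[Prop.\ 2.11]{hornbostel-wendt} in the diagonal case $i=j$. The key diagram displayed above is formally identical in shape to their original; its rows and columns are still the long exact cohomology sequences of the short exact sequences of coefficient complexes $0 \to \KM_j \xrightarrow{\hyperbolic_\linebundle} \KMW_j \to I^j \to 0$, $0 \to I^{j+1} \to \KMW_j \xrightarrow{\forgetful} \KM_j \to 0$, $0 \to \KM_j \xrightarrow{\cdot 2} \KM_j \to \KM_j/2 \to 0$ and $0 \to I^{j+1} \to I^j \xrightarrow{\reduction} \ibar^j \to 0$, with the Milnor conjecture identifying $\ibar^j \cong \KM_j/2$. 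That $\phi$ lands in the stated fibre product is then immediate from commutativity of the central square together with exactness of the middle row ($\partial \circ \forgetful = 0$) and the middle column ($\delta \circ \modulo \hyperbolic_\linebundle = 0$).

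For surjectivity, given $(a,b) \in \ker\delta \times_{H^i(X,\KM_j/2)} \ker\partial$ I would first lift $b$ to $\tilde b \in H^i(X,\KMW_j,\linebundle)$ by exactness of the middle row; the fibre product condition then puts $a - \modulo \hyperbolic_\linebundle(\tilde b)$ in $\ker\reduction$, so the B\"ar sequence supplies $c \in H^i(X,I^{j+1},\linebundle)$ with $\eta(c)$ equal to this difference. The lower-left square of the key diagram, whose leftmost vertical arrow is the identity, shows that the image $\tilde c$ of $c$ in $H^i(X,\KMW_j,\linebundle)$ satisfies $\modulo \hyperbolic_\linebundle(\tilde c) = \eta(c)$, so $x \coloneqq \tilde b + \tilde c$ maps to $(a,b)$.

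For injectivity, suppose $\phi(x)=0$. Exactness of the middle row yields $y \in H^i(X,I^{j+1},\linebundle)$ whose image in $H^i(X,\KMW_j,\linebundle)$ is $x$, and the commutativity used above forces $\eta(y) = \modulo \hyperbolic_\linebundle(x) = 0$. Under hypothesis (2) this gives $y=0$ immediately. Under hypothesis (1), the B\"ar sequence produces $z \in H^{i-1}(X,\KM_j/2)$ with $\bockstein(z)=y$; comparing the $\KMW_j$-SES with the $I^j$-SES via the identity-on-$I^{j+1}$ morphism yields the compatibility $\partial = \bockstein \circ \modulo 2$ on $H^{i-1}(X,\KM_j)$, and absence of $2$-torsion in $H^i(X,\KM_j)$ makes $\modulo 2 \colon H^{i-1}(X,\KM_j) \to H^{i-1}(X,\KM_j/2)$ surjective via the long exact sequence of $0 \to \KM_j \xrightarrow{\cdot 2} \KM_j \to \KM_j/2 \to 0$. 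Lifting $z$ to $w$ then gives $y = \partial(w)$, whence $x = 0$ by exactness of the middle row.

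For the last assertion, the map of short exact sequences $(\id,\forgetful,\reduction)$ from $0 \to \KM_j \xrightarrow{\hyperbolic_\linebundle} \KMW_j \to I^j \to 0$ to $0 \to \KM_j \xrightarrow{\cdot 2} \KM_j \to \ibar^j \to 0$ induces the compatibility $\delta = \bockstein_{\KM} \circ \reduction$, where $\bockstein_{\KM} \colon H^i(X,\ibar^j) \to H^{i+1}(X,\KM_j)$ has image in the $2$-torsion by exactness of its defining sequence; the no-$2$-torsion hypothesis on $H^{i+1}(X,\KM_j)$ therefore forces $\delta = 0$. The main bookkeeping obstacle throughout is keeping the four connecting homomorphisms in the key diagram straight, but all compatibilities between them are formal consequences of morphisms of short exact sequences and so transfer from the diagonal case with no new input.
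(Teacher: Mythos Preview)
Your proof is correct and follows the same overall strategy as the paper's: a diagram chase in the non-diagonal key diagram. The chases differ only in which side you lift first. For surjectivity the paper lifts the $I^j$-coordinate and then corrects via $\hyperbolic_\linebundle$, whereas you lift the $\KM_j$-coordinate and correct via the $I^{j+1}$-map; these are dual and equally valid. For injectivity under (1) the paper's argument is shorter: it lifts $\varepsilon \in \ker\phi$ along the \emph{column} via $\hyperbolic_\linebundle$ (using $\modulo\hyperbolic_\linebundle(\varepsilon)=0$), then observes from the upper square that the lift is $2$-torsion in $H^i(X,\KM_j)$ and hence zero. Your route via the row, the B\"ar sequence, and surjectivity of $\modulo 2$ works, but is more roundabout. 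Likewise for the last assertion the paper simply notes that no $2$-torsion in $H^{i+1}(X,\KM_j)$ makes $\hyperbolic_\linebundle$ injective there, forcing $\delta=0$ by exactness of the column; your factorisation $\delta = \bockstein_{\KM}\circ\reduction$ with image in $2$-torsion is an equally clean alternative.
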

%
%
\begin{proof}
The existence of the group homomorphism $\alpha$ is implied by the universal property of the fiber product.

For surjectivity of $\phi$ consider an element $(\alpha, \beta) \in \ker\delta \times_{H^i(X, \KM_j/2)} \ker \partial$.
Choose an arbitrary lift $\widetilde{\alpha} \in H^i(X, \KMW_j, \linebundle)$ of $\alpha$ and denote the image of this lift in $\ker \partial$ by $\gamma$.
The reduction of $\gamma$ agrees with the reduction of $\beta$,
thus $\beta - \gamma$ has a lift $\widetilde{\beta} \in H^i(X, \KM_j)$.
Then $\widetilde{\alpha} + \hyperbolic_\linebundle(\widetilde{\beta})$ is in the preimage of $(\alpha, \beta)$ under $\phi$.

Assume condition (1) holds and consider an element $\varepsilon \in \ker \phi = \ker \forgetful \, \cap \, \ker (\modulo \hyperbolic_\linebundle)$.
By exactness this must have a lift $\widetilde{\varepsilon}$ along $\hyperbolic_\linebundle$.
Due to commutativity of the upper square in the key diagram $\widetilde{\varepsilon}$ must be $2$-torsion and thus trivial by assumption,
hence $\varepsilon = 0$.
This proves injectivity.
Under condition (2) choose a lift $\widetilde{\varepsilon}^\prime$ in $H^i(X, I^{j+1}, \linebundle)$. 
Then $\widetilde{\varepsilon}^\prime$ must be zero due to injectivity of $\eta$ and commutativity of the left-most square, and thus $\varepsilon = 0$.

The last statement about $\ker \delta$ follows from the commutativity of the top square in the diagram: 
If multiplication with $2$ is injective, then so is $\forgetful \circ \hyperbolic_\linebundle$ and thus $\hyperbolic_\linebundle$. 
Since $\delta$ sits in the left vertical exact sequence before $\hyperbolic_\linebundle$, it follows that $\delta = 0$.
\end{proof}

\begin{lemma} \label{lem:KMWi-1-of-PxP}
	For $j < i$:
	\begin{align*} 
		H^i(\projective^q \times \projective^r, \KMW_{j}, \linebundle) &\cong
		\begin{cases*}
			\witt(k) & $i=0$, $\linebundle = \trivialbundle$ \\
			\witt(k) \langle \orientationa, \orientationb \rangle & $i=q=r$ odd, $\linebundle = \trivialbundle(0,0)$ \\
			\witt(k) \langle \orientationa \rangle & $i=q$, $\linebundle = \trivialbundle(q-1,0)$ and [$q \neq r$ or $q$ even] \\
			\witt(k) \langle \orientationb \rangle & $i=r$, $\linebundle = \trivialbundle(0,r-1)$ and [$q \neq r$ or $r$ even] \\
			\witt(k) \langle \orientationa \cdot \orientationb \rangle & $i = q+r$, $\linebundle = \trivialbundle(q-1, r-1)$ \\
			0 & else
		\end{cases*} \\
		H^i(B\Gm \times B\Gm, \KMW_j, \linebundle) &\cong
		\begin{cases*}
			\witt(k) & $i=0$, $\linebundle = \trivialbundle$ \\
			0 & else
		\end{cases*}
	\end{align*}
\end{lemma}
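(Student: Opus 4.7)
My plan is to reduce the Milnor-Witt computation to the $I^j$-cohomology computation of \cref{lem:Ij-of-PxP-nondiag} by means of the non-diagonal fiber product formula \cref{hornbostel-wendt-fiber-product}. The crucial vanishing input is the observation that the cohomological Gersten complex for Milnor K-theory $C^*(X, \KM_*, j)$ has $\KM_{j-i}(\kappa(x))$ at position $i$, and Milnor K-theory of fields vanishes in negative degrees. Hence for $j < i$ one has $H^i(X, \KM_j) = 0$, $H^i(X, \KM_j/2) = 0$ and $H^{i+1}(X, \KM_j) = 0$ identically.

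With these vanishings in place, condition (1) of \cref{hornbostel-wendt-fiber-product} holds trivially (zero has no nontrivial $2$-torsion) and so does the supplementary hypothesis needed to identify $\ker\delta$ with $H^i(X, I^j, \linebundle)$. Moreover $\ker\partial \subseteq H^i(X, \KM_j) = 0$, and the base $H^i(X, \KM_j/2)$ of the fiber product is zero as well, so the fiber product collapses to its first factor. Combining these facts, the map $\phi = (\modulo \hyperbolic_\linebundle) \times \forgetful$ becomes an isomorphism
\[ H^i(\projective^q \times \projective^r, \KMW_j, \linebundle) \xrightarrow{\cong} H^i(\projective^q \times \projective^r, I^j, \linebundle) , \]
and the claimed description of the groups is a direct transcription of \cref{lem:Ij-of-PxP-nondiag}. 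The generators $\orientationa$, $\orientationb$ and $\orientationa \orientationb$ listed in the statement are interpreted as any $\modulo \hyperbolic_\linebundle$-lifts of the $I^j$-cohomology generators of the same name, which exist by the isomorphism above.

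For $B\Gm \times B\Gm$ I will appeal to \cref{lem:admissible-gadgets-nondiag}, extended to products by the same argument that takes \cref{lem:admissible-gadgets} to \cref{lem:admissible-gadgets-products} (namely applying the single-group result to $\Gm \times \Gm$ acting diagonally on $V_1^{q+1} \times V_1^{r+1}$). This shows that $H^i(B\Gm \times B\Gm, \KMW_j, \linebundle)$ may be computed from $\projective^q \times \projective^r$ whenever $i < \min(q,r) - 1$, and since $q$ and $r$ can be chosen arbitrarily large, the exceptional cases $i = q$, $i = r$ and $i = q + r$ in the projective formula are always out of range. Only the $i = 0$, $\linebundle = \trivialbundle$ case persists and contributes $\witt(k)$.

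The main obstacle is simply making sure the degree conventions in the Gersten complex line up so that the $\KM_j$ and $\KM_j/2$ cohomology indeed vanish for $j < i$; once this is verified the fiber product reduction is formal and no new computation beyond \cref{lem:Ij-of-PxP-nondiag} is needed.
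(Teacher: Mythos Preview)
Your argument is correct and matches the paper's proof essentially line for line: both use \cref{hornbostel-wendt-fiber-product}, the vanishing of $H^i(-,\KM_j)$ and $H^{i+1}(-,\KM_j)$ for $j<i$ to collapse the fiber product to $H^i(-,I^j,\linebundle)$, and then read off the answer from \cref{lem:Ij-of-PxP-nondiag}. The only cosmetic difference is that the paper cites Totaro for the Milnor $K$-theory vanishing whereas you argue directly from the Gersten complex, which is equally valid (and arguably more transparent).
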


\begin{proof}
	Note that $H^i(\projective^q \times \projective^r, \KM_j)$ vanishes for $j < i$ (as follows e.g.\ from \cite[Theorem 2.12]{totaro-bluebook}) thus condition (1) of the fiber product formula \cref{hornbostel-wendt-fiber-product} is satisfied, and its subgroup $\ker \partial$ vanishes as well.
	Hence $H^i(\projective^q \times \projective^r, \KMW_j, \linebundle) \cong \ker \delta \subseteq H^i(\projective^q \times \projective^r, I^j, \linebundle)$ where the latter has been computed in \cref{lem:Ij-of-PxP-nondiag}.
	Since $H^{i+1}(\projective^q \times \projective^r, \KM_j)$ also vanishes, the last inclusion is in fact an equality.
	
	For the statement on $B\Gm \times B\Gm$, use the approximation of equivariant Chow-Witt groups from \cref{lem:admissible-gadgets-products}.
\end{proof}

\chapter{Chow-Witt Ring of \texorpdfstring{$B\Gm \times B\mu_n$}{BG\_m x Bmu\_n}} %
	\label{sec:Chow-Witt-of-PmxBmun}
	
Throughout this chapter, let $n$ be a positive integer, and $k$ a perfect field with characteristic coprime to $2$ and $n$.

The Picard group of $B\Gm \times B\mu_n$ is isomorphic to its first Chow group by \cite[Prop.\ 1.30]{eisenbud-harris} which is computed in \cite[Theorem 2.10, Lemma 2.12]{totaro-bluebook} to be isomorphic to $\Integer \times \Integer/n$.
For line bundles on $B\Gm \times B\mu_n$ (or more precisely, its scheme approximations) we adopt the same notation
\[ \trivialbundle(s,t) = \pr_1^*\trivialbundle_{B\Gm} (s) \otimes \pr_2^* \trivialbundle_{B\mu_n}(t) \]
as for bundles on $B\Gm \times B\Gm$.
Let $\Bmun{r}{n}$ be the approximation of $B\mu_n$ constructed in \cref{sec:Chow-Witt-of-Bmun} and consider the vector bundle $\pi \colon \projective^q \times \Bmun{r}{n} \to \projective^q \times \projective^r$.
Let $\linebundle$ be a line bundle over $\projective^q \times \Bmun{r}{n}$.
Consider the localization sequence associated to the closed subscheme
\[ \projective^q \times \projective^r \xhookrightarrow{(\id, s_0)} \projective^q \times \Bmun{r}{n} \xleftarrow{(\id, \iota)} \projective^q \times (\Bmun{r}{n} \setminus s_0(\projective^r)) \, . \]
Using the same arguments as in \cref{chowwitt-groups-of-Bmun-odd} \textemdash{} scheme approximations of classifying spaces, the isomorphism $\det \Omega_{\projective^q \times \Bmun{r}{n}/\projective^q \times \projective^r}^\vee \cong \det \Omega_{\trivialbundle_{\projective^q \times \projective^r}(n)/\projective^q \times \projective^r}^\vee \cong \trivialbundle(0,n)$ of line bundles over $\projective^q \times \projective^r$, and homotopy invariance \textemdash{} for $q$ and $r$ sufficiently large this sequence is isomorphic to
\begin{multline} \label{eq:locseq-BGmxBmun}
	\ldots \to \chowwitt^{i-1}(B\Gm \times B\Gm, s_0^*\linebundle \otimes \trivialbundle(0,-n)) \xrightarrow{\cdot \eulercw(\pi)} \chowwitt^i(B\Gm \times B\Gm, \linebundle) \\
	\xrightarrow{\iota^*} \chowwitt^i(B\Gm \times B\mu_n, \iota^*\linebundle) \to H^i(B\Gm \times B\Gm, \KMW_{i-1}, s_0^*\linebundle \otimes \trivialbundle(0,n)) \to \ldots 
\end{multline}
As already shown in \cref{sec:Chow-Witt-of-Bmun} the line bundle $\Bmun{r}{n} \to \projective^r$ is isomorphic to $\trivialbundle_{\projective^r}(n)$.
Thus $\projective^q \times \Bmun{r}{n}$ is isomorphic to $\pr_2^* \trivialbundle_{\projective^r}(n) = \trivialbundle(0,n)$ and therefore the map $\cdot \eulercw(\pi)$ can also be expressed as $\cdot \eulercw(\trivialbundle(0,n))$.
This Euler class can be computed using \cref{prop:euler-class-product}:
\[ \eulercw (\trivialbundle(0,n))) = \begin{cases*} 
	-\frac{n}{2} \cdot \hyperbolic_{\trivialbundle}(\cherna(\trivialbundle(0,1)))= - \hyperbolicb \eulercwb & $n$ even \\
	- \frac{n}{2} \hyperbolic \eulercwb = -n \eulercwb & $n$ odd 
\end{cases*} \]

\section{Group Structure}

\begin{theorem}\label{chowwitt-groups-of-BGmxBmun}
	For $\linebundle$ any line bundle on $B\Gm \times B\mu_n$ the following are isomorphisms of $\GW(k)$-modules.
	\begin{align*}
		\shortintertext{If $n$ is odd:}
		\chowwitt^i(B\Gm \times B\mu_n, \linebundle) & \cong \chowwitt^i(B\Gm \times B\Gm) / n \eulercwb \\
		\shortintertext{If $n$ is even:}
		\chowwitt^i(B\Gm \times B\mu_n, \linebundle) & \cong \begin{cases*}
			\GW(k) \oplus \witt(k)\langle \classub \rangle & $i=0$, $\linebundle$ trivial \\
			\chowwitt^i(B\Gm \times B\Gm, \linebundle)/\frac{n}{2} \hyperbolicb \eulercwb & else
		\end{cases*}
	\end{align*}
	Here $\classub$ is the pullback of the class $\classu$ from \cref{chowwitt-groups-of-Bmun-even} along the projection $\pr_2 \colon B\Gm \times B\mu_n \to B\mu_n$ onto the second factor, and
	$\eulercwb$ is the pullback of $\eulercwb$ defined in \cref{prop:chowwitt-of-BGmxBGm} along $\pi$.
\end{theorem}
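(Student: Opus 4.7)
The plan is to read off all Chow-Witt groups of $B\Gm \times B\mu_n$ from the localization sequence \cref{eq:locseq-BGmxBmun} established just above, using the vanishing result \cref{lem:KMWi-1-of-PxP} to control the connecting Milnor-Witt terms and the Euler class computation from \cref{prop:euler-class-product} to identify the image of multiplication by $\eulercw(\pi)$.

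First I would observe that in \cref{eq:locseq-BGmxBmun} the right-hand connecting term $H^i(B\Gm \times B\Gm, \KMW_{i-1}, s_0^*\linebundle \otimes \trivialbundle(0,n))$ has Milnor-Witt degree $i-1 < i$, so by \cref{lem:KMWi-1-of-PxP} it vanishes unless $i = 0$ and the twist is trivial in $\Pic(B\Gm \times B\Gm)/2$. Inspecting $\trivialbundle(0,n)$, this happens precisely when $n$ is even and $\linebundle$ is trivial. Thus for $i \geq 1$ in every twist, and for $i = 0$ in every case except $n$ even with $\linebundle$ trivial, the map $\iota^*$ is surjective and its kernel equals the image of multiplication by $\eulercw(\pi)$.

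Substituting the Euler class formulas quoted in the excerpt from \cref{prop:euler-class-product}, namely $\eulercw(\pi) = -n\eulercwb$ for $n$ odd and $\eulercw(\pi) = -\tfrac{n}{2}\hyperbolicb\eulercwb$ for $n$ even, immediately yields the quotient descriptions stated in the theorem for these degrees. In particular, when $i = 0$ and we are outside the exceptional case, the quoted quotient is empty since $n\eulercwb$ and $\tfrac{n}{2}\hyperbolicb\eulercwb$ both live in degree $1$, so the formula reduces to $\chowwitt^0(B\Gm \times B\mu_n,\iota^*\linebundle) \cong \chowwitt^0(B\Gm \times B\Gm,\linebundle)$, exactly what the short exact sequence delivers.

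The remaining case is $i = 0$, $\linebundle$ trivial, $n$ even, where \cref{eq:locseq-BGmxBmun} becomes the four-term exact sequence
\[
0 \to \chowwitt^0(B\Gm \times B\Gm, \trivialbundle) \xrightarrow{\iota^*} \chowwitt^0(B\Gm \times B\mu_n, \trivialbundle) \to \witt(k) \xrightarrow{\partial} \chowwitt^1(B\Gm \times B\Gm, \trivialbundle(0,n))\, .
\]
The hard part is to prove that the connecting map $\partial$ vanishes, yielding the splitting $\GW(k) \oplus \witt(k)\langle \classub\rangle$. I would do this by exhibiting an explicit preimage of the generator of $\witt(k)$. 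The natural candidate is $\classub \defined \pr_2^*\classu$, the pullback along $\pr_2 \colon B\Gm \times B\mu_n \to B\mu_n$ of the class $\classu$ from \cref{def:classu}. By \cref{lem:locseq-pullback}, $\pr_2^*$ is compatible with the analogous localization sequence for $B\mu_n$ alone, where \cref{chowwitt-groups-of-Bmun-even} shows that $\classu$ maps to the generator of $\witt(k)$ in the corresponding connecting term. Therefore $\classub$ maps to the generator of $\witt(k)$ in our four-term sequence, which both forces $\partial = 0$ and simultaneously provides the required splitting by the class $\classub$.
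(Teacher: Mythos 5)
Your approach is essentially the paper's: apply the localization sequence \cref{eq:locseq-BGmxBmun}, kill the connecting Milnor--Witt terms with \cref{lem:KMWi-1-of-PxP} to get the quotient descriptions, and handle the remaining degree $(0,\trivialbundle)$ with $\classub = \pr_2^*\classu$ and the compatible ladder of localization sequences from \cref{lem:locseq-pullback}. Two points need sharpening before the argument is complete.

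First, the claim that the connecting term $H^0(B\Gm\times B\Gm,\KMW_{-1},s_0^*\linebundle\otimes\trivialbundle(0,n))$ fails to vanish \emph{precisely when $n$ is even and $\linebundle$ is trivial} is not correct as written. That twist is trivial in $\Pic(B\Gm\times B\Gm)/2$ whenever $\linebundle\equiv\trivialbundle(0,n\bmod 2)$; for $n$ odd this is $\linebundle\equiv\trivialbundle(0,1)$, which is nontrivial over $B\Gm\times B\Gm$. The reason no extra exceptional case shows up for $n$ odd is that $\iota^*\trivialbundle(0,1)$ and $\iota^*\trivialbundle$ have the same class in $\Pic(B\Gm\times B\mu_n)/2$ when $n$ is odd, so the group $\chowwitt^0(B\Gm\times B\mu_n,\trivialbundle)$ can equally be computed from the localization sequence with $\linebundle=\trivialbundle$, where the connecting term genuinely vanishes. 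You need to make this choice of representative on $B\Gm\times B\Gm$ explicit; your inspection of $\trivialbundle(0,n)$ alone does not justify the stated characterization. Second, the deduction ``therefore $\classub$ maps to the generator of $\witt(k)$'' requires more than commutativity of the ladder: you must also know that the right-hand vertical arrow $\pr_2^*\colon H^0(B\Gm,\KMW_{-1},\cdot)\to H^0(B\Gm\times B\Gm,\KMW_{-1},\cdot)$ carries a generator of $\witt(k)$ to a generator. The paper supplies this by identifying both groups with the corresponding $I^j$-cohomology groups and appealing to the projective bundle theorem \cref{thm:proj-bundle-Ij-thm}; without some such verification the boundary $\partial(\classub)$ could a priori be zero. (Minor typo: the last term in your displayed exact sequence should be $H^1(B\Gm\times B\Gm,\KMW_0,\cdot)$, which vanishes by \cref{lem:KMWi-1-of-PxP}, not $\chowwitt^1(B\Gm\times B\Gm,\trivialbundle(0,n))$, which does not.)
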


Since the bundle $\trivialbundle(0,1)$ on $B\Gm \times B\mu_n$ is defined as the pullback along $\pi$ of the same bundle on $B\Gm \times B\Gm$, and Euler classes commute with pullbacks, $\eulercwb$ is actually the Euler class of $\trivialbundle(0,-1)$ on $B\Gm \times B\mu_n$.

\begin{proof}
	Except for $i=0$ and trivial twist, the rightmost term in the localization sequence \cref{eq:locseq-BGmxBmun} is shown to vanish in \cref{lem:KMWi-1-of-PxP}, proving the statement in these cases.
	
	Assume that $n$ is odd.
	Then the localization sequence implies
	\begin{align*} 
		&\chowwitt^i(B\Gm \times B\mu_n, \linebundle) \\
		&\cong \chowwitt^i(B\Gm \times B\Gm, \linebundle) / \eulercw(\trivialbundle(0,n)) \cdot \chowwitt^{i-1}(B\Gm \times B\Gm, \linebundle \otimes \trivialbundle(0,n)) \\
		&\cong \chowwitt^i(B\Gm \times B\Gm, \linebundle) / n \eulercwb \cdot \chowwitt^{i-1}(B\Gm \times B\Gm, \linebundle \otimes \trivialbundle(0,n)) \, .
\Theta_{\mathrm{even}}^\linebundle	\end{align*}
	In fact this argument also covers the case $i=0$ with trivial twist, since over $B\Gm \times B\mu_n$ the line bundle $\trivialbundle(1,0)$ is trivial and thus
	\[ \chowwitt^0(B\Gm \times B\mu_n, \trivialbundle) \cong \chowwitt^0(B\Gm \times B\mu_n, \trivialbundle(0,1)) \, . \]
	This works because $\trivialbundle(1,0)$ is trivial over $B\Gm \times B\mu_n$ but not over $B\Gm \times B\Gm$.
	
	Now assume that $n$ is even.
	For $i=0$ and trivial twist, \cref{lem:KMWi-1-of-PxP} shows that $H^0(B\Gm \times B\Gm, \KMW_{-1}, s_0^*\trivialbundle \otimes \trivialbundle(0,n)) \cong \witt(k)$  and $H^{1}(B\Gm \times B\Gm, \KMW_0, \trivialbundle) \cong 0$.
	In the previous section we have computed $\chowwitt^0(B\Gm \times B\Gm, \trivialbundle) \cong \GW(k)$.
	Furthermore $\chowwitt^{-1}(B\Gm \times B\Gm, s_0^*\trivialbundle \otimes \trivialbundle(0,-n))$ vanishes for degree reasons.
	Thus the localization sequence \ref{eq:locseq-BGmxBmun} reads:
	\[ \label{eq:locseq-BGmxBmun-deg0} 0 \to \GW(k) \to \chowwitt^0(B\Gm \times B\mu_n, \trivialbundle) \to \witt(k) \to 0 \]
	Now consider the class $\classu \in \chowwitt^0(B\mu_n, \trivialbundle)$ constructed in \cite[Prop.\ 5.2.1]{diLorenzo-Mantovani} and its pullback along the projection $\pr_2 \colon B\Gm \times B\mu_n \to B\mu_n$ (or $\pr_2 \colon \projective^q \times \Bmun{r}{n} \setminus s_0 \to \Bmun{r}{n} \setminus s_0$ to be precise).
	This class is shown in \cite{diLorenzo-Mantovani} to define a split of the boundary morphism
	\[ \chowwitt^0(B\mu_n, \iota^*\linebundle) \to H^0(B\Gm, \KMW_{-1}, s_0^*\linebundle \otimes \trivialbundle(0,n)) \]
	of the localization sequence \cref{eq:locseq-Bmun}.
	The map $\pr_2$ together with its respective restrictions induces the following maps between the sequences \cref{eq:locseq-Bmun} and \cref{eq:locseq-BGmxBmun} (all groups have trivial twist):
	\[ \begin{tikzcd}[column sep=small]
		\ldots \ar[r, "\cdot n \eulercwz"] & \chowwitt^0(B\Gm) \ar[d, "\pr_2^*"] \ar[r, "\iota^*"] & \chowwitt^0(B\mu_n) \ar[d, "\pr_2^*"] \ar[r, "\partial"] & H^0(B\Gm, \KMW_{-1}) \ar[d, "\pr_2^*"] \ar[r] & \ldots \\
		\ldots \ar[r, "\cdot n \eulercwb"]  & \chowwitt^0(B\Gm \times B\Gm) \ar[r, "(\id \times \iota)^*"] & \chowwitt^0(B\Gm \times B\mu_n) \ar[r, "\partial"] & H^0(B\Gm \times B\Gm, \KMW_{-1}) \ar[r] & \ldots
	\end{tikzcd}\]
	This diagram commutes according to \cref{lem:locseq-pullback} since we have
	\[ \pr_2^* N(s_0 \colon \projective^r \to \Bmun{r}{n}) \cong \pr_2^* \trivialbundle_{\projective^r(n)} \cong \trivialbundle(0,n) \cong N(\id \times s_0 \colon \projective^q \times \projective^r \to \projective^q \to \Bmun{r}{n})  \, . \]
	
	The right-most vertical arrow is an isomorphism which follows from the projective bundle theorem (\cref{thm:proj-bundle-Ij-thm}) together with the fact that these two Milnor-Witt cohomology groups are naturally isomorphic to the corresponding $I^j$-cohomology groups as explained in the proof of \cref{lem:KMWi-1-of-PxP}.
	Together with commutativity of the right-hand square this shows that $\pr_2^*(\classu) \eqqcolon \classub$ defines a split of the boundary map in the localization sequence \ref{eq:locseq-BGmxBmun-deg0} which proves the statement for $i=0$ and $\linebundle$ trivial.
\end{proof}

\section{Ring Structure}

\begin{theorem} \label{thm:chowwitt-of-BGmxBmun}
	We have the following isomorphisms of graded $\GW(k)$-algebras.
	\vspace{-3ex} \begin{align*}
		\intertext{If $n$ is odd:}
		\chowwitt^\tot(B\Gm \times B\mu_n)  \cong &\GW(k)[\eulercwa, \eulercwb, \hyperbolica]/(I(k) \cdot (\hyperbolica, \eulercwa, \eulercwb), \hyperbolica^2 - 2 \hyperbolic, n \eulercwb) \\
	\shortintertext{where}
		\hyperbolica \mapsto & \hyperbolic_{\trivialbundle(1,0)}(1) \in \chowwitt^0(B\Gm \times B\mu_n, \trivialbundle(1,0)) \\
		\eulercwa \mapsto &\eulercw (\trivialbundle(1,0)) \in \chowwitt^1(B\Gm \times B\mu_n, \trivialbundle(1,0)) \\
		\eulercwb \mapsto &\eulercw (\trivialbundle(0,1)) \in \chowwitt^1(B\Gm \times B\mu_n, \trivialbundle) \\
	\intertext{If $n$ is even:}
		\chowwitt^\tot(B\Gm \times B\mu_n)  \cong & \GW(k)[ \hyperbolica, \hyperbolicb, \hyperbolicc, \classub, \eulercwa, \eulercwb, \eulercwc]/(I(k) \cdot (\hyperbolica, \hyperbolicb, \hyperbolicc, \eulercwa, \eulercwb, \eulercwc), \\
		&\mathcal{J}, \eulercwa^2 + \eulercwb^2 + \hyperbolicc \eulercwa \eulercwb - \eulercwc^2, \frac{n}{2}\hyperbolicb \eulercwb, \classub^2 + 2 \classub, (\hyperbolic, \hyperbolica, \hyperbolicb, \hyperbolicc) \cdot \classub, \\
		& \classub\eulercwa - \frac{n}{2}\hyperbolicc\eulercwb, \classub \eulercwb - n \eulercwb, \classub \eulercwc - \frac{n}{2}\hyperbolica\eulercwb)
	\shortintertext{where $\mathcal{J}$ is the ideal of relations from \cref{prop:chowwitt-of-PxP}, }
		\hyperbolica \mapsto & \;\hyperbolic_{\trivialbundle(1,0)}(1) \in \chowwitt^0(B\Gm \times B\mu_n, \trivialbundle(1,0)) \\
		\hyperbolicb \mapsto & \;\hyperbolic_{\trivialbundle(0,1)}(1) \in \chowwitt^0(B\Gm \times B\mu_n, \trivialbundle(0,1)) \\
		\hyperbolicc \mapsto & \;\hyperbolic_{\trivialbundle(1,1)}(1)  \in \chowwitt^0(B\Gm \times B\mu_n, \trivialbundle(1,1)) \\
		\eulercwa \mapsto &\eulercw (\trivialbundle(-1,0)) \in \chowwitt^1(B\Gm \times B\mu_n, \trivialbundle(1,0)) \\
		\eulercwb \mapsto &\eulercw (\trivialbundle(0,-1)) \in \chowwitt^1(B\Gm \times B\mu_n, \trivialbundle(0,1)) \\
		\eulercwc \mapsto &\eulercw (\trivialbundle(-1,-1)) \in \chowwitt^1(B\Gm \times B\mu_n, \trivialbundle(1,1)) 
	\end{align*}
	and $\classub \in \chowwitt^0(B\Gm \times B\mu_n, \trivialbundle)$ is the pullback of the class $\classu$ introduced in \cref{def:classu} along the projection map $\pr_2 \colon B\Gm \times B\mu_n \to B\mu_n$.
\end{theorem}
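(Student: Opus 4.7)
The plan is to combine the localization sequence \cref{eq:locseq-BGmxBmun}, which realises $\chowwitt^\tot(B\Gm\times B\mu_n)$ as essentially a quotient of $\chowwitt^\tot(B\Gm \times B\Gm)$, with the pullback $\pr_2^* \colon \chowwitt^\tot(B\mu_n) \to \chowwitt^\tot(B\Gm \times B\mu_n)$, which supplies $\classub$ in the even case. Since $\iota^*$ is a ring homomorphism the generators $\hyperbolica,\hyperbolicb,\hyperbolicc,\eulercwa,\eulercwb,\eulercwc$ and the relations of \cref{prop:chowwitt-of-BGmxBGm} pull back automatically, and the kernel of $\iota^*$ in each bidegree is the image of multiplication by $\eulercw(\trivialbundle(0,n))$. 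By \cref{prop:euler-class-product} this Euler class equals $-n\eulercwb$ for $n$ odd, and for $n$ even reduces (via the projection formula and the fiber-product description of \cref{prop:chowwitt-is-chow-x-I}) to the form $-\frac{n}{2}\hyperbolicb\eulercwb$, giving the additional relation to quotient by.

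For $n$ odd, $\Pic(B\Gm\times B\mu_n)/2 \cong \Integer/2$ is strictly smaller than $\Pic(B\Gm \times B\Gm)/2$ because $\trivialbundle(0,1) \cong \trivialbundle(0,(n+1)/2)^{\otimes 2}$ over $B\mu_n$, merging the twist classes in pairs. Arguing as in \cref{eq:square-periodicity-H-h}, the square periodicity isomorphism sends $\hyperbolicb \mapsto \hyperbolic \in \GW(k)$ and $\hyperbolicc \mapsto \hyperbolica$; moreover, \cref{prop:euler-class-product} applied to $\trivialbundle(-1,-1) \cong \trivialbundle(-1,0) \otimes \trivialbundle(0,(n-1)/2)^{\otimes 2}$ expresses $\eulercwc$ as a polynomial in $\eulercwa, \eulercwb, \hyperbolica$, so it ceases to be an independent generator. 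I would then substitute these identifications into the relations \cref{eq:relations-BGmxBGm-1}, \cref{eq:relations-BGmxBGm-2}, \cref{eq:relations-BGmxBGm-3} and into $\eulercwa^2 + \eulercwb^2 + \hyperbolicc\eulercwa\eulercwb - \eulercwc^2$; most collapse to consequences of $I(k)\cdot(\hyperbolica,\eulercwa,\eulercwb)$ and $\hyperbolica^2 - 2\hyperbolic$, and adding $n\eulercwb$ from the kernel finishes the odd case.

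For $n$ even, $\Pic/2$ is unchanged, so the full list of generators together with $\mathcal{J}$ and $\eulercwa^2 + \eulercwb^2 + \hyperbolicc\eulercwa\eulercwb - \eulercwc^2$ survives the pullback. The new generator $\classub = \pr_2^*\classu$ supplies the summand $\witt(k)\langle\classub\rangle$ of $\chowwitt^0(B\Gm\times B\mu_n, \trivialbundle)$ identified in \cref{chowwitt-groups-of-BGmxBmun}. Pulling the $B\mu_n$-relations of \cref{chowwitt-groups-of-Bmun-even} through $\pr_2^*$ gives $\classub^2 + 2\classub = 0$, $\hyperbolic\classub = 0$, and the formula for $\classub \eulercwb$. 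The vanishing $(\hyperbolica,\hyperbolicb,\hyperbolicc)\cdot\classub = 0$ is read off the fiber-product description of \cref{prop:chowwitt-is-chow-x-I}: the $\hyperbolic$-classes map to $(0,2)$ (zero $I^*$-coordinate), while $\classub$ lies in $\ker\forgetful$ (zero $\chow$-coordinate), so both coordinates of the product vanish. The remaining mixed products $\classub\eulercwa$ and $\classub\eulercwc$ are verified coordinate-wise in the same fiber product using $\chow^*(B\Gm \times B\mu_n) \cong \Integer[\cherna,\chernb]/(n\chernb)$ and the Künneth formula \cite[Prop.~4.7]{HMW} applied to the previously computed $H^\tot(B\Gm, I^*)$ and $H^\tot(B\mu_n, I^*)$.

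The hardest step will be pinning down the exact form of the relations involving $\classub$ and the generators $\eulercwa, \eulercwc$ that do not descend from $B\mu_n$; since $\classu \eulercwz$ only constrains the $\eulercwb$-direction, the expressions $\classub\eulercwa = \frac{n}{2}\hyperbolicc\eulercwb$ and $\classub\eulercwc = \frac{n}{2}\hyperbolica\eulercwb$ need to be derived by separately matching the $I^*$- and $\chow$-coordinates of the fiber product in each bidegree. As a safeguard, at every stage I would compare the $\GW(k)$-module rank of each bidegree of the proposed presentation against \cref{chowwitt-groups-of-BGmxBmun} to confirm the relation ideal is neither too large nor too small.
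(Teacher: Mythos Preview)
Your overall strategy matches the paper's: quotient $\chowwitt^\tot(B\Gm\times B\Gm)$ by the Euler class from the localization sequence, then for odd $n$ collapse generators via square periodicity, and for even $n$ import $\classub$ via $\pr_2^*$. The odd case and most of the even case (including $\classub^2+2\classub$, $\hyperbolic\classub$, $\classub\eulercwb$ via $\pr_2^*$, and $(\hyperbolica,\hyperbolicb,\hyperbolicc)\classub=0$ via injectivity of $\forgetful$ on the degree-$0$ twisted groups) are essentially what the paper does.

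There is, however, a genuine gap in your plan for $\classub\eulercwa$ and $\classub\eulercwc$. Your proposed coordinate-wise verification in the fiber product cannot work here, for two reasons. First, \cite[Prop.~4.7]{HMW} is a K\"unneth formula for Witt cohomology $H^*(-,I^0)$ only, not for $H^*(-,I^j)$ with $j\geq 1$; in fact the K\"unneth map for $I^j$-cohomology of $B\Gm\times B\mu_n$ with $n$ even fails to be injective precisely because of the relation $\classub\euleria=0$ you are trying to establish. Second, and more fundamentally, the fiber-product comparison map of \cref{prop:chowwitt-is-chow-x-I} is \emph{not injective} in bidegree $(1,\trivialbundle(1,0))$: the group $\chow^1(B\Gm\times B\mu_n)\cong\Integer\oplus\Integer/n$ has $2$-torsion when $n$ is even, so condition~(1) fails, and one checks condition~(2) fails too. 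Concretely, the target value $\frac{n}{2}\hyperbolicc\eulercwb$ has $I$-coordinate $0$ (since $\hyperbolicc\mapsto 0$) and Chow-coordinate $n\chernb=0$, so both $0$ and $\frac{n}{2}\hyperbolicc\eulercwb$ map to $(0,0)$ in the fiber product and cannot be distinguished this way.

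The paper's fix is short: from the explicit group $\chowwitt^1(B\Gm\times B\mu_n,\trivialbundle(1,0))\cong\Integer\langle\eulercwa\rangle\oplus\Integer/n\langle\hyperbolicc\eulercwb\rangle$ one sees $\ker\forgetful=\{0,\,\frac{n}{2}\hyperbolicc\eulercwb\}$, and then one multiplies by $\eulercwb$ and uses the already-known relation $\classub\eulercwb=n\eulercwb$ together with $\langle-1\rangle$-commutativity to get $\classub\eulercwa\cdot\eulercwb=n\eulercwa\eulercwb\neq 0$, ruling out $\classub\eulercwa=0$. The same trick handles $\classub\eulercwc$.
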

\noindent Note that $\hyperbolic_\linebundle(1)$ equals
\[ (0,2) \in \chowwitt^0 (B\Gm \times B\mu_n, \linebundle) \subseteq H^0(B\Gm \times B\mu_n, I^0, \linebundle) \times_{\chowmodtwo^0(B\Gm \times B\mu_n)} \chow^0(B\Gm \times B\mu_n) \]
for all line bundles $\linebundle$ in $\Pic(B\Gm \times B\mu_n)$.
The inclusion here is justified by \cite[Prop.\ 2.11]{hornbostel-wendt} if $\chowwitt^0(B\Gm \times B\mu_n)$ has no non-trivial $2$-torsion which is satisfied by \cite[Theorem 2.10, Lemma 2.12]{totaro-bluebook}.

\begin{proof}
	For odd $n$ this computation is very similar to that of $B\mu_n$ in \cref{chowwitt-ring-of-Bmun-odd}:
	The Chow-Witt ring of $B\Gm \times B\mu_n$ is isomorphic to that of $B\Gm \times B\Gm$ modulo $n \eulercwb$ and the quadratic periodicity isomorphisms for $\trivialbundle(0,1) \cong \trivialbundle(0,n+1) \cong \trivialbundle(0, (n+1)/2) ^{\otimes 2}$ and $\trivialbundle(1,1) \cong \trivialbundle(1,n+1) \cong \trivialbundle(1, (n+1)/2) ^{\otimes 2}$.
	
	Consider the class 
	\begin{align*} 
		\hyperbolicb = (0,2) &\in \chowwitt^0(B\Gm \times B\mu_n, \trivialbundle(0,1)) \\ &\subseteq H^0(B\Gm \times B\mu_n, I^0, \trivialbundle(0,1)) \times_{\chowmodtwo^0(B\Gm \times B\mu_n)} \chow^0(B\Gm \times B\mu_n) \, . 
	\end{align*} 
	By  \cref{chowwitt-quadratic-periodicity} the square periodicity isomorphism $\sqperiod$ acts as the identity on the second factor, and on the first factor it maps $0$ to $0$, thus it sends $\hyperbolicb$ to $\hyperbolic \in \chowwitt^0(B\Gm \times B\mu_n, \trivialbundle)$.
	Similarly, $\hyperbolicc$ maps to $\hyperbolica$.
	
	As a consequence, $\eulercwb$ is mapped to $(n+1)/2 \hyperbolicb \eulercwb$.
	The latter is a generator of the free summand $\Integer/n \langle \hyperbolicb \eulercwb$ of $\chowwitt^1(B\Gm \times B\mu_n, \trivialbundle(0,0))$ and would also serve as a $\GW(k)$-algebra generator of the total Chow-Witt ring, but in order to achieve a more readable presentation we will instead write down $\eulercwb$.
	
	For $\sqperiod(\eulercwc) \in \chowwitt^1(B\Gm \times B\mu_n, \trivialbundle(1,0))$ note that the reduction map to Chow groups sends $\eulercwc$ and thus also $\sqperiod(\eulercwc)$ to $\cherna + \chernb$.
	As we have computed in \cref{chowwitt-groups-of-BGmxBmun},
	\begin{align*} 
		\chowwitt^1(B\Gm \times B\mu_n, \trivialbundle(1,0)) &= \chowwitt^1(B\Gm \times B\Gm, \trivialbundle(1,0))/(n \cdot \eulercwb) \\
		&= \Integer \langle \eulercwa \rangle \oplus \Integer \langle \hyperbolicc \eulercwb \rangle \oplus \Integer \langle \hyperbolicb \eulercwc \rangle / (n \cdot \eulercwb, \hyperbolic \eulercwa + \hyperbolicc \eulercwb - \hyperbolicb \eulercwc) \\
		&= \Integer \langle \eulercwa \rangle \oplus \Integer/n \langle \hyperbolicc \eulercwb \rangle
	\end{align*}
	and the only element here whose image in the Chow group is $\cherna + \chernb$, is $\eulercwa + (n+1)/2 \cdot \hyperbolicc \eulercwb$.

	With this one can simplify the relations inherited from $B\Gm \times B\Gm$ as follows.
	\begin{align*}
		\text{\cref{eq:relations-BGmxBGm-1,eq:relations-BGmxBGm-2}} & \text{ are all equivalent to } \hyperbolica^2 - 2\hyperbolic. \\
		\hyperbolic \eulercwa + \hyperbolicc \eulercwb - \hyperbolicb \eulercwc &= \hyperbolic \eulercwa + \hyperbolica \eulercwb - \hyperbolic(\eulercwa + \frac{n+1}{2}\hyperbolica \eulercwb) \\
		&= \hyperbolica \eulercwb - \frac{n+1}{2} 2\hyperbolica \eulercwb \\
		&= 0 \quad \text{and analogously for the other relations from \cref{eq:relations-BGmxBGm-3}} \\
		I(k) \cdot \hyperbolicb = I(k) \cdot \hyperbolicc &= I(k) \cdot \hyperbolica \\
		I(k) \cdot \eulercwc &= I(k) \cdot \eulercwa + I(k) \cdot (n+1)/2 \hyperbolicc \eulercwb \text{ follows from } I(k)\eulercwa \text{ and } I(k) \eulercwb \\
		\eulercwa^2 + \eulercwb^2 + \hyperbolica \eulercwa \eulercwb - \eulercwc^2 &= \eulercwa^2 + \eulercwb^2 + \hyperbolica \eulercwa \eulercwb - (\eulercwa + \frac{n+1}{2}\hyperbolicc\eulercwb)^2 \\
		&= \eulercwa^2 + \eulercwb^2 + \hyperbolica \eulercwa \eulercwb - \eulercwa^2 - (n+1) \eulercwa \hyperbolica \eulercwb - (\frac{n+1}{2})^2 4\eulercwb^2 = 0 
	\end{align*}
	This completes the proof for odd $n$.
	
	For even $n$, the relations in the Chow-Witt ring of $B\Gm \times B\mu_n$ are also inherited from $B\Gm \times B\Gm$ and since the Picard group of these two spaces are isomorphic there are no quadratic periodicity isomorphisms to be divided out. 
	The only thing left to determine are the relations involving $\classub$.
	
	By construction, $\hyperbolic \classub = 0$.
	To compute $\hyperbolica \classub$, consider the reduction map
	\[ \begin{tikzcd}
		\chowwitt^0(B\Gm \times B\mu_n, \trivialbundle(1,0) \ar[r, "\forgetful"] \ar[d, phantom, sloped, "\cong"] & \chow^0(B\Gm \times B\mu_n) \ar[d, phantom, sloped, "\cong"] \\
		\Integer \langle \hyperbolica \rangle \ar[r] & \Integer \\
	\end{tikzcd} \]
	This sends the generator $\hyperbolica$ to $2$, and since the target is torsion-free this map is injective.
	It is also compatible with multiplication.
	Since $\forgetful (\classub) =0$ by construction, it follows that $\forgetful( \hyperbolica \classub) = 0$ and thus one concludes $\hyperbolica \classub = 0$.
	Analogous arguments show $\hyperbolicb \classub = \hyperbolicc \classub  =0$.

	To compute $\classub ^2$ recall that $\classub$ is defined as the image of $\classu \in \chowwitt^0(B\mu_n, \trivialbundle)$ as constructed in \cref{def:classu} under the ring homomorphism 
	\[ \pr_2^* \colon \chowwitt^\tot (B\mu_n) \to \chowwitt^\tot (B\Gm \times B\mu_n) \]
	induced by the projection $\pr_2 \colon B\Gm \times B\mu_n \to B\mu_n$ on the second factor.
	By \cite[Prop.\ 5.2.3]{diLorenzo-Mantovani}, for $B\mu_n$ there is a relation $\classu^2 = - 2 \classu$.
	This implies
	\[ \classub^2 = \pr_2^*(\classu)^2 = \pr_2^*(\classu^2) = \pr_2^*(-2\classu) = -2 \classub \]
	over $B\Gm \times B\mu_n$.
	The same argument yields
	\[ \classub \eulercwb = \pr_2^*(\classu) \pr_2^*(\eulercwz) = \pr_2^*(\classu \eulercwz) = \pr_2^*(n \eulercwz) = n\eulercwb \]
	where $\eulercwz$ is the Euler class of the bundle $\trivialbundle_{B\mu_n}(1)$ (denoted $T$ in the \cite{diLorenzo-Mantovani}).
	
	For $\classub \eulercwa$ consider the reduction map
	\[ \forgetful \colon \chowwitt^1(B\Gm \times B\mu_n, \trivialbundle(1,0)) \to \chow^1(B\Gm \times B\mu_n) \]
	which sends $\classub$ and thus also $\classub \eulercwa$ to zero. 
	By \cref{chowwitt-groups-of-BGmxBmun} the Chow-Witt group on the left is
	\begin{align*}
		\chowwitt^1(B\Gm \times B\mu_n, \trivialbundle(1,0)) &\cong \chowwitt^1(B\Gm \times B\Gm, \trivialbundle(1,0))/n \hyperbolicc \eulercwb \\
		&= \Integer \langle \eulercwa \rangle \oplus \Integer \langle \hyperbolicc \eulercwb \rangle \oplus \Integer \langle \hyperbolicb \eulercwc \rangle /(\hyperbolicb \eulercwc - \hyperbolic \eulercwa - \hyperbolicc \eulercwb, n \hyperbolicc \eulercwb) \\
		&= \Integer \langle \eulercwa \rangle \oplus \Integer/n \langle \hyperbolicc \eulercwb \rangle 
	\end{align*}
	and the Chow group on the right is $\Integer \langle \cherna \rangle \oplus \Integer/n \langle \chernb \rangle$ by \cite[Theorem 2.10, Lemma 2.12]{totaro-bluebook}.
	The reduction map $\forgetful$ maps Euler classes to the corresponding Chern classes, thus the kernel of $\forgetful$ contains only the two elements $0$ and $n/2 \hyperbolicc \eulercwb$.
	To distinguish between these two, consider
	\[ \classub \eulercwa \eulercwb = \langle -1 \rangle ^2 \classub \eulercwb \eulercwa = (\classub \eulercwb) \eulercwa = n \eulercwb \eulercwa \]
	which implies that $\classub \eulercwb$ cannot be zero, leaving only the possibility $\classub \eulercwa = n/2 \hyperbolicc \eulercwb$.
	An analogous argument shows $\classub \hyperbolicc = n/2 \hyperbolica \eulercwb$.
\end{proof}



\section{Milnor-Witt \texorpdfstring{$K$}{K}-Theory Groups in Non-Diagonal Bidegrees}

The computation in the next section will require $H^i(B\Gm \times B\mu_n, \KMW_{i-1}, \linebundle)$.

\begin{lemma} \label{lem:KMWi-1-of-BGMxBmun}
	Let $j  < i$.
	If $i \neq 0$ or $\linebundle$ is not trivial, then $H^i(B\Gm \times B\mu_n, \KMW_j, \linebundle)$ vanishes.
\end{lemma}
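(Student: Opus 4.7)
The plan is to recycle the localization-sequence argument already used in the proofs of \cref{chowwitt-groups-of-BGmxBmun} and \cref{prop:KMW-of-Bmun}, but now read off the desired vanishing in non-diagonal bidegrees directly from \cref{lem:KMWi-1-of-PxP} in place of a full Chow-Witt computation.

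Concretely, I would take the localization sequence attached to the decomposition $\projective^q \times \projective^r \xhookrightarrow{(\id, s_0)} \projective^q \times \Bmun{r}{n} \xhookleftarrow{(\id, \iota)} \projective^q \times (\Bmun{r}{n} \setminus s_0(\projective^r))$, invoke homotopy invariance for the bundle $\pi$ and the line-bundle identifications used to produce \eqref{eq:locseq-BGmxBmun}, and extract the four-term exact fragment
\begin{multline*}
	H^{i-1}(B\Gm \times B\Gm, \KMW_{j-1}, s_0^*\linebundle \otimes \trivialbundle(0,-n)) \to H^i(B\Gm \times B\Gm, \KMW_j, \linebundle) \\
	\to H^i(B\Gm \times B\mu_n, \KMW_j, \iota^*\linebundle) \to H^i(B\Gm \times B\Gm, \KMW_{j-1}, s_0^*\linebundle \otimes \trivialbundle(0,n))
\end{multline*}
in bidegree $(i,j)$ with $j < i$.

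For $i \geq 1$ both the second and fourth terms vanish by the $B\Gm \times B\Gm$ case of \cref{lem:KMWi-1-of-PxP} (since the $\KMW$-index is strictly smaller than the positive cohomological index), forcing the middle term to vanish. For $i = 0$ with $\linebundle$ non-trivial in $\Pic(B\Gm \times B\mu_n)/2$ the first and second terms vanish by the same lemma, so the only remaining step is to verify that the fourth term does too, that is, that $s_0^*\linebundle \otimes \trivialbundle(0,n)$ represents a non-trivial class in $\Pic(B\Gm \times B\Gm)/2$.

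This last parity check is the only non-formal step and is the main obstacle. It reduces to a short case split on $n$: any lift of $\linebundle$ to $\projective^q \times \Bmun{r}{n}$ has the form $\trivialbundle(s,t)$ with $(s, t \bmod n) \in \Integer \times \Integer/n$ representing the given class, so $s_0^*\linebundle \otimes \trivialbundle(0,n) = \trivialbundle(s, t+n)$. When $n$ is even, non-triviality of $\linebundle$ means $(s,t) \not\equiv (0,0) \pmod 2$, and this parity is preserved under addition of the even number $n$. When $n$ is odd, non-triviality forces $s$ to be odd, and the first coordinate of $\trivialbundle(s, t+n)$ is unaffected by adding $n$ to the second; in this case the ambiguity in choosing a lift of $\linebundle$ (which is exactly a multiple of the class of the zero section $\trivialbundle(0,n)$) is precisely absorbed by the twist $\trivialbundle(0,n)$, so the resulting class is well-defined. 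In both cases non-triviality is inherited, and the case $i = 0$ with $\linebundle$ trivial is genuinely excluded by the statement because there the middle term $H^0(B\Gm \times B\Gm, \KMW_j, \trivialbundle) \cong \witt(k)$ does not vanish.
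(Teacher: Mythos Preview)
Your proposal is correct and follows essentially the same approach as the paper: both feed the non-diagonal localization sequence for $B\Gm \times B\mu_n$ into \cref{lem:KMWi-1-of-PxP} and read off the vanishing of the outer terms. Your treatment of the $i=0$ case is in fact more thorough than the paper's, which merely notes that the exceptional cases are ``$i=0$ and $\linebundle$ respectively $s_0^*\linebundle \otimes \trivialbundle(0,n)$ trivial'' without carrying out the parity check you perform.
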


\begin{proof}
	Consider again the localization sequence \cref{eq:locseq-BGmxBmun}
	which for arbitrary bidegrees reads
	\begin{multline*} 
		\ldots \to H^{i-1}(B\Gm \times B\Gm, \KMW_{j-1}, s_0^*\linebundle \otimes \trivialbundle(0,n)) \to H^i(B\Gm \times B\Gm, \KMW_j, \linebundle) \\ \xrightarrow{\iota^*} H^i(B\Gm \times B\mu_n, \KMW_j, \iota^*\linebundle) \to H^i(B\Gm \times B\Gm, \KMW_{j-1}, s_0^*\linebundle \otimes \trivialbundle(0,n)) \to \ldots 
	\end{multline*}
	The second and fourth terms are shown in \cref{lem:KMWi-1-of-PxP} to vanish - implying that the third term vanishes as well - except when $i=0$ and $\linebundle$ respectively $s_0^*\linebundle \otimes \trivialbundle(0,n)$ are trivial.
\end{proof}

\section{\texorpdfstring{$I^j$}{Ij}-Cohomology of \texorpdfstring{$B\Gm \times B\mu_n$}{BG\_m x Bmu\_n}}

\begin{corollary}
	\begin{align*}
		\shortintertext{If $n$ is odd:}
		H^\tot(B\Gm \times B\mu_n, I^*) \cong & W(k)[\euleria]/(I(k) \euleria) \\
		\shortintertext{If $n$ is even:}
		H^\tot(B\Gm \times B\mu_n, I^*) \cong & W(k)[\classub, \euleria, \eulerib, \euleric](I(k) \cdot (\euleria,\eulerib, \euleric), \euleria^2 + \eulerib^2 - \euleric^2, \\
		&\classub^2 + 2\classub, \classub\cdot (\euleria, \eulerib, \euleric)) 
	\end{align*}
	The symbols $\classub$, $\euleria$, $\eulerib$, $\euleric$ are as in \cref{thm:chowwitt-of-BGmxBmun}.
\end{corollary}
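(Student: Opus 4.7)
The plan is to imitate the proof of \cref{prop:Ij-of-Bmun} and realize $H^\tot(B\Gm \times B\mu_n, I^*)$ as the quotient of the Chow-Witt ring computed in \cref{thm:chowwitt-of-BGmxBmun} by the total hyperbolic image, via the exact sequence
\[ \chow^i(X) \xrightarrow{\hyperbolic_\linebundle} \chowwitt^i(X, \linebundle) \xrightarrow{\modulo \hyperbolic_\linebundle} H^i(X, I^i, \linebundle) \to 0 \]
coming from the left column of the key diagram \eqref{eq:key-diagram}. The Chow ring is $\chow^*(B\Gm \times B\mu_n) \cong \Integer[\cherna, \chernb]/(n \chernb)$ by \cite[Theorem 2.10, Lemma 2.12]{totaro-bluebook}, and since the hyperbolic map is multiplicative with $\forgetful(\eulercwa) = \cherna$ and $\forgetful(\eulercwb) = \chernb$, its image in a given twist factors as
\[ \hyperbolic_{\trivialbundle(s,t)}(\cherna^i \chernb^j) = \hyperbolic_{\trivialbundle(s,t)}(1) \cdot \eulercwa^i \eulercwb^j, \]
where $\hyperbolic_{\trivialbundle(s,t)}(1)$ equals $\hyperbolic$, $\hyperbolica$, $\hyperbolicb$ or $\hyperbolicc$ according to the parity of $(s,t) \in \Pic/2$. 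Quotienting therefore kills $\hyperbolic$ (so $\GW(k)$ is replaced by $\witt(k)$) along with the hyperbolic-type classes in the stated presentation, and would force $2 \eulercwa^i \eulercwb^j = 0$ in positive degrees---but these vanishings are already subsumed by $I(k) \cdot (\eulercwa, \eulercwb, \eulercwc) = 0$ because $2 \in I(k)$.

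For odd $n$, only $\hyperbolica$ appears as a twisted hyperbolic generator of the Chow-Witt ring (since $\Pic/2$ is generated by $\trivialbundle(1,0)$), and the presentation collapses to $\witt(k)[\euleria, \eulerib]/(I(k) (\euleria,\eulerib), n \eulerib)$ after killing $\hyperbolica$ and trivializing $\hyperbolica^2 - 2\hyperbolic$. The relation $n \eulerib = 0$, combined with $2 \eulerib \in I(k) \eulerib = 0$ and $\gcd(n,2) = 1$, forces $\eulerib = 0$, leaving $\witt(k)[\euleria]/(I(k) \euleria)$ as claimed.

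For even $n$, all three twisted hyperbolic classes $\hyperbolica$, $\hyperbolicb$, $\hyperbolicc$ die, so every relation in $\mathcal{J}$ and every relation of the form $(\hyperbolic, \hyperbolica, \hyperbolicb, \hyperbolicc) \classub$ becomes trivial. The quadratic Euler relation reduces to $\euleria^2 + \eulerib^2 - \euleric^2$; the mixed relations $\classub \eulercwa - (n/2)\hyperbolicc \eulercwb$ and $\classub \eulercwc - (n/2)\hyperbolica \eulercwb$ reduce to $\classub \euleria = 0$ and $\classub \euleric = 0$; and $\classub \eulercwb - n \eulercwb$ reduces to $\classub \eulerib = n \eulerib$, which vanishes because $n$ is even and $2 \eulerib \in I(k)\eulerib = 0$. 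The Chow-Witt relation $(n/2) \hyperbolicb \eulercwb$ becomes $0 = 0$ after killing $\hyperbolicb$. Assembling these relations yields the presentation in the statement.

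The main obstacle is verifying that $\classub$ survives as a nonzero generator of the quotient and that no spurious relation, in particular no relation of the form $I(k) \classub = 0$, appears. This reduces to the refined computation in the proof of \cref{chowwitt-groups-of-BGmxBmun}, where $\classub$ was constructed as a splitting of the boundary in the localization sequence \eqref{eq:locseq-BGmxBmun}; hence it descends to a nonzero class in $H^0(B\Gm \times B\mu_n, I^0, \trivialbundle) \cong \witt(k) \oplus \witt(k)\langle \classub \rangle$ independent of the hyperbolic image of any Chern-class monomial.
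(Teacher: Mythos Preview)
Your argument is correct and follows essentially the same route as the paper: quotient the Chow--Witt presentation of \cref{thm:chowwitt-of-BGmxBmun} by the total hyperbolic image (the ideal generated by $\hyperbolic,\hyperbolica$ for $n$ odd, and by $\hyperbolic,\hyperbolica,\hyperbolicb,\hyperbolicc$ for $n$ even), then simplify. One small wording fix: the hyperbolic map is not a ring homomorphism, and in your displayed identity the twist on the right-hand side is off (it should read $\hyperbolic_{\trivialbundle(s-i,t-j)}(1)\cdot \eulercwa^i\eulercwb^j$), but this does not affect your conclusion that the total hyperbolic image is the ideal generated by the $\hyperbolicz$-classes.
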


\begin{proof}
	Combine the computation of $\chowwitt^\tot(B\Gm \times B\mu_n)$ from \cref{thm:chowwitt-of-BGmxBmun} and the fact that the sequence 
	\[ \chow^i(B\Gm \times B\mu_n) \xrightarrow{\hyperbolic_\linebundle} \chowwitt^i(B\Gm \times B\mu_n, \linebundle) \to H^i(B\Gm \times B\mu_n, I^i, \linebundle) \to 0 \]
	is exact.
	
	Using a similar argument as in \cref{prop:Ij-of-Bmun} we find that for $n$ odd, 
	the images of the hyperbolic maps $\hyperbolic_{\trivialbundle(0,0)}$ and $\hyperbolic_{\trivialbundle(1,0)}$ are the submodules of $\chowwitt^\tot(B\Gm \times B\mu_n)$ generated by $\hyperbolic$ and $\hyperbolica$, respectively.
	Dividing out $\hyperbolic \eulercwb = 2 \eulercwb$ also identifies the relation $n \eulercwb$ with $\eulercwb$, therefore the latter will be identified with zero as well.
	
	If $n$ is even, the images of the hyperbolic maps are generated by $\hyperbolic$, $\hyperbolica$, $\hyperbolicb$ and $\hyperbolicc$.
	After dividing these out, the relation $\frac{n}{2}\hyperbolicb\eulercwb$ disappears because it is a multiple of $\hyperbolicb$.
	The same holds for all of the relation ideal $\mathcal{J}$.
	
	The statement then follows by adding up the computed cohomology groups and inheriting the ring structure from $\chowwitt^\tot(B\Gm \times B\mu_n)$.
\end{proof}

\begin{remark}\label{remark:kuenneth-formula-1}
If $n$ is odd, both the Chow-Witt ring and the $I^j$-cohomology ring satisfy a Künneth isomorphism
\[ \chowwitt^\tot(B\Gm) \otimes_{\GW(k)} \chowwitt^\tot(B\mu_n) \cong \chowwitt^\tot(B\Gm \times B\mu_n) \] 
as $\GW(k)$-algebras.
If $n$ is even, however, the Künneth map is neither injective nor surjective:
Both the Chow-Witt and $I^j$-cohomology ring contain an element $\eulercwc$ that does not come from one of the factors, and a new relation $\classub \eulercwa - (n/2)\hyperbolicc \eulercwb$ respectively $\classub \euleria$.

Comparing with singular cohomology, the Künneth map not being surjective is not surprising as the classical Künneth theorem also features an additional Tor-term when none of the factor spaces has all free cohomology groups.
The failure of injectivity, on the other hand, is unexpected as this can never happen in singular cohomology.
This can be explained by considering the real cycle class map
\[ H^i(B\Gm \times B\mu_n, I^j, \linebundle) \to H^i_{\sing}((B\Gm \times B\mu_n)(\Real); \Integer(\linebundle)) \, . \] 
\cite[Theorem 5.7]{HWXZ} does not apply in this case since (our scheme approximations of) $B\Gm \times B\mu_n$ are not cellular, but \cite[Corollary 8.3]{jacobson} shows that this is an isomorphism if $j \geq 2i+5$ (choose the approximation $\projective^r \times \Bmun{r}{n}$ with $r \geq i+2$ which has dimension $2r + 1 \geq 2i + 5$ to compute $H^i(B\Gm \times B\mu_n, I^j, \linebundle)$).
Similarly to \cref{remark:real-cycles-Bmun}, for higher powers of the fundamental ideal the product $\classub \euleria$ does not vanish but presents an additional free generator of the respective $I^j$-cohomology groups.

This differs from Chow theory, where the Künneth isomorphism holds for $B\Gm \times B\mu_n$ by \cite[Lemma 2.12]{totaro-bluebook}.
\end{remark}

\chapter{Chow-Witt Ring of \texorpdfstring{$B\mu_m \times B\mu_n$}{Bmu\_m x Bmu\_n}}

Throughout this chapter let $m$ and $n$ be positive integers and $k$ a perfect field of characteristic coprime to $2$, $m$ and $n$.

The Picard group of $B\mu_m \times B\mu_n$ is isomorphic to its first Chow group which is computed in \cite[Theorem 2.10, Lemma 2.12]{totaro-bluebook} to be isomorphic to $\Integer/m \times \Integer/n$.
For line bundles on (scheme approximations of) $B\Gm \times B\mu_n$ we adopt the usual notation
\[ \trivialbundle(s,t) = \pr_1^*\trivialbundle_{B\Gm} (s) \otimes \pr_2^* \trivialbundle_{B\mu_n}(t) \, . \]

We are going to use another instance of the localization sequence like in the previous sections, this time associated to the decomposition
\[ \projective^q \times \Bmun{r}{n} \xhookrightarrow{s_0} \Bmun{q}{m} \times \Bmun{r}{n} \xhookleftarrow{\iota} (\Bmun{q}{m} \times \Bmun{r}{n}) \setminus s_0(\projective^q \times \Bmun{r}{n}) \, . \]
Under the usual isomorphisms this reads as follows:
\begin{multline} 
\label{eq:locseq1} \ldots \to \chowwitt^{i-1}(B\Gm \times B\mu_n, s_0^*\linebundle \otimes \trivialbundle(m,0)) \xrightarrow{\eulercw(\trivialbundle(m,0))} \chowwitt^i(B\Gm \times B\mu_n, \linebundle) \\
\xrightarrow{\iota^*} \chowwitt^i(B\mu_m \times B\mu_n, \iota^*\linebundle) \to H^i(B\Gm \times B\mu_n, \KMW_{i-1}, s_0^*\linebundle \otimes \trivialbundle(m,0)) \to \ldots 
\end{multline}

Further we will use a trick for degree $0$ building on work of Hudson-Matszangosz-Wendt \cite{HMW}.
Note that the group $H^0(X, I^0, \trivialbundle_X)$ forms a subring of the total $I^j$-cohomology ring of $X$.
\begin{lemma}\label{lem:kuenneth-witt}
	The $\witt(k)$-algebra homomorphism 
	\[ \pr_1 \cdot \pr_2 \colon H^0(B\mu_m, I^0, \trivialbundle) \otimes_{\witt(k)} H^0(B\mu_n, I^0, \trivialbundle) \to H^0(B\mu_m \times B\mu_n, I^0, \trivialbundle) \]
	is an isomorphism.
\end{lemma}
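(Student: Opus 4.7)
The plan is to reduce the claim to the Künneth theorem for Witt cohomology of \cite[Theorem 4.7]{HMW} applied to (suitable scheme-level approximations of) the two factors $B\mu_m$ and $B\mu_n$. That theorem provides, under a mild hypothesis on one of the factors, an isomorphism of $\witt(k)$-algebras from $H^\tot(X, I^\ast, -)\otimes_{\witt(k)} H^\tot(Y, I^\ast, -)$ onto $H^\tot(X\times Y, I^\ast, -)$. Once this is in hand, the statement of the lemma is the restriction of this isomorphism to the $(0, \trivialbundle)$-bidegree component, which causes no loss because cohomology vanishes in negative degrees and therefore the contribution of the Künneth map to $H^0$ with trivial twist comes exclusively from the tensor product of the two $H^0$'s.

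The first step is to pass from the motivic spaces $B\mu_m$ and $B\mu_n$ to scheme approximations. By Corollary \ref{lem:admissible-gadgets-Ij} and Corollary \ref{lem:admissible-gadgets-products}, for $q$ and $r$ sufficiently large the schemes $\Bmun{q}{m}\setminus s_0$, $\Bmun{r}{n}\setminus s_0$ and their product compute the relevant $I^j$-cohomology groups of $B\mu_m$, $B\mu_n$ and $B\mu_m\times B\mu_n$ in degree $0$ with trivial twist. So the statement to prove becomes a purely scheme-theoretic Künneth statement in low degrees, to which \cite[Theorem 4.7]{HMW} can be applied.

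The second step is to verify the hypothesis of \cite[Theorem 4.7]{HMW}. Using the explicit description of $H^\tot(B\mu_n, I^\ast)$ from Corollary \ref{prop:Ij-of-Bmun} (and the analogous description for $B\mu_m$), one sees that the total Witt cohomology ring of $B\mu_n$ is a free graded $\witt(k)$-module in degree $0$ (generated by $1$, and additionally by $\classu$ when $n$ is even), while all higher degrees are $I(k)$-torsion. This is precisely the kind of input under which the HMW Künneth theorem is known to apply to Witt cohomology of products of classifying-space approximations; the freeness in the degrees relevant for computing $H^0$ of the product guarantees that no Tor corrections appear in that bidegree.

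The main obstacle will be bookkeeping the line bundles and twists when translating between motivic classifying spaces and scheme approximations, and making sure that the cited Künneth formula produces exactly the degree-$0$, trivial-twist component on the nose; the algebraic content is then immediate from Corollary \ref{prop:Ij-of-Bmun}, since the map $\pr_1\cdot \pr_2$ is manifestly the composite of the Künneth isomorphism from \cite[Theorem 4.7]{HMW} with the projection onto the $(0, \trivialbundle)$-summand.
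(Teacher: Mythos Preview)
Your approach has a genuine gap: the Künneth theorem \cite[Theorem 4.7]{HMW} requires one of the two factors to be a \emph{cellular} scheme, not merely to have free Witt cohomology. The scheme approximations $\Bmun{q}{m}\setminus s_0$ and $\Bmun{r}{n}\setminus s_0$ of $B\mu_m$ and $B\mu_n$ are not cellular (this is noted explicitly in Remark~\ref{remark:real-cycles-Bmun}), so you cannot invoke \cite[Theorem 4.7]{HMW} directly on the product $B\mu_m\times B\mu_n$. The freeness you verify in Corollary~\ref{prop:Ij-of-Bmun} is necessary but not sufficient.

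The paper's proof circumvents this obstruction by a five-lemma argument. One considers the localization sequence associated to the decomposition $\projective^q \hookrightarrow \trivialbundle_{\projective^q}(m) \hookleftarrow \trivialbundle_{\projective^q}(m)\setminus s_0$, takes its product with $\Bmun{r}{n}\setminus s_0$, and uses the ladder lemma \cite[Lemma 4.5]{HMW} to compare the resulting localization sequence with the original one tensored over $\witt(k)$ with $H^0(B\mu_n,\witt)$ (exactness of the tensored sequence uses that $H^0(B\mu_n,\witt)$ is a free, hence flat, $\witt(k)$-module). Four of the five horizontal maps in the resulting ladder involve $\projective^q$, which \emph{is} cellular, so \cite[Theorem 4.7]{HMW} applies there; the five-lemma then yields the remaining isomorphism. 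What your argument is missing is precisely this reduction step to a cellular factor.
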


\begin{proof}	
Recall from \cref{def:I-cohomology} that $H^i(X, \witt, \linebundle) \cong H^i(X, I^0, \linebundle)$, thus in degree $0$ results about Witt cohomology can be applied.
In the following all groups have trivial twist and we will omit the twist from notation.

Consider the following diagram of schemes
\[ \begin{tikzcd}
	\projective^q \times \Bmun{r}{n} \ar[r, "s_0 \times \id"] \ar[d, "\pr_1"] & \trivialbundle_{\projective^q}(m) \times \Bmun{r}{n} \ar[d, "\pr_1"] & (\trivialbundle_{\projective^q}(m)\setminus s_0) \times \Bmun{r}{n} \ar[d, "\pr_1"] \ar[l, "\iota \times \id"] \\
	\projective^q \ar[r, "s_0"] & \trivialbundle_{\projective^q}(m) & \trivialbundle_{\projective^q}(m) \setminus s_0 \ar[l, "\iota"]
\end{tikzcd}\]
and the $\witt(k)$-module homomorphism $\pr_2^* \colon H^0(B\mu_n, \witt) \to H^\tot(\trivialbundle_{\projective^q}, \witt)$.
The ladder lemma of \cite[Lemma 4.5]{HMW} then states that the long exact localization sequence of the bottom row tensored with $H^0(\Bmun{r}{n}, \witt)$ and that of the top row form a commutative diagram of $\witt(k)$-modules:
\[ \begin{tikzcd}
	\vdots \ar[d] & \vdots \ar[d] \\
	H^{-1}(\projective^q, \witt) \otimes_{\witt(k)} H^0(\Bmun{r}{n}, \witt) \ar[d, "(s_0)_* \otimes \id"] \ar[r, "\pr_1 \cdot \pr_2"] & H^{-1}(\projective^q \times B\mu_n, \witt) \ar[d, "(s_0 \times \id)_*"] \\
	H^0(\projective^q, \witt) \otimes_{\witt(k)} H^0(\Bmun{r}{n}, \witt) \ar[d, "\iota^* \otimes \id"] \ar[r, "\pr_1 \cdot \pr_2"] & H^0(\projective^q \times B\mu_n, \witt) \ar[d, "(\iota \times \id)^*"] \\
	H^0(B\mu_m, \witt) \otimes_{\witt(k)} H^0(\Bmun{r}{n}, \witt) \ar[d, "\partial \otimes \id"] \ar[r, "\pr_1 \cdot \pr_2"] & H^0(B\mu_m \times B\mu_n, \witt) \ar[d, "\partial^\prime"] \\
	H^0(\projective^q, \witt) \otimes_{\witt(k)} H^0(\Bmun{r}{n}, \witt) \ar[d, "(s_0)_* \otimes \id"]  \ar[r, "\pr_1 \cdot \pr_2"] & H^0(\projective^q \times B\mu_n, \witt) \ar[d, "(s_0 \times \id)_*"] \\
	H^1(\projective^q, \witt) \otimes_{\witt(k)} H^0(\Bmun{r}{n}, \witt) \ar[d] \ar[r, "\pr_1 \cdot \pr_2"] & H^1(\projective^q \times B\mu_n, \witt) \ar[d] \\
	\vdots &\vdots
\end{tikzcd} \]
By our previous computation in \cref{prop:Ij-of-Bmun} $H^0(\Bmun{r}{n}, \witt) \cong H^0(B\mu_n, \witt)$ is a free and therefore flat $\witt(k)$-module, hence the left-hand column is exact.
The first, second, fourth and fifth horizontal maps are isomorphisms by the Künneth formula for Witt cohomology \cite[Theorem 4.7]{HMW}; the scheme $\projective^q$ is cellular and $H^0(B\mu_n, \witt) \cong \witt(k)\langle 1, U \rangle$ (as computed in \cref{prop:Ij-of-Bmun}) as well as $H^{> 0}(B\mu_n, \witt) \cong 0$ (as computed in \cref{prop:KMW-of-Bmun}) are free $\witt(k)$-modules.
Thus by the five-lemma, the third horizontal map is an isomorphism as well.
\end{proof}

\section{Group Structure}

\begin{theorem}\label{chowwitt-groups-of-BmumxBmun}
	Let $i \in \Integer$ and $\linebundle$ a line bundle over $B\mu_m \times B\mu_n$.
	\begin{align*}
		\shortintertext{(i) If $m$ and $n$ are odd:}
		\chowwitt^i(B\mu_m \times B\mu_n, \linebundle) & \cong \chowwitt^i(B\Gm \times B\mu_n)/m \eulercwa \\
		& \cong \chowwitt^i(B\Gm \times B\Gm, \linebundle)/(m \eulercwa, n \eulercwb) \\ \shortintertext{(ii) If $m$ is odd and $n$ even:}
		\chowwitt^i(B\mu_m \times B\mu_n, \linebundle) & \cong \chowwitt^i(B\Gm \times B\mu_n)/m \eulercwa \\
		& \cong \begin{cases*}
			\GW(k) \oplus \witt(k)\langle \classub \rangle & $i=0$, $\linebundle$ trivial \\
			\chowwitt^i(B\Gm \times B\Gm, \linebundle)/(m \eulercwa, \frac{n}{2}\hyperbolicb\eulercwb) & else
		\end{cases*}
		\shortintertext{If $m$ is even and $n$ odd, exchanging $m$ and $n$ reduces to case (ii).}
		\shortintertext{(iii) If $m$ and $n$ are both even:}
		\chowwitt^i(B\mu_m \times B\mu_n, \linebundle) & \cong \begin{cases*}
			\GW(k) \oplus \witt(k) \langle \classua, \classub, \classua \classub \rangle & $i=0$, $\linebundle$ trivial \\
			\chowwitt^i(B\Gm \times B\Gm)/(\frac{m}{2}\hyperbolica \eulercwa, \frac{n}{2} \hyperbolicb \eulercwb) & else
		\end{cases*}
	\end{align*}
	
\end{theorem}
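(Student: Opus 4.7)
The plan is to apply the localization sequence~\eqref{eq:locseq1} associated to the embedding $\projective^q \times \Bmun{r}{n} \hookrightarrow \Bmun{q}{m} \times \Bmun{r}{n}$, whose leftmost map is multiplication by $\eulercw(\trivialbundle(m,0))$. By \cref{prop:euler-class-product} this Euler class equals $m \cdot \eulercwa$ for odd $m$ (using that $\hyperbolic \cdot \eulercwa = 2\eulercwa$, a consequence of $I(k) \cdot \eulercwa = 0$) and $\tfrac{m}{2} \hyperbolica \eulercwa$ for even $m$. The required inputs are the groups $\chowwitt^i(B\Gm \times B\mu_n, \linebundle)$ described in \cref{chowwitt-groups-of-BGmxBmun}.

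For bidegrees $(i, \linebundle)$ where the rightmost Milnor-Witt term in~\eqref{eq:locseq1} vanishes---by \cref{lem:KMWi-1-of-BGMxBmun} this holds whenever $i \neq 0$ or $s_0^*\linebundle \otimes \trivialbundle(m,0)$ is non-trivial in $\Pic(B\Gm \times B\mu_n)/2$---the sequence collapses to
\[ \chowwitt^i(B\mu_m \times B\mu_n, \iota^*\linebundle) \cong \chowwitt^i(B\Gm \times B\mu_n, \linebundle)/\eulercw(\trivialbundle(m,0)) \cdot \chowwitt^{i-1}(B\Gm \times B\mu_n, \linebundle \otimes \trivialbundle(m,0)). \]
Inserting \cref{chowwitt-groups-of-BGmxBmun} and identifying the image of Euler-class multiplication then yields the claimed formulas. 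A parity check confirms that this generic argument covers all bidegrees in cases (i) and (ii), and in case (iii) covers everything except $i=0$ with $\linebundle$ trivial (since for $m$ odd the class $\trivialbundle(m,0)$ is already non-trivial in $\Pic/2$).

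For that remaining piece the localization sequence is indecisive, so I would instead invoke the fiber product formula of \cref{prop:chowwitt-is-chow-x-I}: because $\chow^0(B\mu_m \times B\mu_n) \cong \Integer$ has no $2$-torsion, the group $\chowwitt^0(B\mu_m \times B\mu_n, \trivialbundle)$ is the fiber product of its Witt-cohomology and Chow components over $\chowmodtwo^0$. The Witt part is computed via the Künneth-type \cref{lem:kuenneth-witt}: for both $m,n$ even it is $H^0(B\mu_m, \witt) \otimes_{\witt(k)} H^0(B\mu_n, \witt) \cong \witt(k)\langle 1, \classua, \classub, \classua\classub\rangle$, where each $\classu$-factor lies in the kernel of $\forgetful$ by construction. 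Assembling the fiber product then produces the claimed summand $\GW(k) \oplus \witt(k)\langle \classua, \classub, \classua\classub\rangle$.

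The main obstacle will be the combinatorial bookkeeping of the second step: one has to trace carefully through the presentation of $\chowwitt^\tot(B\Gm \times B\mu_n)$ from \cref{thm:chowwitt-of-BGmxBmun} and verify that the Euler-class relation coming from $B\mu_m$ together with the one already present from $B\mu_n$ cuts out exactly the stated ideal in every twist and degree, without any unexpected interactions with the other generators (in particular with $\classub$ in case (ii) or with the classes $\hyperbolica, \hyperbolicc, \eulercwc$). The degree-zero special case of case (iii), by contrast, is essentially mechanical once the Witt-cohomology Künneth is available.
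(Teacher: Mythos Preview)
Your proposal is correct and follows essentially the same route as the paper. The only cosmetic difference is in how the exceptional degree-zero twist for odd $m$ is handled: the paper explicitly notes that the localization sequence is inconclusive for the representative $\linebundle = \trivialbundle(1,0)$ and then invokes the isomorphism $\chowwitt^0(B\mu_m \times B\mu_n, \trivialbundle(1,0)) \cong \chowwitt^0(B\mu_m \times B\mu_n, \trivialbundle)$ coming from $\Pic(B\mu_m)/2 = 0$, whereas you absorb this into your ``parity check'' by only ever choosing representatives with even first coordinate. For case (iii) in degree zero with trivial twist, both you and the paper bypass the localization sequence via the fiber product formula together with \cref{lem:kuenneth-witt}, computing $\ker\partial_{\trivialbundle} = \chow^0$ and reading off the structure maps on the $\classu$-classes.
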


\begin{proof}
	Assume first that $m$ is odd.
	Then \cref{lem:KMWi-1-of-BGMxBmun} shows that the last term of \cref{eq:locseq1} vanishes except when $i=0$ and $s_0^*\linebundle \otimes \trivialbundle(m,0)$ is trivial in $\Pic(B\Gm \times B\mu_n)/2$
	which is the case for $\linebundle = \trivialbundle(1,0)$.
	This case however can be circumvented by using that in $\Pic(B\mu_m \times B\mu_n)/2$, $\trivialbundle(1,0) \sim \trivialbundle(0,0)$ and thus there is an isomorphism
	\[ \chowwitt^i(B\mu_m \times B\mu_n, \trivialbundle(1,0)) \cong \chowwitt^i(B\mu_m \times B\mu_n, \trivialbundle(0,0)) \, . \]
	Therefore 
	\[ \chowwitt^i(B\mu_m \times B\mu_n, \linebundle) \cong \chowwitt^i(B\Gm \times B\mu_n, \linebundle)/ \eulercw(\trivialbundle(m,0)) \cdot \chowwitt^{i-1}(B\Gm \times B\mu_n, \linebundle \otimes \trivialbundle(m,0)) \]
	for all line bundles $\linebundle$ and $i \in \Integer$.
	This solves cases (i) and (ii).
	
	(iii) In this case the last term of the localization sequence \ref{eq:locseq1} vanishes and there is again an isomorphism
	\[ \chowwitt^i(B\mu_m \times B\mu_n, \linebundle) \cong \chowwitt^i(B\Gm \times B\mu_n, \linebundle)/ \eulercw(\trivialbundle(m,0)) \cdot \chowwitt^{i-1}(B\Gm \times B\mu_n, \linebundle \otimes \trivialbundle(m,0)) \]
	except for $i=0$ and $s_0^*\linebundle \otimes \trivialbundle(m,0)$ trivial, i.e.\ $\linebundle$ trivial.

	For this remaining case, \cref{lem:kuenneth-witt} shows 
	\begin{align*} 
		H^0(B\mu_m \times B\mu_n, I^0, \trivialbundle(0,0)) &\cong H^0(B\mu_m, I^0, \trivialbundle(0,0)) \otimes_{\witt(k)} H^0(B\mu_n, I^0, \trivialbundle(0,0)) \\
		&= \witt(k) \langle 1, \classua \rangle \otimes_{\witt(k)} \witt(k) \langle 1, \classub \rangle 
	\end{align*}
	as $\witt(k)$-algebras.
	To compute the Chow-Witt group, apply Hornbostel-Wendt's fiber product formula \cref{prop:chowwitt-is-chow-x-I}:
	From \cite[Theorem 2.10, Lemma 2.12]{totaro-bluebook} deduce $\chow^0(B\mu_m \times B\mu_n) \cong \Integer \langle 1 \rangle$.
	The kernel of the boundary map 
	\[ \partial_{\trivialbundle(0,0)} \colon \chow^0(B\mu_m \times B\mu_n) \to H^1(B\mu_m \times B\mu_n, I^1, \trivialbundle(0,0)) \]
	is the whole domain as it follows from \cref{lem:bockstein-of-sw} that $\partial_{\trivialbundle(0,0)} = \bockstein_{\trivialbundle(0,0)} \circ (\operatorname{mod} 2)$ maps $1$ to $1 \cdot \euleri(\trivialbundle(0,0)) = 0$.
	The structure map from $\chow^0$ to $\chowmodtwo^0$ is the usual mod $2$ map.
	The structure map from $H^0(B\mu_m \times B\mu_n, I^0, \trivialbundle(0,0)) = \witt(k) \langle 1, \classua, \classub, \classua \classub \rangle$ sends $1$ to $1$, and $\classua$, $\classub$ and thus also $\classua \classub$ to zero as di~Lorenzo-Mantovani have already shown in their work on the Chow-Witt ring of $B\mu_n$ \cite[Remark 5.2.2]{diLorenzo-Mantovani}.
	Therefore the Chow-Witt group in degree $0$ evaluates to
	\begin{align*}
		\chowwitt^0(B\mu_m \times B\mu_n, \trivialbundle) & \cong H^0(B\mu_m \times B\mu_n, I^0, \trivialbundle) \times_{\chowmodtwo^0(B\mu_m \times B\mu_n)} \chow^0(B\mu_m \times B\mu_n) \\
		&= \witt(k) \langle 1, \classua, \classub, \classua \classub \rangle \times_{\Integer/2} \Integer \langle 1 \rangle \\
		&= \GW(k) \langle 1 \rangle \oplus \witt(k) \langle \classua, \classub, \classua \classub \rangle \, . \qedhere
	\end{align*}
\end{proof}

\section{Ring Structure}

\begin{theorem} \label{thm:chowwitt-of-BmumxBmun}
	Let $m$ and $n$ be positive natural numbers.
	\begin{align*}
		\shortintertext{(i) If $m$ and $n$ are odd:}
		\chowwitt^\tot(B\mu_m \times B\mu_n) \cong &\GW(k)[\eulercwa, \eulercwb]/\left( I(k) \cdot (\eulercwa, \eulercwb), m \eulercwa, n \eulercwb\right) \\
		\shortintertext{(ii) If $m$ is odd and $n$ even:}
		\chowwitt^\tot(B\mu_m \times B\mu_n) \cong& \GW(k)[\classub, \hyperbolicb, \eulercwa, \eulercwb]/( I(k) \cdot (\hyperbolicb, \eulercwa, \eulercwb), m\eulercwa, \frac{n}{2}\hyperbolicb\eulercwb, \\
		&\hyperbolicb^2 - 2h, \hyperbolic \classub, \hyperbolicb \classub, \classub^2 + 2\classub, \classub \eulercwa, \classub \eulercwb - n \eulercwb) \\
		\shortintertext{(iii) If $m$ and $n$ are even:}
		\chowwitt^\tot(B\mu_m \times B\mu_n) \cong& \GW(k)[\classua, \classub, \hyperbolica, \hyperbolicb, \hyperbolicc, \eulercwa, \eulercwb, \eulercwc]/( \frac{m}{2} \hyperbolica \eulercwa, \frac{n}{2} \hyperbolicb \eulercwb,\\
		&I(k) \cdot (\hyperbolica, \hyperbolicb, \hyperbolicc, \eulercwa, \eulercwb, \eulercwc), \mathcal{J}, \eulercwa^2 + \eulercwb^2 + \hyperbolicc \eulercwa \eulercwb - \eulercwc^2, \\
		& (\hyperbolic, \hyperbolica, \hyperbolicb, \hyperbolicc) \cdot (\classua, \classub), \classua^2 + 2\classua, \classub^2 + 2 \classub, \\
		&\classua \eulercwa - m \eulercwa, \classua \eulercwb - \frac{m}{2}\hyperbolicc\eulercwa, \classua \eulercwc - \frac{m}{2} \hyperbolicb \eulercwa, \\
		&\classub \eulercwa - \frac{n}{2}\hyperbolicc \eulercwb, \classub \eulercwb - n \eulercwb, \classub \eulercwc - \frac{m}{2} \hyperbolica \eulercwb)
	\end{align*}
	Here $\classua$ and $\classub$ correspond to the pullbacks along the projections $\pr_1$ and $\pr_2$ of the class $\classu$ defined in \cref{def:classu}.
	The symbols $\eulercwz_i$ map to $\eulercw(\linebundle^\vee) \in \chowwitt^1(B\mu_m \times B\mu_n, \linebundle)$ and $\hyperbolicz_i$ to
	\[ (0,2) \in \chowwitt^0(B\mu_m \times B\mu_n, \linebundle) \subseteq H^0(B\mu_m \times B\mu_n, I^0, \linebundle) \times_{\chowmodtwo^0(B\mu_m \times B\mu_n)} \chow^0(B\mu_m \times B\mu_n) \]
	for $\linebundle = \trivialbundle(1,0), \, \trivialbundle(0,1), \, \trivialbundle(1,1)$, respectively.
\end{theorem}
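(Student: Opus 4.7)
The plan is to mimic the strategy of Theorems~\ref{chowwitt-ring-of-Bmun-odd} and~\ref{thm:chowwitt-of-BGmxBmun}: use the localization sequence~\eqref{eq:locseq1} to obtain a $\GW(k)$-algebra surjection from $\chowwitt^\tot(B\Gm \times B\mu_n)$ onto (most of) $\chowwitt^\tot(B\mu_m \times B\mu_n)$, whose kernel is the ideal generated by $\eulercw(\trivialbundle(m,0))$. By Proposition~\ref{prop:euler-class-product} this Euler class equals $m\eulercwa$ if $m$ is odd and $\frac{m}{2}\hyperbolica\eulercwa$ if $m$ is even, explaining the first new relation in each case. The group computation of Theorem~\ref{chowwitt-groups-of-BmumxBmun} guarantees that this quotient exhausts the Chow-Witt groups except for the extra summands $\witt(k)\langle\classub\rangle$ in case (ii) and $\witt(k)\langle \classua, \classua\classub\rangle$ in case (iii) sitting in $\chowwitt^0(B\mu_m\times B\mu_n,\trivialbundle)$; these are adjoined via pullback of the class $\classu$ from the factors along the projections.

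First I would fix generators: $\hyperbolicz_i$ and $\eulercwz_i$ are pulled back from $B\Gm\times B\Gm$ along the canonical map $B\mu_m\times B\mu_n\to B\Gm\times B\Gm$ (so in particular $\eulercwa,\eulercwb,\eulercwc$ really are the Euler classes of $\trivialbundle(-1,0),\trivialbundle(0,-1),\trivialbundle(-1,-1)$), while $\classua \defined \pr_1^*\classu$ and $\classub \defined \pr_2^*\classu$. In cases (i) and (ii) the factor $\Integer/m$ of $\Pic/2$ collapses because $m$ is odd, so $\trivialbundle(1,0) \sim \trivialbundle(m+1,0) \sim \trivialbundle(\tfrac{m+1}{2},0)^{\otimes 2}$ is square-trivial. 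As in Theorem~\ref{chowwitt-ring-of-Bmun-odd}, the induced square-periodicity map sends $\hyperbolica$ to $\hyperbolic$ (by the computation of its image in the fiber product $H^0(X,I^0,-)\times_{\chowmodtwo^0}\chow^0$), identifies $\hyperbolicc$ with $\hyperbolicb$ (or with $\hyperbolic$ in case (i)), and rewrites $\eulercwc$ as $\eulercwa + \tfrac{m+1}{2}\hyperbolicz \eulercwa$ (with the appropriate $\hyperbolicz$) by matching Chow-group reductions, exactly as in the proof of Theorem~\ref{thm:chowwitt-of-BGmxBmun}. Most relations in the ideal $\mathcal{J}$ from Proposition~\ref{prop:chowwitt-of-BGmxBGm} then collapse to tautologies in $\GW(k)$ or are absorbed into $I(k)\cdot\hyperbolicz_i$.

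Next I would pin down the relations involving $\classua$ and $\classub$. The key point is that $\classua\eulercwa = \pr_1^*(\classu\cdot\eulercwz)$ since both factors are pullbacks along $\pr_1$, so the relations $\classua^2+2\classua=0$, $\hyperbolic\classua=0$ and $\classua\eulercwa = m\eulercwa$ are inherited from Theorem~\ref{chowwitt-groups-of-Bmun-even} by pullback; symmetrically for $\classub$. The vanishings $\hyperbolica\classua=\hyperbolicb\classua=\hyperbolicc\classua=0$ (and the $b$-counterparts) follow by the reduction-map argument from the proof of Theorem~\ref{thm:chowwitt-of-BGmxBmun}: $\forgetful(\classua)=0$, $\forgetful(\hyperbolicz_i\classua)=0$, and $\forgetful$ is injective on the relevant degree-$0$ twisted Chow-Witt groups (which are torsion-free copies of $\Integer$).

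The main obstacle, as in Theorem~\ref{thm:chowwitt-of-BGmxBmun}, is the mixed products $\classua\eulercwb$, $\classua\eulercwc$, $\classub\eulercwa$, $\classub\eulercwc$. These lie in groups $\chowwitt^1(B\mu_m\times B\mu_n,\linebundle)$ whose kernel under $\forgetful$ is cyclic, generated by one of the classes $\tfrac{m}{2}\hyperbolicz_i\eulercwa$ or $\tfrac{n}{2}\hyperbolicz_j\eulercwb$; since $\forgetful$ kills $\classua$ and $\classub$, the mixed products sit in this kernel. To identify the correct multiple, I would compute a triple product in two ways using associativity and graded commutativity, e.g.\
\[ \classua\,\eulercwa\,\eulercwb \;=\; (\classua\eulercwa)\,\eulercwb \;=\; m\eulercwa\eulercwb, \]
and then, multiplying the ansatz $\classua\eulercwb = \lambda\,\hyperbolicc\eulercwa$ on the right by $\eulercwa$ and using the relation $I(k)\cdot\eulercwa=0$ together with $\hyperbolicc\eulercwa\eulercwb=\dots$ from $\mathcal{J}$, one reads off $\lambda = \tfrac{m}{2}$; the same pattern, with $\eulercwc$ in place of one of the Euler classes, determines the products involving $\eulercwc$. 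Finally one checks completeness by comparing $\GW(k)$-module generators in each bidegree against Theorem~\ref{chowwitt-groups-of-BmumxBmun}.
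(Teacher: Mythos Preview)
Your proposal is correct and follows essentially the same route as the paper: localization sequence plus square-periodicity collapse in the odd cases, with the $\classu$-classes adjoined via pullback from the factors and their products with the $\hyperbolicz_i$ and $\eulercwz_i$ pinned down by the reduction-map and triple-product arguments already used for $B\Gm\times B\mu_n$. Two small points: your formula for $\sqperiod(\eulercwc)$ has a typo (it should read $\eulercwb+\tfrac{m+1}{2}\hyperbolicc\eulercwa$, not $\eulercwa$ twice), and in case~(iii) the paper takes a shortcut you do not: rather than rerunning the triple-product computation for $\classua\eulercwb$, $\classua\eulercwc$, it simply observes that exchanging the two factors gives a ring isomorphism $\chowwitt^\tot(B\mu_m\times B\Gm)\cong\chowwitt^\tot(B\Gm\times B\mu_m)$, so all the $\classua$-relations are read off from \cref{thm:chowwitt-of-BGmxBmun} with the roles of the indices swapped. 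The paper also explicitly cites \cref{lem:kuenneth-witt} to certify that $\classua\classub$ is $\witt(k)$-linearly independent of $1,\classua,\classub$ (hence no hidden relation in degree~$0$); you subsume this into the final comparison with \cref{chowwitt-groups-of-BmumxBmun}, which is fine since that theorem already invokes the lemma.
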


\begin{proof}
	(i) In \cref{chowwitt-groups-of-BmumxBmun} we have seen that all Chow-Witt groups of $B\mu_m \times B\mu_n$ are isomorphic to those of $B\Gm \times B\mu_n$ modulo $m \eulercwa$,
	thus the Chow-Witt ring is isomorphic to that of $B\Gm \times B\mu_n$ modulo $m \eulercwb$ and the quadratic periodicity isomorphism $\sqperiod$ for $\trivialbundle(1,0) \cong \trivialbundle(m+1,0) \cong \trivialbundle((m+1)/2,0)^{\otimes 2}$.
	
	The same argument as in the proof of \cref{thm:chowwitt-of-BGmxBmun} for $n$ odd shows $\sqperiod(\hyperbolica) = \hyperbolic$.
	This implies that the generator $\hyperbolica$ as well as the relation $I(k) \hyperbolica$ from $\chowwitt^\tot(B\Gm \times B\mu_n)$ become redundant in $\chowwitt^\tot(B\mu_m \times B\mu_n)$.
	
	For (ii) we can use an analogous argument, but now there are two quadratic periodicities to consider, namely $\sqperiod_{\trivialbundle((m+1)/2,0)}$ and $\sqperiod_{\trivialbundle((m+1)/2,1)}$.
	Again using the arguments from \cref{thm:chowwitt-of-BGmxBmun} yields
	\begin{align*}
		\sqperiod_{\trivialbundle((m+1)/2,0)}(\hyperbolica) &= \hyperbolic \\
		\sqperiod_{\trivialbundle((m+1)/2,1)}(\hyperbolicc) &= \hyperbolicb \\
		\sqperiod_{\trivialbundle((m+1)/2,1)}(\eulercwc) &= (m+1)/2 \hyperbolicc\eulercwa + \eulercwb \, .
	\end{align*}
	Thus the generators $\hyperbolica$, $\hyperbolicc$ and $\eulercwc$ as well as the relations $\hyperbolica \classub$, $\hyperbolicc \classub$, $\eulercwa^2 + \eulercwb^2 + \hyperbolicc \eulercwa \eulercwb - \eulercwc^2$ and everything in the relation ideal $\mathcal{J}$ from \cref{eq:relations-BGmxBGm-1,eq:relations-BGmxBGm-2,eq:relations-BGmxBGm-3} except for $\hyperbolicb^2 - 2 \hyperbolic$ become redundant.
	The relation $\classub \eulercwa - (n/2) \hyperbolicc \eulercwb$ becomes $\classub \eulercwa$.
	
	(iii) In this case there are no quadratic periodicity isomorphisms to be divided out.
	All relations from the Chow-Witt ring of $B\Gm \times B\mu_n$ are inherited. 
	Exchanging $m$ and $n$ further allows to inherit the product of $\classua$ with $\hyperbolica$, $\hyperbolicb$, $\hyperbolicc$, $\eulercwa$, $\eulercwb$ and $\eulercwc$ from $B\mu_m \times B\Gm$.
	The localization sequence \cref{eq:locseq1} forces the relation $\eulercw(\trivialbundle(m,0))$ which equals $(m/2)\hyperbolica \eulercwa$.
	The Künneth formula in degree $0$ from \cref{lem:kuenneth-witt} shows that $\classua \classub$ is $\witt(k)$-linearly independent of the generators $1$, $\classua$ and $\classub$ 
	hence there are no additional relations for this product.
\end{proof}

\section{\texorpdfstring{$I^j$}{Ij}-Cohomology of \texorpdfstring{$B\mu_m \times B\mu_n$}{Bmu\_m x Bmu\_n}}

\begin{corollary}
	\begin{align*}
		\shortintertext{If $m$ and $n$ are odd:}
		H^\tot(B\mu_m \times B\mu_n, I^*) & \cong W(k) \\
		\shortintertext{If $m$ is odd and $n$ is even:}
		H^\tot(B\mu_m \times B\mu_n, I^*) & \cong W(k)[\classub, \eulerib](I(k) \eulerib, \classub^2 + 2\classub, \classub \eulerib) \\
		\shortintertext{If $m$ and $n$ are even:}
		H^\tot(B\mu_m \times B\mu_n, I^*) &\cong \witt(k)[\classua, \classub, \euleria, \eulerib, \euleric]/(I(k) \cdot (\euleria, \eulerib, \euleric), \euleria^2 + \eulerib^2 - \euleric^2, \\
		& \classua^2 + 2\classua, \classub^2 + 2 \classub, \classua \euleria, \classua \eulerib, \classua \euleric, \classub \euleria, \classub \eulerib, \classub \euleric)
	\end{align*}
\end{corollary}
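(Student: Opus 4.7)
The plan is to imitate the strategy used for the $I^j$-cohomology rings of $B\mu_n$ (\cref{prop:Ij-of-Bmun}) and $B\Gm \times B\mu_n$: combine the computation of $\chowwitt^\tot(B\mu_m \times B\mu_n)$ from \cref{thm:chowwitt-of-BmumxBmun} with the right-exact sequence
\[ \chow^i(B\mu_m \times B\mu_n) \xrightarrow{\hyperbolic_\linebundle} \chowwitt^i(B\mu_m \times B\mu_n, \linebundle) \to H^i(B\mu_m \times B\mu_n, I^i, \linebundle) \to 0. \]
Summed over $\linebundle \in \Pic(B\mu_m \times B\mu_n)/2$ this exhibits the total $I^j$-cohomology ring as the quotient of $\chowwitt^\tot(B\mu_m \times B\mu_n)$ by the two-sided ideal generated by the images of all hyperbolic maps.

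First I would identify these images. Since $\hyperbolic_\linebundle$ is a ring homomorphism and $\chow^*(B\mu_m \times B\mu_n) \cong \Integer[\cherna, \chernb]/(m\cherna, n\chernb)$ by \cite[Theorem 2.10, Lemma 2.12]{totaro-bluebook} is generated multiplicatively by $\cherna$ and $\chernb$, the image in question is simply the ideal generated by the (at most four) classes $\hyperbolic$, $\hyperbolica$, $\hyperbolicb$, $\hyperbolicc$ that appear in the relevant case. Dividing out this ideal is then a matter of bookkeeping against the presentation in \cref{thm:chowwitt-of-BmumxBmun}.

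In the odd-odd case, $\Pic/2$ is trivial and dividing $\GW(k)$ by $\hyperbolic$ yields $\witt(k)$ in degree zero. For $i \geq 1$ the cyclic group $\chowwitt^i \cong \Integer/m$ or $\Integer/n$ is generated by an Euler-class monomial, and $\hyperbolic \cdot \eulercwa^i \eulercwb^j = 2\eulercwa^i \eulercwb^j$ is a unit multiple of that generator since $\gcd(2,m) = \gcd(2,n) = 1$, so all positive-degree groups vanish. In the odd-even case, dividing by $\hyperbolic$ and $\hyperbolicb$ additionally kills $\eulercwa$ by the same odd-gcd argument, reduces $\classub \eulercwb - n\eulercwb$ to $\classub \eulerib = 0$ (because $n\eulerib = (n/2)\hyperbolic\eulerib$ vanishes), and collapses the remaining $\hyperbolicz$-only relations tautologically. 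The main technical step, and the place I expect the bulk of the work to lie, is the even-even case: here no Euler class is killed, and one must carefully go through each relation of \cref{thm:chowwitt-of-BmumxBmun}, noting that the four linear relations among $\hyperbolicz_i \eulercwz_j$ and the entire ideal $\mathcal{J}$ become trivial, while the six relations of the form $\classua \eulercwz_j - (\text{integer}) \cdot \hyperbolicz_i \eulercwz_k$ together with their $\classub$-counterparts all collapse to $\classua \cdot (\euleria, \eulerib, \euleric) = 0 = \classub \cdot (\euleria, \eulerib, \euleric)$, using $m\euleria = (m/2)\hyperbolic\euleria = 0$ in $I^j$-cohomology. The independence of the generator $\classua \classub$ is guaranteed by the Künneth argument of \cref{lem:kuenneth-witt}, so no extra relation between it and the other generators appears.
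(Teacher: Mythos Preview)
Your proposal is correct and follows essentially the same route as the paper: quotient the Chow--Witt presentation of \cref{thm:chowwitt-of-BmumxBmun} by the ideal generated by the images of the hyperbolic maps, then simplify case by case exactly as you describe. One small terminological slip: $\hyperbolic_\linebundle$ is not a ring homomorphism (it sends $1$ to $\hyperbolic$, not $1$), but it does satisfy the projection formula $\hyperbolic_\linebundle(\forgetful(x)\cdot y) = x\cdot \hyperbolic_{\linebundle'}(y)$, which is what actually justifies that the total image is the ideal generated by $\hyperbolic,\hyperbolica,\hyperbolicb,\hyperbolicc$.
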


\begin{proof}
	Combine the computation of $\chowwitt^\tot(B\mu_m \times B\mu_n)$ from \cref{thm:chowwitt-of-BmumxBmun} and the fact that the sequence 
	\[ \chow^i(B\mu_m \times B\mu_n) \xrightarrow{\hyperbolic_\linebundle} \chowwitt^i(B\mu_m \times B\mu_n, \linebundle) \to H^i(B\mu_m \times B\mu_n, I^i, \linebundle) \to 0 \]
	is exact.
	
	If $m$ and $n$ are odd, the image of the hyperbolic map $\hyperbolic_{\trivialbundle(0,0)}$ is the submodule of $\chowwitt^\tot(B\mu_m \times B\mu_n)$ generated by $\hyperbolic$.
	Dividing this out identifies the relation $m \eulercwa$ with $\eulercwa$ and $n \eulercwb$ with $\eulercwb$.
	This leaves only the group $\chowwitt^0(B\mu_m \times B\mu_n, \trivialbundle)/h \cong \GW(k)/h \cong \witt(k)$.
	
	If $m$ is odd and $n$ is even, the images of the hyperbolic maps $\hyperbolic_{\trivialbundle(0,0)}$ and $\hyperbolic_{\trivialbundle(0,1)}$ are generated by $\hyperbolic$ and $\hyperbolicb$, respectively.
	Dividing out $\hyperbolic$ identifies $m \eulercwa$ with $\eulercwa$ (since $m$ is odd and $2 \eulercwa - \hyperbolic \eulercwa \in I(k) \eulercwa$) causing $\euleria$ to become trivial, and all relations containing $\hyperbolic$ or $\hyperbolicb$ vanish.
	
	If $m$ and $n$ are even, the images of the hyperbolic maps are generated by $\hyperbolic$, $\hyperbolica$, $\hyperbolicb$, $\hyperbolicc$, respectively.
	The relation $\classua \eulercwa - m \eulercwa$ becomes $\classua \euleria$, as $m \euleria$ is a multiple of $2 \euleria = \hyperbolic \euleria$ and thus vanishes.
	All relations containing $\hyperbolic$, $\hyperbolica$, $\hyperbolicb$ or $\hyperbolicc$ vanish.
	
	The statement on ring structure follows by adding up the computed cohomology groups and inheriting the ring structure from $\chowwitt^\tot(B\mu_m \times B\mu_n)$.
\end{proof}

\begin{remark}\label{remark:kuenneth-formula}
For $m$ and $n$ both odd there is again a Künneth isomorphism
\begin{align*} 
	\chowwitt^\tot(B\mu_m) \otimes \chowwitt^\tot(B\mu_n) &\cong \chowwitt^\tot(B\mu_m \times B\mu_n) \\
	H^\tot(B\mu_m, I^*) \otimes H^\tot(B\mu_n, I^*) &\cong H^\tot(B\mu_m \times B\mu_n, I^*)
	\end{align*}
for both the Chow-Witt and the $I^j$-cohomology ring.
If $m$ and $n$ are both even this map is injective but not surjective because the right-hand side contains the class $\eulercwc$ that does not come from one of the factors.
If $m$ is odd and $n$ even, the map is surjective but not injective because there is a new relation $\classub \eulercwa - (n/2) \hyperbolicb \eulercwb$ respectively $\classub \euleria$ on the right-hand side.

Like in \cref{remark:kuenneth-formula-1} the real cycle class map 
\[ H^i(B\mu_m \times B\mu_n, I^j, \linebundle) \to H_{\sing}^i((B\mu_m \times B\mu_n)(\Real); \Integer(\linebundle)) \]
is not an isomorphism in all bidegrees but only for $j \geq 2i+6$.
In these bidegrees the product $\classub \eulercwa$ does not vanish but presents an additional generator of the $I^j$-cohomology group, making the Künnneth map injective just like the one for singular cohomology.

In Chow theory, for comparison, the Künneth isomorphism holds by \cite[Lemma 2.12]{totaro-bluebook}: the group $\mu_m$ satisfies condition (ii) because $k$ contains all $e$-th roots of unity where $e$ is the exponent of $\mu_n$ (even if it does not contain all $n$-th roots of unity).

To summarize, of the product spaces considered in this work only $B\Gm \times B\mu_n$ with $n$ odd and $B\mu_m \times B\mu_n$ with both $m$ and $n$ odd satisfy a Künneth isomorphism for Chow-Witt rings.
For $B\mu_m \times B\mu_n$ with $m$ and $n$ odd the Künneth map is surjective but not injective, for $\projective^q \times \projective^r$, $B\Gm \times B\Gm$ and $B\mu_m \times B\mu_n$ with $m$ and $n$ even it is injective but not surjective, and for $B\Gm \times B\mu_n$ with $n$ even and $B\mu_m \times B\mu_n$ with $m$ odd and $n$ even it is neither injective nor surjective.
At this point it is unclear to the author if this behavior can be predicted in any way.

\end{remark}

\backmatter
\printbibliography[heading=bibintoc]
	
\end{document}